%% LyX 2.2.3 created this file.  For more info, see http://www.lyx.org/.
%% Do not edit unless you really know what you are doing.
\documentclass[english]{article}
\usepackage[T1]{fontenc}
\usepackage[latin9]{inputenc}
\usepackage{geometry}
\geometry{verbose,tmargin=1in,bmargin=1in,lmargin=1in,rmargin=1in}
\usepackage{amsmath}
\usepackage{amsthm}
\usepackage{amssymb}
\usepackage{graphicx}
\usepackage{wasysym}

\makeatletter

%%%%%%%%%%%%%%%%%%%%%%%%%%%%%% LyX specific LaTeX commands.
%% Because html converters don't know tabularnewline
\providecommand{\tabularnewline}{\\}

%%%%%%%%%%%%%%%%%%%%%%%%%%%%%% Textclass specific LaTeX commands.
\numberwithin{equation}{section}
\numberwithin{figure}{section}
  \theoremstyle{plain}
  \newtheorem*{conjecture*}{\protect\conjecturename}
\theoremstyle{plain}
\newtheorem{thm}{\protect\theoremname}
  \theoremstyle{plain}
  \newtheorem{lem}[thm]{\protect\lemmaname}
  \theoremstyle{plain}
  \newtheorem{conjecture}[thm]{\protect\conjecturename}
  \theoremstyle{definition}
  \newtheorem{defn}[thm]{\protect\definitionname}
  \theoremstyle{remark}
  \newtheorem*{acknowledgement*}{\protect\acknowledgementname}

%%%%%%%%%%%%%%%%%%%%%%%%%%%%%% User specified LaTeX commands.
\usepackage{hyperref}
\hypersetup{
    colorlinks,
    allcolors=red
}
\usepackage[lined,boxed,ruled,norelsize,algo2e]{algorithm2e}
\@addtoreset{section}{part}
\usepackage{tikz}
\usepackage{wrapfig}

\usepackage{subfig}

\makeatother

\usepackage{babel}
  \providecommand{\acknowledgementname}{Acknowledgement}
  \providecommand{\conjecturename}{Conjecture}
  \providecommand{\definitionname}{Definition}
  \providecommand{\lemmaname}{Lemma}
\providecommand{\theoremname}{Theorem}

\begin{document}
\global\long\def\defeq{\stackrel{\mathrm{{\scriptscriptstyle def}}}{=}}
\global\long\def\norm#1{\left\Vert #1\right\Vert }
\global\long\def\R{\mathbb{R}}
\global\long\def\Ent{\mathrm{Ent}}
\global\long\def\vol{\mathrm{vol}}
 \global\long\def\Rn{\mathbb{R}^{n}}
\global\long\def\tr{\mathrm{Tr}}
\global\long\def\diag{\mathrm{diag}}
\global\long\def\cov{\mathrm{Cov}}
\global\long\def\E{\mathbb{E}}
\global\long\def\P{\mathbb{P}}
\global\long\def\Var{\mbox{Var}}
\global\long\def\rank{\text{rank}}
\global\long\def\lref#1{\text{Lem }\ltexref{#1}}
\global\long\def\lreff#1#2{\text{Lem }\ltexref{#1}.\ltexref{#1#2}}
\global\long\def\ltexref#1{\ref{lem:#1}}\global\long\def\ttag#1{\tag{#1}}
\global\long\def\cirt#1{\raisebox{.5pt}{\textcircled{\raisebox{-.9pt}{#1}}}}
\global\long\def\op{\mathrm{op}}

\definecolor{c063cb8}{RGB}{0,64,192}
\definecolor{cff0000}{RGB}{255,0,0}
\definecolor{c009200}{RGB}{0,128,0}
\definecolor{c008000}{RGB}{0,128,0}
\definecolor{cff6666}{RGB}{255,128,128}
\definecolor{c008600}{RGB}{0,128,0} 

\title{The Kannan-Lovász-Simonovits Conjecture\thanks{This is a \textbf{working draft}. We will be grateful for any comments!}}

\author{Yin Tat Lee\thanks{University of Washington, yintat@uw.edu}, Santosh
S. Vempala\thanks{Georgia Tech, vempala@gatech.edu}}
\maketitle
\begin{abstract}
The Kannan-Lovász-Simonovits conjecture says that the Cheeger constant
of any logconcave density is achieved to within a universal, dimension-independent
constant factor by a hyperplane-induced subset. Here we survey the
origin and consequences of the conjecture (in geometry, probability,
information theory and algorithms) as well as recent progress resulting
in the current best bounds. The conjecture has lead to several techniques
of general interest.
\end{abstract}

{\small{}\tableofcontents{}}{\small \par}

\pagebreak{}

\section{Introduction}

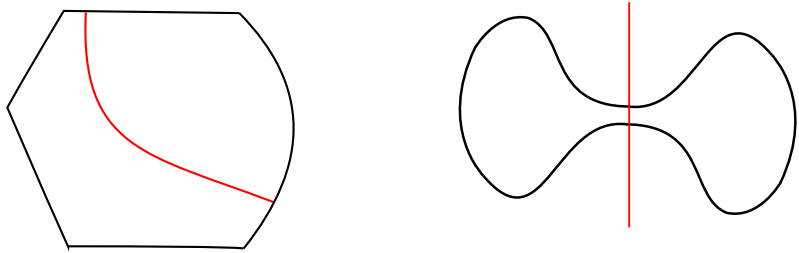
\begin{figure}
\centering
\subfloat{
\begin{tikzpicture}[y=0.80pt, x=0.80pt, yscale=-0.500000, xscale=0.500000, inner sep=0pt, outer sep=0pt]   \path[draw=black,line join=miter,line cap=butt,even odd rule,line width=0.800pt]     (413.0778,258.6109) .. controls (413.0778,258.6109) and (413.0778,258.6109) ..     (247.0446,256.5289) .. controls (193.7500,346.0581) and (193.7500,348.1402) ..     (193.7500,348.1402) .. controls (251.1442,481.3930) and (251.1442,479.3109) ..     (251.1442,479.3109) .. controls (417.1774,479.3109) and (417.1774,481.3930) ..     (417.1774,481.3930);   \path[draw=black,line join=miter,line cap=butt,even odd rule,line width=0.800pt]     (413.0778,258.6109) .. controls (481.0505,327.4657) and (480.5875,402.0638) ..     (417.1774,481.3930);   \path[draw=cff0000,line join=miter,line cap=butt,even odd rule,line     width=0.800pt] (267.9571,257.8746) .. controls (261.7490,388.1156) and     (325.7714,392.2116) .. (445.0982,437.3601);
\end{tikzpicture}
}\quad\quad\quad\quad
\subfloat{
\begin{tikzpicture}[y=0.80pt, x=0.80pt, yscale=-0.500000, xscale=0.500000, inner sep=0pt, outer sep=0pt]   \begin{scope}[cm={{0.5852,0.0,0.0,1.21096,(51.29251,-57.70757)}}]     \path[line cap=butt,miter limit=4.00,nonzero rule,line width=0.800pt]       (136.3636,311.4531) ellipse (1.0904cm and 0.9621cm);     \path[draw=black,line join=miter,line cap=butt,even odd rule,line width=0.950pt]       (332.3096,374.9221) .. controls (200.0131,374.4592) and (231.5385,317.4066) ..       (170.1435,305.7865) .. controls (117.7513,300.2597) and (83.3933,329.3002) ..       (83.3933,329.3002) -- (83.3930,329.3002) .. controls (21.9907,392.6421) and       (97.1970,429.0782) .. (97.1970,429.0782);     \path[draw=black,line join=miter,line cap=butt,even odd rule,line width=0.950pt]       (329.3556,388.8739) .. controls (213.2139,381.7145) and (196.0451,485.7344) ..       (97.1970,429.0782);   \end{scope}   \begin{scope}[cm={{-0.5852,0.0,0.0,-1.21096,(437.87719,867.20595)}}]     \path[line cap=butt,miter limit=4.00,nonzero rule,line width=0.800pt]       (136.3636,311.4531) ellipse (1.0904cm and 0.9621cm);     \path[draw=black,line join=miter,line cap=butt,even odd rule,line width=0.950pt]       (332.3096,374.9221) .. controls (200.0131,374.4592) and (231.5385,317.4066) ..       (170.1435,305.7865) .. controls (117.7513,300.2597) and (83.3933,329.3002) ..       (83.3933,329.3002) -- (83.3930,329.3002) .. controls (21.9907,392.6421) and       (97.1970,429.0782) .. (97.1970,429.0782);     \path[draw=black,line join=miter,line cap=butt,even odd rule,line width=0.950pt]       (329.3556,388.8739) .. controls (213.2139,381.7145) and (196.0451,485.7344) ..       (97.1970,429.0782);   \end{scope}   \path[draw=cff0000,line join=miter,line cap=butt,even odd rule,line     width=0.688pt] (246.0550,297.5922) .. controls (246.0550,510.5716) and     (246.0550,510.5716) .. (246.0550,510.5716) -- (246.0550,510.5716);
\end{tikzpicture}
}\caption{Good and bad isoperimetry\label{fig:Good-and-bad}}
\end{figure} 

In this article, we describe the origin and consequences of the Kannan-Lovász-Simonovits
(KLS) conjecture, which now plays a central role in convex geometry,
unifying or implying older conjectures. Progress on the conjecture
has lead to new proof techniques and influenced diverse fields including
asymptotic convex geometry, functional analysis, probability, information
theory, optimization and the theory of algorithms. 

\subsection{The KLS conjecture}

The isoperimetric problem asks for the unit volume set with minimum
surface area. For Euclidean space, ancient Greeks (around 150 BC \cite{ashbaugh2010problem})
knew that the solution is a ball; a proof was only found in 1838 by
Jakob Steiner \cite{steiner1838einfache}. For sets of arbitrary volume,
the isoperimetry (or expansion) of the set is defined to be the ratio
of surface area to its volume (or its complement, whichever is smaller).
For the Gaussian distribution, or the uniform distribution over a
Euclidean sphere, the minimum isoperimetric ratio is achieved by a
halfspace, i.e., a hyperplane cut \cite{sudakov1974extremal,borell1975brunn}.
However, this is not true in general (even for the uniform distribution
over a simplex) and the minimum ratio set, in the worst case, can
be very far from a hyperplane. In general, any domain that can be
viewed as two large parts with a small boundary between them, a ``dumbbell''-like
shape, can have arbitrary isoperimetric ratio (Figure \ref{fig:Good-and-bad}).
It is natural to expect that convex bodies and logconcave functions
(whose logarithms are concave along every line) have good isoperimetry
\textemdash{} they cannot look like dumbbells. The KLS conjecture
says that a hyperplane cut achieves the minimum ratio up to a constant
factor for the uniform distribution on any convex set, and more generally
for any distribution on $\R^{n}$ with a logconcave density. The constant
is universal and independent of the dimension.

Formally, for a density $p$ in $\R^{n}$, the measure of a set $S\subseteq\R^{n}$
is $p(S)=\int_{S}p(x)\,dx.$ The boundary measure of this subset is
\[
p(\partial S)=\inf_{\varepsilon\rightarrow0^{+}}\frac{p(\left\{ x:d(x,S)\le\varepsilon\right\} )-p(S)}{\varepsilon}
\]
where $d(x,S)$ is the minimum Euclidean distance between $x$ and
$S$. The isoperimetric constant of $p$ (or Cheeger constant of $p$)
is the minimum possible ratio between the boundary measure of a subset
and the measure of the subset among all subsets of measure at most
half:
\[
\psi_{p}=\inf_{S\subseteq\R^{n}}\frac{p(\partial S)}{\min\left\{ p(S),p(\R^{n}\setminus S)\right\} }.
\]

For a Gaussian distribution and an unit hypercube in $\R^{n}$, this
ratio is a constant independent of the dimension, with the minimum
achieved by a halfspace as mentioned. Both of them belong to a much
more general class of probability distributions, called logconcave
distribution. A probability density function is logconcave if its
logarithm is concave along every line, i.e., for any $x,y\in\R^{n}$
and any $\lambda\in[0,1]$, 
\begin{equation}
f(\lambda x+(1-\lambda)y)\ge f(x)^{\lambda}f(y)^{1-\lambda}.\label{eq:logcon}
\end{equation}
Many common probability distributions are logconcave e.g., Gaussian,
exponential, logistic and gamma distributions. This also includes
indicator functions of convex sets, sets with the property that for
any two points $x,y\in K$, the line segment $[x,y]\subseteq K$. 

In the course of their study of algorithms for computing the volume,
in 1995, Kannan, Lovász and Simonovits made the following conjecture.
\begin{conjecture*}[KLS Conjecture \cite{KLS95}]
For any logconcave density p in $\R^{n}$,
\begin{equation}
\psi_{p}\ge c\cdot\inf_{\mathrm{halfspace}\ H}\frac{p(\partial H)}{\min\left\{ p(H),p(\R^{n}\setminus H)\right\} }.\label{eq:KLS}
\end{equation}
where $c$ is an absolute, universal constant independent of the dimension
and the density $p$.
\end{conjecture*}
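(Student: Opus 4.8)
The plan is to recast \eqref{eq:KLS} in analytic language and then attack it through stochastic localization. First, by applying an affine map we may assume $p$ is isotropic, meaning its barycenter is at the origin and $\cov(p)=I_n$; for an isotropic logconcave density every coordinate halfspace through the origin has both boundary measure and measure of order $\Theta(1)$, so the right-hand side of \eqref{eq:KLS} is a universal constant and the conjecture reduces to the scale-free assertion $\psi_p\ge c$ for all isotropic logconcave $p$. Second, I would trade the Cheeger constant for the Poincaré constant: Cheeger's inequality together with a Buser-type reverse bound valid for logconcave measures shows that $\psi_p$ and the spectral gap $\lambda_1$ of the Langevin diffusion with invariant density $p$ are equivalent up to universal constants, so it suffices to establish a dimension-free Poincaré inequality $\Var_p(f)\le C\,\E_p\norm{\nabla f}^2$.

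The heart of the argument is Eldan's stochastic localization. One tilts $p$ along a Brownian path, $p_t(x)\propto p(x)\exp(\langle\theta_t,x\rangle-\tfrac{t}{2}\norm{x}^2)$ with $\theta_t$ chosen so that $(p_t)$ is a martingale with $p_0=p$ and $p_t$ collapsing to a point mass as $t\to\infty$. The barycenter $a_t$ and the covariance $A_t\defeq\cov(p_t)$ obey a coupled system of stochastic differential equations in which $dA_t$ has negative-semidefinite drift and a fluctuation term driven by the third-moment tensor of $p_t$. Because $p=\E[p_T]$ and a Poincaré constant of a mixture is controlled by the worst fiber plus the spread of the fibers' barycenters, and because $p_T$ is, up to a harmless restriction, close to Gaussian for a suitable stopping time $T$, the entire problem reduces to bounding $\sup_{t\le T}\norm{A_t}_{\op}$. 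One then estimates the drift and quadratic variation of $\norm{A_t}_{\op}$, bootstraps an inequality of the shape $\norm{A_t}_{\op}\le 1+O(t)\cdot(\text{thin-shell parameter of the }p_s,\,s\le t)$, and closes the loop using that the thin-shell parameter is itself bounded in terms of $\psi_p$.

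The main obstacle \textemdash{} and the reason the conjecture is still open \textemdash{} is exactly this self-referential covariance estimate. The best available control on the growth of $\norm{A_t}_{\op}$ under stochastic localization leaks a factor that still grows, albeit very slowly, with $n$, so the bootstrap delivers $\psi_p\gtrsim 1/\mathrm{polylog}(n)$ rather than a constant. A complete proof seems to require either a genuinely dimension-free a priori bound on how fast the covariance can inflate along the localization martingale, or an altogether different way to decompose an arbitrary logconcave measure into pieces each provably comparable to a product measure or a Gaussian. The rest of this survey traces how far each of these lines has been pushed and what the current records are.
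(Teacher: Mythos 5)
The statement you were asked to prove is the KLS conjecture itself, which is an open problem; the paper is a survey and does not (and cannot) supply a proof. You correctly recognized this rather than manufacturing a fake argument, and your outline of the intended attack---reduce to the isotropic case, trade the Cheeger constant for a Poincar\'e constant via Cheeger/Buser, then run Eldan's stochastic localization and try to control $\norm{A_t}_{\op}$ along the martingale---matches the strategy the survey describes in its ``Stochastic localization'' section and in Theorems~\ref{thm:n14} and~\ref{thm:eldan}. Your diagnosis of where the approach stalls (the self-referential bound on the covariance inflation) is also accurate and is essentially the discussion surrounding Eldan's reduction from thin-shell to KLS.

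Two small corrections worth making. First, within the scope of this survey the best unconditional bound obtained by the stochastic-localization bootstrap is $\psi_p \gtrsim (\tr A^2)^{-1/4}$, i.e. $\psi_p \gtrsim n^{-1/4}$ in the isotropic case (Theorem~\ref{thm:n14}); the $1/\mathrm{polylog}(n)$ bound you mention was achieved only in later work that postdates this paper. Second, the ``closing the loop'' step you sketch runs in the direction that is actually hard: the easy inequality is $\sigma_p \lesssim 1/\psi_p$, while Eldan's theorem (\ref{thm:eldan}) gives the nontrivial reverse implication $\psi_p \gtrsim 1/(\sigma(n)\log n)$, at the cost of the logarithmic factor that is precisely the obstruction you identify. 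With those caveats, your write-up is an accurate high-level summary of where the problem stood at the time of this survey rather than a proof, and that is the correct thing to say about a statement labeled \emph{conjecture}.
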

For any halfspace $H$, its expansion is a one-dimensional quantity,
namely the expansion of the corresponding interval when the density
projected along the normal to the halfspace. Since projections of
logconcave densities are also logconcave (Lemma \ref{lem:marginal}),
it is not hard to argue that the isoperimetric ratio is $\Theta(1/\sigma_{f})$
for any one-dimensional logconcave density with variance $\sigma_{f}^{2}$.
This gives an explicit formula for the right hand side of (\ref{eq:KLS}).
Going forward, we use the notation $a\gtrsim b$ to denote $a\geq c\cdot b$
for some universal constant $c$ independent of the dimension and
all parameters under consideration.
\begin{lem}
For any $n$-dimensional logconcave density with covariance matrix
$A$,
\[
\inf_{\mathrm{halfspace}\ H}\frac{p(\partial H)}{\min\left\{ p(H),p(\R^{n}\setminus H)\right\} }\gtrsim\frac{1}{\sqrt{\norm A_{\op}}}
\]
where $\norm A_{\op}$ is the largest eigenvalue of $A$, or equivalently,
the spectral norm of $A$.
\end{lem}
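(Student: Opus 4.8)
The plan is to parametrise halfspaces, reduce the left-hand side to a one-dimensional isoperimetric quantity for the marginal of $p$ along the halfspace normal, and then invoke the classical sharp estimate for one-dimensional logconcave densities. First I would write every halfspace (up to a null set) as $H_{\theta,t}=\{x\in\Rn:\langle\theta,x\rangle\le t\}$ with $\norm\theta=1$; since $\theta$ is a unit vector, the $\varepsilon$-neighbourhood of $H_{\theta,t}$ is exactly $H_{\theta,t+\varepsilon}$. Letting $g_{\theta}$ be the density of $\langle\theta,X\rangle$, $X\sim p$, with cumulative distribution function $G_{\theta}$, integration of $p$ over slabs perpendicular to $\theta$ gives $p(H_{\theta,t})=G_{\theta}(t)$, $p(\Rn\setminus H_{\theta,t})=1-G_{\theta}(t)$, and hence $p(\partial H_{\theta,t})=\lim_{\varepsilon\to0^{+}}\varepsilon^{-1}\big(G_{\theta}(t+\varepsilon)-G_{\theta}(t)\big)=g_{\theta}(t)$ (the limit exists since a logconcave density is continuous on the interior of its support). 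So the ratio for $H_{\theta,t}$ is exactly $g_{\theta}(t)/\min\{G_{\theta}(t),1-G_{\theta}(t)\}$, and by Lemma \ref{lem:marginal} the density $g_{\theta}$ is logconcave on $\R$ with variance $\theta^{\top}A\theta\le\norm A_{\op}$ (using $A\succeq0$, $\norm\theta=1$).

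It then remains to prove the one-dimensional fact: any logconcave density $g$ on $\R$ with variance $\tau^{2}$ has $g(t)\gtrsim\tau^{-1}\min\{G(t),1-G(t)\}$ for every $t$. Here I would set $h=\log g$ (concave) and use the tangent-line bound $g(s)\le g(t)\,e^{h'(t)(s-t)}$; integrating it over $(-\infty,t]$ when $h'(t)>0$, or over $[t,\infty)$ when $h'(t)<0$, gives in both cases $g(t)\ge|h'(t)|\cdot\min\{G(t),1-G(t)\}$, which settles every $t$ with $|h'(t)|\gtrsim\tau^{-1}$. The remaining $t$ lie in an interval $I$ around a mode of $g$; there I would argue that $g$ stays within a constant factor of $\|g\|_{\infty}$ (so that $g(t)\gtrsim\|g\|_{\infty}$) and that $\|g\|_{\infty}\gtrsim\tau^{-1}$, using the standard extremal fact $\tau\|g\|_{\infty}=\Theta(1)$ and the observation that mass of variance $\tau^{2}$ cannot remain near its maximum over a window much longer than $\tau$; since also $\min\{G(t),1-G(t)\}\le\tfrac12$, the claim follows on $I$ as well.

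Finally, combining the two parts, every halfspace $H$ obeys $p(\partial H)/\min\{p(H),p(\Rn\setminus H)\}\gtrsim(\theta^{\top}A\theta)^{-1/2}\ge\norm A_{\op}^{-1/2}$, and taking the infimum over halfspaces gives the lemma. I expect the one and only real obstacle to be the one-dimensional estimate, and within it the near-mode regime: precisely where the tangent-line inequality degenerates, and where one has to fall back on the dimension-free relation $\tau\|g\|_{\infty}=\Theta(1)$ together with a (routine but slightly fiddly) bound on the length of the near-mode interval $I$. Everything else — the slab identities and $\theta^{\top}A\theta\le\norm A_{\op}$ — is immediate.
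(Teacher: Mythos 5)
Your proposal takes exactly the route the paper indicates in the sentence preceding the lemma: reduce the expansion of a halfspace $H_{\theta,t}$ to the one-dimensional expansion of the marginal $g_{\theta}$ (logconcave by Lemma~\ref{lem:marginal}, variance $\theta^{\top}A\theta\le\norm{A}_{\op}$), and then invoke the fact that a one-dimensional logconcave density with variance $\tau^{2}$ has Cheeger ratio $\Theta(1/\tau)$. The paper leaves that one-dimensional inequality as ``not hard to argue''; your tangent-line bound handles $\lvert h'(t)\rvert\gtrsim\tau^{-1}$ correctly, and your treatment of the near-mode interval $I$ (bounding $\lvert I\rvert\lesssim\tau$ by combining the slow decay $g(s)\ge\norm{g}_{\infty}e^{-c\lvert s-m\rvert/\tau}$ with the mass constraint $\int_{I}g\le1$ and $\tau\norm{g}_{\infty}=\Theta(1)$) is a correct and standard way to finish; your self-diagnosis of where the fiddliness lives is accurate.
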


It is therefore useful to consider the following normalization to
a given distribution: apply an affine transformation so that for the
transformed density $p$, we have $\E_{x\sim p}\left(X\right)=0$
and $\E_{x\sim p}\left(XX^{\top}\right)=I$, i.e., zero mean and identity
covariance; we call such a distribution \emph{isotropic. }For any
distribution with mean $\mu$ and covariance $A$, both well-defined,
we can apply the transformation $A^{-\frac{1}{2}}(X-\mu)$ to get
an isotropic distribution. Using this, we can reformulate the KLS
conjecture as follows:
\begin{conjecture*}[KLS, reformulated]
For any logconcave density p in $\R^{n}$ with covariance matrix
$A$, $\psi_{p}\gtrsim\norm A_{\op}^{-\frac{1}{2}}$. Equivalently,
$\psi_{p}\gtrsim1$ for any isotropic logconcave distribution $p$.
\end{conjecture*}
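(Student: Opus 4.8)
Since the statement to be proved is precisely the KLS conjecture, what follows is a proposed route rather than a complete argument; it is the stochastic localization scheme of Eldan, which underlies all of the strongest partial bounds known. First I would reduce to a spectral gap estimate: by E.\ Milman's equivalence of isoperimetric-type constants for logconcave measures, the Cheeger constant and the spectral gap $\lambda_1(p)$ of the weighted Laplacian satisfy $\psi_p\asymp\sqrt{\lambda_1(p)}$ (the easy direction being Cheeger's inequality $\lambda_1\gtrsim\psi_p^2$, the reverse using logconcavity essentially). By the affine reduction already described in the excerpt, it therefore suffices to prove that every \emph{isotropic} logconcave $p$ has Poincar\'e constant $C_P(p)=1/\lambda_1(p)\lesssim 1$, i.e.\ $\Var_p(g)\lesssim\int\norm{\nabla g}^2\,dp$ for all smooth $g$.

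Next I would run stochastic localization. Introduce the measure-valued process $p_t$ with density proportional to $p(x)\exp\!\big(\langle\theta_t,x\rangle-\tfrac t2\norm x^2\big)$, where $\theta_t$ solves $d\theta_t=dW_t+\mu_t\,dt$ and $\mu_t=\E_{x\sim p_t}(x)$ is the barycenter. Three facts make the scheme work: (i) each $p_t$ is again logconcave, being a logconcave density times a Gaussian factor; (ii) $p=\E\,p_t$ for every $t$, so $(p_t)$ is a ``needle''-type decomposition of $p$ that refines continuously; and (iii) writing $A_t=\cov_{p_t}$, the barycenter $\mu_t$ is a martingale while $A_t$ solves a matrix It\^o equation $dA_t=-A_t^2\,dt+\big(\text{martingale term}\big)$, whose martingale part is the third-moment tensor of $p_t$ contracted against $dW_t$. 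As $t$ grows, $p_t$ collapses toward a point mass and $\norm{A_t}_{\op}\to 0$.

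\textbf{The localization inequality.} One then shows an estimate of the form $C_P(p)\lesssim\E\big[\sup_{s\le T}\norm{A_s}_{\op}\big]$ for a suitable deterministic horizon $T\asymp 1$, by differentiating in $t$ an appropriate functional built from $p_t$ (Eldan's original choice, or a semigroup interpolation of $\Var_p(g)$) and checking that the resulting drift is pointwise controlled by $\norm{A_t}_{\op}$; the contribution from $t>T$ is negligible because by then $p_t$ has concentrated into a tiny ball. This is the ``thin-shell implies spectral gap'' mechanism.

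\textbf{The crux.} Everything then reduces to controlling $\sup_{s\le T}\norm{A_s}_{\op}$ starting from $A_0=I$. The drift $-A_t^2\,dt$ only contracts, so the entire difficulty is bounding the quadratic variation of the martingale term, i.e.\ bounding, uniformly over unit vectors $v$ and with a dimension-free constant, the operator norm of $\E_{p_t}\big[(x-\mu_t)(x-\mu_t)^\top\langle v,x-\mu_t\rangle\big]$ in terms of $\norm{A_t}_{\op}$. Equivalently, one needs a strong thin-shell estimate for logconcave measures: for logconcave $q$ with covariance $\Sigma$, the variance of $\norm{\Sigma^{-1/2}(x-\E x)}$ should be $O(1)$ (the thin-shell/variance conjecture). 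I expect this to be the main obstacle — it is exactly where all current proofs concede a polylogarithmic factor, yielding $\psi_p\gtrsim\norm A_{\op}^{-1/2}/\mathrm{polylog}(n)$ rather than the clean constant. Proving that the thin-shell bound is dimension-free, or otherwise controlling $\norm{A_t}_{\op}$ directly along the flow, would complete the proof; short of that, the same scheme still delivers the best bounds currently known.
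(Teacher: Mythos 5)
The statement you are asked to prove is the KLS conjecture itself; the paper contains no proof of it (the entire survey is devoted to the open problem), so there is no argument to compare yours against. You correctly recognize this, and your outline of the stochastic localization program reproduces the paper's own Section 3.2 almost exactly: the same SDE $dc_t = dW_t + \mu_t\,dt$ with $p_t \propto p(x)e^{c_t^\top x - \frac{t}{2}\norm{x}^2}$, the same martingale property $\E\,p_t = p$, the same evolution $dA_t = (\text{third-moment martingale}) - A_t^2\,dt$, and the same identification of the crux, namely a dimension-free bound on $\norm{A_t}_{\op}$ (equivalently on the third-moment tensor of $p_t$). The gap you flag is the genuine one, and it is exactly why this remains a conjecture. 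Two corrections to your account of the state of the art as presented in this paper: the unconditional loss in this scheme is \emph{polynomial}, not polylogarithmic --- the potential $\Phi_t = \tr(A_t^2)$ yields $\psi_p \gtrsim (\tr A^2)^{-1/4} = n^{-1/4}$ in the isotropic case (Theorem \ref{thm:n14}); the polylogarithmic loss you describe arises only \emph{conditionally}, via Eldan's reduction $\psi_p \gtrsim 1/(\sigma(n)\log n)$ (Theorem \ref{thm:eldan}), and the thin-shell constant $\sigma(n)$ is itself only known to be $O(n^{1/4})$. Also, proving a dimension-free thin-shell bound would not by itself close the conjecture through Eldan's reduction because of the residual $\log n$; one would need to control $\norm{A_t}_{\op}$ directly, as you note in your final sentence.

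One further point: the only assertion the paper actually \emph{establishes} in the vicinity of this statement is the equivalence of the reformulation with the original halfspace formulation, which rests on the preceding Lemma that the halfspace expansion of a logconcave density with covariance $A$ is $\Theta(\norm{A}_{\op}^{-1/2})$ (via the one-dimensional marginal in the direction of the top eigenvector), together with the affine equivariance of both sides. If that equivalence is what was intended as the provable content here, your proposal does not address it; it goes straight to attacking the conjecture. As an attack on the conjecture, your proposal is a faithful and essentially correct description of the strongest known technique, but it is not a proof, by your own admission and by the nature of the problem.
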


\subsection{Concentration of measure}

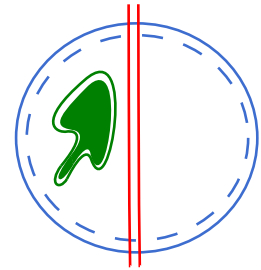
\begin{wrapfigure}{r}{0.25\textwidth}
\centering
\begin{tikzpicture}[y=0.80pt, x=0.80pt, yscale=-0.250000, xscale=0.250000, inner sep=0pt, outer sep=0pt]   \path[draw=c063cb8,dash pattern=on 8.00pt off 8.00pt,line cap=butt,miter     limit=4.00,draw opacity=0.761,nonzero rule,line width=1.000pt]     (355.9839,452.3622) ellipse (5.8864cm and 5.4603cm);   \path[draw=c063cb8,line cap=butt,miter limit=4.00,draw opacity=0.762,nonzero     rule,line width=1.000pt] (354.5455,452.3622) ellipse (6.4141cm and 6.0934cm);   \begin{scope}[cm={{1.00879,-0.01162,0.01706,0.68726,(-11.2869,120.82206)}}]     \path[draw=cff0000,line join=miter,line cap=butt,miter limit=4.00,even odd       rule,line width=1.000pt] (345.4545,120.5440) .. controls (334.0909,840.9986)       and (336.3636,840.9986) .. (336.3636,840.9986);     \path[draw=cff0000,line join=miter,line cap=butt,miter limit=4.00,even odd       rule,line width=1.000pt] (363.6364,120.5440) .. controls (352.2727,840.9986)       and (354.5455,840.9986) .. (354.5455,840.9986);   \end{scope}   \begin{scope}[cm={{0.9126,0.0,0.0,1.09343,(-144.56099,-66.27663)}},miter limit=4.00,line width=2.002pt]     \begin{scope}[cm={{1.00583,0.0,0.0,1.04013,(180.08503,10.57023)}},draw=c009200,miter limit=4.00,line width=1.000pt]       \begin{scope}[draw=c009200,miter limit=4.00,line width=1.000pt]         \path[draw=c009200,line join=miter,line cap=butt,miter limit=4.00,even odd           rule,line width=1.000pt] (209.0000,492.3622) .. controls (274.5268,428.2139)           and (221.5909,435.3338) .. (221.5909,435.3338) .. controls (198.8931,438.6397)           and (155.5607,420.3997) .. (256.8182,350.0895) .. controls (367.5714,269.6319)           and (308.7171,567.5409) .. (271.1893,482.2921) .. controls (258.3543,451.2350)           and (221.0000,514.3622) .. (221.0000,514.3622);         \path[draw=c009200,line join=miter,line cap=butt,miter limit=4.00,even odd           rule,line width=1.000pt] (209.0000,492.3622) .. controls (181.9482,521.9047)           and (204.9329,535.2203) .. (221.0000,514.3622);       \end{scope}     \end{scope}     \begin{scope}[cm={{0.75995,-0.09224,-0.00625,0.78339,(251.52894,138.36435)}},draw=c009200,fill=c009200,miter limit=4.00,line width=1.000pt]       \path[draw=c009200,fill=c009200,line join=miter,line cap=butt,miter         limit=4.00,even odd rule,line width=1.000pt] (215.2300,495.0927) .. controls         (160.4759,549.3920) and (185.8097,552.3336) .. (217.9904,510.7357);       \path[draw=c009200,fill=c009200,line join=miter,line cap=butt,miter         limit=4.00,even odd rule,line width=1.000pt] (215.2300,495.0927) .. controls         (269.9974,445.9938) and (221.5909,435.3338) .. (221.5909,435.3338) .. controls         (183.5914,424.4004) and (155.5607,420.3997) .. (256.8182,350.0895) .. controls         (367.5714,269.6319) and (302.6820,619.9579) .. (269.2876,491.6660) .. controls         (256.4526,460.6089) and (217.9904,510.7357) .. (217.9904,510.7357);     \end{scope}   \end{scope}
\end{tikzpicture}
\caption*{Concentration of measure}
\end{wrapfigure}

One motivation to study isoperimetry is the phenomenon known as concentration
of measure. This can be illustrated as follows: most of a Euclidean
unit ball in $\R^{n}$ lies within distance $O(\frac{1}{n})$ of its
boundary, and also within distance $O(\frac{1}{\sqrt{n}})$ of any
central hyperplane. Most of a Gaussian lies in an annulus of thickness
$O(1)$. For any subset of the sphere of measure $\frac{1}{2}$, the
measure of points at distance at least $\sqrt{\frac{\log n}{n}}$
from the set is a vanishing fraction. These concentration phenomena
are closely related to isoperimetry. For example, since the sphere
has good isoperimetry (about $\sqrt{n}$), the boundary of any subset
of measure $\frac{1}{2}$ is large, and summing up over all points
within distance $\frac{1}{\sqrt{n}}$ gives a constant fraction of
the entire sphere. 

The relationship between isoperimetry and concentration runs deep
with connections in both directions. In particular, the asymptotic
behavior as the dimension grows is of interest in both cases and we
will discuss this in more detail. We now review some basic definitions
and properties of convex sets and logconcave distributions.

For two subsets $A,B\subseteq\R^{n}$ their Minkowski sum is $A+B=\{x+y\,:\,x\in A,y\in B\}.$
The Brunn-Minkowski theorem says that if $A,B,A+B$ are measurable,
then 

\[
\vol(A+B)^{\frac{1}{n}}\geq\vol(A)^{\frac{1}{n}}+\vol(B)^{\frac{1}{n}}.
\]
For the cross-sections of a convex body $K$ orthogonal to a fixed
vector $u$, it says that the volume function $v(t)$ along any direction
is $\frac{1}{n-1}$-concave, i.e. $v(t)^{\frac{1}{n-1}}$ is concave.
If we replace each cross-section with a ball of the same volume, the
radius function is concave along $u$. 

A generalization of convex sets is logconcave functions (\ref{eq:logcon}).
Their basic properties are summarized by the following classical lemma. 
\begin{lem}[Dinghas; Prékopa; Leindler]
\label{lem:marginal} The product, minimum and convolution of two
logconcave functions is also logconcave; in particular, any linear
transformation or marginal of a logconcave density is logconcave.
\end{lem}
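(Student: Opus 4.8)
The plan is to dispatch the four assertions in increasing order of difficulty, reducing the two substantive ones --- marginals and convolutions --- to the Pr\'ekopa-Leindler inequality, the functional form of the Brunn-Minkowski theorem. That products are preserved is immediate from (\ref{eq:logcon}): for logconcave $f,g$ and $\lambda\in[0,1]$,
\[
(fg)(\lambda x+(1-\lambda)y)\ge f(x)^{\lambda}f(y)^{1-\lambda}g(x)^{\lambda}g(y)^{1-\lambda}=\left((fg)(x)\right)^{\lambda}\left((fg)(y)\right)^{1-\lambda}.
\]
The same computation applied to both $f$ and $g$, together with $f(x),g(x)\ge\min\{f(x),g(x)\}$ (and likewise at $y$), gives $\min\{f,g\}(\lambda x+(1-\lambda)y)\ge\min\{f,g\}(x)^{\lambda}\min\{f,g\}(y)^{1-\lambda}$; and precomposing with an invertible affine map obviously preserves concavity of the logarithm. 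So the real work is to show that a marginal of a logconcave density is logconcave --- a general linear image then reduces to a coordinate projection after an invertible change of coordinates --- and that a convolution of two logconcave functions is logconcave.

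I would obtain both from the Pr\'ekopa-Leindler inequality: if $\lambda\in(0,1)$ and integrable $u,v,w:\R^{n}\to[0,\infty)$ satisfy $w(\lambda x+(1-\lambda)y)\ge u(x)^{\lambda}v(y)^{1-\lambda}$ for all $x,y$, then $\int w\ge(\int u)^{\lambda}(\int v)^{1-\lambda}$. Granting it, the marginal statement follows by slicing: if $F$ is logconcave on $\R^{n}\times\R^{m}$ and $p(x)=\int_{\R^{m}}F(x,y)\,dy$, then for $x=\lambda x_{1}+(1-\lambda)x_{2}$ the functions $u(y)=F(x_{1},y)$, $v(y)=F(x_{2},y)$, $w(y)=F(x,y)$ satisfy the hypothesis, since $(x,\lambda y_{1}+(1-\lambda)y_{2})=\lambda(x_{1},y_{1})+(1-\lambda)(x_{2},y_{2})$ and $F$ is logconcave; hence $p(x)\ge p(x_{1})^{\lambda}p(x_{2})^{1-\lambda}$. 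The convolution statement is the same device with $u(y)=f(y)g(x_{1}-y)$, $v(y)=f(y)g(x_{2}-y)$, $w(y)=f(y)g(x-y)$, where logconcavity of $f$ controls the first factor and that of $g$ the second (using $x-(\lambda y_{1}+(1-\lambda)y_{2})=\lambda(x_{1}-y_{1})+(1-\lambda)(x_{2}-y_{2})$), and $\int w=(f*g)(x)$.

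It remains to prove Pr\'ekopa-Leindler, by induction on $n$. For $n=1$, after discarding the trivial case $\int u=0$ or $\int v=0$ and rescaling --- replace $u,v,w$ by $u/\norm u_{\infty}$, $v/\norm v_{\infty}$, $w/(\norm u_{\infty}^{\lambda}\norm v_{\infty}^{1-\lambda})$, which preserves the hypothesis --- we may assume $\norm u_{\infty}=\norm v_{\infty}=1$. Then for $s\in(0,1)$ the superlevel sets $U_{s}=\{u\ge s\}$ and $V_{s}=\{v\ge s\}$ are nonempty, the hypothesis forces $\{w\ge s\}\supseteq\lambda U_{s}+(1-\lambda)V_{s}$, and the one-dimensional bound $|\lambda A+(1-\lambda)B|\ge\lambda|A|+(1-\lambda)|B|$ gives $|\{w\ge s\}|\ge\lambda|U_{s}|+(1-\lambda)|V_{s}|$; integrating over $s\in(0,1)$ by the layer-cake formula yields $\int w\ge\lambda\int u+(1-\lambda)\int v\ge(\int u)^{\lambda}(\int v)^{1-\lambda}$ via AM-GM. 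For the inductive step, split $\R^{n}=\R^{n-1}\times\R$, apply the $(n-1)$-dimensional inequality to the slices $u(\cdot,t_{1}),v(\cdot,t_{2}),w(\cdot,t)$ with $t=\lambda t_{1}+(1-\lambda)t_{2}$ to produce profile functions $\tilde u(t)=\int_{\R^{n-1}}u(x,t)\,dx$, $\tilde v,\tilde w$ satisfying $\tilde w(\lambda t_{1}+(1-\lambda)t_{2})\ge\tilde u(t_{1})^{\lambda}\tilde v(t_{2})^{1-\lambda}$, and invoke the $n=1$ case for $\tilde u,\tilde v,\tilde w$.

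The main obstacle is exactly this one-dimensional base case and its measure-theoretic scaffolding: proving $|\lambda A+(1-\lambda)B|\ge\lambda|A|+(1-\lambda)|B|$ for measurable $A,B\subseteq\R$ (reduce to compact sets by inner regularity, translate so that $\lambda A$ and $(1-\lambda)B$ meet in a single point, and note their union sits inside the sumset), and verifying measurability of the superlevel sets and the profile functions so that Fubini and the layer-cake formula apply. Once the one-dimensional inequality is in hand, the rest --- the induction and the three slicing deductions --- is routine bookkeeping.
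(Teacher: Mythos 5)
Your proof is correct and follows the route the paper itself indicates: the paper gives no proof of this lemma, but remarks that it ``follows from'' the Pr\'ekopa--Leindler inequality, which it then states. You carry out exactly that reduction --- marginals and convolutions via Pr\'ekopa--Leindler, with products, minima, and affine images handled directly --- and in addition supply the standard inductive proof of Pr\'ekopa--Leindler itself, which the paper leaves as a citation.
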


Unlike convex sets, the set of logconcave distributions is closed
under convolution. This is one of many reasons we work with this more
general class of functions. The proof of this follows from an analog
of the Brunn-Minkowski theorem for functions called the Prékopa-Leindler
inequality: Let $\lambda\in[0,1]$, and three bounded functions $f,g,h:\R^{n}\rightarrow\R_{+}$
satisfy $f(\lambda x+(1-\lambda)y)\ge g(x)^{\lambda}h(y)^{1-\lambda}$
for any $x,y\in\R^{n}$. Then, 
\[
\int_{\R^{n}}f\ge\left(\int_{\R^{n}}g\right)^{\lambda}\left(\int_{\R^{n}}h\right)^{1-\lambda}.
\]

\begin{figure}
\centering
\begin{tikzpicture}[y=0.80pt, x=0.80pt, yscale=-0.500000, xscale=0.500000, inner sep=0pt, outer sep=0pt] \path[draw=black,line join=miter,line cap=butt,miter limit=4.00,even odd   rule,line width=1.000pt] (73.4680,441.2501) -- (72.3420,440.3243) --   (34.0574,459.7645) .. controls (21.6712,503.2734) and (20.5452,502.3477) ..   (20.5452,502.3477) -- (61.0818,520.8621) -- (87.5433,478.7418); \path[draw=black,line join=miter,line cap=butt,miter limit=4.00,even odd   rule,line width=1.000pt] (87.5433,478.7418) -- (72.3420,440.3243) --   (72.3420,440.3243); \path[draw=black,line join=miter,line cap=butt,miter limit=4.00,even odd   rule,line width=1.000pt] (72.3420,440.3243) -- (202.9601,408.8498) --   (234.4886,443.1015); \path[draw=black,line join=miter,line cap=butt,miter limit=4.00,even odd   rule,line width=1.000pt] (202.9601,408.8498) .. controls (161.2975,462.5417)   and (161.2975,462.5417) .. (161.2975,462.5417); \path[draw=black,line join=miter,line cap=butt,miter limit=4.00,even odd   rule,line width=1.000pt] (161.8245,461.8129) -- (195.6354,501.4791) --   (234.4886,443.1015); \path[draw=black,line join=miter,line cap=butt,miter limit=4.00,even odd   rule,line width=1.000pt] (202.9601,408.8498) -- (304.3018,444.9530) --   (284.5965,489.3876); \path[draw=black,line join=miter,line cap=butt,miter limit=4.00,even odd   rule,line width=1.000pt] (304.7164,445.5690) -- (305.9908,470.8732); \path[draw=black,line join=miter,line cap=butt,miter limit=4.00,even odd   rule,line width=1.000pt] (285.1595,489.3876) -- (305.9908,470.8732); \path[draw=black,line join=miter,line cap=butt,miter limit=4.00,even odd   rule,line width=1.000pt] (61.0818,520.8621) -- (284.5965,489.3876); \path[draw=black,dash pattern=on 4.08pt off 4.08pt,line join=miter,line   cap=butt,miter limit=4.00,even odd rule,line width=1.000pt] (20.5452,502.3477)   .. controls (162.4235,460.6902) and (162.4235,461.6159) .. (162.4235,461.6159)   -- (227.7325,474.5760) -- (285.1595,489.3876); \path[draw=black,line join=miter,line cap=butt,miter limit=4.00,even odd   rule,line width=1.000pt] (87.5433,478.7418) -- (234.4886,443.1015) --   (305.9908,470.8732); \path[cm={{0.98776,-0.15598,0.03615,0.99935,(0.0,0.0)}},draw=black,line   cap=butt,miter limit=4.00,nonzero rule,line width=1.000pt] (442.7515,536.5011)   ellipse (0.4042cm and 0.9496cm); \path[cm={{0.96214,-0.27254,0.04782,0.99886,(0.0,0.0)}},draw=black,line   cap=butt,miter limit=4.00,nonzero rule,line width=1.000pt] (624.7074,638.6868)   ellipse (0.5872cm and 1.3652cm); \path[cm={{0.98525,-0.1711,0.03288,0.99946,(0.0,0.0)}},draw=black,line   cap=butt,miter limit=4.00,nonzero rule,line width=1.000pt] (712.9545,589.9141)   ellipse (0.3593cm and 0.6593cm); \path[draw=black,line join=miter,line cap=butt,even odd rule,line width=1.000pt]   (454.4216,433.5888) .. controls (531.5548,407.0616) and (597.9580,399.8869) ..   (724.1319,444.2449); \path[draw=black,line join=miter,line cap=butt,even odd rule,line width=1.000pt]   (724.5652,490.4161) .. controls (574.6907,554.2867) and (454.7745,500.2673) ..   (454.7745,500.2673); \path[draw=c063cb8,line join=miter,line cap=butt,miter limit=4.00,draw   opacity=0.761,even odd rule,line width=1.000pt] (457.8599,471.2212) ..   controls (720.2251,471.2212) and (720.2251,471.2212) .. (720.2251,471.2212); \begin{scope}[cm={{0.00882,-0.67689,0.90317,0.00661,(-235.29315,707.10301)}}]   \path[draw=black,line join=miter,line cap=butt,even odd rule,line width=1.000pt]     (355.3734,627.2486) -- (355.3734,708.9435);   \path[draw=black,line join=miter,line cap=butt,even odd rule,line width=1.000pt]     (395.5272,677.3068) -- (354.6012,709.1438);   \path[draw=black,line join=miter,line cap=butt,even odd rule,line width=1.000pt]     (314.1952,676.6060) -- (355.1212,708.4430); \end{scope}
\end{tikzpicture}
\caption*{Brunn-Minkowski applied to convex bodies: the radius function is concave}
\end{figure}

\subsection{The original motivation: A sampling algorithm}

The study of algorithms has rich connections to high-dimensional convex
geometry. It was the study of an algorithm for sampling that led to
the KLS conjecture.

\subsubsection{Model of computation}

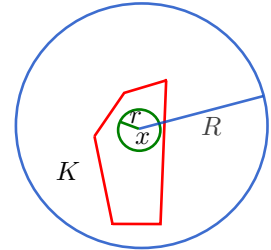
\begin{wrapfigure}{r}{0.25\textwidth}
\centering
\begin{tikzpicture}[y=0.80pt, x=0.80pt, yscale=-0.250000, xscale=0.250000, inner sep=0pt, outer sep=0pt]   \path[draw=c063cb8,line cap=butt,miter limit=4.00,draw opacity=0.761,nonzero     rule,line width=1.000pt] (383.3333,438.4733) ellipse (6.6897cm and 6.5068cm);   \path[draw=c008000,line cap=butt,miter limit=4.00,nonzero rule,line     width=1.000pt] (378.7037,446.8067) ellipse (1.1237cm and 1.0975cm);   \path[draw=cff0000,line join=miter,line cap=butt,miter limit=4.00,even odd     rule,line width=1.000pt] (294.0000,458.3622) -- (350.2317,376.3788) ..     controls (431.7737,352.3586) and (429.6279,352.3586) .. (429.6279,352.3586) --     (418.8987,624.5880) -- (328.0000,624.3622);   \path[draw=cff0000,line join=miter,line cap=butt,miter limit=4.00,even odd     rule,line width=1.000pt] (294.0000,458.3622) .. controls (328.6491,626.5891)     and (328.0000,624.3622) .. (328.0000,624.3622);   \path[draw=c063cb8,line join=miter,line cap=butt,miter limit=4.00,draw     opacity=0.761,even odd rule,line width=1.000pt] (378.0000,444.3622) --     (613.4085,382.3622);   \path[draw=c008000,fill=c008000,line join=miter,line cap=butt,miter     limit=4.00,even odd rule,line width=1.000pt] (342.9577,431.2354) --     (378.0000,444.3622);   \path[fill=black,line join=miter,line cap=butt,line width=1.000pt]     (220.9718,541.0946) node[above right] (text5403) {$K$};   \path[xscale=1.012,yscale=0.988,fill=c063cb8,line join=miter,line cap=butt,fill     opacity=0.761,line width=1.000pt] (490.1948,460.6454) node[above right]     (text5407) {$R$};   \path[fill=black,line join=miter,line cap=butt,line width=1.000pt]     (373.2394,455.8833) node[above right] (text5411) {};   \path[fill=black,line join=miter,line cap=butt,line width=1.000pt]     (384.3478,455.8405) node[above right] (text4764) {};   \path[xscale=1.044,yscale=0.957,fill=c008000,line join=miter,line cap=butt,miter     limit=4.00,line width=1.000pt] (355.3322,495) node[above right]     (text4768) {$x$};   \path[fill=black,line join=miter,line cap=butt,miter limit=4.00,line     width=1.000pt] (360.2473,434.0002) node[above right] (text4772) {$r$};
\end{tikzpicture}
\caption*{Well-guaranteed oracle}
\end{wrapfigure}

For algorithmic problems such as sampling, optimization and integration,
the following general model of computation is standard. Convex bodies
and logconcave functions are presented by \emph{well-guaranteed} oracles.
For a convex body $K\subset\R^{n}$, a well-guaranteed membership
oracle is given by numbers $R\ge r>0$, a point $x_{0}\in K$ with
the guarantee that $x_{0}+rB_{n}\subseteq K\subseteq RB_{n}$ and
an oracle that answers YES or NO to a query of the form ``$x\in K$?''
for any $x\in\R^{n}$. Another oracle of interest is a well-guaranteed
\emph{separation} oracle; it is given by the same parameters $r,R$
(but no starting point in the set), and a stronger oracle: for a query
``$x\in K$?'', the oracle either answers YES or answers NO and
provides a hyperplane that separates $x$ from $K$. For an integrable
function $f:\R^{n}\rightarrow\R_{+}$, a well-guaranteed function
oracle is given by a bound on the norm of its center of gravity, upper
and lower bounds on the eigenvalues of the covariance matrix with
density proportional to $f$ and an oracle that returns $f(x)$ for
any $x\in\R^{n}$. The complexity of an algorithm is measured first
in terms of the number of calls to the oracle and second in terms
of the total number of arithmetic operations performed. An algorithm
is considered efficient (and a problem \emph{tractable}) if its complexity
is bounded by a fixed polynomial in the dimension and other input
parameters, which typically include an error parameter and a probability
of failure parameter. We use $\widetilde{O}(\cdot)$ to suppress logarithmic
factors in the leading expression and $O^{*}(\cdot)$ to suppress
logarithmic factors as well as dependence on error parameters. 

\subsubsection{Sampling with a Markov chain}

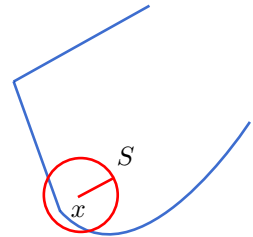
\begin{wrapfigure}{r}{0.25\textwidth}
\centering
\begin{tikzpicture}[y=0.80pt, x=0.80pt, yscale=-0.6, xscale=0.6, inner sep=0pt, outer sep=0pt]   \path[draw=c063cb8,line join=miter,line cap=butt,miter limit=4.00,draw     opacity=0.761,even odd rule,line width=1.000pt] (171.4626,414.0940) --     (278.5744,354.2415);   \path[draw=c063cb8,line join=miter,line cap=butt,miter limit=4.00,draw     opacity=0.761,even odd rule,line width=1.000pt] (171.5008,413.9242) --     (208.2029,516.3927);   \path[draw=c063cb8,line join=miter,line cap=butt,miter limit=4.00,draw     opacity=0.761,even odd rule,line width=1.000pt] (207.9736,515.9397) ..     controls (269.2629,583.3207) and (357.6946,445.8634) .. (357.6946,445.8634);   \path[draw=cff0000,line cap=butt,miter limit=4.00,fill opacity=0.761,nonzero     rule,line width=1.000pt] (224.4444,503.5104) circle (0.8184cm);   \path[draw=cff0000,fill=cff0000,line join=miter,line cap=butt,miter     limit=4.00,even odd rule,line width=1.000pt] (222.2222,505.1400) .. controls     (250.0000,490.3252) and (250.0000,490.3252) .. (250.0000,490.3252);   \path[fill=black,line join=miter,line cap=butt,line width=1.000pt]     (252.7778,480.1029) node[above right] (text4702) {$S$};   \path[fill=black,line join=miter,line cap=butt,line width=1.000pt]     (216.6667,520.8437) node[above right] (text4706) {$x$};
\end{tikzpicture}
\caption*{Ball walk}
\end{wrapfigure}

An important problem in the theory of algorithms is efficiently sampling
high-dimensional sets and distributions. As we will presently see,
sampling is closely related to an even more basic and ancient problem:
estimating the volume (or integral). 

Algorithms for sampling are based on Markov chains whose stationary
distribution is the target distribution for the sampling problem.
One such method is the \emph{ball walk} \cite{L90} for sampling from
the uniform distribution over a convex body (compact convex set),
a particular discretization of Brownian motion. Start with some point
in the body. At a current point $x$, 
\begin{enumerate}
\item Pick a random point $y$ in the ball of a fixed radius $\delta$ centered
at $x$. 
\item If $y$ is in the body, go to $y$, otherwise stay at $x$. 
\end{enumerate}
This process converges to the uniform distribution over any compact,
``well-connected'' domain. But how efficient is it? To answer this
question, we have to study its rate of convergence. This is a subject
on its own with several general tools. 

One way to bound the rate of convergence is via the smallest non-zero
eigenvalue of the transition operator. If this eigenvalue is $\lambda$,
then an appropriate notion of distance from the current distribution
to the stationary distribution decreases by a multiplicative factor
of $\left(1-\lambda\right)^{t}$ after $t$ steps of a discrete-time
Markov chain. Thus, the larger the spectral gap, the more rapid the
convergence. 
\begin{itemize}
\item But how to bound the spectral gap? There are many methods. In the
context of high-dimensional continuous distributions, where the state
space is not finite, one useful method is the notion of \emph{conductance}.
This can be viewed as the isoperimetry or expansion of the state space
under the transition operator. More precisely, for a Markov chain
defined by a state space $\Omega$ with transition operation $p(x\rightarrow y$)
and stationary distribution $Q$, the conductance of a measurable
subset is
\end{itemize}
\[
\phi(S)\defeq\frac{\int_{y\notin S}\int_{x\in S}P(x\rightarrow y)Q(x)dxdy}{\min\left\{ Q(S),Q(\Omega\setminus S)\right\} }
\]
and the conductance of the entire Markov chain is 
\[
\phi\defeq\inf_{S\subset\Omega}\phi(S).
\]
The following general theorem, due to Jerrum and Sinclair \cite{SJ89}
was extended to the continuous setting by Lovász and Simonivits \cite{LS93}.
\begin{thm}[\cite{LS93}]
\label{thm:cheeger}Let $Q_{t}$ be the distribution of the current
point after $t$ steps of a Markov chain with stationary distribution
$Q$ and conductance at least $\phi,$ starting from initial distribution
$Q_{0}.$ Then, with $M=\sup_{A}\frac{Q_{0}(A)}{Q(A)}$,
\[
d_{TV}(Q_{t},Q)\le\sqrt{M}\left(1-\frac{\phi^{2}}{2}\right)^{t}
\]
where $d_{TV}(Q_{t},Q)$ is the total variation distance between $Q_{t}$
and $Q$.
\end{thm}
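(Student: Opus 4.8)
The plan is to compress the whole distribution $Q_{t}$ into a single concave scalar function and track how conductance pushes that function toward the identity — this is the potential-function strategy of Lov\'asz and Simonovits. (As is standard for the chains in question I will assume the chain is lazy and reversible with respect to $Q$; laziness can be enforced by flipping a coin each step between staying and moving, which changes $\phi$ only by a constant factor.) \emph{Step 1 (the potential).} For $x\in[0,1]$ set $g_{t}(x)=\sup\{Q_{t}(A):A\text{ measurable},\,Q(A)=x\}$. Since the extremal $A$ is a super-level set of the density ratio $h_{t}=dQ_{t}/dQ$, one has $g_{t}(x)=\int_{0}^{x}h_{t}^{\downarrow}$ for the decreasing rearrangement $h_{t}^{\downarrow}$ of $h_{t}$; hence $g_{t}$ is concave on $[0,1]$ with $g_{t}(0)=0$, $g_{t}(1)=1$, and $g_{t}(x)\ge x$. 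Moreover $d_{TV}(Q_{t},Q)=\sup_{A}(Q_{t}(A)-Q(A))=\max_{x\in[0,1]}(g_{t}(x)-x)$, so it suffices to bound $g_{t}(x)-x$ uniformly in $x$.

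\emph{Step 2 (the one-step recursion, which I expect to be the main obstacle).} The claim is that, writing $\phi_{x}=\phi\min(x,1-x)$,
\[
g_{t+1}(x)\le\tfrac{1}{2}g_{t}(x-2\phi_{x})+\tfrac{1}{2}g_{t}(x+2\phi_{x}).
\]
Fix a near-extremal $A$ with $Q(A)=x\le\tfrac12$. Reversibility gives $Q_{t+1}(A)=\int (P\mathbf{1}_{A})(y)\,h_{t}(y)\,Q(dy)=\int P(y\to A)\,h_{t}(y)\,Q(dy)$, and laziness gives $P(y\to A)\ge\tfrac12$ on $A$ and $\le\tfrac12$ on $A^{c}$; subtracting the constant $\tfrac12$ and using $\int h_{t}\,dQ=1$,
\[
Q_{t+1}(A)=\tfrac{1}{2}+\int_{A}\big(P(y\to A)-\tfrac12\big)h_{t}\,dQ-\int_{A^{c}}\big(\tfrac12-P(y\to A)\big)h_{t}\,dQ.
\]
Both correction integrals are governed by the cross-flow $\int_{A}P(y\to A^{c})\,Q(dy)=\int_{A^{c}}P(y\to A)\,Q(dy)\ge\phi\,Q(A)=\phi_{x}$, where the equality is reversibility and the lower bound is the conductance hypothesis. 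Converting this flow bound into the two-point inequality — moving at most $2\phi_{x}$ units of $Q$-mass out of and into $A$, and using concavity of $g_{t}$ to dominate the resulting weighted averages of $h_{t}$ by the values $g_{t}(x\pm2\phi_{x})$ — is the delicate set-surgery step, and this is where I expect the real work to lie.

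\emph{Step 3 (induction with a test function).} Let $v(x)=\sqrt{\min(x,1-x)}$. From $g_{0}(x)\le\min(Mx,1)$ a short case analysis gives $g_{0}(x)-x\le\sqrt{M}\,v(x)$ for all $x$. The averaging operator of Step 2 contracts $v$: for $x\le\tfrac12$, using $\sqrt{1-a}+\sqrt{1+a}\le2-a^{2}/4$ with $a=2\phi$ together with the fact that replacing $x+2\phi_{x}$ by $\min(x+2\phi_{x},1-(x+2\phi_{x}))$ only decreases $v$,
\[
\tfrac{1}{2}v(x-2\phi_{x})+\tfrac{1}{2}v(x+2\phi_{x})\le\tfrac{\sqrt{x}}{2}\big(\sqrt{1-2\phi}+\sqrt{1+2\phi}\big)\le\Big(1-\tfrac{\phi^{2}}{2}\Big)v(x).
\]
Combining this with Step 2 and the concavity of $g_{t}$, induction on $t$ yields $g_{t}(x)-x\le(1-\phi^{2}/2)^{t}\sqrt{M}\,v(x)$; since $\max_{x}v(x)=v(\tfrac12)\le1$, taking the supremum over $x$ gives $d_{TV}(Q_{t},Q)\le\sqrt{M}(1-\phi^{2}/2)^{t}$.

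Finally, a shorter but less self-contained route worth noting (for reversible lazy chains): $d_{TV}(Q_{t},Q)=\tfrac12\|h_{t}-1\|_{L^{1}(Q)}\le\tfrac12\|h_{t}-1\|_{L^{2}(Q)}$, and since $h_{t}-1=P^{t}(h_{0}-1)$ with $h_{0}-1$ orthogonal to constants, $\|h_{t}-1\|_{L^{2}(Q)}\le(1-\gamma)^{t}\|h_{0}-1\|_{L^{2}(Q)}$ where $\gamma$ is the spectral gap; the discrete Cheeger inequality $\gamma\ge\phi^{2}/2$ and $\|h_{0}-1\|_{L^{2}(Q)}^{2}=\int h_{0}^{2}\,dQ-1\le M-1$ then finish it. I would nonetheless present the direct argument above, since it avoids presupposing Cheeger's inequality and is the version that adapts to later refinements.
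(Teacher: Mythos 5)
The paper does not prove this theorem; it is cited directly from [LS93] without argument, so there is no in-paper proof to compare against. Your reconstruction follows the Lov\'asz--Simonovits potential-function strategy from that source, and Steps 1 and 3 are sound: the concavity and extremality of $g_t$, the identity $d_{TV}(Q_t,Q)=\max_x(g_t(x)-x)$, the base case $g_0(x)-x\le\sqrt{M}\,v(x)$ (via $g_0\le\min(Mx,1)$ and a case split at $x=1/M$), and the contraction estimate $\tfrac12(\sqrt{1-2\phi}+\sqrt{1+2\phi})\le 1-\phi^2/2$ all check out.

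The genuine gap is Step 2, and you are right to flag it yourself, but it should be stressed that it is not a routine ``delicate'' detail — it is the entire technical content of the theorem. The one-step inequality $g_{t+1}(x)\le\tfrac12 g_t(x-2\phi_x)+\tfrac12 g_t(x+2\phi_x)$ does not follow from the flow bound by a naive Jensen argument: writing $Q_{t+1}(A)=\int_0^1 Q_t(\{\psi>s\})\,ds\le\int_0^1 g_t\bigl(Q(\{\psi>s\})\bigr)\,ds$ with $\psi=P(\cdot\to A)$ and splitting at $s=\tfrac12$, Jensen applied to the concave hill $G_t=g_t-\mathrm{id}$ yields $\tfrac12 G_t(x+\alpha)+\tfrac12 G_t(x-\beta)$ with $\alpha,\beta\ge 2\phi_x$, but $G_t$ vanishes at both endpoints and is not monotone, so one cannot simply replace $\alpha,\beta$ by $2\phi_x$. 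Lov\'asz and Simonovits close this by exploiting that the extremal $h_t$ is a two-level step function, so that $g_t$ is piecewise linear, and by comparing against the concave test function directly rather than against $g_t$ itself. Until that lemma is actually established, the proposal is an outline of the proof, not a proof. Two smaller points: laziness is indispensable (a period-two chain has $\phi=1$ but never mixes), so the hypothesis should be stated with the theorem rather than smuggled in mid-proof; and the $L^2$/Cheeger shortcut at the end is a legitimate alternative for reversible lazy chains, but it outsources the hard work to establishing Cheeger's inequality for continuous state spaces rather than avoiding it.
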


The mixing time of a Markov chain is related to its conductance via
the following fundamental inequality.

\begin{equation}
\frac{1}{\phi}\lesssim\tau\lesssim\frac{1}{\phi^{2}}\log M\label{eq:JS}
\end{equation}

The conductance of the ball walk can be lower bounded by the product
of two parameters. The first parameter is a mix of probability and
geometry and asks for the minimum distance such that two points at
this distance will have some constant overlap in their next-step distributions.
The second is a purely geometry question, it is exactly the Cheeger
constant of the stationary distribution. It turns out the first parameter
can be estimated (for a precise statement, see Theorem \ref{thm:KLS-mixing}),
thereby reducing the problem of bounding the conductance of the Markov
chain to bounding the Cheeger constant of the stationary distribution.
This motivated the conjecture that the Cheeger constant for any isotropic
logconcave density is in fact at least a universal constant, independent
of the density and the dimension. If true, it would imply a bound
of $O^{*}(n^{2})$ on the mixing time of the ball walk from a warm
start in an isotropic convex body, which is the best possible bound
(it is tight for the isotropic hypercube).

\section{Connections }

Here we discuss a wide array of implications of the conjecture, in
geometry, probability and algorithms. For further details, we refer
the reader to recent books on the topic \cite{brazitikos2014geometry,alonso2015approaching,ArtsteinGM2015}. 

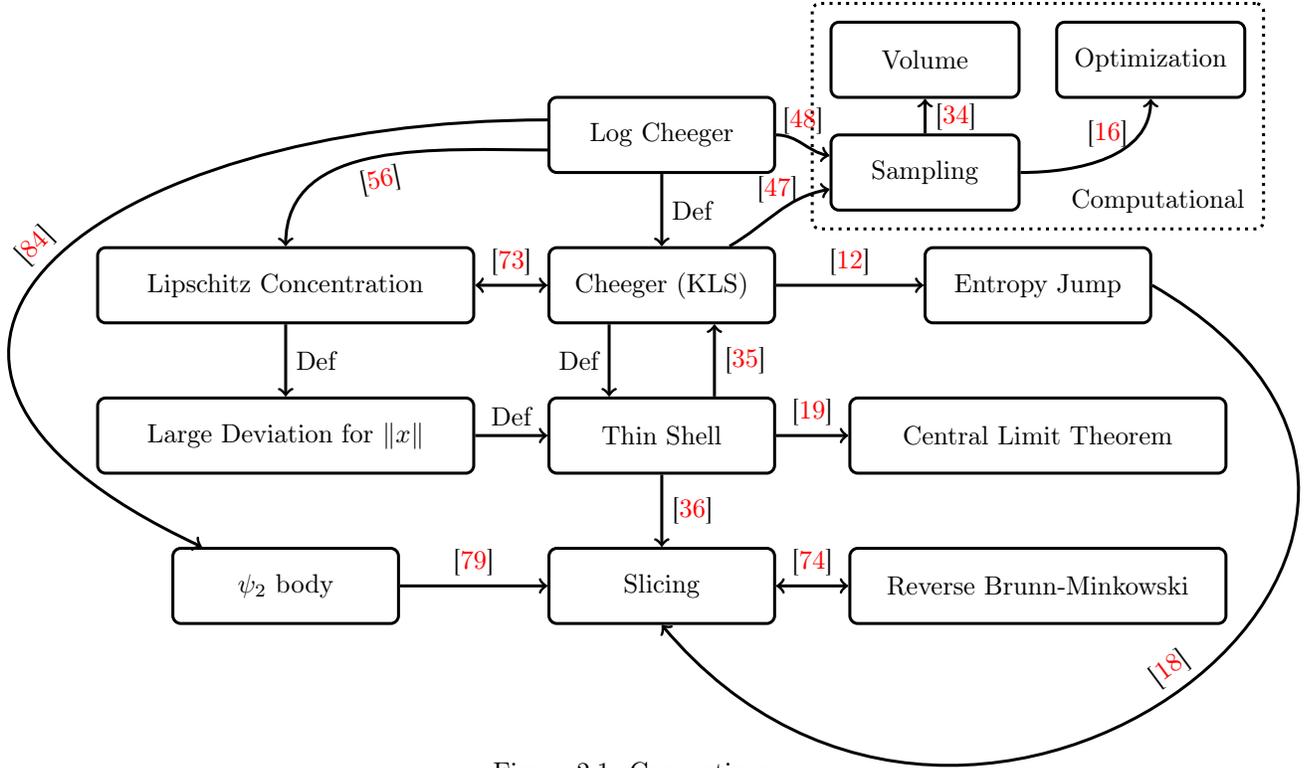
\begin{figure}
\hspace*{-3cm}
\begin{tikzpicture}
[thinbox/.style={rectangle,draw=black,line width=0.4mm,minimum width=3cm,minimum height=1cm,rounded corners=3pt}, 
widebox/.style={rectangle,draw=black,line width=0.4mm,minimum width=5cm,minimum height=1cm,rounded corners=3pt},
dottedbox/.style={rectangle,dotted,draw=black,line width=0.4mm,minimum width=6cm,minimum height=3cm,rounded corners=3pt,align=right,text width=5.5cm,text height=2.5cm}, 
tinybox/.style={rectangle,draw=black,line width=0.4mm,minimum width=2.5cm,minimum height=1cm,rounded corners=3pt}
]

\node at (0,0) (slicing) [thinbox] {Slicing};
\node at (0,2) (thinshell) [thinbox] {Thin Shell};
\node at (0,4) (kls) [thinbox] {Cheeger (KLS)};
\node at (0,6) (logch) [thinbox] {Log Cheeger};
\node at (5,6.25) [dottedbox] {Computational};
\node at (3.5,5.5) (sampling) [tinybox] {Sampling};
\node at (3.5,7) (volume) [tinybox] {Volume};
\node at (6.5,7) (optimization) [tinybox] {Optimization};

\draw [->,line width=0.4mm] (thinshell) -- node[right] {\cite{EldanK2011}} (slicing);
\draw [->,line width=0.4mm] ([xshift=-0.7cm]kls.south) -- node[left] {Def} ([xshift=-0.7cm]thinshell.north);
\draw [<-,line width=0.4mm] ([xshift=0.7cm]kls.south) -- node[right] {\cite{Eldan2013}} ([xshift=0.7cm]thinshell.north);
\draw [->,line width=0.4mm] (logch) -- node[right] {Def} (kls);

\draw [->,line width=0.4mm] (logch) to[out=0,in=170] node[above] {\cite{KannanLM06}} (sampling);
\draw [->,line width=0.4mm] (kls) to[out=30,in=190] node[above] {\cite{KLS97}} (sampling);
\draw [->,line width=0.4mm] (sampling) -- node[right] {\cite{DyerFK89}} (volume);
\draw [->,line width=0.4mm] (sampling) to[out=0,in=-90] node[above] {\cite{BV04}} (optimization);

\node at (-5,0) (psi) [thinbox] {$\psi_2$ body};
\node at (-5,2) (deviation) [widebox] {Large Deviation for $\|x\|$}; \node at (-5,4) (lip) [widebox] {Lipschitz Concentration};
\node at (5,0) (brunn) [widebox] {Reverse Brunn-Minkowski};
\node at (5,2) (clt) [widebox] {Central Limit Theorem};
\node at (5,4) (jump) [thinbox] {Entropy Jump};

\draw [->,line width=0.4mm] (psi) -- node[above] {\cite{paouris2012small}} (slicing);
\draw [->,line width=0.4mm] (deviation) -- node[above] {Def} (thinshell);
\draw [<->,line width=0.4mm]  (lip) -- node[above]{\cite{Milman2009}} (kls) ;
\draw [->,line width=0.4mm] (lip) -- node[right] {Def} (deviation);

\draw [->,line width=0.4mm] (kls) -- node[above] {\cite{ball2012entropy}} (jump);
\draw [->,line width=0.4mm] (thinshell) -- node[above] {\cite{bobkov2003concentration}} (clt);
\draw [<->,line width=0.4mm]  (slicing) -- node[above] {\cite{milman1989isotropic}}  (brunn) ;
\draw [->,line width=0.4mm]  (jump.east) to[out=-30,in=-50,distance=6.5cm]  node[above,sloped] {\cite{bobkov2010entropy}}  (slicing.south) ;
\draw [->,line width=0.4mm]  ([yshift=-0.2cm]logch.west) to[out=180,in=90]  node[below,sloped] {\cite{ledoux1999concentration}}  (lip) ;
\draw [->,line width=0.4mm]  ([yshift=0.2cm]logch.west) to[out=180,in=155,distance=6cm]  node[above,sloped] {\cite{stavrakakis2013geometry}}  (psi) ; 

\end{tikzpicture}
\vspace*{-4cm}

\caption{Connections}

\end{figure}

\subsection{Geometry and Probability }

The KLS conjecture implies the slicing conjecture and the thin-shell
conjecture. Each of these has powerful and surprising consequences.
We discuss them in order of the strength of the conjectures \textemdash{}
slicing, thin-shell, KLS.

\subsubsection{Slicing to anti-concentration}

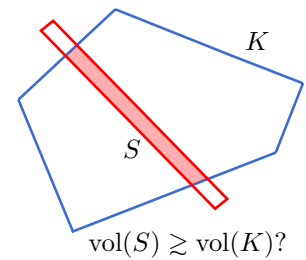
\begin{wrapfigure}{r}{0.25\textwidth}
\centering
\begin{tikzpicture}[y=0.80pt, x=0.80pt, yscale=-0.3000000, xscale=0.3000000, inner sep=0pt, outer sep=0pt]   \path[draw=c063cb8,line join=miter,line cap=butt,miter limit=4.00,draw     opacity=0.761,even odd rule,line width=1.000pt] (89.3829,473.8448) --     (175.3188,681.7452) -- (495.0000,557.3622);   \path[draw=c063cb8,line join=miter,line cap=butt,miter limit=4.00,draw     opacity=0.761,even odd rule,line width=1.000pt] (495.0000,557.3622) --     (537.9999,447.9389);   \path[draw=c063cb8,line join=miter,line cap=butt,miter limit=4.00,draw     opacity=0.761,even odd rule,line width=1.000pt] (89.3829,473.8448) --     (242.2085,331.3622);   \path[draw=c063cb8,line join=miter,line cap=butt,miter limit=4.00,draw     opacity=0.761,even odd rule,line width=1.000pt] (242.2085,331.3622) ..     controls (533.8042,447.9601) and (537.9999,447.9389) .. (537.9999,447.9389);   \path[cm={{0.7791,-0.6269,0.69866,0.71545,(0.0,0.0)}},draw=cff0000,line     cap=butt,miter limit=4.00,fill opacity=0.761,nonzero rule,line     width=1.000pt,rounded corners=0.0000cm] (-166.3427,364.3955) rectangle     (-141.6752,757.1052);   \path[fill=black,line join=miter,line cap=butt,line width=1.000pt]     (200.0334,723.5007) node[above right] (text5305) {$\mathrm{vol}(S) \apprge \mathrm{vol}(K)$?};   \path[fill=black,line join=miter,line cap=butt,line width=1.000pt]     (446.5659,395.5517) node[above right] (text4142) {$K$};   \path[fill=cff0000,fill opacity=0.316] (265.2715,507.4381) .. controls     (211.4740,452.3657) and (166.9055,406.6835) .. (166.2304,405.9221) --     (165.0030,404.5378) -- (173.2521,396.8505) .. controls (177.7892,392.6224) and     (181.5887,389.2011) .. (181.6955,389.2475) .. controls (182.0090,389.3836) and     (386.3381,598.6835) .. (386.2313,598.7591) .. controls (385.9773,598.9388) and     (363.5273,607.5756) .. (363.3213,607.5729) .. controls (363.1915,607.5709) and     (319.0691,562.5105) .. (265.2715,507.4381) -- cycle;   \path[fill=black,line join=miter,line cap=butt,line width=1.000pt]     (254.6037,566.1661) node[above right] (text4142-9) {$S$};
\end{tikzpicture}
\caption*{Slicing Conjecture}
\end{wrapfigure}

The slicing conjecture (a.k.a. the hyperplane conjecture) is one of
the main open questions in convex geometry. It is implied by KLS conjecture.
Ball first showed that a positive resolution of the KLS conjecture
implies the slicing conjecture \cite{Ball2006lecture}. Eldan and
Klartag \cite{EldanK2011} later gave a more refined quantitative
relation (Theorem \ref{thm:Eldan-Klartag}).

The conjecture says that any convex body in $\R^{n}$ of unit volume
has a hyperplane section whose $(n-1)$-dimensional volume is at least
a universal constant. Ball \cite{ball1988logarithmically} gave the
following equivalent conjecture for logconcave distributions.
\begin{conjecture}[Slicing conjecture \cite{Bourgain1986,ball1988logarithmically}]
For any isotropic logconcave density $p$ in $\R^{n}$, the isotropic
(slicing) constant $L_{p}\defeq p(0)^{\frac{1}{n}}$ is $O(1)$.
\end{conjecture}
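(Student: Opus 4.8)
The plan is to bound $L_{p}$ by the \emph{thin-shell constant} and then to prove the thin-shell conjecture itself by stochastic localization. For an isotropic logconcave $\mu$ on $\R^{n}$ set $\sigma(\mu)^{2}\defeq\tfrac1n\Var_{X\sim\mu}\!\big(\norm{X}^{2}\big)$, and let $\sigma_{n}$ be the supremum of $\sigma(\mu)$ over all such $\mu$ in all dimensions $\le n$. By the Eldan--Klartag reduction (Theorem~\ref{thm:Eldan-Klartag}) one has $L_{p}\lesssim\sup_{k\le n}\sigma_{k}$, so it suffices to prove $\sigma_{n}=O(1)$ uniformly in $n$, i.e.\ the thin-shell conjecture. (Equivalently, proving the KLS bound $\psi_{p}\gtrsim1$ would also suffice, via Eldan's thin-shell $\Rightarrow$ KLS implication \cite{Eldan2013} together with KLS $\Rightarrow$ thin-shell $\Rightarrow$ slicing; I take the thin-shell bound as the target.)

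To control $\sigma_{n}$, fix isotropic logconcave $\mu$ and run Eldan's stochastic localization: a martingale family $(\mu_{t})_{t\ge0}$ of logconcave measures with $\mu_{0}=\mu$ and $\E\,\mu_{t}=\mu$ for every $t$, obtained by tilting $\mu$ by $\exp(\langle\theta_{t},x\rangle-\tfrac t2\norm{x}^{2})$ along an It\^o diffusion with $d\theta_{t}=dW_{t}$. Writing $a_{t}=\E_{\mu_{t}}X$ and $A_{t}=\cov(\mu_{t})$, It\^o's formula gives $da_{t}=A_{t}\,dW_{t}$ and a covariance evolution
\[
dA_{t}\;=\;\Big(\textstyle\int(x-a_{t})(x-a_{t})^{\top}\,\langle x-a_{t},\,dW_{t}\rangle\,d\mu_{t}(x)\Big)\;-\;A_{t}^{2}\,dt,
\]
whose drift $-A_{t}^{2}\,dt$ is a contraction and whose martingale part must be tamed. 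Monitor $\Phi_{t}=\tr\!\big(A_{t}^{2q}\big)^{1/2q}$ with $q\asymp\log n$ (so $\Phi_{t}\asymp\norm{A_{t}}_{\op}$); expanding $d\Phi_{t}$ via the identity above and bounding the third-moment tensor of $\mu_{t}$ in terms of $\norm{A_{t}}_{\op}$ and a running estimate $\kappa_{\ast}$ of the worst Poincar\'e constant over dimensions $\le n$ yields a self-referential differential inequality of the shape $\frac{d}{dt}\E\Phi_{t}\lesssim g(\Phi_{t},\kappa_{\ast})\,\E\Phi_{t}$, whose solution keeps $\norm{A_{t}}_{\op}\le2$ with high probability up to some time $t_{\ast}$.

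The argument then closes by transfer. At any time $t$, $\mu=\int\mu_{t}\,d\P$, and each $\mu_{t}$ is $t$-strongly logconcave, hence --- once its covariance is $O(1)$, which the bootstrap guarantees at $t\asymp t_{\ast}$ --- has thin-shell $O(1/\sqrt{t_{\ast}})$ by Brascamp--Lieb. Accounting for the spread of the barycenters $a_{t}$, itself $O\!\big(\int_{0}^{t_{\ast}}\norm{A_{s}}_{\op}\,ds\big)=O(t_{\ast})$, and renormalizing, one obtains a self-improving recursion of the schematic form $\sigma_{n}^{2}\lesssim(1-\delta)\,\sigma_{n}^{2}+O(1/t_{\ast})$, hence $\sigma_{n}^{2}\lesssim 1/t_{\ast}$. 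If $t_{\ast}\gtrsim1$ this gives $\sigma_{n}=O(1)$ and, by Theorem~\ref{thm:Eldan-Klartag}, $L_{p}=O(1)$.

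The crux --- and the reason the conjecture is still open --- is establishing $t_{\ast}\gtrsim1$: that $\norm{A_{t}}_{\op}$ does not blow up before a \emph{constant} time. The martingale term in $dA_{t}$ couples all $n$ eigendirections, and the crude estimate pays a $\mathrm{polylog}(n)$ factor (a $\log n$ from working with $\tr(A_{t}^{2q})$ at $q\sim\log n$, compounded over the $\sim\log n$ bootstrap rounds needed to refine $\kappa_{\ast}$). Removing this loss --- e.g.\ via a matrix-valued Gr\"onwall bound that exploits the negative drift $-A_{t}^{2}\,dt$ together with the \emph{true} correlation structure of the third-moment tensor of a logconcave measure, rather than its worst-case norm --- is the main obstacle. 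All known partial progress stops exactly here: the current stochastic-localization analyses yield only $\psi_{p}^{-1},\ \sigma_{n},\ L_{p}\lesssim\mathrm{polylog}(n)$, while the classical volume-ratio route (Bourgain, refined to the exponent $1/4$) gives only the unconditional bound $L_{p}\lesssim n^{1/4}$ as a consistency check.
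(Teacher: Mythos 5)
The statement you were given is not a theorem of the paper but the slicing conjecture itself, which the survey presents as an open problem; the paper contains no proof of it, and you do not supply one either. To your credit you say so explicitly: your entire argument is conditional on establishing $t_{\ast}\gtrsim1$, i.e.\ that stochastic localization keeps $\norm{A_{t}}_{\op}=O(1)$ up to a constant time, and that step \emph{is} the KLS/thin-shell conjecture in disguise. Everything before that point is a correct account of known reductions (Theorem \ref{thm:Eldan-Klartag} gives $L_{p}\lesssim\sup_{p}\sigma_{p}$; Eldan's Theorem \ref{thm:eldan} relates thin-shell and KLS up to a $\log$ factor; the martingale property $\E\,p_{t}=p$ and the covariance evolution are as in the paper's Section on stochastic localization), and everything after it is a conditional deduction from the unproven hypothesis. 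So the verdict is: genuine gap, and the gap is precisely the open conjecture.

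To be concrete about why the missing step does not follow from what you wrote: the bound on the third-moment tensor of $p_{t}$ that feeds your differential inequality for $\Phi_{t}=\tr(A_{t}^{2q})^{1/2q}$ is only known in terms of worst-case quantities (logconcave moment bounds as in Lemma \ref{lem:lcmom}, or a running Poincar\'e estimate $\kappa_{\ast}$), and with those inputs the bootstrap closes only with polynomial or polylogarithmic losses. This is exactly what the paper's Theorem \ref{thm:n14} achieves unconditionally, namely $\psi_{p}\gtrsim(\tr A^{2})^{-1/4}\ge n^{-1/4}$, hence $\sigma_{p}\lesssim n^{1/4}$ and $L_{p}\lesssim n^{1/4}$ --- not $O(1)$. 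Your proposed ``self-improving recursion'' $\sigma_{n}^{2}\lesssim(1-\delta)\sigma_{n}^{2}+O(1/t_{\ast})$ also requires an a priori finite bound and a contraction factor uniformly below $1$, which is precisely where all known analyses lose their dimension-dependent factor. A candid description of a strategy and of the obstacle is valuable as exposition, but it is not a proof of the statement.
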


Geometrically, this conjecture says that an isotropic logconcave distribution
cannot have much mass around the origin. The best known bound is $L_{p}\lesssim n^{\frac{1}{4}}$
\cite{Klartag2006,Bourgain1986}.

Paouris showed that if the slicing conjecture is true, then a logconcave
distribution satisfies a strong anti-concentration property (small
ball probability). 
\begin{thm}[Small ball probability \cite{paouris2012small}]
If slicing conjecture is true, for any isotropic logconcave density
$p$ in $\R^{n}$, we have

\[
\P_{x\sim p}\left(\norm x\leq t\sqrt{n}\right)=O(t)^{n}
\]
for all $0\leq t\leq c$ for some universal constant $c$. 
\end{thm}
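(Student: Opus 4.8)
The plan is to estimate the small-ball probability directly: bound the integral of $p$ over the ball $t\sqrt n B_{n}$ by the volume of that ball times the supremum of the density, and then control each of the two factors separately. Concretely, I would start from
\[
\P_{x\sim p}\left(\norm x\le t\sqrt n\right)=\int_{t\sqrt n B_{n}}p(x)\,dx\le\vol\left(t\sqrt n B_{n}\right)\cdot\norm p_{\infty}.
\]
For the volume factor, Stirling's formula gives $\vol(B_{n})^{1/n}\le c_{0}/\sqrt n$ for an absolute constant $c_{0}$, so $\vol(t\sqrt n B_{n})^{1/n}=t\sqrt n\cdot\vol(B_{n})^{1/n}\le c_{0}t$. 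For the density factor, since $p$ is logconcave with barycenter at the origin (isotropy), Fradelizi's inequality — which bounds the maximum of a logconcave function by $e^{n}$ times its value at the barycenter — gives $\norm p_{\infty}\le e^{n}p(0)=\left(e\,L_{p}\right)^{n}$, where $L_{p}=p(0)^{1/n}$ is the isotropic (slicing) constant.

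Multiplying the two bounds yields
\[
\P_{x\sim p}\left(\norm x\le t\sqrt n\right)\le\left(c_{0}\,e\,L_{p}\,t\right)^{n}.
\]
At this point I would invoke the hypothesis: assuming the slicing conjecture, $L_{p}\le L$ for a universal constant $L$, and the right-hand side becomes $\left(c_{0}\,eL\,t\right)^{n}=O(t)^{n}$. Choosing $c=1/(c_{0}eL)$ makes the estimate non-trivial (strictly below $1$) for all $0\le t\le c$, which is exactly the claimed range.

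I do not expect a genuine obstacle in this conditional form: the argument is elementary given two standard facts — Stirling's estimate for $\vol(B_{n})$ and Fradelizi's bound relating $\norm p_{\infty}$ to $p(0)$ — so the only nontrivial ingredient is the hypothesis $L_{p}=O(1)$, which is precisely the slicing conjecture. The real difficulty is thus entirely packed into that hypothesis; the unconditional small-ball estimates of Paouris, which avoid the slicing constant, instead pass through the $L_{q}$-centroid bodies of $p$ and are substantially more involved.
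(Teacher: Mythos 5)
Your argument is correct and is the standard way to prove the conditional small-ball estimate. Since the paper states this result as a survey citation to Paouris without proof, there is no paper proof to compare against, but the chain you give — bound the ball probability by $\vol(t\sqrt{n}B_{n})\cdot\norm{p}_{\infty}$, use Stirling to get $\vol(B_{n})^{1/n}\lesssim 1/\sqrt{n}$, use Fradelizi's barycenter inequality $\norm{p}_{\infty}\le e^{n}p(0)$ (valid here since isotropy places the barycenter at the origin), and then invoke $L_{p}=O(1)$ from the slicing hypothesis — is precisely the expected elementary argument. The only minor point worth making explicit is that $p(0)$ must be interpreted via the upper semi-continuous (or essential supremum) representative of the density so that $p(0)^{1/n}=L_{p}$ is the usual isotropic constant, but this is a routine convention for logconcave functions and causes no trouble.
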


Paouris also showed that the inequality holds unconditionally with
exponent $O(\sqrt{n})$ \cite{paouris2012small}.

Another nice application of anti-concentration is to lower bound the
entropy of a distribution.
\begin{thm}[Entropy of logconcave distribution \cite{bobkov2010entropy}]
If the slicing conjecture is true, for any isotropic logconcave density
$p$ in $\R^{n}$, we have

\[
-O(n)\leq\E_{x\sim p}\log\frac{1}{p(x)}\leq O(n).
\]
\end{thm}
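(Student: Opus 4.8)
The plan is to recognize $\E_{x\sim p}\log\frac{1}{p(x)}$ as the differential entropy $h(p)\defeq-\int p\log p$, to show that for \emph{every} logconcave density $h$ lies within $n$ of $-\log\norm p_{\infty}$, and then to pin down $\norm p_{\infty}$ using isotropy together with the slicing conjecture. Only the lower bound $h(p)\ge-O(n)$ will actually invoke the conjecture; the upper bound $h(p)\le O(n)$ is unconditional, being immediate from the fact that the standard Gaussian maximizes differential entropy among isotropic densities, so that $h(p)\le h(N(0,I))=\frac{n}{2}\log(2\pi e)$.

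First I would establish a translation-invariant fact about an arbitrary logconcave density $q=e^{-V}$ on $\Rn$, namely
\[
-\log\norm q_{\infty}\ \le\ h(q)\ \le\ -\log\norm q_{\infty}+n .
\]
After translating $q$ so that its mode sits at the origin --- which changes neither side --- set $m=\min V=-\log\norm q_{\infty}$ and $W=V-m\ge0$, a convex function with $W(0)=0$, so that $h(q)=\int qV=m+\E_{x\sim q}W(x)$. Since $W\ge0$, the lower bound is immediate. For the upper bound, the subgradient inequality for $W$ at $x$ evaluated at the origin gives $W(x)\le\langle\nabla W(x),x\rangle$ almost everywhere; integrating this against $e^{-W}$ and integrating by parts --- the boundary terms vanish by logconcave decay, and $\int\langle\nabla W,x\rangle e^{-W}=\int(\operatorname{div}x)\,e^{-W}=n\int e^{-W}$ --- yields $\E_{x\sim q}W(x)\le n$. (Non-smooth $V$ is handled by the usual mollification; this is a standard estimate for logconcave functions.)

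Applying this with $q=p$, and using that isotropy places the barycenter of $p$ at the origin so that Fradelizi's inequality (a logconcave density is at least $e^{-n}$ times its maximum at the barycenter) gives $L_p^{n}=p(0)\le\norm p_{\infty}\le e^{n}p(0)=e^{n}L_p^{n}$, I would obtain
\[
-n\log L_p-n\ \le\ h(p)\ \le\ -n\log L_p+n .
\]
Everything then reduces to the slicing constant. Unconditionally $L_p\gtrsim1$, which together with the Gaussian bound already gives $h(p)\le O(n)$; under the slicing conjecture $L_p=O(1)$, hence $\lvert n\log L_p\rvert=O(n)$ and the display also produces $h(p)\ge-O(n)$, finishing the proof. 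The one genuinely hard ingredient lies outside this argument --- controlling $L_p$ unconditionally, where the current record $L_p\lesssim n^{1/4}$ only yields $h(p)\ge-O(n\log n)$ without the conjecture.
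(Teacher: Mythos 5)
Your proof is correct, and it reconstructs the standard argument from the cited Bobkov--Madiman paper; the survey states this theorem with a citation and does not supply its own proof, so there is no in-paper argument to compare against. The skeleton you use is exactly the right one: the unconditional sandwich $-\log\norm q_{\infty}\le h(q)\le-\log\norm q_{\infty}+n$ for any logconcave $q$ (your derivation via the subgradient inequality $W(x)\le\langle\nabla W(x),x\rangle$ at the mode, followed by integration by parts to get $\E_{q}W\le n$, is sound, with the non-smooth case handled by mollification as you note); Fradelizi's inequality $p(0)\le\norm p_{\infty}\le e^{n}p(0)$ to swap $\norm p_{\infty}$ for $L_p^n$; and finally the two-sided control of $L_p$ --- the unconditional lower bound $L_p\gtrsim1$ for the upper bound on entropy, and the slicing conjecture $L_p\lesssim1$ for the lower bound on entropy. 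Your observation that only the lower bound on $h(p)$ genuinely needs the slicing conjecture, and that the upper bound is free (either via the Gaussian maximum-entropy argument or via $L_p\gtrsim1$), is a clean and correct reading of the statement. The closing remark that the current bound $L_p\lesssim n^{1/4}$ only gives $h(p)\ge-O(n\log n)$ unconditionally is also accurate and puts the conditional nature of the theorem in the right perspective.
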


The Brunn-Minkowski inequality is not tight when applied to convex
bodies that are large in different directions (e.g., $K=\{\varepsilon x^{2}+\varepsilon^{-1}y^{2}\leq1\}$
and $T=\{\varepsilon^{-1}x^{2}+\varepsilon y^{2}\leq1\}$ with tiny
$\varepsilon$). The anti-concentration aspect of slicing also shows
the following reverse inequality.
\begin{thm}[Reverse Brunn-Minkowski Inequalities \cite{milman1989isotropic}]
If slicing conjecture is true, then for any isotropic convex sets
$K$ and $T$, we have
\[
\text{vol}(K+T)^{1/n}\leq O(1)\left(\text{vol}(K)^{1/n}+\text{vol}(T)^{1/n}\right).
\]
\end{thm}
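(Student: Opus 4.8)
The plan is to reduce the inequality to the single claim that $\vol(K+T)^{1/n}=O(1)$ and then establish that by a covering argument. For the reduction: if $K$ is an isotropic convex body then the uniform density $p_{K}=\mathbf 1_{K}/\vol(K)$ is isotropic and logconcave with $p_{K}(0)=1/\vol(K)$, so its slicing constant is $L_{p_{K}}=p_{K}(0)^{1/n}=\vol(K)^{-1/n}$. The slicing conjecture gives $L_{p_{K}}=O(1)$, hence $\vol(K)^{1/n}\gtrsim 1$, and likewise $\vol(T)^{1/n}\gtrsim 1$; thus the right-hand side of the desired inequality is $\Theta(1)$, and it suffices to prove $\vol(K+T)^{1/n}=O(1)$. (The matching upper bound $\vol(K)^{1/n}=O(1)$ holds unconditionally — by Fradelizi's comparison of the value of a logconcave density at its centroid with its maximum — but is not needed.)

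The heart of the argument is a covering estimate: every isotropic convex body $K$ in $\Rn$ satisfies $N(K,\,C\sqrt n\,B_{2}^{n})\le e^{Cn}$ for a universal $C$, where $N(\cdot,\cdot)$ denotes covering numbers by translates. This says that, up to exponentially many translates, an isotropic body looks like the Euclidean ball of volume radius $\sim\sqrt n$ — equivalently, that the isotropic position is an $M$-position whose $M$-ellipsoid is (a multiple $L_{K}$ of) the ball. It is here that slicing enters, in the guise of anti-concentration: the small-ball estimate $\P_{x\sim p_{K}}(\norm x\le t\sqrt n)\le (Ct)^{n}$ records that $K$ carries only an exponentially small share of its volume inside any small Euclidean ball, and together with the tail decay forced by logconcavity this is what makes $K$ efficiently coverable. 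Concretely one passes to the $L_{n}$-centroid body $Z_{n}(K)$, defined by $h_{Z_{n}(K)}(\theta)=(\vol(K)^{-1}\int_{K}|\langle x,\theta\rangle|^{n}\,dx)^{1/n}$: one has $K\subseteq cZ_{n}(K)$ (centroid at the origin) and $\vol(Z_{n}(K))^{1/n}\asymp\vol(K)^{1/n}$ (from $Z_{n}(K)\subseteq\mathrm{conv}(K\cup -K)\subseteq K-K$ and Rogers–Shephard), and Paouris' regular-covering estimates for the bodies $Z_{p}(K)$ give $N(Z_{n}(K),\,CL_{p_{K}}\sqrt n\,B_{2}^{n})\le e^{Cn}$; under slicing $L_{p_{K}}=O(1)$, which yields the claim for $K$ and likewise for $T$.

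Granting the covering estimate for $K$ and $T$, write $K\subseteq\bigcup_{i\le e^{Cn}}(x_{i}+C\sqrt n\,B_{2}^{n})$ and $T\subseteq\bigcup_{j\le e^{Cn}}(y_{j}+C\sqrt n\,B_{2}^{n})$; then $K+T\subseteq\bigcup_{i,j}(x_{i}+y_{j}+2C\sqrt n\,B_{2}^{n})$, so $\vol(K+T)\le e^{2Cn}\,\vol(2C\sqrt n\,B_{2}^{n})$. Since $\vol(C'\sqrt n\,B_{2}^{n})^{1/n}=C'\sqrt n\,\omega_{n}^{1/n}=\Theta(1)$ (as $\omega_{n}^{1/n}\asymp n^{-1/2}$), this gives $\vol(K+T)^{1/n}=O(1)$, and combined with the reduction we get $\vol(K+T)^{1/n}=O(1)\lesssim\vol(K)^{1/n}+\vol(T)^{1/n}$. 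The main obstacle is the covering estimate of the second paragraph; everything around it is soft. Proving it amounts to showing that slicing implies the $M$-position form of reverse Brunn–Minkowski in isotropic position, and it rests on the fine geometry of the centroid bodies $Z_{p}(K)$ (Lutwak–Yang–Zhang, Paouris) rather than on any elementary manipulation — the slicing hypothesis being used precisely to remove the factor $L_{K}$ that would otherwise enlarge the covering balls, and correspondingly appear as a multiplicative $L_{K}+L_{T}$ in front of the unconditional version of the inequality.
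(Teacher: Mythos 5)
The paper states this result with a citation to Milman's 1989 paper but gives no proof, so there is no in-house argument to compare against; on its own merits your outline is correct and is in fact the standard route. The reduction is right: in the identity-covariance normalization used here, $L_{p_K}=\vol(K)^{-1/n}$, so slicing gives $\vol(K)^{1/n},\vol(T)^{1/n}\gtrsim 1$, and then the whole content of the theorem collapses to showing $\vol(K+T)^{1/n}=O(1)$ (the universal lower bound $L_K\gtrsim1$, hence $\vol(K)^{1/n}\lesssim1$, makes both sides $\Theta(1)$, but you correctly note this is unused — and the parenthetical attribution to Fradelizi is a little off for indicator densities, where the bound is just $L_K\geq c$, though this is harmless since you set it aside). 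The covering step $N(K,C\sqrt n\,B_2^n)\le e^{Cn}$, which under slicing amounts to ``isotropic position is an $M$-position,'' is exactly the crux, and you are right that this is where all the real work and the slicing hypothesis live; you pin it appropriately to the $Z_p$-body machinery of Lutwak--Yang--Zhang and Paouris (with $K\subseteq cZ_n(K)$, the Rogers--Shephard comparison $\vol(Z_n(K))^{1/n}\asymp\vol(K)^{1/n}$, and Paouris' regular covering of $Z_n(K)$ at scale $L_K\sqrt n$), rather than pretending it is elementary. Given the covering estimate, your concluding computation — covering $K+T$ by $e^{2Cn}$ translates of $2C\sqrt n\,B_2^n$ and using $\omega_n^{1/n}\asymp n^{-1/2}$ — is clean and correct. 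So this is a sound sketch: the only caveat a reader should carry away is that the covering lemma is a genuine theorem being invoked, not derived, which you flag explicitly.
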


We remark that all these consequences of the slicing conjecture are
in fact equivalent to the conjecture itself \cite{dafnis2010small,bobkov2010entropy,bourgain2004symmetrization}. 

\subsubsection{Thin shell to central limit theorem}

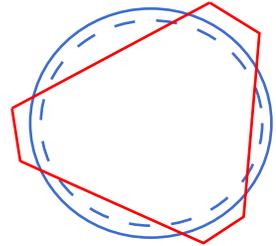
\begin{wrapfigure}{r}{0.25\textwidth}
\centering
\begin{tikzpicture}[y=0.80pt, x=0.80pt, yscale=-0.250000, xscale=0.250000, inner sep=0pt, outer sep=0pt]   \path[draw=c063cb8,dash pattern=on 8.00pt off 8.00pt,line cap=butt,miter     limit=4.00,draw opacity=0.761,nonzero rule,line width=1.000pt]     (355.9839,452.3622) ellipse (5.8864cm and 5.4603cm);   \path[draw=c063cb8,line cap=butt,miter limit=4.00,draw opacity=0.762,nonzero     rule,line width=1.000pt] (354.5455,452.3622) ellipse (6.4141cm and 6.0934cm);   \path[draw=cff0000,line join=miter,line cap=butt,miter limit=4.00,even odd     rule,line width=1.000pt] (92.0000,424.3622) -- (466.0000,224.3622);   \path[draw=cff0000,line join=miter,line cap=butt,miter limit=4.00,even odd     rule,line width=1.000pt] (466.0000,224.3622) -- (560.0000,282.3622) ..     controls (530.4225,628.8411) and (530.0000,629.8270) .. (530.0000,629.8270) ..     controls (455.3521,679.1228) and (454.0000,679.3622) .. (454.0000,679.3622) --     (108.0000,524.3622);   \path[draw=cff0000,line join=miter,line cap=butt,miter limit=4.00,even odd     rule,line width=1.000pt] (92.0000,424.3622) .. controls (107.4930,525.7706)     and (108.0000,524.3622) .. (108.0000,524.3622);
\end{tikzpicture}
\caption*{Thin Shell conjecture}
\end{wrapfigure}

The Central Limit Theorem says that a random marginal of a hypercube
is approximately Gaussian. Brehm and Voigt asked if the same is true
for convex sets. Anttila, Ball and Perissinaki \cite{Anttila2003}
observed that this is true for any distribution on the sphere. Therefore,
this also holds for any distributions with the norm $\norm x$ concentrated
at some value. Here, we state the version by Bobkov that holds for
any distribution.
\begin{thm}[Central Limit Theorem \cite{bobkov2003concentration}]
Let $\mu$ be an isotropic probability on $\Rn$, which might not
be logconcave. Assume that 
\begin{equation}
\mu\left(\left|\frac{\norm x_{2}}{\sqrt{n}}-1\right|\geq\varepsilon\right)\leq\varepsilon\label{eq:thin_shell_weak}
\end{equation}
for some $0<\varepsilon<1/3$. Let $g_{\theta}(s)=\mu(\{x^{\top}\theta=s\})$
and $g(s)=\frac{1}{\sqrt{2\pi}}\exp(-\frac{s^{2}}{2})$. Then, for
every $\delta>0$, we have that 
\[
\P\left(\left\{ \theta\in S^{n-1}:\text{ }\max_{t\in\R}\left|\int_{-\infty}^{t}g_{\theta}(s)ds-\int_{-\infty}^{t}g(s)ds\right|\geq2\delta+\frac{6}{\sqrt{n}}+4\varepsilon\right\} \right)\le c_{1}\delta^{-\frac{3}{2}}\exp(-c_{2}\delta^{4}n)
\]
for some universal constants $c_{1},c_{2}>0$.
\end{thm}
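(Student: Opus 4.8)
The plan is to show that, for all but an exponentially small fraction of directions $\theta\in S^{n-1}$, the distribution function of the one-dimensional marginal $\langle x,\theta\rangle$ under $\mu$ is uniformly close to that of $N(0,1)$. The argument rests on three ingredients: (i) the thin-shell hypothesis (\ref{eq:thin_shell_weak}) together with the Maxwell--Poincar\'e lemma forces the \emph{average} over $\theta$ of the marginal to be close to Gaussian; (ii) after a mild Gaussian smoothing, the marginal's distribution function is a Lipschitz function of $\theta$ with a \emph{dimension-free} Lipschitz constant, so L\'evy's concentration of measure on the sphere pins it to that average; and (iii) a standard smoothing (Esseen--Feller type) inequality lets one pass from the smoothed distribution function back to $g_\theta$ itself. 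The smoothing width and the target accuracy are both chosen of order $\delta$, which is exactly what produces the factor $\delta^4 n$ in the exponent.

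Concretely, write $F_\theta(t)=\int_{-\infty}^t g_\theta(s)\,ds=\mu(\{x:\langle x,\theta\rangle\le t\})$, fix a width $\sigma\asymp\delta$, let $\gamma_\sigma$ be the $N(0,\sigma^2)$ density and $F_\theta^\sigma=F_\theta*\gamma_\sigma$, so that $F_\theta^\sigma(t)=\E_{x}\Phi\big((t-\langle x,\theta\rangle)/\sigma\big)=\P_{x\sim\mu,\,g\sim N(0,1)}(\langle x,\theta\rangle+\sigma g\le t)$; let $\Phi_s$ denote the distribution function of $N(0,s)$, so $\Phi_1=\Phi$ and $\Phi*\gamma_\sigma=\Phi_{1+\sigma^2}$. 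Beyond (\ref{eq:thin_shell_weak}) I would only use two consequences of isotropy. The first is $\E_x|\langle x,v\rangle|\le\sqrt{\E_x\langle x,v\rangle^2}=\|v\|$; combined with $\sup|\Phi'|=1/\sqrt{2\pi}$ this gives $|F_\theta^\sigma(t)-F_{\theta'}^\sigma(t)|\le\|\theta-\theta'\|/(\sigma\sqrt{2\pi})$, so $\theta\mapsto F_\theta^\sigma(t)$ is $O(1/\sigma)$-Lipschitz on $S^{n-1}$ for every fixed $t$ --- crucially with no factor $\sqrt n$. The second is $\P_x(|\langle x,\theta\rangle|>R)\le R^{-2}$, which lets me restrict attention to $t$ in a window of length $O(\sqrt{\log(1/\delta)})$. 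For the average, $\E_\theta F_\theta^\sigma(t)=\E_x\big[\P_{\theta,g}(\|x\|\,Y_1+\sigma g\le t)\big]$, where $Y_1$ is the first coordinate of a uniform point on $S^{n-1}$ (for fixed $x$ the law of $\langle x,\theta\rangle$ equals that of $\|x\|\,Y_1$). By the Maxwell--Poincar\'e lemma $\sqrt n\,Y_1$ is within $O(1/\sqrt n)$ of $N(0,1)$ in Kolmogorov distance, and convolving with the independent $\sigma g$ only contracts that distance; hence $\P_{\theta,g}(\|x\|\,Y_1+\sigma g\le t)$ is within $O(1/\sqrt n)$ of $\Phi_{\|x\|^2/n+\sigma^2}(t)$, uniformly in $t$. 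On the event $\big|\,\|x\|/\sqrt n-1\,\big|<\varepsilon$ --- of $\mu$-measure at least $1-\varepsilon$ by (\ref{eq:thin_shell_weak}) --- the variance $\|x\|^2/n$ lies in $[(1-\varepsilon)^2,(1+\varepsilon)^2]$, so $\Phi_{\|x\|^2/n+\sigma^2}$ is within $O(\varepsilon)$ of $\Phi_{1+\sigma^2}=\Phi*\gamma_\sigma$ (every normal density in play is uniformly bounded, the variances being bounded below); off that event a trivial $[0,1]$ bound costs another $\varepsilon$. Therefore $\sup_t|\E_\theta F_\theta^\sigma(t)-(\Phi*\gamma_\sigma)(t)|\lesssim\varepsilon+1/\sqrt n$, while $\sup_t|(\Phi*\gamma_\sigma)(t)-\Phi(t)|=O(\sigma^2)$.

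Now L\'evy's lemma for the uniform measure on $S^{n-1}$ (deviations measured from the mean) gives, for each fixed $t$, $\P_\theta\big(|F_\theta^\sigma(t)-\E_\theta F_\theta^\sigma(t)|>\delta\big)\le c_1\exp(-c_2\,n\sigma^2\delta^2)$. Union-bounding over a sufficiently fine net of the bounded $t$-window turns this into an exponentially small bound on the measure of the ``bad'' $\theta$, with only a polynomial-in-$1/\delta$ prefactor; this step is legitimate because $F_\theta^\sigma$ is itself $O(1/\sigma)$-Lipschitz in $t$ (so a spacing $\asymp\sigma\delta$ between net points suffices), and because monotonicity of $F_\theta^\sigma$ together with its values at the two endpoints of the window controls $F_\theta^\sigma-\Phi*\gamma_\sigma$ outside the window. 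For any $\theta$ that is not bad, combining the previous paragraph's estimates gives $\sup_t|F_\theta^\sigma(t)-(\Phi*\gamma_\sigma)(t)|\le\delta+O(\varepsilon)+O(1/\sqrt n)$; finally the smoothing inequality $\|F-\Phi\|_\infty\lesssim\|F*\gamma_\sigma-\Phi*\gamma_\sigma\|_\infty+\sigma\sup|\Phi'|$ --- valid because $\Phi$ has a bounded density --- converts this into $\max_t|F_\theta(t)-\Phi(t)|\le 2\delta+O(1/\sqrt n)+O(\varepsilon)$, and tracking the numerical constants while setting $\sigma\asymp\delta$ yields the stated bound $2\delta+6/\sqrt n+4\varepsilon$.

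The hard part is obtaining a conclusion uniform in $t$ while keeping the exceptional probability exponentially small in $n$. Pointwise in $t$, L\'evy's lemma is immediate, but the union bound over $t$ is affordable only because the Gaussian-smoothed distribution function is $O(1/\sigma)$-Lipschitz in $t$ and because monotonicity confines the relevant $t$ to a bounded window. Balancing the smoothing error ($O(\sigma)$), the concentration radius ($\delta$), and the spherical exponent ($n\sigma^2\delta^2$) forces $\sigma\asymp\delta$, which is precisely what gives $\exp(-c_2\delta^4 n)$. A secondary point to monitor is that every comparison of two normal laws with nearby variances stays non-degenerate --- automatic here, since each variance appearing is at least $\min(\sigma^2,(1-\varepsilon)^2)$ --- and that the constants in the Maxwell--Poincar\'e estimate and in L\'evy's lemma be tracked carefully to recover the explicit $6/\sqrt n$ and the precise polynomial prefactor rather than a generic $O(1/\sqrt n)$.
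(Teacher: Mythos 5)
The paper does not supply a proof of this theorem --- it is quoted from Bobkov's 2003 paper --- so there is no in-paper argument to check you against; what follows is an assessment of your proposal on its own terms and against the standard proof.

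Your sketch is, in substance, the right argument and the same one that Bobkov uses: smooth the conditional distribution function $F_\theta$ so that $\theta\mapsto F_\theta^\sigma(t)$ becomes \emph{dimension-free} $O(1/\sigma)$-Lipschitz (the key observation, using only $\E_x\left|\langle x,v\rangle\right|\le\norm v$ from isotropy); identify the $\theta$-average via the Maxwell--Poincar\'e lemma and the thin-shell hypothesis; apply L\'evy concentration on $S^{n-1}$ pointwise in $t$; pay a union bound over a net; and then un-smooth. The choice $\sigma\asymp\delta$ and the $n\sigma^2\delta^2$ exponent in L\'evy's lemma do indeed conspire to give $\exp(-c\,\delta^4 n)$, which is the crux, and the $\Theta(1/\sqrt n)$ term from Maxwell--Poincar\'e and the $\Theta(\varepsilon)$ cost from the thin-shell condition are correctly accounted for. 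So the architecture is sound and matches the cited source.

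Two pieces of bookkeeping are glossed over and would not, as written, deliver the exact numerical form $c_1\delta^{-3/2}\exp(-c_2\delta^4n)$ or the precise additive error $2\delta+6/\sqrt n+4\varepsilon$. First, the de-smoothing inequality you quote, $\norm{F-\Phi}_\infty\lesssim\norm{F*\gamma_\sigma-\Phi*\gamma_\sigma}_\infty+\sigma\sup|\Phi'|$, is not quite what the Gaussian kernel gives: because $\gamma_\sigma$ has unbounded support, the elementary argument produces an error of order $\sigma\sqrt{\log(1/\sigma)}$ rather than $O(\sigma)$ (and also carries a constant strictly larger than $1$ in front of the smoothed supremum). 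Using a compactly supported kernel in place of $\gamma_\sigma$ removes the logarithm and gives a genuine $O(\sigma)$ un-smoothing loss with the same dimension-free $O(1/\sigma)$ Lipschitz constant in $\theta$, so this is fixable, but it is not automatic with the Gaussian kernel as stated. Second, your net over a window of length $O(\sqrt{\log(1/\delta)})$ with spacing $\asymp\sigma\delta$ has $\asymp\sqrt{\log(1/\delta)}/\delta^2$ points, giving a polynomial prefactor larger than $\delta^{-3/2}$. Neither issue is a conceptual gap --- both can be absorbed by renaming $\delta$ and by shrinking $c_2$ slightly, since the theorem only claims universal constants --- but your final sentence suggests that careful tracking of constants will recover the literal bound $2\delta+6/\sqrt n+4\varepsilon$ and the prefactor $\delta^{-3/2}$, and that step is not actually present in your outline. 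If you intend to match the stated constants exactly, you should either switch to a compactly supported smoothing kernel and redo the constant-tracking, or state the theorem with the constants your proof actually yields and note the equivalence up to rescaling.
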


So, it suffices to prove that $\norm x$ is concentrated near $\sqrt{n}$
for any isotropic logconcave distribution. In a seminal work, Klartag
proved that (\ref{eq:thin_shell_weak}) holds with $\varepsilon\lesssim\log^{-\frac{1}{2}}n$
\cite{Klartag2007}. Shortly after, Fleury, Guédon and Paouris gave
an alternative proof with $\varepsilon\lesssim\log^{-1/6}n\cdot(\log\log n)^{2}$
\cite{fleury2007stability}. It is natural to ask for the optimal
bound for (\ref{eq:thin_shell_weak}). If the KLS conjecture is true,
we can apply the conjecture for the sets $\{\norm x_{2}\leq r\}$
for different values of $r$ and get the following conjecture, suggested
in \cite{Anttila2003,Bobkov2003}. 
\begin{conjecture}[Thin shell conjecture]
For any isotropic logconcave density $p$ in $\R^{n}$, the thin
shell constant $\sigma_{p}\defeq\Var_{x\sim p}\norm X_{2}$ is $O(1)$.
\end{conjecture}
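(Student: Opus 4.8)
The plan is to deform $p$ smoothly into a point mass via Eldan's stochastic localization while tracking how fast the covariance is allowed to grow. I would run the measure-valued process $p_{0}=p$, $dp_{t}(x)=p_{t}(x)\,\langle x-a_{t},\,dW_{t}\rangle$, where $a_{t}=\E_{x\sim p_{t}}x$ is the barycenter and $W_{t}$ is a standard $n$-dimensional Brownian motion; equivalently $p_{t}(x)\propto\exp(\langle\theta_{t},x\rangle-\tfrac{t}{2}\norm x^{2})\,p(x)$ with $d\theta_{t}=dW_{t}+a_{t}\,dt$. Three features make this useful: (i) each $p_{t}$ is logconcave, being $p$ times a Gaussian factor; (ii) $\E\,p_{t}=p$, so $p$ is a mixture of the $p_{t}$; and (iii) for $t$ of order $1$ the typical $p_{t}$ is already ``well localized''. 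The first step, then, is to reduce the thin-shell estimate for $p$ to controlling $\norm{A_{t}}_{\op}$ for $A_{t}=\cov(p_{t})$ over $t\lesssim 1$: I would split $\Var\norm X$ (law of total variance along the filtration) into a fluctuation coming from the conditional means $a_{t}$, bounded by $\int_{0}^{c}\norm{A_{t}}_{\op}\,dt$ via It\^o, and a residual fluctuation inside a typical $p_{c}$, bounded by a thin-shell estimate in effectively smaller dimension, which feeds an induction.

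The heart of the matter is the It\^o evolution $dA_{t}=F_{t}\,dW_{t}-A_{t}^{2}\,dt$, where $F_{t}$ is the third-moment tensor $\E_{x\sim p_{t}}\!\big[(x-a_{t})(x-a_{t})^{\top}\otimes(x-a_{t})\big]$. The drift $-A_{t}^{2}\,dt$ only shrinks eigenvalues, so everything reduces to the martingale part. I would compute $d\,\tr(A_{t}^{q})$ for a large fixed power $q$ (a smooth surrogate for $\norm{A_{t}}_{\op}$), whose quadratic-variation term is a fourth moment of $p_{t}$ along the top eigendirections of $A_{t}$; by logconcavity of the marginals of $p_{t}$ this is bounded by a constant times the thin-shell (or $\psi_{2}$) parameter of those marginals, times $\norm{A_{t}}_{\op}^{2}\tr(A_{t}^{q})$-type quantities. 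Integrating the resulting differential inequality up to $t=c$ should show $\norm{A_{t}}_{\op}$ stays within a constant factor of its initial value on that interval, closing the recursion: with $\sigma_{n}$ the supremum of $\sqrt{\Var\norm X}$ over isotropic logconcave densities in $\R^{n}$, the argument yields a self-improving bound that I would iterate over $\Theta(\log n)$ scales.

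The step I expect to be the real obstacle is precisely this fourth-moment estimate — controlling the quadratic variation by $O(1)\cdot\norm{A_{t}}_{\op}^{2}$ rather than by $\mathrm{polylog}(n)\cdot\norm{A_{t}}_{\op}^{2}$. Every known execution of this plan loses logarithmic factors here: the naive bound on directional third/fourth moments of an arbitrary logconcave density is only $\sqrt{\log n}$-type, and the bootstrap between $\norm{A_{t}}_{\op}$ and the moment bound accumulates a logarithm per time-doubling. Removing the loss appears to require a genuinely sharper handle on the third-moment tensor of a logconcave measure — which is close to the conjecture itself at the previous scale — so a complete proof must break this circularity, perhaps by exhibiting a potential functional on logconcave measures that is monotone along stochastic localization and already encodes an $O(1)$ thin shell, or by an entirely non-localization argument (e.g.\ sharpening the KLS localization/needle technique). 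The remaining ingredients — handling the non-isotropy of the intermediate $p_{t}$, and the stopping-time bookkeeping (stop if $\norm{A_{t}}_{\op}$ doubles, and show that is improbable) — I expect to be routine once the moment bound is in place.
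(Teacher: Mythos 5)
The statement you were asked to prove is not a theorem of the paper but its eponymous \emph{Thin Shell Conjecture}, stated explicitly as open; there is no proof in the paper to compare against, and the paper itself records (Table \ref{tab:thinshell}) that the best bound known at the time of writing is $\sigma_p\lesssim n^{1/4}$, not $O(1)$. Your write-up is not a proof and, to your credit, does not pretend to be one: you describe the stochastic-localization program accurately and then correctly identify the exact place where every known execution of it is stuck.

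What you lay out is the framework of Definition \ref{def:A} and Lemma \ref{lem:def-pt}: with $C_t=I$ one has $dp_t(x)=(x-\mu_t)^{\top}dW_t\,p_t(x)$, the covariance evolves with a dissipative drift $-A_t^2\,dt$ and a martingale part driven by the third-moment tensor, and the game is to control $\norm{A_t}_{\op}$ via a potential such as $\tr(A_t^{q})$ (or the Stieltjes barrier of \cite{lee2017stochastic}). The paper's actual outputs along this route are Theorem \ref{thm:n14}, $\psi_p\gtrsim(\tr A^2)^{-1/4}$, and Eldan's reduction Theorem \ref{thm:eldan}, $\psi_p\gtrsim 1/(\sigma(n)\log n)$. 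Note that the latter goes in the \emph{reverse} direction from what you need: it derives a Cheeger lower bound assuming a thin-shell bound, whereas a proof of the conjecture must bound $\sigma(n)$ unconditionally.

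The obstruction you name is the genuine gap, and it is fatal to the plan as stated: you need a dimension-free control of the quadratic variation of $\tr(A_t^{q})$, equivalently of the third-moment tensor $\E_{x\sim p_t}(x-\mu_t)^{\otimes 3}$ of an arbitrary logconcave measure, and the only available bounds (Lemma \ref{lem:lcmom} and Lemma \ref{lem:key}, e.g.\ $\E_{x,y\sim p}\left|\langle x-\mu,y-\mu\rangle\right|^{3}\lesssim(\tr A^{2})^{3/2}$) are precisely what gives $n^{1/4}$ and no better. Your proposed bootstrap over $\Theta(\log n)$ scales is the circularity you yourself flag: improving the moment bound at one scale requires thin shell at the previous scale, and the scheme accumulates at least a $\log$ factor, as in Theorem \ref{thm:eldan}. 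So your ``proof'' terminates, honestly, at the state of the art rather than past it. A useful exercise with this machinery that is actually within reach is the converse inequality $\sigma_p\lesssim 1/\psi_p$ (called ``an exercise'' in the paper), or a full reproduction of the proof of Theorem \ref{thm:n14}; the full conjecture is not attainable by filling in the sketch you wrote.
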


The relation $\sigma_{p}\lesssim\psi_{p}^{-1}$ is an exercise. Eldan
and Klartag showed that the slicing constant is bounded by the thin-shell
constant (to within a universal constant).
\begin{thm}[\cite{EldanK2011}]
\label{thm:Eldan-Klartag} $L_{p}\lesssim\sup_{p}\sigma_{p}$ where
the maximization is over all isotropic logconcave distribution. 
\end{thm}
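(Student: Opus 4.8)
The plan is to pass to the \emph{peak density} $p(0)$, introduce the logarithmic Laplace transform, and reduce the theorem to a single volume inequality which is exactly where the thin-shell hypothesis is spent. Since an isotropic logconcave $p=e^{-V}$ on $\Rn$ has barycenter at the origin, Fradelizi's inequality gives $p(0)\le\|p\|_{\infty}\le e^{n}p(0)$, so $L_{p}=p(0)^{1/n}$ and $\|p\|_{\infty}^{1/n}$ agree up to the factor $e$ and it suffices to bound $p(0)^{1/n}$. Let $\Lambda(\theta)=\log\int_{\Rn}e^{\langle\theta,x\rangle}p(x)\,dx$: it is convex with $\Lambda(0)=0$, $\nabla\Lambda(0)=0$, $\nabla^{2}\Lambda(0)=I$ (the isotropy conditions), $\nabla^{2}\Lambda(\theta)=\cov(p_{\theta})$ is the covariance of the tilted logconcave density $p_{\theta}\propto p\,e^{\langle\theta,\cdot\rangle}$, and $\nabla\Lambda(\theta)$ its barycenter. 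Let $\Lambda^{*}$ be the Legendre transform (the Cram\'er rate function): convex, $\Lambda^{*}\ge0=\Lambda^{*}(0)$, $\nabla^{2}\Lambda^{*}(0)=I$, and let $K:=\{x\in\Rn:\Lambda^{*}(x)\le 2n\}$. Writing $\sigma:=\sup_{q}\sigma_{q}$ over all isotropic logconcave $q$, I would reduce the theorem to the two estimates $p(0)\,\vol(K)\le C^{n}$ and $\vol(K)^{1/n}\gtrsim\sigma^{-1}$, which together give $p(0)\le(C/c)^{n}\sigma^{n}$, i.e.\ $L_{p}\lesssim\sigma$.

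\textbf{The soft estimate.} The bound $p(0)\,\vol(K)\le C^{n}$ should hold unconditionally; morally it says the rate function $\Lambda^{*}$ is comparable to the potential $V-V(0)$ up to the $e^{O(n)}$ slack invisible to $n$-th roots. One route: testing the supremum defining $\Lambda^{*}(x)$ at the tilt $\theta=s\,\nabla V(x)$, with $s<1$ chosen just large enough to stay in the (convex) domain of $\Lambda$, gives a pointwise bound $\Lambda^{*}(x)\ge V(x)-V(0)-O\!\big(n\log(2+\norm x/\sqrt n)\big)$; another: apply Fradelizi's inequality to each $p_{\theta}$ at its barycenter $b_{\theta}=\nabla\Lambda(\theta)$, combined with the Fenchel identity $\Lambda(\theta)+\Lambda^{*}(b_{\theta})=\langle\theta,b_{\theta}\rangle$, to get $p(0)\le e^{n}\,p(b_{\theta})\,e^{\langle\theta,b_{\theta}\rangle}$ for every $\theta$, together with the universal lower bound $\|p_{\theta}\|_{\infty}\gtrsim c^{n}/\sqrt{\det\cov(p_{\theta})}$ on slicing constants. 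Either way, integrating the resulting pointwise bound over $K$ and using $\int e^{-V}=1$ with Paouris-type moment estimates $\E_{p}\norm X^{m}\le(C\sqrt n+Cm)^{m}$ to control the tails yields $\vol(K)\le C^{n}/p(0)$. This step uses nothing about thin shell.

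\textbf{The hard estimate, and the obstacle.} The lower bound $\vol(K)^{1/n}\gtrsim\sigma^{-1}$ is the crux, and this is where thin shell is essential. Since $\nu\propto e^{-\Lambda^{*}}$ is logconcave with $\E_{\nu}\Lambda^{*}\le n$, Markov gives $\nu(K)\ge\tfrac12$ and hence $\vol(K)\ge\tfrac12\int_{\Rn}e^{-\Lambda^{*}}$, so the estimate is equivalent to $\int_{\Rn}e^{-\Lambda^{*}}\gtrsim(c/\sigma)^{n}$; changing variables by $x=\nabla\Lambda(\theta)$, this is guaranteed once the covariances $\nabla^{2}\Lambda(\theta)=\cov(p_{\theta})$ of the exponential tilts do not degenerate, in the $\log\det$ sense, by more than a factor $\sigma^{-2n}$ over a ball of radius of order $\sqrt n/\sigma$ in $\theta$. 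To get such control from thin shell, decompose $\norm X^{2}=\langle\theta/\norm\theta,X\rangle^{2}+\norm{X_{\theta^{\perp}}}^{2}$: concentration of $\norm X$ near $\sqrt n$ bounds $\Var_{p}(\norm X^{2})\lesssim n\sigma^{2}$, hence bounds the covariance of $\norm X^{2}$ with any linear functional by $\sqrt n\,\sigma$, so a tilt by $\theta$ with $\norm\theta\lesssim\sqrt n/\sigma$ changes $\tr\cov(p_{\theta})=\E_{p_{\theta}}\norm{X-b_{\theta}}^{2}$ only by a lower-order amount; the thin-shell constants in dimensions below $n$ give the analogous control of the trace of $\cov(p_{\theta})$ inside every coordinate subspace. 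Upgrading this uniform control of \emph{traces} into the needed uniform control of the \emph{determinant} $\det\cov(p_{\theta})$ over the whole ball of radii $\lesssim\sqrt n/\sigma$ --- ruling out a ``hidden thin direction'' for any such tilt --- is the technical heart of the argument and the main obstacle; this is essentially the content of Eldan--Klartag, and the rest is bookkeeping. A parallel route avoids the Laplace transform: use Klartag's ``thin shell $\Rightarrow$ approximately Gaussian marginals'' strengthening of Bobkov's theorem to conclude that $\widehat p(\xi)=\E_{p}e^{i\langle\xi,X\rangle}$ is within the relevant error of $e^{-\norm\xi^{2}/2}$ on a large ball, and then bound $p(0)=(2\pi)^{-n}\int_{\Rn}\widehat p$ by Fourier inversion; there the obstacle migrates to controlling the large-$\norm\xi$ tail of $\widehat p$ and converting a ``most directions'' statement into an honest $L^{1}$ bound.
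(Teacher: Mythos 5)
The paper gives no proof of this theorem --- it is a survey that states the result and cites Eldan--Klartag \cite{EldanK2011} --- so there is nothing local to compare against. Your proposal does correctly reconstruct the skeleton of the actual Eldan--Klartag argument: pass to $p(0)^{1/n}$, introduce the logarithmic Laplace transform $\Lambda$ and the rate function $\Lambda^{*}$, take the sublevel set $K=\{\Lambda^{*}\le 2n\}$, and split into the ``soft'' unconditional comparison $p(0)\,\vol(K)\le C^{n}$ and the ``hard'' thin-shell-dependent lower bound $\vol(K)^{1/n}\gtrsim \sigma^{-1}$. That is the right framework, inherited from Klartag's $n^{1/4}$ slicing paper, and the reduction to uniform control of $\det\nabla^{2}\Lambda(\theta)=\det\cov(p_{\theta})$ over $\|\theta\|\lesssim\sqrt{n}/\sigma$ is also the right target.

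However, there is a genuine gap, and you name it yourself: you do not prove the crucial claim that thin shell controls $\det\cov(p_{\theta})$ uniformly over the required ball of tilts. The argument you sketch --- that thin shell bounds $\Var_{p}(\|X\|^{2})\lesssim n\sigma^{2}$, hence covariances of $\|X\|^{2}$ with linear functionals, hence the change in $\tr\cov(p_{\theta})$ under a tilt --- controls only \emph{traces} of the tilted covariances, and you explicitly flag that passing from trace control to determinant control (ruling out a hidden thin direction) is ``the technical heart'' and ``essentially the content of Eldan--Klartag.'' That is exactly the step that turns an outline into a proof; without it, the claim $\vol(K)^{1/n}\gtrsim\sigma^{-1}$ is unsupported. (In Eldan--Klartag this is done by a delicate argument, not mere bookkeeping: they need thin-shell information in \emph{all} lower-dimensional marginals of \emph{all} the tilted measures $p_{\theta}$, combined with a careful induction/transport to get the $\log\det$ bound, and the dimension of the marginals is itself an optimization parameter.) The ``soft'' estimate is also handwaved --- the $O(n\log(2+\|x\|/\sqrt n))$ error in your pointwise bound is only $O(n)$ on a bounded multiple of $\sqrt{n}\,B_{2}^{n}$, so you do need the Fenchel/Fradelizi route plus Paouris-type tails to make it airtight, and you should say which one you are actually running --- but that step is believable. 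The second route you float (Fourier inversion via approximately-Gaussian characteristic function) is not the Eldan--Klartag proof and has its own unaddressed issue (the $L^{1}$ control of $\widehat p$ away from the origin), so it doesn't rescue the argument either. In short: correct architecture, honest about what's missing, but the central lemma is asserted rather than proved.
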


Since Klartag's bound on $\sigma_{p}$, there has been much effort
to improve the bound (see Table \ref{tab:thinshell}). In a breakthrough,
Eldan \cite{Eldan2013} showed that the thin shell conjecture is in
fact equivalent to the KLS conjecture up a logarithmic factor (see
Theorem \ref{thm:eldan}).

\begin{table}[h]
\centering{}%
\begin{tabular}{|c|c|}
\hline 
Year/Authors  & $\sigma_{p}$\tabularnewline
\hline 
\hline 
2006/Klartag \cite{Klartag2007} & $\sqrt{n}/\sqrt{\log n}$\tabularnewline
\hline 
2006/Fleury-Guédon-Paouris \cite{fleury2007stability} & $\sqrt{n}\frac{(\log\log n)^{2}}{\log^{1/6}n}$\tabularnewline
\hline 
2006/Klartag \cite{Klartag2007b} & $n^{4/10}$\tabularnewline
\hline 
2010/Fleury \cite{Fleury2010} & $n^{3/8}$\tabularnewline
\hline 
2011/Guedon-Milman \cite{GuedonM11} & $n^{1/3}$\tabularnewline
\hline 
2016/Lee-Vempala \cite{LeeV17KLS} & $n^{1/4}$\tabularnewline
\hline 
\end{tabular}\caption{\label{tab:thinshell} Progress on the thin shell bound.}
\end{table}

\subsubsection{Isoperimetry to concentration}

After discussing two conjectures that are potentially weaker than
KLS conjecture, we now move to some implications of the KLS conjecture
itself. 

The Poincaré constant of a measure is the minimum possible ratio of
the expected squared gradient to the variance over smooth functions.
Applying Cheeger's inequality \cite{Cheeger69} this constant is at
least the square of the Cheeger constant. The reverse inequality holds
for logconcave measures and was proved by Buser \cite{buser1982note}
(Ledoux \cite{ledoux1994simple} gave another proof).
\begin{thm}[Poincaré inequality \cite{Mazja60,Cheeger69,buser1982note,ledoux1994simple}]
 For any isotropic logconcave density $p$ in $\R^{n}$, we have
that
\[
\zeta_{p}\defeq\inf_{\mathrm{smooth}\ g}\frac{\E_{p}\left(\norm{\nabla g(x)}_{2}^{2}\right)}{\Var_{p}\left(g(x)\right)}\approx\psi_{p}^{2}\cdot
\]
for any smooth $g$.
\end{thm}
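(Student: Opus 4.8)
The plan is to prove the two-sided bound $\zeta_p \approx \psi_p^2$ by establishing each direction separately, since they come from genuinely different arguments. The easy direction is $\zeta_p \lesssim \psi_p^2$: I would take any subset $S$ with $p(S) \le 1/2$, and for each $\varepsilon > 0$ build a Lipschitz test function $g_\varepsilon$ that equals $1$ on $S$, equals $0$ outside the $\varepsilon$-neighborhood $S_\varepsilon = \{x : d(x,S) \le \varepsilon\}$, and interpolates linearly in between with slope $1/\varepsilon$. Then $\E_p(\norm{\nabla g_\varepsilon}_2^2) \le \varepsilon^{-2}(p(S_\varepsilon) - p(S))$, while $\Var_p(g_\varepsilon) \gtrsim p(S)(1-p(S)) \gtrsim p(S)$. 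Dividing and letting $\varepsilon \to 0^+$ gives $\zeta_p \le p(\partial S)/p(S)$ up to a constant; optimizing over $S$ yields $\zeta_p \lesssim \psi_p$. To upgrade this to $\zeta_p \lesssim \psi_p^2$ one applies the same construction not to $\mathbf{1}_S$ directly but to a truncation of a generic $g$: split at the median, consider the super-level sets $\{g \ge t\}$, and apply the isoperimetric inequality to each; the co-area formula $\int |\nabla g| = \int_0^\infty p(\partial\{g \ge t\})\,dt$ together with Cauchy--Schwarz converts the linear-in-$\psi_p$ bound on each level set into the quadratic bound on the Rayleigh quotient. This is the standard Cheeger-inequality argument and is entirely routine.

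The hard direction is $\zeta_p \gtrsim \psi_p^2$, which is Buser's inequality and is the real content. The cleanest route is the semigroup argument of Ledoux. Let $(P_t)_{t\ge0}$ be the heat semigroup associated with the Langevin diffusion for $p$, i.e. with generator $L = \Delta - \nabla(-\log p)\cdot\nabla$, which is reversible with respect to $p$. The two facts one needs about this semigroup are: (i) a \emph{local Poincar\'e / gradient bound} — because $-\log p$ is convex (logconcavity!), the Bakry--\'Emery curvature condition $CD(0,\infty)$ holds, giving $\norm{\nabla P_t f}_2 \le P_t\norm{\nabla f}_2$ pointwise; and (ii) an $L^1\to L^1$ type smoothing estimate quantifying that $P_t$ moves mass by distance about $\sqrt{t}$, so that for $f = \mathbf{1}_S$ one has $\norm{P_t f - f}_{L^1(p)} \lesssim \sqrt{t}\cdot p(\partial S)$. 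One then writes, for the indicator $f=\mathbf{1}_S$,
\[
p(S) - \langle P_t f, f\rangle = \int_0^t \E_p\bigl(\norm{\nabla P_s f}_2^2\bigr)\,ds,
\]
bounds the left side below using that $P_t$ is a contraction together with (ii) and $\langle P_tf,f\rangle \le \norm{P_tf-f}_{L^1} + \langle f,f\rangle$ type manipulations, and bounds the right side above by $t\cdot\E_p(\norm{\nabla f}_2^2)$ after using (i) and reversibility to push the gradient onto $f$. Choosing $t$ of order $\psi_p^{-2}$ — small enough that $\sqrt{t}\,\psi_p \lesssim 1$ — balances the terms and yields $\Var_p(f) \lesssim \psi_p^{-2}\,p(\partial S)$, which is the Poincar\'e inequality restricted to indicators; a density/truncation argument extends it to all smooth $g$ (or one simply runs the same computation with general $f$ from the start, replacing (ii) by a bound on $\norm{P_tf-f}_{L^1}$ in terms of $\int\norm{\nabla f}$).

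The main obstacle is step (ii), the quantitative $L^1$ smoothing estimate $\norm{P_tf - f}_{L^1(p)} \lesssim \sqrt t \cdot p(\partial S)$ for $f = \mathbf{1}_S$. The subtle point is that this must be controlled by the \emph{boundary measure} rather than by any isoperimetric constant — otherwise the argument would be circular, using $\psi_p$ to prove a bound in terms of $\psi_p$. The resolution is that this inequality is a pointwise heat-kernel fact that does not see $\psi_p$ at all: it follows from the representation of $P_t - I$ via the diffusion, combined with a Gaussian-type upper bound on how far the diffusion travels in time $t$ (again a consequence of $CD(0,\infty)$, via the fact that the heat kernel for a logconcave measure is dominated in the appropriate sense by the Euclidean heat kernel). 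Getting this domination with the right constant — uniformly in the dimension — is the delicate part and is exactly where logconcavity is used in full force. Everything downstream (the integration-by-parts identity, the choice of $t$, the passage from indicators to general functions, and the final division by $\Var_p$) is bookkeeping by comparison.
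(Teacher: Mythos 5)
The paper itself does not prove this statement; it is stated and cited to Cheeger and to Buser/Ledoux, so there is no internal proof to match against. On its own merits, your proposal has the two directions swapped. The co-area/Cauchy--Schwarz computation you sketch in the first paragraph (level sets of $g$, writing $\int|\nabla g^2|=2\int|g||\nabla g|$, then Cauchy--Schwarz) is the classical proof of \emph{Cheeger's} inequality and yields the \emph{lower} bound $\zeta_p\gtrsim\psi_p^2$: if every level set is forced to have large boundary measure relative to its volume, the Rayleigh quotient is forced to be large. Conversely, the semigroup argument in your last two paragraphs is Ledoux's proof of \emph{Buser's} inequality, which yields the \emph{upper} bound $\zeta_p\lesssim\psi_p^2$, equivalently $\psi_p\gtrsim\sqrt{\zeta_p}$. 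You correctly call this direction ``Buser's inequality'' but then write it as $\zeta_p\gtrsim\psi_p^2$, which is the opposite; with the labels reversed, the plan as stated does not assemble into a proof of either direction.

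There are also local gaps. In the first paragraph, sending $\varepsilon\to0^+$ makes $\E_p\|\nabla g_\varepsilon\|_2^2\approx\varepsilon^{-1}p(\partial S)$ diverge, so the claimed limit $\zeta_p\le p(\partial S)/p(S)$ does not follow, and in any case an intermediate bound of the form $\zeta_p\lesssim\psi_p$ is not a step toward either half of the theorem. In the last paragraph, the asserted conclusion ``$\Var_p(f)\lesssim\psi_p^{-2}\,p(\partial S)$'' is not the Poincar\'e inequality for indicators: the right side involves $p(\partial S)=\int|\nabla f|\,dp$ (an $L^1$/BV gradient quantity), whereas the Poincar\'e inequality has $\int|\nabla f|^2\,dp$, and one does not pass from the former to the latter by a density or truncation argument. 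What Ledoux's computation actually produces, after choosing $t\sim1/\zeta_p$ to balance the heat-flow displacement estimate $\|P_t\mathbf 1_S-\mathbf 1_S\|_{L^1(p)}\lesssim\sqrt t\,p(\partial S)$ (which does use the $CD(0,\infty)$ commutation $|\nabla P_t f|\le P_t|\nabla f|$, the place logconcavity enters) against the spectral decay $\|P_{t/2}(\mathbf 1_S-p(S))\|_2^2\le e^{-\zeta_p t}\Var(\mathbf 1_S)$, is $p(\partial S)\gtrsim\sqrt{\zeta_p}\min\{p(S),p(\R^n\setminus S)\}$, i.e., $\psi_p\gtrsim\sqrt{\zeta_p}$. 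If you swap the direction labels, drop the $\varepsilon\to0$ test-function step, and restate the semigroup conclusion as this bound on $p(\partial S)$, you recover the standard Cheeger--Buser argument that the theorem is referring to.
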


The Poincaré inequality is important in the study of partial differential
equations. For example, the Poincaré constant governs exactly how
fast the heat equation converges. For the logconcave setting, the
choice of $\ell_{2}$ norm is not important and it can be generalized
as follows:
\begin{thm}[Generalized Poincaré inequality \cite{Milman2009}]
For any isotropic logconcave density $p$ in $\R^{n}$ and for all
$1\leq q<\infty$, we have that
\[
\E_{x\sim p}\left|\nabla g(x)\right|\apprge\frac{\psi_{p}}{q}\cdot\left(\E_{x\sim p}\left|g(x)-\E_{y\sim p}g(y)\right|^{q}\right)^{1/q}
\]
for any smooth $g$.
\end{thm}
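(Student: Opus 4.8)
\noindent\emph{Proof plan.} The plan is to reduce to the case $q=1$, dispatch that case directly from the definition of $\psi_{p}$, and then bootstrap to general $q$ through an exponential tail bound. First normalize: let $m$ be a median of $g$ and replace $g$ by $g-m$, so $m=0$; split $g=g_{+}-g_{-}$ into positive and negative parts, each supported on a set of $p$-measure at most $\tfrac12$. The case $q=1$ is the $L^{1}$-Cheeger inequality: by the coarea formula and the definition of $\psi_{p}$,
\[
\E_{p}|\nabla g_{+}|=\int_{0}^{\infty}p(\partial\{g_{+}>t\})\,dt\;\ge\;\psi_{p}\int_{0}^{\infty}p(\{g_{+}>t\})\,dt\;=\;\psi_{p}\,\E_{p}g_{+},
\]
and likewise for $g_{-}$, so $\E_{p}|g|\le\psi_{p}^{-1}\E_{p}|\nabla g|$; moving the centering from the median to the mean costs only a factor $2$, since $\E_{p}|g-\E_{p}g|\le2\,\E_{p}|g|$.

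For general $q$ I would establish the \emph{$L^{1}$ form of exponential concentration}: there is a universal $c>0$ such that for every smooth $g$ (with median $m$),
\[
p\bigl(|g-m|\ge t\bigr)\;\le\;4\exp\!\Bigl(-\,\frac{c\,\psi_{p}\,t}{\E_{p}|\nabla g|}\Bigr),\qquad t\ge0 .
\]
Granting this, the theorem follows by a layer-cake computation: with $B\defeq\E_{p}|\nabla g|$,
\[
\E_{p}|g-m|^{q}=\int_{0}^{\infty}qt^{q-1}\,p(|g-m|\ge t)\,dt\;\le\;4q\int_{0}^{\infty}t^{q-1}e^{-c\psi_{p}t/B}\,dt\;=\;4\,\Gamma(q+1)\Bigl(\tfrac{B}{c\psi_{p}}\Bigr)^{q},
\]
so $\bigl(\E_{p}|g-m|^{q}\bigr)^{1/q}\le\bigl(4\Gamma(q+1)\bigr)^{1/q}B/(c\psi_{p})\lesssim\tfrac{q}{\psi_{p}}\E_{p}|\nabla g|$ because $\Gamma(q+1)^{1/q}\lesssim q$ by Stirling; finally $\|g-\E_{p}g\|_{q}\le\|g-m\|_{q}+\bigl|\E_{p}(g-m)\bigr|\le2\|g-m\|_{q}$ moves the centering to the mean.

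The crux is the displayed tail bound, and it is exactly here that logconcavity of $p$ must re-enter. For $1$-Lipschitz $g$ the bound is classical: writing $h(t)=p(g-m>t)$, the Cheeger inequality gives $-h'(t)\ge p(\partial\{g>t\})\ge\psi_{p}h(t)$ for $t\ge0$ (using $h(t)\le\tfrac12$ and $|\nabla g|\le1$), so $h(t)\le\tfrac12 e^{-\psi_{p}t}$ by Gr\"onwall. The whole difficulty is that for an \emph{arbitrary} smooth $g$ one can no longer bound $-h'(t)$ by the boundary measure, and the coarea estimate that sufficed for $q=1$ degrades to the merely polynomial rate $p(|g-m|\ge t)\le\E_{p}|\nabla g|/(\psi_{p}t)$, which is too weak and is in fact sharp for a general measure of Cheeger constant $\psi_{p}$; the available $L^{2}$-Poincar\'e inequality $\Var_{p}g\lesssim\psi_{p}^{-2}\E_{p}|\nabla g|^{2}$ does not rescue this either, since Gromov--Milman bootstrapping again handles only Lipschitz test functions. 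Two routes seem viable for the general-function tail bound. One is Milman's semigroup argument: since $p=e^{-V}$ with $V$ convex, the associated diffusion semigroup $P_{t}$ (generator $L$) satisfies the Bakry--\'Emery bound $\mathrm{CD}(0,\infty)$, hence the gradient contraction $|\nabla P_{t}f|\le P_{t}|\nabla f|$; feeding this, the telescoping identity $g=-\int_{0}^{\infty}LP_{t}g\,dt$ (valid since $\E_{p}g=0$), and the $L^{1}$-Cheeger inequality into an estimate along the flow converts control of $\E_{p}|\nabla g|$ into the exponential tail with rate $\propto\psi_{p}$. The other is to localize via the Lov\'asz--Simonovits needle decomposition, reducing to one-dimensional logconcave needles, on each of which the statement is an explicit calculus fact whose extremal profile is a linear $g$ against a truncated-exponential weight --- which is precisely where the factor $q$ and the Gamma function come from --- the delicate point being to relate the one-dimensional Cheeger constants of the needles back to $\psi_{p}$. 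In either approach, this concentration step, not the reductions around it, is the main obstacle.
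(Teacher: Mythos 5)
Your reduction is the natural one, and the skeleton is sound: the $q=1$ case is exactly the $L^1$-Cheeger inequality via the coarea formula, the layer-cake computation from a subexponential tail does produce the $q$ factor through $\Gamma(q+1)^{1/q}\lesssim q$, and you correctly flag the general-function tail bound as the only real obstacle. But that obstacle cannot be overcome, because the tail bound you propose is simply false even for logconcave $p$. Take $p$ the one-dimensional exponential law $p(x)=e^{-x}$ on $(0,\infty)$ (isotropic after translating by $1$, with $\psi_p=1$) and $g(x)=e^{\alpha x}$ with $0<\alpha<1$. Then $\E_p|\nabla g|=\alpha/(1-\alpha)$ stays bounded as $\alpha\uparrow\tfrac12$, the median is $m=2^{\alpha}$, and the tail is $p(g-m\geq t)=(m+t)^{-1/\alpha}$, which decays only polynomially in $t$; no universal constant $c$ can make $(m+t)^{-1/\alpha}\leq4\exp(-c\psi_p t/\E_p|\nabla g|)$ hold for all $t$. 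Logconcavity of $p$ does not rescue the bound, and neither will the semigroup route (the gradient contraction $|\nabla P_t f|\le P_t|\nabla f|$ controls Lipschitz norms, not $L^1$-norms of the gradient) nor the needle route (my counterexample is already one-dimensional). In short, the "crux" step you defer cannot be filled in.

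In fact the same example shows the theorem, \emph{as transcribed in the survey}, fails at $q=2$: as $\alpha\uparrow\tfrac12$, $\Var_p g=\frac{1}{1-2\alpha}-\frac{1}{(1-\alpha)^2}\to\infty$ while $\E_p|\nabla g|\to1$, so $\E_p|\nabla g|\gtrsim\tfrac{\psi_p}{2}\,\|g-\E g\|_2$ cannot hold with any universal constant. A $(q,1)$-Poincar\'e inequality with a dimension-free constant simply does not hold; one cannot upgrade $L^1$ control of $\nabla g$ to $L^q$ control of $g$ for $q>1$. What Milman actually proves in the cited paper is the Lipschitz (i.e.\ $(q,\infty)$) version: for $1$-Lipschitz $g$, $p(|g-m|\geq t)\lesssim e^{-c\,\psi_p\,t}$, equivalently $\|g-\E g\|_{L^q}\lesssim \tfrac{q}{\psi_p}\,\|\nabla g\|_{L^\infty}$ (together with deeper equivalences between the best constants in several such inequalities). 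With $\|\nabla g\|_\infty$ in place of $\E_p|\nabla g|$, your outline is exactly right: the tail bound is the classical Gromov--Milman exponential concentration (the Gr\"onwall argument you give, where $|\nabla g|\le1$ is genuinely used), and the layer-cake step finishes it. So the logical gap in the proposal is not a missing lemma but a false one, inherited from a misprint in the statement being proved; the argument should be run for Lipschitz $g$ with the Lipschitz constant, not $\E_p|\nabla g|$, on the left-hand side.
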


Together with previous inequalities, we can summarize the relationships
as follows: for any isotropic logconcave density $p$ in $\R^{n}$,
\[
L_{p}\lesssim\sup_{p}\sigma_{p}\text{ \quad and\quad\ }\sigma_{p}\lesssim\frac{1}{\sqrt{\zeta_{p}}}\approx\frac{1}{\psi_{p}}
\]
where the relation ``$\lesssim$'' hides only universal constants
independent of the density $p$ and the dimension $n$.

We next turn to concentration inequalities. The classical concentration
theorem of Levy says that any Lipschitz function $g$ on the sphere
in $\R^{n}$ is concentrated near its mean (or median): 
\[
\P\left(|g(x)-\E_{y}g(y)|\ge t\right)\le2e^{-\frac{1}{2}t^{2}n}.
\]
The following theorem is an analogous statement for any logconcave
density, and is due to Gromov and Milman. 
\begin{thm}[Lipschitz concentration \cite{GromovM83}]
For any $L$-Lipschitz function $g$ in $\R^{n},$ and isotropic
logconcave density $p$,
\[
\P_{x\sim p}\left(\left|g(x)-\E g\right|>L\cdot t\right)=e^{-\Omega(t\psi_{p})}.
\]
\end{thm}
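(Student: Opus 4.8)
The plan is to deduce the concentration inequality from the generalized Poincaré inequality (Theorem of Milman stated above), which gives control of all $L^q$-norms of $g-\E g$ in terms of $\E|\nabla g|$ with the correct $\psi_p$-dependence. First I would normalize: by replacing $g$ with $g/L$ we may assume $g$ is $1$-Lipschitz, so $|\nabla g(x)| \le 1$ almost everywhere. Set $\bar g = \E_{x\sim p} g(x)$ and $h(x) = g(x) - \bar g$. The generalized Poincaré inequality, applied to $g$ itself, reads $\E_{x\sim p}|\nabla g(x)| \apprge \frac{\psi_p}{q}\left(\E_{x\sim p}|h(x)|^q\right)^{1/q}$ for every $1 \le q < \infty$. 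Since $|\nabla g| \le 1$, this yields $\left(\E|h|^q\right)^{1/q} \lesssim q/\psi_p$, i.e. the $q$-th moment of $|h|$ grows at most linearly in $q$.

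Next I would convert this moment bound into a tail bound. A random variable $Z = |h(x)|$ with $\left(\E Z^q\right)^{1/q} \le C q$ for all $q \ge 1$ has sub-exponential tails: by Markov's inequality, $\P(Z \ge s) \le s^{-q}\E Z^q \le (Cq/s)^q$, and choosing $q = s/(Ce)$ (valid once $s$ is at least a constant multiple of $C$) gives $\P(Z \ge s) \le e^{-s/(Ce)}$. With $C \asymp 1/\psi_p$ this is exactly $\P_{x\sim p}(|g(x) - \E g| \ge t) = e^{-\Omega(t\psi_p)}$, the desired estimate (for $t$ below the relevant constant the bound is trivial since probabilities are at most $1$, which we may absorb into the $\Omega(\cdot)$).

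The main technical point to get right is the chain of implications: \emph{generalized Poincaré} $\Rightarrow$ \emph{linear moment growth} $\Rightarrow$ \emph{sub-exponential tails}, and in particular tracking that the constant in the exponent is genuinely $\Theta(\psi_p)$ rather than, say, $\Theta(\psi_p^2)$ — this is precisely why one invokes the $L^q$ (Milman) form rather than the $L^2$ Poincaré inequality, which would only give a Gaussian-type bound $e^{-\Omega(t^2\psi_p^2)}$ and hence worse behavior in the relevant exponential regime. A minor subtlety is that the generalized Poincaré inequality is stated for \emph{smooth} $g$, whereas a Lipschitz $g$ need only be differentiable almost everywhere; I would handle this by a routine mollification argument — convolve $g$ with a smooth approximate identity to obtain $g_\varepsilon$, which remains $1$-Lipschitz (hence $|\nabla g_\varepsilon| \le 1$), apply the inequality to $g_\varepsilon$, and pass to the limit $\varepsilon \to 0$ using that $g_\varepsilon \to g$ uniformly on compact sets and dominated convergence against the logconcave density (which has exponential tails, so all moments are finite). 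The hard part is really just recognizing that this theorem is an immediate corollary of the generalized Poincaré inequality; once that is in hand, only these standard measure-theoretic and combinatorial steps remain.
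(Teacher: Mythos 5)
Your proof is correct, but it takes a genuinely different route from the classical argument that the paper cites. The Gromov--Milman proof (and the standard derivation of Lipschitz concentration from a Cheeger-type isoperimetric inequality) works directly at the level of sets: take $A=\{x: g(x)\le \mathrm{med}(g)\}$, observe that the $t$-enlargement $A_t=\{x:d(x,A)\le t\}$ contains $\{g\le \mathrm{med}(g)+Lt\}$ since $g$ is $L$-Lipschitz, and integrate the isoperimetric differential inequality $\frac{d}{dt}p(A_t)\ge \psi_p\,(1-p(A_t))$ to get $1-p(A_t)\le \frac12 e^{-\psi_p t}$; a symmetric bound and the $O(1/\psi_p)$ gap between mean and median then give the theorem. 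Your route instead invokes Milman's generalized ($L^q$) Poincar\'e inequality, extracts the linear-in-$q$ moment growth $\left(\E|g-\E g|^q\right)^{1/q}\lesssim q/\psi_p$, and converts moments to a sub-exponential tail via Markov with the optimal choice $q\asymp s\psi_p$. Both arguments are sound; you correctly identified that the $L^q$ form rather than the $L^2$ Poincar\'e inequality is what gives the exponential (rather than sub-Gaussian-in-$t\psi_p$) rate, and your moment-to-tail calculation is the standard one. Two small remarks: (i) your derivation is logically fine in the context of this survey but historically anachronistic, since Milman's $L^q$-Poincar\'e inequality (2009) postdates Gromov--Milman (1983) and is itself typically proved via the very isoperimetric argument it would here replace; (ii) the direct enlargement argument is slightly more elementary, needing no moment machinery, whereas your approach has the advantage of giving concentration around the mean directly without passing through the median.
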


Milman \cite{Milman2009} showed the reverse, namely that a Lipschtiz
concentration inequality implies a lower bound on the Cheeger constant. 

The next consequence is information-theoretic. Let $X$ be a random
variable from an $n$-dimensional distribution with a density $p$.
Its entropy is $\Ent(X)=-\E(\log p)=-\int_{\R^{n}}p(x)\log p(x)\,dx$.
The Shannon-Stam inequality says that for independent random vectors
$X,Y\sim p$, 
\[
\Ent\left(\frac{X+Y}{\sqrt{2}}\right)\ge\Ent(X)
\]
with equality only if $p$ is Gaussian. Quantifying the increase in
entropy has been a subject of investigation. Ball and Nguyen \cite{ball2012entropy}
proved the following bound on the entropy gap.
\begin{thm}[Entropy jump \cite{ball2012entropy}]
 Let $p$ be an isotropic logconcave density and $X,Y\sim p$ in
$\R^{n}$ and $Z$ be drawn from a standard Gaussian in $\R^{n}$.
Then, 
\[
\Ent\left(\frac{X+Y}{\sqrt{2}}\right)-\Ent(X)\gtrsim\psi_{p}^{2}\left(\Ent(Z)-\Ent(X)\right).
\]
\end{thm}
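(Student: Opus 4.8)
The plan is to reduce the multi-dimensional entropy gap to a one-dimensional statement along generic directions, and there exploit the connection between entropy production under convolution and the Poincaré/Cheeger constant. Recall that for a density $p$ with $X,Y\sim p$ independent, the quantity $\Ent(\frac{X+Y}{\sqrt2})-\Ent(X)$ can be written as a relative entropy (a Kullback–Leibler divergence) between the law of $\frac{X+Y}{\sqrt 2}$ and the law of $X$, and by the de Bruijn identity / the Blachman–Stam machinery it is controlled by an integral over the heat flow of Fisher-information deficits. Concretely, if one interpolates by adding Gaussian noise $X_t = X + \sqrt t\,Z$, then both sides of the claimed inequality can be expressed via $\int_0^\infty$ of a Fisher-information gap $J(X_t)-J(Z_t)$-type expression, and the standard route (following Ball–Barthe–Naor and Ball–Nguyen) is to bound the entropy jump below by the Fisher information deficit of $X$ itself, up to a universal constant.

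The key steps, in order: (i) write $\Ent(\frac{X+Y}{\sqrt2})-\Ent(X)$ as an integral over the Ornstein–Uhlenbeck (or heat) semigroup of the Fisher-information deficit $\delta_J(X_t):=J(X_t)-n$ (here $n=J(Z)$ since $X$ is isotropic), using the classical formula that expresses entropy production along convolution; (ii) similarly express the right-hand side $\Ent(Z)-\Ent(X)$ as an integral of the same deficit along the semigroup; (iii) prove the pointwise-in-$t$ inequality $\delta_J(X_t)\gtrsim \psi_p^2\,\delta_J(X_t^{\mathrm{OU}})$ relating the deficit at the $X_t$ scale to the deficit at the scale appearing in the RHS — this is where isoperimetry enters, because the Fisher-information deficit of a logconcave measure is lower bounded in terms of its spectral gap, which by the Poincaré inequality theorem quoted above (Cheeger/Buser, $\zeta_p\approx\psi_p^2$) is $\gtrsim\psi_p^2$; (iv) integrate over $t$ to recover the stated bound. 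Step (iii) is really a statement of the form: for an isotropic logconcave density, the gap between its Fisher information and that of the Gaussian controls, with a loss of at most $\psi_p^{-2}$, the corresponding entropy gap — this follows from decomposing the deficit using the spectral decomposition of the Ornstein–Uhlenbeck generator and using that the relevant quadratic form is bounded below by the Poincaré constant $\zeta_p$.

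The main obstacle is step (iii): one needs a \emph{quantitative} linking of the Fisher-information deficit to the Poincaré constant that is tight up to universal factors, not merely a qualitative "deficit is positive unless Gaussian" statement. The natural tool is to consider the score function $\nabla\log p_t$ and write the deficit $J(X_t)-n$ as the variance (under $p_t$) of a vector field; then a spectral-gap inequality applied componentwise, together with logconcavity of $p_t$ (preserved under Gaussian convolution by Lemma \ref{lem:marginal}) and the fact that the Poincaré constant only improves under convolution with a Gaussian, yields the factor $\psi_p^2$. One must be careful that $\psi_{p_t}\gtrsim\psi_p$ uniformly in $t$ — this uses that adding independent Gaussian noise cannot decrease the Cheeger constant by more than a constant factor, which itself follows from the concentration/isoperimetry package (e.g. via the Lipschitz concentration theorem above or directly from a localization argument). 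Once that uniform bound is in hand, the integration in step (iv) is routine and produces the claimed inequality with a universal constant.
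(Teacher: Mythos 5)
Your high-level framework is sound and does match the Ball--Nguyen route: pass to the Ornstein--Uhlenbeck flow, use de Bruijn to convert entropy gaps into integrals of Fisher-information deficits, prove a pointwise-in-$t$ Fisher-information jump with a $\psi_p^2$-factor, and integrate. However, there are two concrete problems, one minor and one at the heart of the argument.

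The minor issue is step (i): the left-hand side is not a single integral of a Fisher deficit. Writing $\delta_J(W)=J(W)-n$ and using that the OU flow commutes with the map $(X,Y)\mapsto(X+Y)/\sqrt2$, the correct reduction is
\[
\Ent\Bigl(\tfrac{X+Y}{\sqrt2}\Bigr)-\Ent(X)=\int_0^\infty\Bigl(\delta_J(X_t)-\delta_J\bigl(\tfrac{X_t+Y_t}{\sqrt2}\bigr)\Bigr)\,dt,
\qquad
\Ent(Z)-\Ent(X)=\int_0^\infty\delta_J(X_t)\,dt,
\]
so the whole theorem reduces to the pointwise Fisher-information jump $\delta_J\bigl(\tfrac{X_t+Y_t}{\sqrt2}\bigr)\le(1-c\,\zeta_{p_t})\,\delta_J(X_t)$. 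You need a difference of two integrals, not one.

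The major gap is step (iii). You propose to get the $\psi_p^2$ factor by noting that $\delta_J(X_t)=\E_{p_t}|\nabla\log p_t+x|^2$ is a variance of the score field and then ``applying a spectral-gap inequality componentwise.'' But the Poincar\'e inequality applied to the coordinates of $V=\nabla\log p+x$ goes in the wrong direction: it gives $\Var_p(V_i)\le\zeta_p^{-1}\E_p|\nabla V_i|^2$, which is an \emph{upper} bound on the deficit in terms of second derivatives and says nothing about $J\bigl(\tfrac{X+Y}{\sqrt2}\bigr)$ versus $J(X)$. What actually produces the jump in Ball--Nguyen is the Ball--Barthe--Naor variational characterization of the Fisher information of a marginal. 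One views $(X+Y)/\sqrt2$ as a one-dimensional-per-coordinate marginal of the product density on $\R^{2n}$, writes its Fisher information as an infimum over vector fields with prescribed divergence, tests with the optimizer for $J(X)$, and identifies the surplus as a covariance/variance term along the orthogonal direction $(X-Y)/\sqrt2$. The Poincar\'e inequality for $p_t$ (equivalently $\zeta_{p_t}\approx\psi_{p_t}^2$) is then applied to \emph{that} specific variance, not to the score field, to force the surplus to be $\gtrsim\zeta_{p_t}\,\delta_J(X_t)$. Without the variational formula and this decomposition, the spectral gap has nothing to bite on, and your sketch does not produce the inequality.

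One small remark on the part you got right in spirit: the uniform control $\zeta_{p_t}\gtrsim\zeta_p$ along the OU flow is what lets you pull $\psi_p^2$ out of the integral; the clean way to see this is subadditivity of the Poincar\'e constant under convolution applied to $X_t=e^{-t}X+\sqrt{1-e^{-2t}}Z$ (so $C_P(p_t)\le e^{-2t}C_P(p)+(1-e^{-2t})\le C_P(p)$ since $C_P(p)\ge1$ for isotropic $p$); no localization or Lipschitz-concentration argument is needed.

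(The paper you are reading states this theorem as a cited result and gives no proof, so the comparison above is against the actual Ball--Nguyen argument rather than a proof in the paper.)
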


\subsection{Algorithms}

In this section we discuss algorithmic connections of the KLS conjecture.

\subsubsection{Sampling}

The ball walk can be used to sample from any density using a Metropolis
filter. To sample from the density $Q$, we repeat the following:
at a point $x$, 
\begin{enumerate}
\item Pick a random point $y$ in the $\delta$-ball centered at $x$. 
\item Go to $y$ with probability $\min\left\{ 1,\frac{Q(y)}{Q(x)}\right\} $. 
\end{enumerate}
If the resulting Markov chain is ergodic, the current distribution
approaches a unique stationary distribution. The complexity of sampling
depends on the \emph{rate }of convergence to stationarity. For an
isotropic logconcave distribution, the rate of convergence of the
ball walk from a \emph{warm} starting distribution is bounded in terms
of the Cheeger constant. A starting distribution $Q_{0}$ is \emph{warm}
with respect to the stationary distribution $Q$ if $\E_{x\sim Q}\frac{Q_{0}(x)}{Q(x)}$
is bounded by a constant.
\begin{thm}[\cite{KLS97}]
\label{thm:KLS-mixing} For an isotropic logconcave density $p$,
the ball walk mixes from a warm start in $O^{*}\left(n^{2}/\psi_{p}^{2}\right)$
steps.
\end{thm}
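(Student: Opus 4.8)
The plan is to reduce the mixing time to the \emph{conductance} of the Metropolis ball walk whose stationary distribution is $p$, and then lower bound that conductance by a geometric quantity controlled by $\psi_{p}$. By Theorem~\ref{thm:cheeger} (equivalently \eqref{eq:JS}), a reversible chain of conductance at least $\phi$ started from $Q_{0}$ with $M=\sup_{A}Q_{0}(A)/p(A)$ reaches total variation distance $\varepsilon$ in $\tau\lesssim\phi^{-2}\log(M/\varepsilon)$ steps; from a \emph{warm} start $\log M=O(1)$, so it suffices to establish $\phi\gtrsim\psi_{p}/n$ up to logarithmic factors, whereupon $\tau=O^{*}(n^{2}/\psi_{p}^{2})$ follows.

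To bound $\phi$ I would use the two-parameter decomposition of Lov\'asz--Simonovits. Fix measurable $S$ with $p(S)\le\tfrac12$, write $P_{x}$ for the one-step distribution of the ball walk from $x$, and set $S_{1}=\{x\in S:P_{x}(\R^{n}\setminus S)<\tfrac14\}$, $S_{2}=\{x\notin S:P_{x}(S)<\tfrac14\}$, $S_{3}=\R^{n}\setminus(S_{1}\cup S_{2})$. Points of $S_{3}\cap S$ send at least a quarter of their next-step mass out of $S$, so reversibility forces the cross-flow across $(S,\R^{n}\setminus S)$ to be $\gtrsim p(S_{3})$, giving $\phi(S)\gtrsim p(S_{3})/p(S)$; hence we may assume $p(S_{1})\ge\tfrac12 p(S)$ and $p(S_{2})\ge\tfrac12 p(\R^{n}\setminus S)$. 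Two ingredients then finish the job (after excising the low-local-conductance region, treated below). First, a \emph{local step lemma}: there is $\rho=\Theta(\delta/\sqrt n)$, with $\delta$ the ball-walk radius, such that $\norm{x-y}\le\rho$ and local conductance $\ge\tfrac34$ at $x,y$ imply $d_{TV}(P_{x},P_{y})<\tfrac12$; since $P_{x}(S)>\tfrac34$ on $S_{1}$ and $P_{y}(S)<\tfrac14$ on $S_{2}$, this yields $d(S_{1},S_{2})\ge\rho$. Second, the Euclidean isoperimetric inequality for the logconcave $p$ in three-set form: integrating the Cheeger bound $p(\partial A)\ge\psi_{p}\min\{p(A),p(\R^{n}\setminus A)\}$ over the slab $\{x:d(x,S_{1})\le t\}$, $t\in[0,\rho]$, gives $p(S_{3})\gtrsim\rho\,\psi_{p}\min\{p(S_{1}),p(S_{2})\}$. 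Combining, $\phi(S)\gtrsim\rho\,\psi_{p}$, so $\phi\gtrsim\delta\,\psi_{p}/\sqrt n$.

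It remains to choose $\delta$, and here isotropy and logconcavity of $p$ are essential: for $\delta=\Theta(1/\sqrt n)$ one shows that for all but a negligible $p$-fraction of points the Metropolis ratio $p(y)/p(x)$ stays within a constant factor over $y\in x+\delta B_{n}$, so $P_{x}$ carries mass $\Omega(1)$ and moves by at most $\tfrac12$ in total variation under $\Theta(\delta/\sqrt n)$ perturbations of $x$ --- exactly what the local step lemma requires. With this choice $\phi\gtrsim\psi_{p}/n$, hence $\tau\lesssim n^{2}/\psi_{p}^{2}$ up to logarithmic factors. (Isotropy is what pins $\delta=\Theta(1/\sqrt n)$ as the correct scale; for a general logconcave density one first applies the affine map $A^{-1/2}(x-\mu)$ from the introduction.)

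The main obstacle --- and the reason the bound is $O^{*}$ rather than a clean $O$ --- is the behaviour near the boundary of $\mathrm{supp}(p)$, where the local step lemma genuinely fails and the shell of points whose $\delta$-ball pokes substantially outside the support can carry a constant, not negligible, fraction of the measure. The standard remedy is to run the whole conductance argument on the \emph{speedy walk} (which ignores rejected moves, and whose stationary measure reweights $p$ by local conductance), or equivalently to use $s$-conductance, while separately bounding the measure a warm start deposits in this bad shell. Reconciling that reweighting with the isoperimetric inequality above, and proving the local step lemma with the precise scaling $\rho=\Theta(\delta/\sqrt n)$, is where essentially all of the technical work lies; the rest of the argument is bookkeeping with the two parameters.
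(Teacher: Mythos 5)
The paper states this result without proof, citing \cite{KLS97}, but its surrounding discussion sketches exactly the two-parameter conductance argument you give. Your proposal is a correct account of the standard Lov\'asz--Simonovits/KLS argument --- the $S_1,S_2,S_3$ decomposition, the local-step overlap lemma at scale $\delta/\sqrt{n}$, the three-set isoperimetric bound (which you legitimately extract from the Cheeger constant by the coarea integration over $\{x : d(x,S_1)\le t\}$), and the speedy-walk/$s$-conductance treatment of the low-local-conductance shell --- and you correctly locate where the genuine technical work lies, namely in the precise step-overlap lemma and in the boundary bookkeeping that produces the $O^{*}$ rather than a clean $O$.
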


\subsubsection{Sampling to Convex Optimization}

Sampling can be used to efficiently implement a basic algorithm for
convex optimization given a separation oracle \textemdash{} the \emph{cutting
plane} method. Convex optimization can be reduced to convex feasibility
by including the objective function as a constraint and doing a binary
search on its value. To solve the feasibility problem, the method
maintains a convex set containing $K$, starting with the ball of
radius $R$ which is guaranteed to contain $K$. At each step it queries
the centroid of the set. If infeasible, it uses the violated inequality
given by the separation oracle to restrict the set. The basis of this
method is the following theorem of Grunbaum.
\begin{thm}[\cite{Grunbaum1960}]
For any convex body $K$, for any halfspace $H$ containing the centroid
of $K$, $\vol(H\cap K)\ge\frac{1}{e}\vol(K)$.
\end{thm}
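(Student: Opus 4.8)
The plan is to prove Grunbaum's theorem—that any halfspace $H$ through the centroid of a convex body $K$ satisfies $\vol(H\cap K)\ge e^{-1}\vol(K)$—by reducing to a one-dimensional statement via the Brunn-Minkowski machinery already introduced. First I would normalize: translate so that the centroid of $K$ is the origin, and let the bounding hyperplane of $H$ have unit normal $u$, so $H=\{x:\langle x,u\rangle\ge 0\}$. Define the cross-sectional volume function $v(t)=\vol_{n-1}(K\cap\{\langle x,u\rangle=t\})$ supported on an interval $[a,b]$ with $a<0<b$. The centroid condition becomes $\int t\,v(t)\,dt=0$, and the quantity to lower-bound is $\int_0^b v(t)\,dt \big/ \int_a^b v(t)\,dt$.

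The key structural input is Brunn-Minkowski: as noted in the excerpt, $v(t)^{1/(n-1)}$ is concave on $[a,b]$. The standard trick is then a \emph{symmetrization/comparison} argument. I would compare $K$ against the cone (or ``double cone'') $C$ that has the same volume, the same base hyperplane position, and whose cross-section radius is linear—i.e. replace each slab by a ball, invoke the concavity of the radius function, and observe that among all bodies with a fixed volume distribution profile $v(t)^{1/(n-1)}$ that is concave, the worst case for the centroid-slicing ratio is attained by the cone whose profile is \emph{affine} and vanishes at one endpoint. Concretely: fix the total volume and the value $v(0)$; the concave function $v(t)^{1/(n-1)}$ that pushes the centroid as far as possible to the positive side (thereby minimizing $\int_0^b v$) is the one that is linear, decreasing to zero at $b$ and extended linearly (or truncated) on the negative side. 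So it suffices to verify the inequality for a cone.

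For the cone, everything is an explicit computation. If the profile is $v(t)^{1/(n-1)} = $ (linear, zero at $t=h$, positive for $t<h$), then $v(t)\propto (h-t)^{n-1}$, and one computes the centroid location and the ratio $\int_0^h v \big/ \int_{a}^{h} v$ as a function of $n$ alone. Pushing the lower endpoint to its extreme (the limiting single cone) gives the ratio $\left(\frac{n}{n+1}\right)^{n}$, which is decreasing in $n$ and converges to $1/e$ from above; hence the bound $e^{-1}$ holds for every $n$, with the simplex/cone being the extremal shape.

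The main obstacle is making the reduction ``worst case is the cone'' fully rigorous. The naive move—replacing $K$ by a body with the same $v$—doesn't change anything; the real content is an optimization over the class of nonnegative functions $w=v^{1/(n-1)}$ that are concave on their support, subject to $\int w^{n-1}$ fixed and $w(0)$ fixed, minimizing $\int_0^b w^{n-1}$ under the centroid constraint $\int t\,w^{n-1}=0$. One shows via a variational/rearrangement argument that at the optimum $w$ must be piecewise affine with at most the ``one kink at $0$, one zero endpoint'' structure, i.e. a truncated cone; this is where care is needed, since one must argue that perturbing $w$ to be ``more linear'' while preserving the constraints only decreases the objective. Once that lemma is in hand, the remainder is the elementary calculus of the cone above, and the constant $1/e$ emerges as $\lim_{n\to\infty}(1+1/n)^{-n}$.
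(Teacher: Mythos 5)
The paper cites Gr\"unbaum's theorem without supplying a proof, so there is no internal argument to compare against; I am judging your proposal on its own merits. Your strategy --- project onto the normal of the cutting hyperplane, invoke Brunn--Minkowski to get $\frac{1}{n-1}$-concavity of the sectional volume function $v$, argue that a cone is extremal, and compute --- is the standard route, and your cone computation is correct: normalizing a profile $v(t)\propto (h-t)^{n-1}$ so that its centroid sits at $0$ gives mass $\left(\tfrac{n}{n+1}\right)^n = (1+1/n)^{-n}$ on the apex side, which decreases to $1/e$, so the bound holds for every $n$, with simplices asymptotically extremal.

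The genuine gap --- which you flag yourself --- is the reduction to the cone. ``A variational/rearrangement argument shows the optimum is piecewise affine'' is a statement of intent, not a proof: you would need to set up the constrained optimization correctly (fixing $v(0)$ is an extraneous normalization that does not obviously help), establish existence of a minimizer in the right class, and characterize it, e.g., via the extreme-point structure of nonnegative concave profiles subject to two integral constraints. There is also a small inconsistency in the description: you call the conjectured extremizer a ``truncated cone'' with a possible kink at $0$, but the true extremizer --- and the shape you actually compute for --- is a single affine profile with no kink. If you prefer to bypass the extremal characterization entirely, a cleaner route is available: set $G(t)=\vol\left(K\cap\{\langle x,u\rangle\ge t\}\right)$; applying Brunn--Minkowski to these nested convex slabs shows $G^{1/n}$ is concave. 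Substituting $s=1-G(t)^{1/n}$, whose inverse $\tau$ is concave and increasing, turns the centroid-at-origin condition into $\E[\tau(S)]=0$ for $S\sim\mathrm{Beta}(1,n)$, and Jensen's inequality immediately yields $\tau\left(\tfrac{1}{n+1}\right)\ge 0$, hence $\vol(K\cap H)=G(0)\ge\left(\tfrac{n}{n+1}\right)^n\ge\tfrac{1}{e}$, with no optimization over shapes required.
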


Thus, the volume of the set maintained decreases by a constant factor
in each iteration and the number of iterations is $O(n\log\frac{R}{r})$,
which is asymptotically the best possible. However, there is one important
difficulty, namely computing the center of gravity, even of an explicit
polytope \cite{DyerF90}, is a computationally intractable problem
(\#P-hard). The next theorem uses sampling to get around this, while
keeping the same asymptotic complexity. 
\begin{thm}[\cite{BV04}]
Let $X=\frac{1}{m}\sum_{i=1}^{m}X_{i}$ where $X_{i}$ are drawn
i.i.d. from a logconcave density $p$ in $\R^{n}$. Then, for any
halfspace $H$ containing $X$, 
\[
\E\left(\int_{H}p\right)\ge\frac{1}{e}-\sqrt{\frac{n}{m}}.
\]
\end{thm}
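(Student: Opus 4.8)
The plan is to reduce the $n$-dimensional statement to the one-dimensional Grunbaum inequality by projecting along the normal direction of the halfspace $H$, and then to control the error introduced by using the empirical mean $X$ in place of the true centroid. First I would normalize: since the conclusion is affine-invariant in the usual way and the quantity $\int_H p$ depends only on the pushforward of $p$ under the linear functional $\ell$ defining $H$, I may assume $H=\{x:\ell(x)\le \ell(X)\}$ for a fixed unit functional $\ell$, and replace $p$ by its one-dimensional marginal $q$ along $\ell$, which is logconcave by Lemma~\ref{lem:marginal}. Write $m_0$ for the true mean (equivalently centroid) of $q$ and $S=\ell(X)=\frac1m\sum_i \ell(X_i)$, so $S$ is an average of $m$ i.i.d. samples from $q$. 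Grunbaum's theorem gives $\int_{-\infty}^{m_0} q \ge \frac1e$, so it remains to show that $\E\int_{-\infty}^{S} q \ge \frac1e - \sqrt{n/m}$, i.e. that the expected loss from sampling error is at most $\sqrt{n/m}$.

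The key step is to bound $\E\bigl|\int_{-\infty}^{S} q - \int_{-\infty}^{m_0} q\bigr|$. Since the density $q$ is bounded by its value at the mode, and for a logconcave density in isotropic-type normalization this sup-norm is controlled by $1/\sigma$ where $\sigma^2=\Var_q$, we get the Lipschitz-type estimate $\bigl|\int_{-\infty}^{S} q - \int_{-\infty}^{m_0} q\bigr| \le \|q\|_\infty\,|S-m_0| \lesssim |S-m_0|/\sigma$. Actually one can do better without a sup-norm bound: the map $t\mapsto \int_{-\infty}^t q$ is $1$-Lipschitz after rescaling by $\sigma$ only up to constants, so I would instead directly use $\E|S-m_0| \le (\E(S-m_0)^2)^{1/2} = \sigma/\sqrt m$ by Cauchy--Schwarz and independence. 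Combining, the expected loss is $\lesssim 1/\sqrt m$ per coordinate of variance $\sigma^2$; summing the worst-case contribution across the $n$ directions (the marginal variance is at most the largest covariance eigenvalue, and in the isotropic normalization this is $1$) yields the bound $\sqrt{n/m}$. The cleanest route: observe $\E_H \int_H p = \E_X F(\ell(X))$ where $F$ is the CDF of $q$, expand around $m_0$, and bound the first-order term using that $F$ is concave-then-convex so the linear term vanishes in expectation only approximately — hence the need for the crude $L^1$ bound $\E|F(S)-F(m_0)| \le \E|S-m_0|\cdot\sup F' \le \sqrt{n/m}$ after accounting for the normalization of $p$.

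The main obstacle is getting the constant exactly $\sqrt{n/m}$ rather than $C\sqrt{n/m}$: this requires being careful that the relevant one-dimensional quantity is $\Var(\ell(X_i)) \le 1$ under the normalization implicit in the statement (note the theorem as stated does not assume isotropy, so one must check the dependence is genuinely on $n$ and not on the covariance — presumably the intended reading is that $p$ is isotropic, or that $n$ is really $\tr(\cov p)/\|\cov p\|$-type quantity), together with the sharp comparison between $\|q\|_\infty$, $\sigma$, and the modulus of continuity of $F$. I expect the proof to use the fact that for a one-dimensional logconcave density, $\int_{-\infty}^{m_0+t\sigma} q$ as a function of $t$ has derivative at most a universal constant, so that $|F(S)-F(m_0)| \le c\,|S-m_0|/\sigma$ with $c$ small enough (in fact $c$ can be taken close to $1$ by an extremal argument with the exponential density), and then $\E|S-m_0|/\sigma \le 1/\sqrt m$ exactly by Cauchy--Schwarz. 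The rest is routine: add Grunbaum's $\frac1e$ and subtract the error.
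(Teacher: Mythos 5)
Your high-level plan (project to one dimension, invoke Grunbaum, bound the CDF displacement via the sup-norm of a logconcave density) is the right skeleton, but the proposal misses the one step that actually produces the factor $\sqrt{n}$, and the places where you try to account for it are incorrect.

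The hyperplane direction is not fixed in advance: for each sample outcome $X$, an adversary may pick the worst direction $\theta$ (unit vector) for the halfspace through $X$. So the quantity you must control is
\[
\E_{X}\ \sup_{\theta}\ \bigl|F_{\theta}(\theta\cdot X)-F_{\theta}(\theta\cdot\mu)\bigr|,
\]
not $\E|F(S)-F(m_0)|$ for a single functional $\ell$. For a fixed direction your own estimate gives $\E|F(S)-F(m_0)|\le \|q\|_\infty\,\E|S-m_0|\le (1/\sigma)\cdot(\sigma/\sqrt m)=1/\sqrt m$, \emph{not} $\sqrt{n/m}$, so the inequality $\E|F(S)-F(m_0)|\le\E|S-m_0|\cdot\sup F'\le\sqrt{n/m}$ you write is off by a factor of $\sqrt n$. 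The later patch (``summing the worst-case contribution across the $n$ directions'') would give $n/\sqrt m$, not $\sqrt{n/m}$, and in any case a sum is the wrong operation.

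The correct accounting is to observe that with $A=\cov(p)$ and $\sigma_\theta^2=\theta^\top A\theta$, the Lipschitz bound $\|q_\theta\|_\infty\le 1/\sigma_\theta$ (tight for the exponential, so the constant is exactly $1$) yields
\[
\sup_{\theta}\ \frac{|\theta\cdot(X-\mu)|}{\sigma_\theta}
=\sup_{\theta}\ \frac{|\theta\cdot(X-\mu)|}{\sqrt{\theta^\top A\theta}}
=\bigl\|A^{-1/2}(X-\mu)\bigr\|_2,
\]
and then Cauchy--Schwarz gives $\E\|A^{-1/2}(X-\mu)\|_2\le\sqrt{\E\|A^{-1/2}(X-\mu)\|_2^2}=\sqrt{\tr(A^{-1}\cdot A/m)}=\sqrt{n/m}$. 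This also disposes of your worry about isotropy: because each directional deviation is normalized by its own $\sigma_\theta$, the $A$'s cancel and the result holds for arbitrary logconcave $p$ with no extra hypothesis --- the stated bound is genuinely $\sqrt{n/m}$, not ``$\tr(\cov)/\|\cov\|$''-type. So the fix is short, but without the sup-over-directions $\to$ Mahalanobis-norm step your argument as written does not establish the claimed bound.
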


Thus, for a convex body, using the average of $m=O(n)$ samples is
an effective substitute for the center of gravity. 

While this method achieves the best possible oracle complexity and
(an impractically high) polynomial number of arithmetic operations,
the work of Lee, Sidford and Wong \cite{lee2015faster} shows how
to reduce the overall arithmetic complexity to $\widetilde{O}(n^{3})$.
Their general method also leads to the current fastest algorithms
for semi-definite programming and submodular function minimization. 

Convex optimization given only a membership oracle can also be reduced
to sampling, via the method known as simulated annealing. It starts
with a uniform distribution over the feasible set, then gradually
focuses the distribution on near-optimal points. A canonical way to
minimize the linear function $c^{\top}x$ over a convex body $K$
is to use a sequence of Boltzmann-Gibbs distributions with density
proportional to $e^{-\alpha c^{\top}x}$ for points in $K$, with
$\alpha$ starting close to zero and gradually increasing it. A random
point drawn from this density satisfies

\[
\E\left(c^{\top}x\right)\le\min_{K}c^{\top}x+\frac{n}{\alpha}.
\]
Thus, sampling from the density with $\alpha=n/\epsilon$ gives an
additive $\epsilon$ error approximation. In \cite{KV06} it is shown
how to make this method efficient, using a sequence of only $\widetilde{O}(\sqrt{n})$
distributions. 

\begin{figure}
\begin{tikzpicture}[y=0.80pt, x=0.80pt, yscale=-0.300000, xscale=0.300000, inner sep=0pt, outer sep=0pt]   \path[draw=cff0000,line join=miter,line cap=butt,miter limit=4.00,even odd     rule,line width=1.000pt] (237.0265,445.9110) .. controls (237.0265,445.9110)     and (242.6843,439.7648) .. (255.8418,303.1833) -- (2.5591,198.0155) --     (-220.3297,278.1433) -- (-197.1725,473.4549);   \path[draw=cff0000,line join=miter,line cap=butt,miter limit=4.00,even odd     rule,line width=1.000pt] (-197.1725,473.4549) .. controls (-125.7451,539.7571)     and (22.5800,586.8137) .. (237.0265,445.9110);   \path[draw=c063cb8,line join=miter,line cap=butt,miter limit=4.00,draw     opacity=0.761,even odd rule,line width=1.000pt] (-0.2085,126.8304) .. controls     (36.5871,657.7393) and (32.6447,611.7447) .. (32.6447,611.7447);   \path[draw=black,line join=miter,line cap=butt,miter limit=4.00,even odd     rule,line width=1.000pt] (-56.7162,180.7098) .. controls (-25.1770,585.4621)     and (-25.1770,586.7762) .. (-25.1770,586.7762);   \path[draw=c008000,line join=miter,line cap=butt,miter limit=4.00,even odd     rule,line width=1.000pt] (53.6708,174.1391) .. controls (86.5241,577.5773) and     (86.5241,577.5773) .. (86.5241,577.5773);   \path[draw=black,line join=miter,line cap=butt,miter limit=4.00,even odd     rule,line width=1.000pt] (104.9219,199.1076) .. controls (131.2046,535.5251)     and (131.2046,535.5251) .. (131.2046,535.5251);   \path[draw=c063cb8,line join=miter,line cap=butt,miter limit=4.00,draw     opacity=0.761,even odd rule,line width=1.000pt] (135.1470,209.6283) ..     controls (154.8589,517.1196) and (154.8589,517.1196) .. (154.8589,517.1196);   \path[draw=c008000,line join=miter,line cap=butt,miter limit=4.00,even odd     rule,line width=1.000pt] (158.8013,220.1457) .. controls (179.8274,500.0316)     and (179.8274,500.0316) .. (179.8274,500.0316);   \path[fill=black,line cap=butt,miter limit=4.00,nonzero rule,line width=1.000pt]     (-43.0856,354.2449) ellipse (0.1113cm and 0.1298cm);   \path[fill=black,line cap=butt,miter limit=4.00,nonzero rule,line width=1.000pt]     (116.0222,354.2449) ellipse (0.1113cm and 0.1298cm);   \path[fill=c008000,line cap=butt,miter limit=4.00,nonzero rule,line     width=1.000pt] (69.0490,364.8139) ellipse (0.1113cm and 0.1298cm);   \path[fill=c008000,line cap=butt,miter limit=4.00,nonzero rule,line     width=1.000pt] (168.4896,344.2352) ellipse (0.1113cm and 0.1298cm);   \path[fill=c063cb8,line cap=butt,miter limit=4.00,fill opacity=0.761,nonzero     rule,line width=1.000pt] (12.4574,322.3144) ellipse (0.1113cm and 0.1298cm);   \path[fill=c063cb8,line cap=butt,miter limit=4.00,fill opacity=0.761,nonzero     rule,line width=1.000pt] (142.3187,329.4442) ellipse (0.1113cm and 0.1298cm);   \path[draw=c008000,line join=miter,line cap=butt,miter limit=4.00,even odd     rule,line width=1.000pt] (-240.9693,118.3894) .. controls (298.2213,73.5618)     and (297.6620,72.0101) .. (297.6620,72.0101);   \path[draw=c008000,line join=miter,line cap=butt,miter limit=4.00,even odd     rule,line width=1.000pt] (266.6620,59.0101) .. controls (297.6203,71.0791) and     (293.0616,70.1135) .. (293.0616,70.1135);   \path[draw=c008000,line join=miter,line cap=butt,miter limit=4.00,even odd     rule,line width=1.000pt] (271.6620,84.0101) .. controls (302.6203,71.9411) and     (297.6620,72.0101) .. (297.6620,72.0101);   \path[draw=cff0000,line join=miter,line cap=butt,miter limit=4.00,even odd     rule,line width=1.000pt] (972.7993,434.7532) .. controls (972.7993,434.7532)     and (978.4570,428.6071) .. (991.6146,292.0255) -- (738.3318,186.8578) --     (515.4430,266.9856) -- (538.6003,462.2972);   \path[draw=cff0000,line join=miter,line cap=butt,miter limit=4.00,even odd     rule,line width=1.000pt] (538.6003,462.2972) .. controls (610.0276,528.5994)     and (758.3528,575.6560) .. (972.7993,434.7532);   \path[fill=black,line cap=butt,miter limit=4.00,nonzero rule,line width=1.000pt]     (692.6871,343.0872) ellipse (0.1113cm and 0.1298cm);   \path[fill=black,line cap=butt,miter limit=4.00,nonzero rule,line width=1.000pt]     (815.1752,454.3549) ellipse (0.1113cm and 0.1298cm);   \path[fill=c008000,line cap=butt,miter limit=4.00,nonzero rule,line     width=1.000pt] (804.8217,353.6562) ellipse (0.1113cm and 0.1298cm);   \path[fill=c008000,line cap=butt,miter limit=4.00,nonzero rule,line     width=1.000pt] (904.2622,318.9930) ellipse (0.1113cm and 0.1298cm);   \path[fill=c063cb8,line cap=butt,miter limit=4.00,fill opacity=0.761,nonzero     rule,line width=1.000pt] (925.6949,288.6214) ellipse (0.1113cm and 0.1298cm);   \path[fill=c063cb8,line cap=butt,miter limit=4.00,fill opacity=0.761,nonzero     rule,line width=1.000pt] (878.0914,318.2865) ellipse (0.1113cm and 0.1298cm);   \path[fill=black,line cap=butt,miter limit=4.00,nonzero rule,line width=1.000pt]     (602.5463,296.6083) ellipse (0.1113cm and 0.1298cm);   \path[fill=black,line cap=butt,miter limit=4.00,nonzero rule,line width=1.000pt]     (761.6540,296.6083) ellipse (0.1113cm and 0.1298cm);   \path[fill=c008000,line cap=butt,miter limit=4.00,nonzero rule,line     width=1.000pt] (949.3326,349.9789) ellipse (0.1113cm and 0.1298cm);   \path[fill=c063cb8,line cap=butt,miter limit=4.00,fill opacity=0.761,nonzero     rule,line width=1.000pt] (693.3007,261.8608) ellipse (0.1113cm and 0.1298cm);   \path[fill=c063cb8,line cap=butt,miter limit=4.00,fill opacity=0.761,nonzero     rule,line width=1.000pt] (771.0492,256.3146) ellipse (0.1113cm and 0.1298cm);   \path[fill=black,line cap=butt,miter limit=4.00,nonzero rule,line width=1.000pt]     (722.2646,419.1436) ellipse (0.1113cm and 0.1298cm);   \path[fill=black,line cap=butt,miter limit=4.00,nonzero rule,line width=1.000pt]     (881.3723,419.1436) ellipse (0.1113cm and 0.1298cm);   \path[fill=c008000,line cap=butt,miter limit=4.00,nonzero rule,line     width=1.000pt] (933.8397,409.1338) ellipse (0.1113cm and 0.1298cm);   \path[fill=c063cb8,line cap=butt,miter limit=4.00,fill opacity=0.761,nonzero     rule,line width=1.000pt] (777.8076,387.2130) ellipse (0.1113cm and 0.1298cm);   \path[fill=c063cb8,line cap=butt,miter limit=4.00,fill opacity=0.761,nonzero     rule,line width=1.000pt] (906.2605,283.0752) ellipse (0.1113cm and 0.1298cm);   \path[fill=black,line cap=butt,miter limit=4.00,nonzero rule,line width=1.000pt]     (699.7295,492.3830) ellipse (0.1113cm and 0.1298cm);   \path[fill=black,line cap=butt,miter limit=4.00,nonzero rule,line width=1.000pt]     (916.5836,329.0027) ellipse (0.1113cm and 0.1298cm);   \path[fill=c008000,line cap=butt,miter limit=4.00,nonzero rule,line     width=1.000pt] (960.6002,296.4577) ellipse (0.1113cm and 0.1298cm);   \path[fill=c063cb8,line cap=butt,miter limit=4.00,fill opacity=0.761,nonzero     rule,line width=1.000pt] (798.9343,290.0299) ellipse (0.1113cm and 0.1298cm);   \path[fill=c063cb8,line cap=butt,miter limit=4.00,fill opacity=0.761,nonzero     rule,line width=1.000pt] (942.8801,304.2020) ellipse (0.1113cm and 0.1298cm);   \path[fill=black,line cap=butt,miter limit=4.00,nonzero rule,line width=1.000pt]     (879.9639,365.6224) ellipse (0.1113cm and 0.1298cm);   \path[fill=c063cb8,line cap=butt,miter limit=4.00,fill opacity=0.761,nonzero     rule,line width=1.000pt] (941.1879,333.6919) ellipse (0.1113cm and 0.1298cm);   \path[fill=c008000,line cap=butt,miter limit=4.00,nonzero rule,line     width=1.000pt] (885.9524,268.2887) ellipse (0.1113cm and 0.1298cm);   \path[fill=black,line cap=butt,miter limit=4.00,nonzero rule,line width=1.000pt]     (854.6118,354.3549) ellipse (0.1113cm and 0.1298cm);   \path[fill=c008000,line cap=butt,miter limit=4.00,nonzero rule,line     width=1.000pt] (928.2059,354.2042) ellipse (0.1113cm and 0.1298cm);   \path[fill=c063cb8,line cap=butt,miter limit=4.00,fill opacity=0.761,nonzero     rule,line width=1.000pt] (949.6386,323.8327) ellipse (0.1113cm and 0.1298cm);   \path[fill=c063cb8,line cap=butt,miter limit=4.00,fill opacity=0.761,nonzero     rule,line width=1.000pt] (902.0350,353.4978) ellipse (0.1113cm and 0.1298cm);   \path[fill=c008000,line cap=butt,miter limit=4.00,nonzero rule,line     width=1.000pt] (890.1777,289.4155) ellipse (0.1113cm and 0.1298cm);   \path[fill=c063cb8,line cap=butt,miter limit=4.00,fill opacity=0.761,nonzero     rule,line width=1.000pt] (930.2041,318.2865) ellipse (0.1113cm and 0.1298cm);   \path[fill=black,line cap=butt,miter limit=4.00,nonzero rule,line width=1.000pt]     (972.9216,314.9182) ellipse (0.1113cm and 0.1298cm);   \path[fill=c063cb8,line cap=butt,miter limit=4.00,fill opacity=0.761,nonzero     rule,line width=1.000pt] (979.2161,350.5933) ellipse (0.1113cm and 0.1298cm);   \path[fill=c063cb8,line cap=butt,miter limit=4.00,fill opacity=0.761,nonzero     rule,line width=1.000pt] (966.8238,339.4133) ellipse (0.1113cm and 0.1298cm);   \path[fill=c063cb8,line cap=butt,miter limit=4.00,fill opacity=0.761,nonzero     rule,line width=1.000pt] (967.9485,363.2693) ellipse (0.1113cm and 0.1298cm);   \path[fill=c008000,line cap=butt,miter limit=4.00,nonzero rule,line     width=1.000pt] (909.8960,303.5000) ellipse (0.1113cm and 0.1298cm);   \path[fill=black,line cap=butt,miter limit=4.00,nonzero rule,line width=1.000pt]     (737.7576,268.4393) ellipse (0.1113cm and 0.1298cm);   \path[fill=c008000,line cap=butt,miter limit=4.00,nonzero rule,line     width=1.000pt] (849.8921,279.0083) ellipse (0.1113cm and 0.1298cm);   \path[fill=black,line cap=butt,miter limit=4.00,nonzero rule,line width=1.000pt]     (767.3351,344.4957) ellipse (0.1113cm and 0.1298cm);   \path[fill=black,line cap=butt,miter limit=4.00,nonzero rule,line width=1.000pt]     (926.4428,344.4957) ellipse (0.1113cm and 0.1298cm);   \path[fill=c063cb8,line cap=butt,miter limit=4.00,fill opacity=0.761,nonzero     rule,line width=1.000pt] (822.8781,312.5651) ellipse (0.1113cm and 0.1298cm);   \path[fill=black,line cap=butt,miter limit=4.00,nonzero rule,line width=1.000pt]     (925.0343,290.9746) ellipse (0.1113cm and 0.1298cm);   \path[fill=c063cb8,line cap=butt,miter limit=4.00,fill opacity=0.761,nonzero     rule,line width=1.000pt] (910.2020,268.9031) ellipse (0.1113cm and 0.1298cm);   \path[fill=black,line cap=butt,miter limit=4.00,nonzero rule,line width=1.000pt]     (899.6821,279.7069) ellipse (0.1113cm and 0.1298cm);   \path[fill=c063cb8,line cap=butt,miter limit=4.00,fill opacity=0.761,nonzero     rule,line width=1.000pt] (867.9485,250.5933) ellipse (0.1113cm and 0.1298cm);   \path[fill=black,line cap=butt,miter limit=4.00,nonzero rule,line width=1.000pt]     (795.5040,417.7351) ellipse (0.1113cm and 0.1298cm);   \path[fill=black,line cap=butt,miter limit=4.00,nonzero rule,line width=1.000pt]     (954.6118,417.7351) ellipse (0.1113cm and 0.1298cm);   \path[fill=c063cb8,line cap=butt,miter limit=4.00,fill opacity=0.761,nonzero     rule,line width=1.000pt] (851.0471,385.8046) ellipse (0.1113cm and 0.1298cm);   \path[fill=black,line cap=butt,miter limit=4.00,nonzero rule,line width=1.000pt]     (953.2034,364.2140) ellipse (0.1113cm and 0.1298cm);
\end{tikzpicture}
\centering{}\caption{Centroid cuts vs Simulated annealing for optimization}
\end{figure}
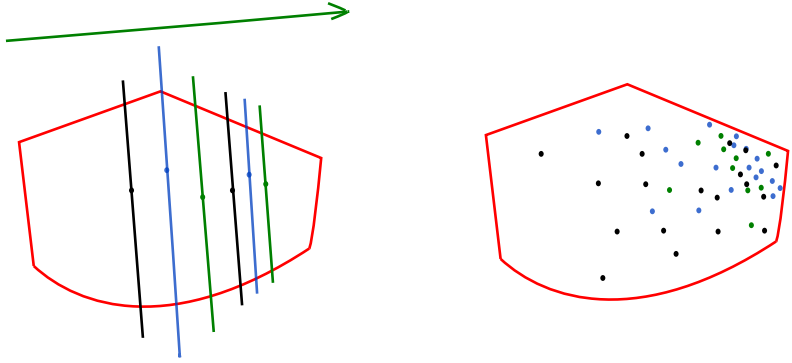

The method can be viewed as a special case of a more general family
of algorithms referred to as the interior-point method \cite{NN1984}.
The latter method works by minimizing the sum of the desired objective
function with a diminishing multiple of a smooth convex function which
blows up at the boundary. Thus the optimum points to this modified
objective for any positive value of the multiplier are in the interior
of the convex body. The path followed by the method as a function
of the multiplier is called the central path. There are several interesting
choices of the smooth convex functions, including the logarithmic
barrier, volumetric barrier, universal barrier and entropic barrier.
The last two of these achieve the optimal rate in terms of the dimension
for arbitrary convex bodies. This rate, i.e., number of steps to reduce
the distance to optimality by a constant factor, is $O(\sqrt{n})$,
the same as for simulated annealing. This is not a coincidence \textemdash{}
the path of the centroid as a function of $\alpha$ corresponds exactly
to the central path taken by the interior-point method using the entropic
barrier \cite{bubeck2014entropic,abernethy2016faster}. The universal
and entropic barriers are not the most efficient for the important
case of linear programming \textemdash{} each step can be implemented
in polynomial time, but is $n^{4}$ or higher. For a linear program
given by $m$ inequalities, the logarithmic barrier uses $\widetilde{O}(\sqrt{m})$
phases, with each phase requiring the solution of a linear system.
Lee and Sidford have proposed a barrier that takes only $\widetilde{O}(\sqrt{n})$
iterations and gives the currest fastest method for solving linear
programs \cite{lee2014path,lee2015efficient}.

Optimization based on sampling has various robustness properties (e.g.,
it can be applied to stochastic optimization \cite{belloni2015escaping,FeldmanGV17}
and regret minimization \cite{narayanan2010random,bubeck2017kernel}),
and continues to be an active research topic. 

\subsubsection{Sampling to Volume computation and Integration}

\begin{figure}
\centering{}\begin{tikzpicture}[y=0.80pt, x=0.80pt, yscale=-0.300000, xscale=0.300000, inner sep=0pt, outer sep=0pt]   \path[draw=cff0000,line join=miter,line cap=butt,miter limit=4.00,even odd     rule,line width=1.000pt] (293.0305,204.8597) .. controls (293.0305,204.8597)     and (293.0305,204.8597) .. (121.8612,67.6541) .. controls (121.8612,67.6541)     and (124.5357,67.6541) .. (-220.4774,199.5826) .. controls     (-188.3831,389.5597) and (-188.3831,389.5597) .. (-188.3831,389.5597);   \path[draw=cff0000,line join=miter,line cap=butt,miter limit=4.00,even odd     rule,line width=1.000pt] (-188.3831,389.5597) .. controls (-8.0223,511.0899)     and (148.2237,420.0602) .. (293.0305,204.8597);   \path[draw=c063cb8,line cap=butt,miter limit=4.00,draw opacity=0.761,fill     opacity=0.761,nonzero rule,line width=1.000pt] (17.5369,281.4858) ellipse     (1.4757cm and 1.3246cm);   \path[draw=c063cb8,dash pattern=on 5.36pt off 5.36pt,line cap=butt,miter     limit=4.00,draw opacity=0.761,fill opacity=0.761,nonzero rule,line     width=1.000pt] (16.3877,278.8490) ellipse (2.4811cm and 2.2623cm);   \path[draw=c063cb8,dash pattern=on 5.36pt off 5.36pt,line cap=butt,miter     limit=4.00,draw opacity=0.761,fill opacity=0.761,nonzero rule,line     width=1.000pt] (17.5370,278.3215) ellipse (3.5514cm and 3.2595cm);   \path[draw=c063cb8,dash pattern=on 5.36pt off 5.36pt,line cap=butt,miter     limit=4.00,draw opacity=0.761,fill opacity=0.761,nonzero rule,line     width=1.000pt] (17.5369,279.3763) ellipse (4.8811cm and 4.3609cm);   \begin{scope}[cm={{0.79837,0.24132,-0.54061,0.35637,(702.59085,103.82937)}},draw=c063cb8,draw opacity=0.761,transparency group]     \path[draw=c063cb8,line join=miter,line cap=butt,miter limit=4.00,draw       opacity=0.761,even odd rule,line width=1.000pt] (677.9334,367.9549) ..       controls (677.9334,367.9549) and (677.9334,367.9549) .. (481.9488,196.7899) ..       controls (481.9488,196.7899) and (485.0110,196.7899) .. (89.9794,361.3717) ..       controls (126.7265,598.3694) and (126.7265,598.3694) .. (126.7265,598.3694);     \path[draw=c063cb8,line join=miter,line cap=butt,miter limit=4.00,draw       opacity=0.761,even odd rule,line width=1.000pt] (126.7265,598.3694) ..       controls (333.2353,749.9792) and (512.1331,636.4190) .. (677.9334,367.9549);     \path[draw=c008000,line cap=butt,miter limit=4.00,fill opacity=0.761,nonzero       rule,line width=1.000pt] (362.5000,463.5464) ellipse (1.6896cm and 1.6525cm);   \end{scope}   \path[draw=c008000,line join=miter,line cap=butt,miter limit=4.00,even odd     rule,line width=1.000pt] (567.7194,351.6717) .. controls (691.9218,393.7032)     and (723.3844,-201.4376) .. (844.1567,318.0027) .. controls     (865.8678,397.4897) and (929.7863,384.7063) .. (929.7863,384.7063);   \path[draw=c063cb8,dash pattern=on 6.40pt off 6.40pt,line join=miter,line     cap=butt,miter limit=4.00,draw opacity=0.761,even odd rule,line width=1.000pt]     (532.0563,336.7374) .. controls (793.1329,25.0108) and (802.9621,228.2421) ..     (968.7946,337.0959);
\end{tikzpicture}\caption{DFK vs Simulated Annealing}
\end{figure}
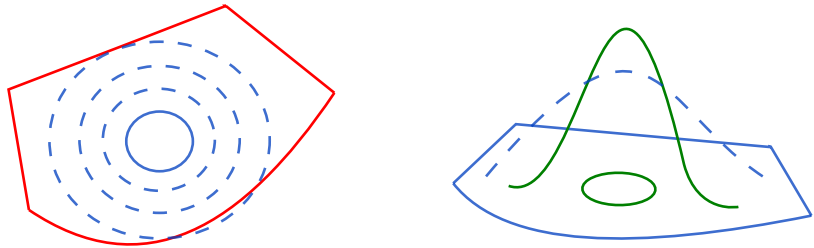

Sampling is the core of efficient volume computation and integration.
The main idea for the latter problems is to sample a sequence of logconcave
distributions, starting with one that is easy to integrate and ending
with the function whose integral is desired. This process, known as
\emph{simulated annealing} can be expressed as the following telescoping
product:

\[
\int_{\R^{n}}f=\int f_{0}\frac{\int f_{i}}{\int f_{0}}\frac{\int f_{2}}{\int f_{3}}\ldots\frac{\int f_{m}}{\int f_{m-1}}
\]
where $f_{m}=f$. Each ratio $\int f_{i+1}/\int f_{i}$ is the expectation
of the estimator $Y=\frac{f_{i+1}(X)}{f_{i}(X)}$ for $X$ drawn from
the density proportional to $f_{i}$. What is the optimal sequence
of interpolating functions to use? The celebrated polynomial-time
algorithm of Dyer, Frieze and Kannan \cite{DyerFK89} used the uniform
distribution on a sequence of convex bodies, starting with the ball
contained inside the input body $K$. Each body in the sequence is
a ball intersected with the given convex body $K$: $K_{i}=2^{\frac{i}{n}}rB\cap K$.
The length of the sequence is $m=O(n\log\frac{R}{r})$ so that the
final body is just $K$. A variance bound shows that $O(m/\epsilon^{2}$)
samples per distribution suffice to get an overall $1+\epsilon$ multiplicative
error approximation with high probability. The total number of samples
is $O^{*}(m^{2})=O^{*}(n^{2})$ and the complexity of the resulting
algorithm is $O^{*}(n^{5})$ as shown in \cite{KLS97}. Table \ref{tab:volume}
below summarizes progress on the volume problem over the past three
decades. Besides improving the complexity of volume computation, each
step has typically resulted in new techniques. For more details, we
refer the reader to surveys on the topic \cite{Simonovits03,VemSurvey}.

\begin{table}[h]
\centering{}%
\begin{tabular}{|c|l|c|}
\hline 
Year/Authors  & New ingredients & Steps\tabularnewline
\hline 
\hline 
1989/Dyer-Frieze-Kannan \cite{DyerFK89} & Everything & $n^{23}$\tabularnewline
\hline 
1990/Lovász-Simonovits \cite{LS90} & Better isoperimetry & $n^{16}$\tabularnewline
\hline 
1990/Lovász \cite{L90} & Ball walk & $n^{10}$\tabularnewline
\hline 
1991/Applegate-Kannan \cite{ApplegateK91} & Logconcave sampling & $n^{10}$\tabularnewline
\hline 
1990/Dyer-Frieze \cite{DyerF90} & Better error analysis & $n^{8}$\tabularnewline
\hline 
1993/Lovász-Simonovits \cite{LS93} & Localization lemma & $n^{7}$\tabularnewline
\hline 
1997/Kannan-Lovász-Simonovits \cite{KLS97} & Speedy walk, isotropy & $n^{5}$\tabularnewline
\hline 
2003/Lovász-Vempala \cite{LV2} & Annealing, hit-and-run & $n^{4}$\tabularnewline
\hline 
2015/Cousins-Vempala \cite{CV2015} (well-rounded) & Gaussian Cooling & $n^{3}$\tabularnewline
\hline 
2017/Lee-Vempala (polytopes) & Hamiltonian Walk & $mn^{\frac{2}{3}}$\tabularnewline
\hline 
\end{tabular}\caption{\label{tab:volume} The complexity of volume estimation, each step
uses $\widetilde{O}(n)$ bit of randomness. }
\end{table}

Lovász and Vempala \cite{LV2} improved on \cite{KLS97} by sampling
from a sequence of nonuniform distributions. The densities in the
sequence have the form $f_{i}(x)\propto\exp(-\alpha_{i}\norm x)\chi_{K}(x)$
or $f_{i}(x)\propto\exp(-\alpha_{i}\norm x^{2})\chi_{K}(x)$. Then
the ratio of two consecutive integrals is the expectation of the following
estimator:

\[
Y=\frac{f_{i+1}(X)}{f_{i}(X)}.
\]
They showed that the coefficient $\alpha_{i}$ (inverse ``temperature'')
can be changed by a factor of $(1+\frac{1}{\sqrt{n}})$, which implies
that $m=\widetilde{O}(\sqrt{n})$ phases suffice, and the total number
of samples is only $O^{*}(n)$. This is perhaps surprising since the
ratio of the initial integral to the final is typically $n^{\Omega(n)}$.
Even though the algorithm uses only $\widetilde{O}(\sqrt{n})$ phases,
and hence estimates a ratio of $n^{\tilde{\Omega}(\sqrt{n})}$ in
one or more phases, the variance of the estimator is bounded in every
phase. 
\begin{thm}[\cite{LV2}]
The volume of a convex body in $\R^{n}$ (given by a membership oracle)
can be computed to relative error $\varepsilon$ using $\widetilde{O}(n^{4}/\varepsilon^{2})$
oracle queries and $\widetilde{O}(n^{2})$ arithmetic operations per
query.
\end{thm}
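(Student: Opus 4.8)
The plan is to realize this by \emph{simulated annealing} along a sequence of logconcave densities, as sketched above. Translate $K$ so that $rB_{n}\subseteq K$, set $f_{i}(x)\propto e^{-\alpha_{i}\norm x^{2}}\chi_{K}(x)$, and write $Z(\alpha)\defeq\int_{K}e^{-\alpha\norm x^{2}}dx$ for the (unnormalized) partition function and $\mu_{i}=f_{i}/Z(\alpha_{i})$. Choose $\alpha_{0}=\widetilde{\Theta}(n/r^{2})$ large enough that restriction to $K$ costs only an $\varepsilon$-fraction of the Gaussian mass, so $Z(\alpha_{0})=(\pi/\alpha_{0})^{n/2}(1\pm\varepsilon)$ is known in closed form; let $\alpha$ decrease geometrically down to $\alpha_{m}\asymp\varepsilon/R^{2}$, small enough that $e^{-\alpha_{m}\norm x^{2}}\in[1-\varepsilon,1]$ on $K\subseteq RB_{n}$, so $Z(\alpha_{m})=\vol(K)(1\pm\varepsilon)$. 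The telescoping identity expresses $Z(\alpha_{m})/Z(\alpha_{0})$ as the product of the ratios $Z(\alpha_{i+1})/Z(\alpha_{i})$, and each ratio equals $\E_{\mu_{i}}Y_{i}$ for the estimator $Y_{i}=f_{i+1}(X)/f_{i}(X)=e^{-(\alpha_{i+1}-\alpha_{i})\norm X^{2}}$. It then remains to (i) fix the schedule and bound the variance of each $Y_{i}$, (ii) convert the variance bounds into a sample count, and (iii) produce the samples.

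\textbf{The schedule and the variance lemma (the crux).} A direct computation gives $\E_{\mu_{i}}Y_{i}=Z(\alpha_{i+1})/Z(\alpha_{i})$ and $\E_{\mu_{i}}Y_{i}^{2}=Z(2\alpha_{i+1}-\alpha_{i})/Z(\alpha_{i})$, so the relative second moment is $\exp\big(g(2\alpha_{i+1}-\alpha_{i})+g(\alpha_{i})-2g(\alpha_{i+1})\big)$ with $g=\log Z$, evaluated at two points whose midpoint is $\alpha_{i+1}$. Since $Z$ is a Laplace transform of a positive measure, $g$ is convex (Hölder's inequality), so this exponent is non-negative and, by Taylor, at most $(\alpha_{i+1}-\alpha_{i})^{2}\sup g''$ over the relevant interval; and $g''(\beta)=\Var_{\mu_{\beta}}(\norm X^{2})$. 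The heart of the proof is the bound $\Var_{\mu_{\beta}}(\beta\norm X^{2})\le n/2$, equivalently that $\beta\mapsto\beta^{n/2}Z(\beta)$ is log-concave: writing $\beta^{n/2}Z(\beta)=\int_{\R^{n}}e^{-\norm y^{2}}\chi_{K}(y/\sqrt\beta)\,dy$ after the substitution $y=\sqrt\beta\,x$, log-concavity in $\beta$ follows from the Prékopa--Leindler inequality, using convexity of $\norm{\cdot}^{2}$ and the fact that $0\in K$ (which lets one combine a point of the $\tfrac1{\sqrt a}$-scaled copy of $K$, a point of the $\tfrac1{\sqrt b}$-scaled copy, and the origin into a point of $K$). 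Granting this, a step of relative size $|\alpha_{i+1}-\alpha_{i}|\asymp\alpha_{i}/\sqrt n$ makes the exponent $O(1)$, hence $\E_{\mu_{i}}Y_{i}^{2}\le C(\E_{\mu_{i}}Y_{i})^{2}$; and moving $\alpha$ from $\alpha_{0}$ to $\alpha_{m}$ takes $m=O(\sqrt n\log\tfrac{nR}{r\varepsilon})=\widetilde{O}(\sqrt n)$ phases. Establishing the log-concavity of $\beta^{n/2}Z(\beta)$ is the main obstacle; it is exactly what makes a temperature change of $1+1/\sqrt n$ (rather than the naive $1+1/n$) admissible, and hence what yields the $\widetilde{O}(\sqrt n)$-length schedule responsible for the total sample count being $\widetilde{O}(n)$ rather than $\widetilde{O}(n^{2})$.

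\textbf{From variances to sample complexity.} Draw $k$ samples from each $\mu_{i}$ and let $W_{i}$ be the empirical mean of $Y_{i}$; then $\prod_{i}W_{i}$ estimates $Z(\alpha_{m})/Z(\alpha_{0})$ with relative variance at most $\prod_{i}\big(1+\tfrac{C}{k}\big)-1$, which is $O(\varepsilon^{2})$ once $k=O(m/\varepsilon^{2})$; Chebyshev gives a $(1\pm\varepsilon)$-estimate with constant probability, amplified by a median of independent runs. Multiplying by $Z(\alpha_{0})$ and accounting for the two $(1\pm\varepsilon)$ factors at the ends yields a $(1\pm O(\varepsilon))$-estimate of $\vol(K)$. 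The total number of samples is $mk=O(m^{2}/\varepsilon^{2})=\widetilde{O}(n/\varepsilon^{2})$.

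\textbf{Producing the samples.} Each $\mu_{i}$ is logconcave (a spherical Gaussian times an indicator, Lemma~\ref{lem:marginal}) and is sampled by a rapidly mixing walk such as hit-and-run. Consecutive densities are mutually $O(1)$-warm: the $\chi^{2}$-divergence of $\mu_{i}$ from $\mu_{i+1}$ equals $\exp\big(g(\alpha_{i+1})+g(2\alpha_{i}-\alpha_{i+1})-2g(\alpha_{i})\big)$, again a midpoint expression bounded by the \emph{same} log-concavity estimate, so a single sample from $\mu_{i}$ is a warm start for the chain targeting $\mu_{i+1}$. After an initial rounding bringing $K$ to near-isotropic position (standard, and within the stated budget), every $\mu_{i}$ is well-rounded --- the spherical factor only shortens axes --- so from an $O(1)$-warm start hit-and-run mixes in $\widetilde{O}(n^{3})$ steps, each step costing one membership query (up to a logarithmic number for the one-dimensional search) and $\widetilde{O}(n^{2})$ arithmetic operations for the direction sampling, the binary search, and the maintenance of the running isotropic transformation. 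Multiplying, the algorithm uses $\widetilde{O}(n/\varepsilon^{2})\cdot\widetilde{O}(n^{3})=\widetilde{O}(n^{4}/\varepsilon^{2})$ oracle queries and $\widetilde{O}(n^{2})$ arithmetic operations per query. The residual bookkeeping --- that the Markov chain samples are only approximately $\mu_{i}$-distributed (controlled by running each chain slightly longer), that the $\varepsilon$-errors across the $O(\sqrt n)$ phases and the two endpoints compound to $O(\varepsilon)$, and that all failure probabilities are polynomially small --- is routine.
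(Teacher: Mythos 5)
Your proposal is correct and follows the same high-level plan the paper (and [LV2]) uses: an annealing schedule over $\widetilde{O}(\sqrt n)$ densities, a variance bound per phase giving $\widetilde{O}(n/\varepsilon^{2})$ total samples, and hit-and-run with $\widetilde{O}(n^{3})$ mixing per sample after an $\widetilde{O}(n^{4})$ rounding step. The one implementation difference is that you use Gaussian annealing and prove log-concavity of $\beta^{n/2}Z(\beta)$, whereas [LV2] used the exponential/pencil construction with the analogous log-concavity of $a^{n+1}\int e^{-at}$ — but the paper explicitly allows either density, the Prékopa--Leindler argument is the same in both cases, and your version of it (Cauchy--Schwarz on the coefficients plus $0\in K$) is correct.
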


The LV algorithm has two parts. In the first it finds a transformation
that puts the body in near-isotropic position. The complexity of this
part is $\widetilde{O}(n^{4})$. In the second part, it runs the annealing
schedule, while maintaining that the distribution being sampled is
\emph{well-rounded}, a weaker condition than isotropy. Well-roundedness
requires that a level set of measure $\frac{1}{8}$ contains a constant-radius
ball and the trace of the covariance (expected squared distance of
a random point from the mean) to be bounded by $O(n)$, so that $R/r$
is effectively $O(\sqrt{n})$. To achieve the complexity guarantee
for the second phase, it suffices to use the KLS bound of $\psi_{p}\gtrsim n^{-\frac{1}{2}}$.
Connecting improvements in the Cheeger constant directly to the complexity
of volume computation is an open question. To apply improvements in
the Cheeger constant, one would need to replace well-roundedness with
(near-)isotropy and maintain that. However, maintaining isotropy appears
to be much harder \textemdash{} possibly requiring a sequence of $\Omega(n)$
distributions and $\Omega(n$) samples from each, providing no gain
over the current complexity of $O^{*}(n^{4})$ even if the KLS conjecture
turns out to be true.

Cousins and Vempala \cite{CV2015} gave a faster algorithm for well-rounded
convex bodies (any isotropic logconcave density satisfies $\frac{R}{r}=O(\sqrt{n})$
and is well-rounded). Their algorithm, called Gaussian cooling, is
significantly simpler, crucially utilizes the fact that the KLS conjecture
holds for a Gaussian density restricted by any convex body (Theorem
\ref{thm:Gaussian-iso}), and completely avoids computing an isotropic
transformation.
\begin{thm}[\cite{CV2015}]
The volume of a well-rounded convex body, i.e., with $R/r=O^{*}(\sqrt{n})$,
can be computed using $O^{*}(n^{3})$ oracle calls.
\end{thm}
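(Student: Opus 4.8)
The plan is to implement the \emph{Gaussian cooling} scheme. Translating so that $rB_{n}\subseteq K\subseteq RB_{n}$ with $0$ the center of the inscribed ball, introduce the truncated Gaussians
\[
f_{\sigma}(x)=e^{-\norm{x}^{2}/(2\sigma^{2})}\,\chi_{K}(x),\qquad\sigma_{0}<\sigma_{1}<\cdots<\sigma_{m},
\]
and express the volume as the telescoping product
\[
\vol(K)=\Bigl(\int f_{\sigma_{0}}\Bigr)\cdot\prod_{i=0}^{m-1}\frac{\int f_{\sigma_{i+1}}}{\int f_{\sigma_{i}}}\cdot\frac{\int_{K}1}{\int f_{\sigma_{m}}},
\]
estimating each factor $\int f_{\sigma_{i+1}}/\int f_{\sigma_{i}}$ as the empirical mean of $Y_{i}=f_{\sigma_{i+1}}(X)/f_{\sigma_{i}}(X)$ for $X$ drawn from $f_{\sigma_{i}}$. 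Choose $\sigma_{0}^{2}=\Theta^{*}(r^{2}/n)$ so that $f_{\sigma_{0}}$ is, up to a $(1\pm o(1))$ factor, an untruncated Gaussian concentrated inside $rB_{n}\subseteq K$, making $\int f_{\sigma_{0}}=(1\pm o(1))(2\pi\sigma_{0}^{2})^{n/2}$ explicit; and $\sigma_{m}^{2}=\Theta(R^{2})$ so that, since $K\subseteq RB_{n}$, the weight $e^{-\norm{x}^{2}/(2\sigma_{m}^{2})}$ lies in $[\Omega(1),1]$ on $K$ and the last factor $\int_{K}1/\int f_{\sigma_{m}}=\E_{X\sim f_{\sigma_{m}}}\!\bigl[e^{\norm{X}^{2}/(2\sigma_{m}^{2})}\bigr]$ has $O(1)$ relative variance.

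First I would fix the cooling rate and the sample budget. A direct Gaussian moment computation --- which the convex truncation only improves, since $K/\sigma$ shrinks toward $0$ as $\sigma$ grows --- shows that with $\sigma_{i+1}^{2}=(1+\varepsilon_{i})\sigma_{i}^{2}$ one has $\E[Y_{i}^{2}]/\E[Y_{i}]^{2}=\exp\!\bigl(O(\varepsilon_{i}^{2}\min\{n,R^{4}/\sigma_{i}^{4}\})\bigr)$, and the warm-start ratio (the parameter $M$ of Theorem~\ref{thm:cheeger} between the normalized $f_{\sigma_{i}}$ and $f_{\sigma_{i+1}}$) is bounded by $\exp(O(\varepsilon_{i}n))$. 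Taking each $\varepsilon_{i}$ of order $1/n$ (up to logarithms) --- small enough that the per-phase relative variance is $1+O(1/m)$ and the warm-start ratio is $O(1)$ --- and using well-roundedness to bound $\sigma_{m}^{2}/\sigma_{0}^{2}=O^{*}(nR^{2}/r^{2})=O^{*}(n^{2})$, one gets a schedule of $m=O^{*}(n)$ phases; a telescoping-variance argument as in Lov\'asz--Vempala then shows that $O^{*}(n/\epsilon^{2})$ sample points in total, spread over the phases, suffice for a $(1\pm\epsilon)$ estimate of $\vol(K)$. A bound on $R/r$ is essential here: without it the schedule length and the accumulated variance cannot be controlled.

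The heart of the argument --- and the step I expect to be hardest --- is the cost of drawing one sample. In phase $i$ I would run the ball walk with Metropolis weight $e^{-\norm{x}^{2}/(2\sigma_{i}^{2})}$ and step size $\delta_{i}=\Theta(\sigma_{i}/\sqrt{n})$, started from the point produced in phase $i-1$, which by the previous paragraph is a warm start with $M=O(1)$. The decisive input is Theorem~\ref{thm:Gaussian-iso}: because $f_{\sigma_{i}}$ is a Gaussian restricted to a convex body, its (Euclidean) Cheeger constant satisfies $\psi_{f_{\sigma_{i}}}\gtrsim 1/\sigma_{i}$ --- a factor $\sqrt{n}$ better than the classical logconcave bound $\psi_{f_{\sigma_{i}}}\gtrsim 1/\sqrt{\tr\cov(f_{\sigma_{i}})}\gtrsim 1/(\sigma_{i}\sqrt{n})$, and this gap is exactly what separates the target $O^{*}(n^{3})$ from the $O^{*}(n^{4})$ one would otherwise obtain. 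Feeding $\psi_{f_{\sigma_{i}}}\gtrsim 1/\sigma_{i}$ together with the local one-step-overlap length $\Theta(\delta_{i}/\sqrt{n})$ of a logconcave density into the standard conductance estimate $\phi_{i}\gtrsim\psi_{f_{\sigma_{i}}}\cdot\delta_{i}/\sqrt{n}$, the two powers of $\sigma_{i}$ cancel, giving $\phi_{i}\gtrsim\Theta^{*}(1/n)$; by Theorem~\ref{thm:cheeger} the walk then mixes in $O^{*}(n^{2})$ steps, \emph{uniformly in $i$}, with no isotropic transformation computed anywhere. Making this rigorous has two obstacles. (i) One must show the ball walk keeps $\Omega(1)$ average one-step overlap even near $\partial K$ --- i.e.\ that $f_{\sigma_{i}}$ does not concentrate within distance $\delta_{i}$ of the boundary --- which uses the logconcavity of $f_{\sigma_{i}}$ and the fact that $\delta_{i}\le\Theta^{*}(1)\lesssim r$. (ii) One must handle the hot end $\sigma_{i}\asymp R$, where $f_{\sigma_{i}}$ is within a constant factor of the uniform density on $K$ and only the weak Cheeger bound is available for that uniform density: there $\psi_{f_{\sigma_{i}}}\gtrsim 1/\sigma_{i}\asymp 1/R=\Omega^{*}(1/\sqrt{n})$ still holds and the walk still mixes in $O^{*}(n^{2})$ steps with $\delta_{i}=\Theta^{*}(1)$, but one must stop the cooling exactly at $\sigma_{m}=\Theta(R)$ --- pushing $\sigma$ further degrades the bound, stopping earlier spoils the bounded variance of the final factor --- and it is precisely well-roundedness, $R=O^{*}(\sqrt{n})$, that makes these two requirements compatible. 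Multiplying out, the total cost is $O^{*}(n/\epsilon^{2})$ samples, each costing $O^{*}(n^{2})$ oracle calls, i.e.\ $O^{*}(n^{3})$ in all.
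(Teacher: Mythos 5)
Your proposal is correct and reproduces the Cousins--Vempala argument in all its essentials: the telescoping sequence of truncated Gaussians from $\sigma_{0}^{2}\approx r^{2}/n$ up to $\sigma_{m}^{2}\approx R^{2}$, the decisive use of Theorem~\ref{thm:Gaussian-iso} to get a per-sample mixing time of $O^{*}(n^{2})$ uniformly across phases without ever computing an isotropy transformation, and well-roundedness to control both the schedule length and the variance at the hot end. The one place you depart from the cited proof is the cooling rate: CV anneal at $(1+\Theta(1/\sqrt{n}))$ (accelerating once $\sigma^{2}\ge 1$, using that the effective dimension drops to $\min\{n,R^{4}/\sigma^{4}\}$), giving $O^{*}(\sqrt{n})$ phases and therefore needing an $L^{2}$/chi-square warm-start bound in place of the $L^{\infty}$ parameter $M$ of Theorem~\ref{thm:cheeger}, whereas your finer choice $\varepsilon_{i}=\Theta(1/n)$ keeps $M=O(1)$ so that Theorem~\ref{thm:cheeger} applies verbatim, at the cost of $O^{*}(n)$ phases with $O(1/\epsilon^{2})$ samples each --- the total budget $O^{*}(n/\epsilon^{2})$ samples $\times$ $O^{*}(n^{2})$ steps $=O^{*}(n^{3})$ comes out identical.
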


We note that the covariance matrix of any logconcave density can be
computed efficiently from only a linear in the dimension number of
samples. This question of sample complexity was also motivated by
the study of volume computation.
\begin{thm}[\cite{Adamczak2010,adamczak2011sharp,srivastava2013covariance}]
Let $Y=\frac{1}{m}\sum_{i=1}^{m}X_{i}X_{i}^{\top}$ where $X_{1},\ldots,X_{m}$
are drawn from an isotropic logconcave density $p$. If $m\gtrsim\frac{n}{\varepsilon^{2}}$,
then $\norm{Y-I}_{\op}\le\varepsilon$ with high probability.
\end{thm}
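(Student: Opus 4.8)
The plan is to convert the operator-norm bound into a uniform deviation estimate over the sphere, and then to control that estimate by a truncation argument anchored by a large-deviation bound for the Euclidean norm. By isotropy, $\E\langle X,\theta\rangle^{2}=1$, and so
\[
\norm{Y-I}_{\op}=\sup_{\theta\in S^{n-1}}\left|\frac1m\sum_{i=1}^{m}\langle X_{i},\theta\rangle^{2}-1\right|.
\]
Fixing a $\tfrac14$-net $\mathcal N$ of $S^{n-1}$ with $|\mathcal N|\le 9^{n}$ and comparing the quadratic form on $\mathcal N$ with its supremum, it suffices to bound $\bigl|\tfrac1m\sum_i\langle X_i,\theta\rangle^{2}-1\bigr|$ by $\varepsilon/C$ for each fixed $\theta\in\mathcal N$, paying a union-bound factor $9^{n}$.

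For a single direction, Lemma~\ref{lem:marginal} says $\langle X,\theta\rangle$ is a one-dimensional logconcave variable of variance $1$, and every such variable is sub-exponential, $\P(|\langle X,\theta\rangle|>t)\le e^{-t/C}$ (this is the one-dimensional case and follows from the $\Theta(1)$ isoperimetry of variance-one logconcave densities noted above); hence $\langle X,\theta\rangle^{2}$ has only $\psi_{1/2}$-type tails. Here the real difficulty surfaces. A Bernstein inequality for the sum of $m$ such variables, or matrix Bernstein applied directly to $\tfrac1m\sum X_iX_i^{\top}$, only yields the conclusion when $m\gtrsim n\log n/\varepsilon^{2}$; and a bare net-plus-Bernstein argument fails outright when $\varepsilon$ is a constant and $n\to\infty$, because the exponent governing a $\psi_{1/2}$ sum of $m$ terms at deviation $\varepsilon$ is of order $\sqrt{m\varepsilon}$ rather than $m\varepsilon^{2}$, so $\sqrt{m\varepsilon}\gtrsim n$ would force $m\gtrsim n^{2}/\varepsilon$. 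Removing this loss and reaching the optimal $m\asymp n/\varepsilon^{2}$ is the crux.

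The fix is to truncate at the natural scale $\sqrt n$: with $E_i=\{\norm{X_i}_2\le K\sqrt n\}$ for a large absolute constant $K$, split $X_iX_i^{\top}=X_iX_i^{\top}\mathbf 1_{E_i}+X_iX_i^{\top}\mathbf 1_{E_i^{c}}$. The tail part is controlled by the Paouris large-deviation inequality $\P(\norm{X}_2>K\sqrt n)\le e^{-cK\sqrt n}$: for $K$ large, the event that any $X_i$ is truncated has probability at most $m\,e^{-cK\sqrt n}$, and on it the crude estimate $\norm{\tfrac1m\sum_i X_iX_i^{\top}\mathbf 1_{E_i^{c}}}_{\op}\le\tfrac1m\sum_i\norm{X_i}_2^{2}\mathbf 1_{E_i^{c}}$ is again below $\varepsilon$ with overwhelming probability by the same inequality. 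For the main part the summands satisfy $\norm{X_iX_i^{\top}\mathbf 1_{E_i}}_{\op}\le K^{2}n$ while still having $\E[X_iX_i^{\top}\mathbf 1_{E_i}]\approx I$ and sub-exponential marginals; one then bounds $\E\sup_{\theta}\bigl|\tfrac1m\sum_i\langle X_i,\theta\rangle^{2}\mathbf 1_{E_i}-1\bigr|$ by a chaining argument --- Rudelson's symmetrization and non-commutative Khintchine step, sharpened using the $\psi_{1}$-geometry of the class $\{\langle\cdot,\theta\rangle:\theta\in S^{n-1}\}$ to kill the spurious $\log$ --- and upgrades it to a high-probability statement by Talagrand's concentration inequality for empirical processes. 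An alternative that avoids chaining is the iterative rank-one-correction argument of Srivastava and Vershynin, which repairs $Y$ one sample at a time.

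The main obstacle is precisely this tension: the heavy ($\psi_{1/2}$) tails of $\langle X,\theta\rangle^{2}$ keep both a bare net-plus-Bernstein bound and off-the-shelf matrix concentration from reaching $m\asymp n/\varepsilon^{2}$, so one must combine truncation at the $\sqrt n$ scale --- legitimised by the Paouris norm inequality --- with a chaining argument exploiting the sub-exponential geometry of linear functionals of a logconcave vector. Everything else (the net reduction, the one-dimensional tail estimate, and the expectation-to-probability upgrade) is routine.
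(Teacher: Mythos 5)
The paper does not prove this theorem; it is stated as a cited result from the references \cite{Adamczak2010,adamczak2011sharp,srivastava2013covariance}, so there is no ``paper's own proof'' to compare against. Judged on its own terms, your outline is a faithful summary of how the result is actually proved in those papers, and it correctly pinpoints the difficulty.

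Specifically, the following are all accurate. (1) The marginal $\langle X,\theta\rangle$ is a variance-one one-dimensional logconcave variable, hence $\psi_{1}$, so $\langle X,\theta\rangle^{2}$ is only $\psi_{1/2}$; for a sum of $m$ i.i.d.\ $\psi_{1/2}$ variables the deviation probability at level $m\varepsilon$ decays like $\exp(-c\min(m\varepsilon^{2},\sqrt{m\varepsilon}))$, and in the large-deviation regime the exponent $\sqrt{m\varepsilon}$ must beat the net cardinality $9^{n}$, forcing $m\gtrsim n^{2}/\varepsilon$. (2) Truncation at the scale $\sqrt{n}$ is the correct device, and it is precisely Paouris's norm inequality $\P(\|X\|_{2}>K\sqrt{n})\le e^{-cK\sqrt n}$ (for $K$ above an absolute constant) that legitimizes it; matrix Bernstein applied to the truncated summands with $\|X_iX_i^{\top}\mathbf 1_{E_i}\|_{\op}\le K^{2}n$ then gives $m\gtrsim n\log n/\varepsilon^{2}$, so the residual work is exactly to remove that $\log n$. (3) The two known ways to do that are the ones you name: the Adamczak--Litvak--Pajor--Tomczak-Jaegermann chaining argument, which uses Rudelson-type symmetrization and a majorizing-measures/$\gamma$-functional estimate adapted to $\psi_{1}$ tails, and the Srivastava--Vershynin iterative rank-one barrier argument, which avoids chaining entirely. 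The one thing to be clear-eyed about is that the step you describe as ``a chaining argument \dots\ sharpened to kill the spurious $\log$'' is the entire technical content of the ALPT papers; leaving it as a pointer is fine for a cited theorem, but it is not a step you could fill in in a few lines. Similarly your treatment of the truncated tail part silently assumes $m$ is at most, say, exponential in $\sqrt n$ so the union bound $m\,e^{-cK\sqrt n}$ is small; that holds for the regime $m\asymp n/\varepsilon^{2}$ that the theorem is about, but is worth stating.
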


\section{Proof techniques}

Classical proofs of isoperimetry for special distributions are based
on different types of symmetrization that effectively identify the
extremal subsets. Bounding the Cheeger constant for general convex
bodies and logconcave densities is more complicated since the extremal
sets can be nonlinear and hard to describe precisely, due to the trade-off
between minimizing the boundary measure of a subset and utilizing
as much of the ``external'' boundary as possible. The main technique
to prove bounds in the general setting has been \emph{localization},
a method to reduce inequalities in high dimension to inequalities
in one dimension. We now describe this technique with a few applications. 

\subsection{Localization}

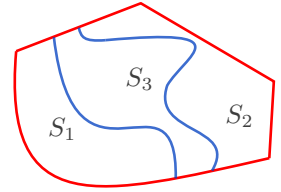
\begin{wrapfigure}{r}{0.25\textwidth}
\centering
\begin{tikzpicture}[y=0.80pt, x=0.80pt, yscale=-0.200000, xscale=0.200000, inner sep=0pt, outer sep=0pt]   \path[draw=cff0000,line join=miter,line cap=butt,miter limit=4.00,even odd     rule,line width=1.000pt] (84.8522,415.4839) .. controls (377.3462,302.3555)     and (380.0296,302.3555) .. (380.0296,302.3555) -- (490.3539,367.2152) --     (693.9910,486.9334) -- (683.2572,668.5343);   \path[draw=cff0000,line join=miter,line cap=butt,miter limit=4.00,even odd     rule,line width=1.000pt] (84.8522,415.4839) .. controls (50.3252,819.2758) and     (284.0089,759.4034) .. (683.2572,668.5343);   \path[draw=c063cb8,line join=miter,line cap=butt,miter limit=4.00,draw     opacity=0.761,even odd rule,line width=1.000pt] (174.0000,382.3622) ..     controls (223.1713,796.0781) and (466.8943,432.1223) .. (462.0000,716.3622);   \path[draw=c063cb8,line join=miter,line cap=butt,miter limit=4.00,draw     opacity=0.761,even odd rule,line width=1.000pt] (296.0000,389.3622) ..     controls (625.9415,359.4824) and (483.4065,426.7886) .. (442.0000,496.3622) ..     controls (402.3261,614.9930) and (611.7898,588.9575) .. (548.0000,698.1069);   \path[draw=c063cb8,line join=miter,line cap=butt,miter limit=4.00,draw     opacity=0.761,even odd rule,line width=1.000pt] (232.9787,359.8090) ..     controls (238.3701,382.0707) and (243.5154,393.1383) .. (296.0000,389.3622);   \path[draw=c063cb8,fill=c063cb8,line join=miter,line cap=butt,miter     limit=4.00,draw opacity=0.761,fill opacity=0.761,line width=1.000pt]     (160.5745,624.7026) node[above right] (text6789) {$S_1$};   \path[draw=c063cb8,fill=c063cb8,line join=miter,line cap=butt,miter     limit=4.00,draw opacity=0.761,fill opacity=0.761,line width=1.000pt]     (343.6170,506.6175) node[above right] (text6793) {$S_3$};   \path[draw=c063cb8,fill=c063cb8,line join=miter,line cap=butt,miter     limit=4.00,draw opacity=0.761,fill opacity=0.761,line width=1.000pt]     (579.7872,593.8516) node[above right] (text6797) {$S_2$};
\end{tikzpicture}
\caption*{Euclidean isoperimetry}
\end{wrapfigure}

We will sketch a proof of the following theorem to illustrate the
use of localization. This theorem was also proved by Karzanov and
Khachiyan \cite{KarzanovK91} using a different, more direct approach.
\begin{thm}[\cite{DyerF90,LS90,KarzanovK91}]
\label{ISO2}Let $f$ be a logconcave function whose support has
diameter $D$ and let $\pi_{f}$ be the induced measure. Then for
any partition of $\R^{n}$ into measurable sets $S_{1},S_{2},S_{3}$, 

\[
\pi_{f}(S_{3})\geq\frac{2d(S_{1},S_{2})}{D}\min\{\pi_{f}(S_{1}),\pi_{f}(S_{2})\}.
\]
\end{thm}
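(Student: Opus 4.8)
The plan is to apply \emph{localization} to collapse the inequality to its one‑dimensional instance, and then to settle that instance directly. Set $\rho=d(S_1,S_2)$; we may assume $\rho>0$ and $\pi_f(S_1)>0$, since otherwise the bound is trivial, and we label so that $\pi_f(S_1)\le\pi_f(S_2)$. Suppose toward a contradiction that $\pi_f(S_3)<\frac{2\rho}{D}\,\pi_f(S_1)$. Since $f$ is logconcave with bounded support it is bounded and integrable, so the functions
\[
g=\frac{2\rho}{D}\,f\,\mathbf{1}_{S_1}-f\,\mathbf{1}_{S_3},\qquad h=\frac{2\rho}{D}\,f\,\mathbf{1}_{S_2}-f\,\mathbf{1}_{S_3}
\]
are integrable, with $\int_{\Rn}g=\frac{2\rho}{D}\pi_f(S_1)-\pi_f(S_3)>0$ and $\int_{\Rn}h\ge\int_{\Rn}g>0$. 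After a routine inner approximation of the $S_i$ by closed sets, the localization lemma --- two integrable functions with positive integrals admit a common ``needle'' --- yields points $a,b\in\Rn$ and an affine map $\ell\colon[0,1]\to[0,\infty)$ such that, writing $\gamma(t)=(1-t)a+tb$,
\[
\int_0^1\ell(t)^{n-1}g(\gamma(t))\,dt>0\qquad\text{and}\qquad\int_0^1\ell(t)^{n-1}h(\gamma(t))\,dt>0.
\]

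Put $F(t)=\ell(t)^{n-1}f(\gamma(t))$ on $[0,1]$, extended by $0$; it is logconcave, being a product of the logconcave functions $\ell(t)^{n-1}$ and $t\mapsto f(\gamma(t))$ (Lemma~\ref{lem:marginal}). With $J_i=\gamma^{-1}(S_i)$, a partition of $[0,1]$, the two needle inequalities read $\int_{J_3}F<\frac{2\rho}{D}\int_{J_1}F$ and $\int_{J_3}F<\frac{2\rho}{D}\int_{J_2}F$. Let $L=\norm{b-a}$. If $\gamma(t)\in S_1$ and $\gamma(t')\in S_2$ then $L\lvert t-t'\rvert=\norm{\gamma(t)-\gamma(t')}\ge\rho$, so $d(J_1,J_2)\ge\rho/L$; and if $F(t),F(t')>0$ then $\gamma(t),\gamma(t')$ lie in the support of $f$, so $L\lvert t-t'\rvert\le D$, whence the support of $F$ has diameter $D'\le D/L$. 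Therefore $\frac{2\rho}{D}\le\frac{2\,d(J_1,J_2)}{D'}$, and the displayed bounds give
\[
\int_{J_3}F<\frac{2\,d(J_1,J_2)}{D'}\,\min\Big\{\int_{J_1}F,\ \int_{J_2}F\Big\},
\]
contradicting the theorem in dimension one, applied to $F$, its induced measure, the partition $J_1,J_2,J_3$, and the diameter $D'$. So it is enough to treat $n=1$.

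In dimension one, $F$ is logconcave with support an interval, which we take to be $[0,D]$; $J_1,J_2,J_3$ partition $\R$; $\rho=d(J_1,J_2)>0$; and $0<\pi_F(J_1)\le\pi_F(J_2)$. I would first rearrange $J_1$ and $J_2$ into complementary rays: choose $\sigma_1$ with $\pi_F((-\infty,\sigma_1])=\pi_F(J_1)$ and $\sigma_2$ with $\pi_F([\sigma_2,\infty))=\pi_F(J_2)$, and replace the partition by $(-\infty,\sigma_1]$, $(\sigma_1,\sigma_2)$, $[\sigma_2,\infty)$, which has the same three $\pi_F$‑measures. The step to check --- and the only place where it matters that $F$ is an honest density on a bounded interval rather than a sum of point masses --- is that this rearrangement does not shrink the separation, i.e.\ $\sigma_2-\sigma_1\ge\rho$. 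Granting it, the claim becomes the elementary one‑variable estimate that, for logconcave $F$ on $[0,D]$ and $0\le\sigma_1\le\sigma_1+\rho\le\sigma_2\le D$,
\[
\pi_F\big((\sigma_1,\sigma_2)\big)\ \ge\ \frac{2\rho}{D}\,\min\big\{\pi_F\big((-\infty,\sigma_1]\big),\ \pi_F\big([\sigma_2,\infty)\big)\big\},
\]
which is verified by a direct computation whose extremal case is the uniform density on $[0,D]$ (with $\sigma_1,\sigma_2\to D/2$).

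The main obstacle is the localization lemma itself --- both its proof by iterated bisection and the approximation required to apply it to arbitrary measurable sets $S_i$; granting it, the passage to one dimension is bookkeeping. Inside the one‑dimensional case the delicate point is the separation bound $\sigma_2-\sigma_1\ge\rho$, which is where logconcavity is genuinely used; the remaining estimate is routine, and is exactly where the constant $2$ and the role of the diameter $D$ emerge.
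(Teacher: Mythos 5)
Your reduction to dimension one via Lemma~\ref{lem:LOCALIZATION}, the logconcavity of $F(t)=\ell(t)^{n-1}f(\gamma(t))$, and the rescaling bookkeeping for $d(J_1,J_2)$ and $D'$ all match the paper and are correct. The gap is precisely the step you flag as needing verification: the separation bound $\sigma_2-\sigma_1\ge\rho$ after replacing $J_1,J_2$ by the rays $(-\infty,\sigma_1]$, $[\sigma_2,\infty)$ of the same $\pi_F$-measures. This is false, even for logconcave $F$ with bounded support, and even under your labeling $\pi_F(J_1)\le\pi_F(J_2)$. Take $F(x)\propto e^{-ax}$ on $[0,1]$ with $a$ large, $J_2=[0,0.01]$, $J_1=[0.11,0.21]$, $J_3$ the rest, so $\rho=d(J_1,J_2)=0.1$. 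Then $\pi_F(J_1)\approx e^{-0.11a}$ is indeed the smaller mass while $\pi_F(J_2)\approx 1-e^{-0.01a}$. Since $\Phi(\sigma_1)=\pi_F(J_1)$ and $\Phi(\sigma_2)=1-\pi_F(J_2)\approx e^{-0.01a}$, both $\sigma_1$ and $\sigma_2$ are of order $e^{-0.01a}/a$, so $\sigma_2-\sigma_1\to 0$ as $a\to\infty$, far below $\rho$. Swapping which set is sent to which ray rescues this example but fails on its mirror image ($F\propto e^{ax}$ with $J_1,J_2$ exchanged), so no fixed choice of direction works. The underlying issue is that a measure-preserving rearrangement to rays controls the $\pi_F$-mass sitting between the sets, not their physical separation $\rho$, and there is no monotone relationship between the two.

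The paper's one-dimensional reduction is genuinely different: after localization it invokes a \emph{combinatorial} argument that reduces to the case where each $Z_i$ is a single interval, keeping the sets in place rather than transporting mass. One picks a $\rho$-wide gap separating $Z_1$ from $Z_2$ and merges the remaining pieces of $Z_3$ into $Z_1$ or $Z_2$ in a way that only decreases the ratio; the separation is preserved by construction. The final interval inequality is then an elementary computation (a factor-of-$2$ version already follows from unimodality), which you sketch correctly. So the missing idea in your write-up is that interval reduction step; the ray rearrangement cannot be made to carry the separation with it, and as stated your bridging claim does not hold even when logconcavity is assumed.
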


Before discussing the proof, we note that there is a variant of this
result in the Riemannian setting.
\begin{thm}[\cite{li1980estimates}]
If $K\subset(M,g)$ is a locally convex bounded domain with smooth
boundary, diameter $D$ and $Ric_{g}\geq0$, then the Poincaré constant
is at least $\frac{\pi^{2}}{4D^{2}}$, i.e., for any $g$ with $\int g=0$,
we have that
\[
\int\left|\nabla g(x)\right|^{2}dx\geq\frac{\pi^{2}}{4D^{2}}\int g(x)^{2}dx.
\]
\end{thm}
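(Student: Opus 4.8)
The plan is to identify the Poincar\'e constant of $K$ (with its Riemannian volume) with the first nonzero Neumann eigenvalue $\lambda_{1}$ of the Laplace--Beltrami operator on $K$ --- by the variational characterization, $\inf_{\int g=0}\int\norm{\nabla g}^{2}/\int g^{2}=\lambda_{1}$, which is exactly the quantity in the statement --- and then to prove $\lambda_{1}\ge\pi^{2}/(4D^{2})$ via a Li--Yau type gradient estimate combined with a Lipschitz argument along a minimizing path. Let $u$ be a first eigenfunction, $\Delta u=-\lambda_{1}u$ on $K$ with $\partial_{\nu}u=0$ on $\partial K$; since $\int_{K}u=0$ it changes sign, so after replacing $u$ by $-u$ if necessary and rescaling we may assume $\max_{K}u=\norm u_{\infty}=1$ and $m:=-\min_{K}u\in(0,1]$. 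Elliptic regularity on the smooth domain $K$ makes $u$ smooth up to $\partial K$.

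The heart of the argument is the pointwise bound $\norm{\nabla u}^{2}\le\lambda_{1}(1-u^{2})$ on $K$. I would obtain this by applying the maximum principle to a function of the form $G:=\norm{\nabla u}^{2}+\lambda_{1}(u^{2}-1)$ (in practice to the slightly perturbed $\norm{\nabla u}^{2}+\lambda_{1}((1+\varepsilon)u^{2}-1)$, letting $\varepsilon\to0$ at the end, to avoid a borderline case). If the maximum of $G$ occurs on $\partial K$, then at that point $\nabla u$ is tangent to $\partial K$ because $\partial_{\nu}u=0$, and one computes $\partial_{\nu}\norm{\nabla u}^{2}=-2\,\mathrm{II}(\nabla u,\nabla u)\le0$ using the nonnegative second fundamental form (this is exactly where local convexity of $K$ is used), while $\partial_{\nu}(u^{2})=2u\,\partial_{\nu}u=0$; hence $\partial_{\nu}G\le0$, and Hopf's lemma rules this out unless $G$ is constant. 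Otherwise the maximum is interior, where $\nabla G=0$ gives $(\nabla^{2}u)(\nabla u)=-\lambda_{1}u\,\nabla u$, so $\nabla u$ is an eigenvector of $\nabla^{2}u$ with eigenvalue $-\lambda_{1}u$ and the complementary block is trace-free, yielding $\norm{\nabla^{2}u}^{2}\ge\lambda_{1}^{2}u^{2}$; Bochner's formula with $\mathrm{Ric}\ge0$ and $\Delta u=-\lambda_{1}u$ then gives $0\ge\tfrac12\Delta G\ge\norm{\nabla^{2}u}^{2}-\lambda_{1}^{2}u^{2}\ge0$, and the strong maximum principle (or the $\varepsilon$-perturbation) forces $G\le0$ throughout $K$.

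Granting the gradient estimate, the theorem follows quickly. The function $h:=\arcsin u\colon\overline K\to[-\tfrac\pi2,\tfrac\pi2]$ satisfies $\norm{\nabla h}=\norm{\nabla u}/\sqrt{1-u^{2}}\le\sqrt{\lambda_{1}}$, so $h$ is $\sqrt{\lambda_{1}}$-Lipschitz for the intrinsic distance $d_{K}$ on $K$. Choosing $p,q\in\overline K$ with $u(p)=1$, $u(q)=-m$, and a path in $K$ from $q$ to $p$ of length at most the diameter $D$, we get
\[
\frac\pi2\le\frac\pi2+\arcsin m=h(p)-h(q)\le\sqrt{\lambda_{1}}\,d_{K}(p,q)\le\sqrt{\lambda_{1}}\,D,
\]
whence $\lambda_{1}\ge\pi^{2}/(4D^{2})$, which is the asserted Poincar\'e inequality.

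The main obstacle is the gradient estimate: making the Bochner computation close requires the correct auxiliary function and care with the resulting differential inequality, which is borderline (hence the $\varepsilon$-regularization and the appeal to the strong maximum principle), while simultaneously the Neumann boundary term must have the right sign --- and this is precisely where local convexity of $K$ enters, through $\partial_{\nu}\norm{\nabla u}^{2}=-2\,\mathrm{II}(\nabla u,\nabla u)$. Everything else (intrinsic diameter, behavior at points where $\nabla u=0$, smoothness up to the boundary) is routine. Finally I note that the factor $4$ is not sharp: replacing $1-u^{2}$ by the tighter quadratic $(1-u)(u+m)$ in the gradient estimate (the Zhong--Yang refinement) and running the same Lipschitz argument with an antiderivative of $1/\sqrt{(1-s)(s+m)}$, using $\int_{-m}^{1}ds/\sqrt{(1-s)(s+m)}=\pi$, upgrades the conclusion to $\lambda_{1}\ge\pi^{2}/D^{2}$.
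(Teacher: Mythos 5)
The paper does not prove this theorem; it is stated as a citation to Li and Yau's 1980 paper, and your proof is precisely the Li--Yau gradient-estimate argument from that source, so you are in agreement with the intended proof. Two small remarks on the details. First, the ``Hopf's lemma'' step in ruling out a boundary maximum needs a little more care than you indicate: at a boundary maximum of $G$ one gets only $\partial_{\nu}G=0$ from the sign of the normal derivative, not an immediate contradiction (and $\Delta G\ge0$ does not hold globally, so Hopf's lemma cannot be applied verbatim). The clean resolutions in the literature are either to work with the $\varepsilon$-perturbed function throughout (so that at a boundary max one first concludes $\partial_\nu G_\varepsilon = 0$ and hence $\nabla G_\varepsilon = 0$, then applies the interior Bochner computation, which is now strict), or to extend $u$ by reflection across $\partial K$ using the Neumann condition and convert the boundary max into an interior one. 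Second, your $\varepsilon$-perturbation $G_\varepsilon=\|\nabla u\|^2+\lambda_1((1+\varepsilon)u^2-1)$ yields $G_\varepsilon\le\lambda_1\varepsilon$ (not $G_\varepsilon\le0$, since $G_\varepsilon$ can be positive where $u$ is near $\pm1$); that is still what you want after letting $\varepsilon\to0$, but it is worth stating precisely. The Lipschitz argument via $\arcsin u$ and the observation that Zhong--Yang's refinement upgrades the constant to $\pi^2/D^2$ are both correct.
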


For the case of convex bodies in $\Rn$, this result is equivalent
to Theorem \ref{ISO2} up to a constant. One benefit of localization
is that it does not require a carefully crafted potential. Localization
has recently been generalized to Riemannian setting \cite{klartag2017needle}.
The origins of this method were in a paper by Payne and Weinberger
\cite{payne1960optimal}. 

We begin the proof of Theorem \ref{ISO2}. For a proof by contradiction,
let us assume the converse of its conclusion, i.e., for some partition
$S_{1},S_{2},S_{3}$ of $\R^{n}$ and logconcave density $f$, assume
that

\[
\int_{S_{3}}f(x)\,dx<C\int_{S_{1}}f(x)\,dx\quad\mbox{ and }\quad\int_{S_{3}}f(x)\,dx<C\int_{S_{2}}f(x)\,dx
\]
where $C=2d(S_{1},S_{2})/D$. This can be reformulated as 

\begin{equation}
\int_{\R^{n}}g(x)\,dx>0\quad\mbox{ and }\quad\int_{\R^{n}}h(x)\,dx>0\label{eq:1}
\end{equation}
where 
\[
g(x)=\begin{cases}
Cf(x) & \mbox{ if }x\in S_{1},\\
0 & \mbox{ if }x\in S_{2},\\
-f(x) & \mbox{ if }x\in S_{3}.
\end{cases}\quad\mbox{ and }\quad h(x)=\begin{cases}
0 & \mbox{ if }x\in S_{1},\\
Cf(x) & \mbox{ if }x\in S_{2},\\
-f(x) & \mbox{ if }x\in S_{3}.
\end{cases}
\]
 These inequalities are for functions in $\R^{n}$. The next lemma
will help us analyze them.
\begin{lem}[Localization Lemma \cite{KLS95}]
\label{lem:LOCALIZATION}Let $g,h:\R^{n}\rightarrow\R$ be lower
semi-continuous integrable functions such that 
\[
\int_{\R^{n}}g(x)\,dx>0\quad\mbox{ and }\quad\int_{\R^{n}}h(x)\,dx>0.
\]
 Then there exist two points $a,b\in\R^{n}$ and an affine function
$\ell:[0,1]\rightarrow\R_{+}$ such that 
\[
\int_{0}^{1}\ell(t)^{n-1}g((1-t)a+tb)\,dt>0\quad\mbox{ and }\quad\int_{0}^{1}\ell(t)^{n-1}h((1-t)a+tb)\,dt>0.
\]
\end{lem}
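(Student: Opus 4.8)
The plan is to prove the Localization Lemma by iterated bisection (a ``needle decomposition'' argument). The engine is the following bisection claim: if $n\ge 2$ and $g,h$ are integrable with $\int_{\R^{n}}g>0$ and $\int_{\R^{n}}h>0$, then there is a hyperplane $H$, with closed halfspaces $H^{+},H^{-}$, such that $\int_{H^{+}}g=\int_{H^{-}}g=\tfrac12\int g$ and $\int_{H^{+}}h=\int_{H^{-}}h=\tfrac12\int h$; in particular both halfspaces retain strictly positive integrals of $g$ and of $h$. To prove this I would invoke Borsuk--Ulam: compactify the space of oriented hyperplanes to a sphere $S^{n}$ by adjoining the two degenerate ``orientations'' $H^{+}=\R^{n}$ and $H^{+}=\emptyset$, and consider the map $S^{n}\to\R^{2}$ sending $H$ to $\big(\int_{H^{+}}g-\int_{H^{-}}g,\ \int_{H^{+}}h-\int_{H^{-}}h\big)$. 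Integrability of $g,h$ (via absolute continuity of the integral near a genuine hyperplane, and dominated convergence near the degenerate points) makes this map continuous; reversing orientation negates it; and at the two degenerate points its value is $\pm(\int g,\int h)\neq 0$. Since $2\le n$, Borsuk--Ulam forces a zero at an honest hyperplane, which is exactly a simultaneous bisector.

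Next I would iterate so that the pieces collapse to a one-dimensional needle. Fix a point $z$ and require every cut to pass through $z$; after $k$ steps one has a nested cone $C_{k}$ with apex $z$ (an intersection of $k$ halfspaces through $z$), and since each cut halves both integrals on both sides, the chosen piece satisfies $\int_{C_{k}}g=2^{-k}\int g>0$ and $\int_{C_{k}}h=2^{-k}\int h>0$. Each side of a measure-bisecting cut has positive Lebesgue measure, hence positive solid angle, so one may always pass to the side whose solid angle is at most half the parent's; then the solid angle of $C_{k}$ tends to $0$ and $C_{k}$ shrinks toward a ray $\{z+\rho u:\rho\ge 0\}$. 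Writing $g$ in polar coordinates centred at $z$, the suitably renormalized restriction of $g$ to $C_{k}$ concentrates on this ray with density $\rho^{\,n-1}g(z+\rho u)$ — the Jacobian $\rho^{\,n-1}$ of polar coordinates is exactly where the exponent $n-1$ comes from — and likewise for $h$; since $\int_{0}^{\infty}\rho^{\,n-1}g(z+\rho u)\,d\rho>0$ there is a finite $R$ with $\int_{0}^{R}\rho^{\,n-1}g(z+\rho u)\,d\rho>0$, and the change of variables $\rho=Rt$ rewrites this as $\int_{0}^{1}\ell(t)^{n-1}g((1-t)a+tb)\,dt>0$ with $a=z$, $b=z+Ru$ and $\ell(t)=Rt$ affine and nonnegative, the $h$-integral being handled simultaneously. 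Equivalently, the argument can be organized as an induction on $n$ with trivial base case $n=1$: in polar form $\int_{\R^{n}}g=\int_{S^{n-1}}\big(\int_{0}^{\infty}\rho^{\,n-1}g(z+\rho\omega)\,d\rho\big)\,d\omega>0$, so the $(n-1)$-dimensional Lemma applied to the ``needle integral'' $\omega\mapsto\int_{0}^{\infty}\rho^{\,n-1}g(z+\rho\omega)\,d\rho$ and its $h$-analogue produces the direction $u$ and the affine weight.

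The hard part is this collapse step: making the degeneration to a genuine one-dimensional needle rigorous while keeping a uniform positive margin in all four integrals, and — crucially — pinning down that the limiting cross-sectional weight is a power of an \emph{affine} function rather than the general $\tfrac{1}{n-1}$-concave profile that Brunn--Minkowski would permit for arbitrary thin convex pieces; it is the apex/cone structure of the cuts that forces affineness. Lower semicontinuity of $g$ and $h$ enters precisely here, via a Fatou-type bound guaranteeing that the one-dimensional integrals cannot jump upward in the limit, so that strict positivity is preserved. What remains is bookkeeping: the small-dimensional Borsuk--Ulam count (for instance $n=2$, where a bisecting hyperplane through a prescribed point need not exist, so one bisects by a free hyperplane and handles the collapse by a separate compactness/extremality argument), and, in the inductive formulation, the passage between integrals over the sphere of directions and integrals over $\R^{n-1}$.
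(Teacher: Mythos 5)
The fixed\textendash apex cone construction at the heart of your collapse step cannot prove the Lemma: the statement it would establish is false. Take $n\ge 3$, $z=0$, and
\[
g(x)=\begin{cases}(1+\epsilon)\,e^{-\|x\|^{2}} & x_{1}>0,\\ -e^{-\|x\|^{2}} & x_{1}\le 0,\end{cases}
\qquad
h(x)=\begin{cases}-e^{-\|x\|^{2}} & x_{1}\ge 0,\\ (1+\epsilon)\,e^{-\|x\|^{2}} & x_{1}<0.\end{cases}
\]
Both are integrable, lower semi-continuous, and $\int g=\int h=\tfrac{\epsilon}{2}\int e^{-\|x\|^{2}}>0$. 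Yet along every ray from $z=0$, one of $g,h$ is $\le 0$ everywhere (whichever corresponds to the sign of the first coordinate of the direction; both are $\le 0$ if it vanishes). So no direction $u$, no $R$, and indeed no sub-interval $[R_{1},R_{2}]$ makes both $\int\rho^{\,n-1}g(\rho u)\,d\rho$ and $\int\rho^{\,n-1}h(\rho u)\,d\rho$ positive; the needle the Lemma actually delivers here (for instance $a=(1,0,\ldots,0)$, $b=(-1,0,\ldots,0)$ with $\ell\equiv 1$) is a segment \emph{through} $z$ with a weight that does not vanish at either end, not a ray \emph{from} $z$. The step of your argument that fails is exactly the one you flag as hard: lower semi-continuity of $g$ gives, via Fatou, that $G(\omega)=\int_{0}^{\infty}\rho^{\,n-1}g(z+\rho\omega)\,d\rho$ is lower semi-continuous, hence $G(u)\le\liminf_{\omega\to u}G(\omega)$; that inequality runs the \emph{wrong} way and cannot prevent the ray integral from dropping below the (positive) cone averages, and in the example above it does drop below zero. (Two smaller issues: pinning hyperplanes at $z$ shrinks the parameter space to $S^{n-1}$, so simultaneous Borsuk--Ulam bisection needs $n\ge 3$; and driving the solid angle to $0$ only forces $\bigcap_{k}C_{k}\cap S^{n-1}$ to be a spherically convex set of measure zero, not necessarily a single point, so the limit need not be a ray at all.)

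The paper's argument sidesteps all of this precisely because it does \emph{not} pin the cuts at an apex and does \emph{not} simultaneously bisect both measures. It bisects only $g$, keeps whichever complementary halfspace has $\int h>0$, and chooses bisecting halfspaces whose bounding hyperplanes contain a countable dense family of rational $(n-2)$-flats $\{x_{i}=r_{1},\,x_{j}=r_{2}\}$; a density/countability argument (not a solid-angle argument) then forces the intersection down to a segment that is free to sit anywhere in $\R^{n}$. At that stage the limiting cross-sectional profile is only known to be $\tfrac{1}{n-1}$-concave by Brunn--Minkowski, and a separate, genuinely nontrivial reduction from concave to affine produces the $\ell(t)^{n-1}$ form. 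Your cone structure was designed to shortcut that last reduction, but the shortcut costs exactly the freedom to place and weight the needle that the Lemma requires.
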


\begin{wrapfigure}{r}{0.25\textwidth}
\centering
\begin{tikzpicture}[y=0.80pt, x=0.80pt, yscale=-0.250000, xscale=0.250000, inner sep=0pt, outer sep=0pt]   \path[draw=black,fill=cff6666,line cap=butt,miter limit=4.00,fill     opacity=0.761,nonzero rule,line width=1.000pt] (120.7860,427.2994) ellipse     (0.4222cm and 1.0081cm);   \path[draw=black,fill=cff6666,line cap=butt,miter limit=4.00,fill     opacity=0.758,nonzero rule,line width=1.0pt] (564.1912,414.3622) ellipse     (1.3706cm and 2.5744cm);   \path[draw=black,line join=miter,line cap=butt,miter limit=4.00,even odd     rule,line width=1pt] (119.7851,425.6119) .. controls (564.1911,416.6121)     and (562.1893,416.6121) .. (562.1893,416.6121);   \path[draw=black,line join=miter,line cap=butt,miter limit=4.00,even odd     rule,line width=1pt] (119.5685,391.8627) .. controls (563.9745,324.3643)     and (560.1875,323.2393) .. (560.1875,323.2393);   \path[draw=black,line join=miter,line cap=butt,miter limit=4.00,even odd     rule,line width=1pt] (120.0068,462.8607) -- (561.7147,505.1406);
\end{tikzpicture}
\caption*{Truncated cone}
\end{wrapfigure}
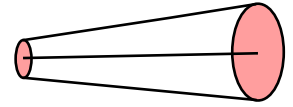

The points $a,b$ represent an interval and one may think of $\ell(t)^{n-1}$
as proportional to the cross-sectional area of an infinitesimal cone.

The lemma says that over this cone truncated at $a$ and $b$, the
integrals of $g$ and $h$ are positive. Also, without loss of generality,
we can assume that $a,b$ are in the union of the supports of $g$
and $h$. 
\begin{proof}[Proof outline]
The main idea is the following. Let $H$ be any halfspace such that
\[
\int_{H}g(x)\,dx=\frac{1}{2}\int_{\R^{n}}g(x)\,dx.
\]
Let us call this a bisecting halfspace. Now either 
\[
\int_{H}h(x)\,dx>0\quad\mbox{ or }\quad\int_{\R^{n}\setminus H}h(x)\,dx>0.
\]
Thus, either $H$ or its complementary halfspace will have positive
integrals for both $g$ and $h$, reducing the domain of the integrals
from $\R^{n}$ to a halfspace. If we could repeat this, we might hope
to reduce the dimensionality of the domain. For any $(n-2)$-dimensional
affine subspace $L$, there is a bisecting halfspace containing $L$
in its bounding hyperplane. To see this, let $H$ be a halfspace containing
$L$ in its boundary. Rotating $H$ about $L$ we get a family of
halfspaces with the same property. This family includes $H'$, the
complementary halfspace of $H$. The function $\int_{H}g-\int_{\R^{n}\setminus H}g$
switches sign from $H$ to $H'$. Since this is a continuous family,
there must be a halfspace for which the function is zero.

If we take all $(n-2)$-dimensional affine subspaces defined by $\left\{ x\in\Rn:\ x_{i}=r_{1},\,x_{j}=r_{2}\right\} $
where $r_{1},r_{2}$ are rational, then the intersection of all the
corresponding bisecting halfspaces is a line or a point (by choosing
only rational values for $x_{i}$, we are considering a countable
intersection). To see why it is a line or a point, assume we are left
with a two or higher dimensional set. Since the intersection is convex,
there is a point in its interior with at least two coordinates that
are rational, say $x_{1}=r_{1}$ and $x_{2}=r_{2}$. But then there
is a bisecting halfspace $H$ that contains the affine subspace given
by $x_{1}=r_{1},\,x_{2}=r_{2}$ in its boundary, and so it properly
partitions the current set.

Thus the limit of this bisection process is a function supported on
an interval (which could be a single point), and since the function
itself is a limit of convex sets (intersections of halfspaces) containing
this interval, it is a limit of a sequence of concave functions and
is itself concave, with positive integrals. Simplifying further from
concave to linear takes quite a bit of work. For the full proof, we
refer the reader to \cite{LS93}.
\end{proof}
Going back to the proof sketch of Theorem \ref{ISO2}, we can apply
the localization lemma to get an interval $[a,b]$ and an affine function
$\ell$ such that 
\begin{equation}
\int_{0}^{1}\ell(t)^{n-1}g((1-t)a+tb)\,dt>0\quad\mbox{ and }\quad\int_{0}^{1}\ell(t)^{n-1}h((1-t)a+tb)\,dt>0.\label{AFTERLEMMA}
\end{equation}
The functions $g,h$ as we have defined them are not lower semi-continuous.
However, this can be addressed by expanding $S_{1}$ and $S_{2}$
slightly so as to make them open sets, and making the support of $f$
an open set. Since we are proving strict inequalities, these modifications
do not affect the conclusion.

Let us partition $[0,1]$ into $Z_{1},Z_{2},Z_{3}$ as follows: 
\[
Z_{i}=\{t\in[0,1]\,:\,(1-t)a+tb\in S_{i}\}.
\]
Note that for any pair of points $u\in Z_{1},v\in Z_{2},\,|u-v|\ge d(S_{1},S_{2})/D$.
We can rewrite (\ref{AFTERLEMMA}) as 
\[
\int_{Z_{3}}\ell(t)^{n-1}f((1-t)a+tb)\,dt<C\int_{Z_{1}}\ell(t)^{n-1}f((1-t)a+tb)\,dt
\]
and 
\[
\int_{Z_{3}}\ell(t)^{n-1}f((1-t)a+tb)\,dt<C\int_{Z_{2}}\ell(t)^{n-1}f((1-t)a+tb)\,dt.
\]
The functions $f$ and $\ell(\cdot)^{n-1}$ are both logconcave, so
$F(t)=\ell(t)^{n-1}f((1-t)a+tb)$ is also logconcave. We get, 
\begin{equation}
\int_{Z_{3}}F(t)\,dt<C\min\left\{ \int_{Z_{1}}F(t)\,dt,\int_{Z_{2}}F(t)\,dt\right\} .\label{FEQ}
\end{equation}
Now consider what Theorem \ref{ISO2} asserts for the function $F(t)$
over the interval $[0,1]$ and the partition $Z_{1},Z_{2},Z_{3}$:
\begin{equation}
\int_{Z_{3}}F(t)\,dt\geq2d(Z_{1},Z_{2})\min\left\{ \int_{Z_{1}}F(t)\,dt,\int_{Z_{2}}F(t)\,dt\right\} .\label{FEQ2}
\end{equation}
We have substituted $1$ for the diameter of the interval $[0,1]$.
Also, $2d(Z_{1},Z_{2})\geq2d(S_{1},S_{2})/D=C$. Thus, Theorem \ref{ISO2}
applied to the function $F(t)$ contradicts (\ref{FEQ}) and to prove
the theorem in general, and it suffices to prove it in the one-dimensional
case. A combinatorial argument reduces this to the case when each
$Z_{i}$ is a single interval. Proving the resulting inequality up
to a factor of $2$ is a simple exercise and uses only the unimodality
of $F$. The improvement to the tight bound requires one-dimensional
logconcavity. This completes the proof of Theorem \ref{ISO2}.

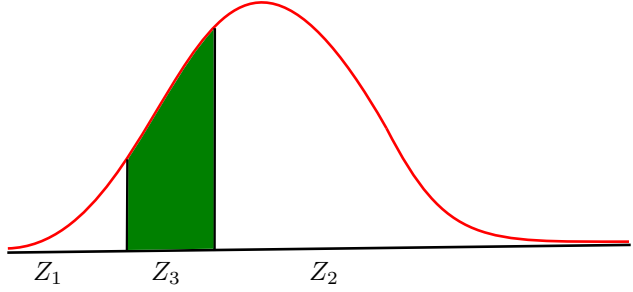
\begin{figure}
\centering{}\vspace*{-3cm}
\begin{tikzpicture}[y=0.80pt, x=0.80pt, yscale=-0.500000, xscale=0.500000, inner sep=0pt, outer sep=0pt]   \path[draw=black,line join=miter,line cap=butt,miter limit=4.00,even odd     rule,line width=1.000pt] (49.4368,481.0238) -- (638.9274,473.7382);   \path[draw=cff0000,line join=miter,line cap=butt,miter limit=4.00,even odd     rule,line width=1.000pt] (49.9408,477.0956) .. controls (211.8739,472.0423)     and (225.1589,42.0372) .. (408.0000,363.3622);   \path[draw=cff0000,line join=miter,line cap=butt,miter limit=4.00,even odd     rule,line width=1.000pt] (408.0000,363.3622) .. controls (465.1394,474.3529)     and (497.3013,470.6621) .. (637.3252,470.6447);   \path[draw=black,line join=miter,line cap=butt,miter limit=4.00,even odd     rule,line width=1.000pt] (163.1089,392.7482) .. controls (162.6487,480.1715)     and (162.6487,480.1715) .. (162.6487,480.1715);   \path[draw=black,line join=miter,line cap=butt,miter limit=4.00,even odd     rule,line width=1.000pt] (245.6003,268.6231) .. controls (245.3088,479.5246)     and (245.3088,479.5246) .. (245.3088,479.5246);   \path[xscale=0.943,yscale=1.060,fill=c008600,line join=miter,line cap=butt,line     width=1.000pt] (196.7728,481.5306) node[above right] (text7961) {$Z_3$};   \path[xscale=0.943,yscale=1.060,fill=black,line join=miter,line cap=butt,line     width=1.000pt] (355.1962,481.5861) node[above right] (text7961-8) {$Z_2$};   \path[xscale=0.943,yscale=1.060,fill=black,line join=miter,line cap=butt,line     width=1.000pt] (78.3270,482.0243) node[above right] (text7961-7) {$Z_1$};   \path[fill=c008600] (163.8199,435.1567) -- (164.1599,391.7936) --     (168.7722,384.6251) .. controls (174.7102,375.3961) and (178.2118,369.6886) ..     (193.1964,344.8145) .. controls (219.2131,301.6275) and (227.9729,288.7495) ..     (243.0990,271.4510) -- (244.3579,270.0115) -- (244.3768,373.6738) --     (244.3957,477.3362) -- (222.9072,477.6342) .. controls (211.0885,477.7981) and     (192.8825,478.0644) .. (182.4493,478.2260) -- (163.4800,478.5199) --     (163.8200,435.1567) -- cycle;
\end{tikzpicture}\caption{One-dimensional isoperimetry}
\end{figure}

The localization lemma has been used to prove a variety of isoperimetric
inequalities. The next theorem is a refinement of Theorem \ref{ISO2},
replacing the diameter by the square-root of the expected squared
distance of a random point from the mean. For an isotropic distribution
this is an improvement from $n$ to $\sqrt{n}$. This theorem was
proved by Kannan-Lovász-Simonovits in the same paper in which they
proposed the KLS conjecture. 
\begin{thm}[\cite{KLS95}]
\label{thm:KLS_thm} For any logconcave density $p$ in $\R^{n}$
with covariance matrix $A$, the KLS constant satisfies
\[
\psi_{p}\gtrsim\frac{1}{\sqrt{\tr(A)}}.
\]
\end{thm}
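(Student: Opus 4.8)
The plan is to run the localization method exactly as in the proof of Theorem~\ref{ISO2}, but to carry the second moment $\tr(A)$ rather than the diameter through to the one-dimensional reduction. First I would pass to the equivalent three-set statement: it suffices to prove that for every partition of $\R^{n}$ into measurable $S_{1},S_{2},S_{3}$,
\[
p(S_{3})\gtrsim\frac{d(S_{1},S_{2})}{\sqrt{\tr(A)}}\,\min\{p(S_{1}),p(S_{2})\},
\]
since taking $S_{1}=S$, $S_{2}=\{x:d(x,S)>\varepsilon\}$, $S_{3}=\{x:0<d(x,S)\le\varepsilon\}$ and letting $\varepsilon\to0^{+}$ recovers $\psi_{p}\gtrsim\tr(A)^{-1/2}$. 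After an affine translation we may assume $p$ has mean $0$ and $\int p=1$, so that $\tr(A)=\int_{\R^{n}}\norm x^{2}p(x)\,dx$.

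Next I would argue by contradiction. If the displayed inequality fails for some partition with a sufficiently small absolute constant $c$, then, writing $C=c\,d(S_{1},S_{2})/\sqrt{\tr(A)}$ and, as in Theorem~\ref{ISO2}, enlarging $S_{1},S_{2}$ and $\mathrm{supp}(p)$ to restore lower semicontinuity, we obtain $g,h$ with $\int_{\R^{n}}g>0$, $\int_{\R^{n}}h>0$, where $g=Cp$ on $S_{1}$, $g=-p$ on $S_{3}$, $g=0$ on $S_{2}$, and symmetrically for $h$. The crucial new ingredient is a strengthened version of the Localization Lemma: since $w(x)\defeq(\tr(A)-\norm x^{2})p(x)$ satisfies $\int_{\R^{n}}w=0$, one can produce a segment $[a,b]$ and an affine $\ell\ge0$ on $[0,1]$ such that, with $x(t)=(1-t)a+tb$,
\[
\int_{0}^{1}\ell(t)^{n-1}g(x(t))\,dt>0,\qquad\int_{0}^{1}\ell(t)^{n-1}h(x(t))\,dt>0,\qquad\int_{0}^{1}\ell(t)^{n-1}w(x(t))\,dt=0.
\]
The bisection underlying Lemma~\ref{lem:LOCALIZATION} is run so as to additionally preserve $\int_{\Omega}w=0$ on the current body $\Omega$: at each step one chooses among the hyperplanes halving $\int_{\Omega}w$ (these exist because $\int_{\Omega}w=0$ is maintained) one that is correctly oriented for \emph{both} $g$ and $h$, and such a choice exists by a short topological argument on that family of hyperplanes exploiting the sign-flipping symmetry $\theta\mapsto-\theta$, which rules out the ``good side for $g$'' always being the ``bad side for $h$''. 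I expect this extension --- genuinely threading the single linear functional $\int(\tr(A)-\norm x^{2})p$ through the whole bisection, and verifying that the intersection still collapses to a segment rather than stalling at a two- or three-dimensional set --- to be the main obstacle; plain localization is blind to the covariance and yields only the diameter bound of Theorem~\ref{ISO2}.

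To finish, set $F(t)=\ell(t)^{n-1}p(x(t))$, which is logconcave on $[0,1]$ by Lemma~\ref{lem:marginal} as a product of logconcave functions, and let $\nu$ be its pushforward along $x(\cdot)$, a logconcave measure carried by the segment $[a,b]\subset\R^{n}$. The third constraint reads $\int\norm y^{2}\,d\nu(y)=\tr(A)\,\nu([a,b])$; since $\nu$ lives on a line, its variance equals $\E_{\nu}\norm{Y-\E_{\nu}Y}^{2}\le\E_{\nu}\norm Y^{2}=\tr(A)$, so its standard deviation is at most $\sqrt{\tr(A)}$. Partitioning $[0,1]$ by $Z_{i}=\{t:x(t)\in S_{i}\}$, the first two constraints give $\int_{Z_{3}}F<C\min\{\int_{Z_{1}}F,\int_{Z_{2}}F\}$, whereas the one-dimensional logconcave isoperimetric inequality recorded in the introduction (the variance form of the halfspace bound, in its three-set version), applied to $\nu$ and using that the arclength distance along $[a,b]$ between the part mapping into $S_{1}$ and the part mapping into $S_{2}$ is at least $d(S_{1},S_{2})$, gives $\int_{Z_{3}}F\gtrsim d(S_{1},S_{2})\,\tr(A)^{-1/2}\min\{\int_{Z_{1}}F,\int_{Z_{2}}F\}$ --- a contradiction once $c$ is below the implied absolute constant. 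As in Theorem~\ref{ISO2}, this one-dimensional inequality is itself reduced by a combinatorial argument to the case of three intervals and then proved directly from one-dimensional logconcavity.
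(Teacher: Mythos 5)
The idea of threading the second-moment functional $w(x)=(\tr(A)-\norm{x}^{2})p(x)$ through localization, so that the resulting needle has $\E_{\nu}\norm{Y}^{2}=\tr(A)$ and hence variance at most $\tr(A)$, is natural, and your one-dimensional endgame (variance $\le\tr(A)$ plus $d(Z_1,Z_2)\ge d(S_1,S_2)$ gives the contradiction) is correct. The problem is the bisection argument you sketch for the modified localization lemma, and it is not just a technicality to be filled in. Fix an $(n-2)$-dimensional flat $L$ and parametrize by $\theta$ the circle of halfspaces $H_{\theta}$ through $L$, with $H_{\theta+\pi}=H_{\theta}^{c}$; write $\phi_{g}(\theta)=\int_{H_{\theta}}g$, and similarly $\phi_{h},\phi_{w}$. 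Once $\int_{\Omega}w=0$ is maintained, $\phi_{w}$ is odd under $\theta\mapsto\theta+\pi$, so it vanishes somewhere; but its zero set can be a single antipodal pair $\{\theta_{0},\theta_{0}+\pi\}$. The relations $\phi_{g}(\theta_{0})+\phi_{g}(\theta_{0}+\pi)=\int_{\Omega}g>0$ and $\phi_{h}(\theta_{0})+\phi_{h}(\theta_{0}+\pi)=\int_{\Omega}h>0$ are perfectly compatible with $\phi_{g}(\theta_{0})>0\ge\phi_{g}(\theta_{0}+\pi)$ while $\phi_{h}(\theta_{0})\le0<\phi_{h}(\theta_{0}+\pi)$ (for instance $\phi_{w}=\cos(\theta-\theta_{0})$, $\phi_{g}=\sin(\theta-\theta_{0})+\frac{1}{10}$, $\phi_{h}=-\sin(\theta-\theta_{0})+\frac{1}{10}$ realize exactly this), and the sign-flip symmetry does not rule it out. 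So there are flats $L$ at which no admissible cut exists, and the intersection of the cuts one \emph{can} make need not collapse to a segment. Heuristically, one hyperplane cut gives you a bisection plus one binary choice, enough to preserve two sign conditions; three is one too many for this style of argument.

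This is also not the route taken in \cite{KLS95}. There the relevant scale is the first absolute moment $M_{1}(p)=\E_{p}\norm{x-z}$, which is $\Theta(\sqrt{\tr(A)})$ by Lemma \ref{lem:lcmom}, and it is built into the localization functions rather than imposed as an extra constraint: negating the conclusion leaves a product-of-integrals inequality, of the form $\int_{S_{3}}p\cdot\int\norm{x-z}p\,dx<c\,d(S_{1},S_{2})\int_{S_{i}}p\cdot\int p$, which is handled by the \emph{four-function} form of Lemma \ref{lem:LOCALIZATION} (also proved in \cite{KLS95}). On the resulting needle, the factor $\norm{x(t)-z}\ge|t-t_{z}|\,\norm{b-a}$ carries the correct normalization automatically, and the final one-dimensional inequality is then verified directly. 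If you prefer your constraint-based plan, the clean way to make it rigorous is to invoke a multi-constraint localization theorem (in the spirit of the Fradelizi--Gu\'edon result quoted later in this section) rather than to re-derive it by hyperplane bisection; note that the extreme needles then need not have density of the form $\ell(t)^{n-1}$ with affine $\ell$, though they remain logconcave, which is all your one-dimensional step requires.
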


The next theorem shows that the KLS conjecture is true for an important
family of distributions. The proof is again by localization \cite{CV2014},
and the one-dimensional inequality obtained is a Brascamp-Lieb Theorem.
We note that the same theorem can be obtained by other means \cite{Ledoux1999,Eldan2013}.
\begin{thm}
\label{thm:Gaussian-iso}Let $h(x)=f(x)e^{-\frac{1}{2}x^{\top}Bx}/\int f(y)e^{-\frac{1}{2}y^{\top}By}dy$
where $f:\R^{n}\rightarrow\R_{+}$ is an integrable logconcave function
and $B$ is positive definite. Then $h$ is logconcave and for any
measurable subset $S$ of $\R^{n}$,
\[
\frac{h(\partial S)}{\min\left\{ h(S),h\left(\R^{n}\setminus S\right)\right\} }\gtrsim\frac{1}{\norm{B^{-1}}_{\op}^{\frac{1}{2}}}.
\]
In other words, the expansion of h is $\Omega(\norm{B^{-1}}_{\op}^{-\frac{1}{2}})$.
\end{thm}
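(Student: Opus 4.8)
The plan is to combine a reduction to the case $B=I$ with the localization method used above for Theorem \ref{ISO2}. First, $h$ is logconcave by Lemma \ref{lem:marginal}: it is a normalization of the product of the logconcave function $f$ with the Gaussian $e^{-\frac12 x^{\top}Bx}$, which is logconcave because $B$ is positive definite, and the product is integrable since $f$ is. For the expansion bound, set $\sigma=\lambda_{\min}(B)>0$, so that $\norm{B^{-1}}_{\op}^{-1/2}=\sqrt\sigma$. Since $B-\sigma I\succeq 0$, the function $x\mapsto f(x)e^{-\frac12 x^{\top}(B-\sigma I)x}$ is again logconcave (Lemma \ref{lem:marginal}), so after absorbing it into $f$ we may assume $h(x)\propto g(x)e^{-\frac{\sigma}{2}\norm x^{2}}$ with $g$ logconcave and integrable; rescaling by $x\mapsto x/\sqrt\sigma$ produces a density $q(x)\propto g(x/\sqrt\sigma)e^{-\frac12\norm x^{2}}$ of the same shape, in such a way that $\psi_{h}=\sqrt\sigma\,\psi_{q}$. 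So it suffices to prove $\psi_{q}\gtrsim 1$ whenever $q(x)\propto g(x)e^{-\frac12\norm x^{2}}$ with $g$ logconcave.

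To prove $\psi_{q}\gtrsim 1$ I would argue by contradiction exactly as in the proof of Theorem \ref{ISO2}. Fix a small absolute constant $C$, to be pinned down at the end, and suppose there is a measurable $S$ with $q(\partial S)<C\min\{q(S),q(\R^{n}\setminus S)\}$. Opening up the support of $q$ and the sets involved, and writing the separating boundary as a thin collar $S_{3}$ of width $\varepsilon$ with complementary open pieces $S_{1}\subseteq S$, $S_{2}\subseteq\R^{n}\setminus S$, this failure becomes the positivity of $\int u>0$ and $\int v>0$ for the lower semicontinuous functions $u=C'q\chi_{S_{1}}-q\chi_{S_{3}}$ and $v=C'q\chi_{S_{2}}-q\chi_{S_{3}}$, with $C'$ of order $C\varepsilon$. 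The Localization Lemma \ref{lem:LOCALIZATION} then yields points $a,b$ and a nonnegative affine $\ell$ with $\int_{0}^{1}\ell(t)^{n-1}u((1-t)a+tb)\,dt>0$ and the same for $v$. Setting $F(t)=\ell(t)^{n-1}q((1-t)a+tb)$ and $Z_{i}=\{t\in[0,1]:(1-t)a+tb\in S_{i}\}$, this reads $\int_{Z_{3}}F<C'\min\{\int_{Z_{1}}F,\int_{Z_{2}}F\}$ while $Z_{3}$ separates $Z_{1}$ from $Z_{2}$ by a gap of order $\varepsilon$, so the whole problem collapses to a one-dimensional isoperimetric inequality for the measure with density proportional to $F$.

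What makes the resulting bound dimension-free is that along any needle the Gaussian weight remains a \emph{standard} one-dimensional Gaussian. Parametrizing the segment $[a,b]$ by arc length $s$ with unit direction $\theta=(b-a)/\norm{b-a}$, one has $\norm{a+s\theta}^{2}=(s+\langle a,\theta\rangle)^{2}+(\norm a^{2}-\langle a,\theta\rangle^{2})$, so in arc length the needle density is proportional to $\widetilde F(s)=w(s)\,e^{-\frac12(s-s_{0})^{2}}$, where $w(s)=\ell_{*}(s)^{n-1}g(a+s\theta)$ is logconcave (a product of $\ell_{*}^{n-1}$ and a line-restriction of $g$, by Lemma \ref{lem:marginal}). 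Hence $-\log\widetilde F=-\log w+\frac12(s-s_{0})^{2}+\mathrm{const}$ has second derivative at least $1$. For a one-dimensional logconcave measure $\mu\propto e^{-V}$ with $V''\ge 1$, the one-dimensional inequality we need --- $\mu(Z_{3})\gtrsim d(Z_{1},Z_{2})\min\{\mu(Z_{1}),\mu(Z_{2})\}$, equivalently $\psi_{\mu}\gtrsim 1$ --- is exactly a Brascamp--Lieb-type statement: the Brascamp--Lieb inequality gives a Poincar\'e constant at least $1$, and Buser's reverse Cheeger inequality for logconcave measures (the one-dimensional case of the Poincar\'e-inequality theorem quoted above) turns this into a lower bound $\gtrsim 1$ on the Cheeger constant; in one dimension it can also be verified directly by a short calculus argument. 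Taking $C$ below the absolute constant so obtained contradicts the needle inequality, proving $\psi_{q}\gtrsim 1$ and, after undoing the reductions, Theorem \ref{thm:Gaussian-iso}.

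The main obstacle, as with Theorem \ref{ISO2}, is not a single estimate but making the localization step airtight while keeping the final constant genuinely universal: one has to justify passing to lower semicontinuous data by opening the sets, check that the needle integrals really encode the Cheeger ratio of a \emph{logconcave} one-dimensional measure (so that Buser's inequality applies), confirm that the $\varepsilon$-collar widths cancel between the two sides, and note that the arc-length reparametrization is an isometry and hence distorts neither the separating distances nor the measures. I would also remark, as the statement hints, that localization can be avoided altogether: $-\log q=-\log g+\frac12\norm x^{2}$ has Hessian $\succeq I$ on all of $\R^{n}$, so the $n$-dimensional Brascamp--Lieb inequality (equivalently the Bakry--\'Emery criterion) directly gives a Poincar\'e constant $\ge 1$, and Buser's reverse Cheeger inequality then gives $\psi_{q}\gtrsim 1$; the localization route is kept here because it isolates precisely the one-dimensional inequality that drives the theorem.
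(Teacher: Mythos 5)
Your proposal is correct and follows the same route the paper sketches for this theorem: localization reducing to a one-dimensional Brascamp--Lieb-type inequality (the paper cites \cite{CV2014}), with your closing remark on the direct Bakry--\'Emery plus Buser argument matching the paper's note that the result ``can be obtained by other means.'' The extra details you supply---the reduction to $B=I$ by peeling off $B-\sigma I$ and rescaling, the arc-length observation that the needle retains a shifted standard one-dimensional Gaussian factor, and the Poincar\'e-to-Cheeger bridge via Buser---are exactly what the paper leaves to the cited reference.
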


The analysis of the Gaussian Cooling algorithm for volume computation
\cite{CV2015} uses localization. Kannan, Lovász and Montenegro \cite{KannanLM06}
used localization to prove the following bound on the log-Cheeger
constant, a quantity that is asymptotically the square-root of the
log-Sobolev constant \cite{ledoux1994simple}, and which we will discuss
in the next section.
\begin{thm}[\cite{KannanLM06}]
\label{thm:-The-log-Cheeger} The log-Cheeger constant of any isotropic
logconcave density with support of diameter $D$ satisfies $\kappa_{p}\gtrsim\frac{1}{D}$
where
\[
\kappa_{p}=\inf_{S\subseteq\R^{n}}\frac{p(\partial S)}{\min\left\{ p(S)\log\frac{1}{p(S)},p(\R^{n}\setminus S)\log\frac{1}{p(\R^{n}\setminus S)}\right\} }.
\]
\end{thm}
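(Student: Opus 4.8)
The plan is to prove Theorem~\ref{thm:-The-log-Cheeger} by contradiction and localization, in close parallel to the proof of Theorem~\ref{ISO2}, the genuinely new ingredients being a one-dimensional log-Cheeger estimate on an interval of length at most $D$ and a refinement of the localization step tailored to the nonlinear weight $\Phi(u)\defeq u\log\frac1u$. First I would assume some measurable $S$ with $p(S)\le\frac12$ violates the conclusion, and set $S_{1}=S$, $S_{3}=\{x:0<d(x,S)<\varepsilon\}$ a thin shell, $S_{2}=\R^{n}\setminus(S_{1}\cup S_{3})$; after slightly enlarging $S_{1}$ and the support of $p$ to open sets (which is harmless since we prove strict inequalities) we have $d(S_{1},S_{2})\ge\varepsilon$ and $p(S_{3})/\varepsilon\to p(\partial S)$. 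Since the denominator of $\kappa_{p}$ is a minimum, for $\varepsilon$ small the violation reads $p(S_{3})<\frac{c\varepsilon}{D}\Phi(p(S_{1}))$ and $p(S_{3})<\frac{c\varepsilon}{D}\Phi(p(S_{2}))$.

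Next I would feed a localization-type argument the two functions $g=\frac{c\varepsilon}{D}\log\frac1{p(S_{1})}\,p\,\chi_{S_{1}}-p\,\chi_{S_{3}}$ and $h=\frac{c\varepsilon}{D}\log\frac1{p(S_{2})}\,p\,\chi_{S_{2}}-p\,\chi_{S_{3}}$, whose integrals over $\R^{n}$ are positive by the two displayed inequalities (the logarithmic factors have been frozen at their $n$-dimensional values so that $g,h$ are genuine linear functionals). Localization then produces a needle: an interval $[a,b]$ with $|b-a|\le D$ and an affine $\ell\ge0$ such that, with $F(t)=\ell(t)^{n-1}p((1-t)a+tb)$ a one-dimensional logconcave density on $[0,1]$ and $Z_{i}=\{t:(1-t)a+tb\in S_{i}\}$, one gets $\int_{Z_{3}}F<\frac{c\varepsilon}{D}\log\frac1{p(S_{1})}\int_{Z_{1}}F$ and $\int_{Z_{3}}F<\frac{c\varepsilon}{D}\log\frac1{p(S_{2})}\int_{Z_{2}}F$, while $d(Z_{1},Z_{2})\ge d(S_{1},S_{2})/|b-a|\ge\varepsilon/D$. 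After the same combinatorial step as in Theorem~\ref{ISO2} that reduces the one-dimensional problem to $Z_{1},Z_{2},Z_{3}$ being intervals with $Z_{3}$ separating $Z_{1}$ from $Z_{2}$, one is left to contradict the sharp one-dimensional statement $\nu(Z_{3})\gtrsim d(Z_{1},Z_{2})\min\{\Phi(\nu(Z_{1})),\Phi(\nu(Z_{2}))\}$, where $\nu$ is the normalized measure of $F$. I would prove this last inequality by a short but case-heavy argument: split on the position of the mode of $F$ relative to $Z_{3}$, use unimodality and convexity of $-\log F$ to lower bound $F$ at the separating point in terms of $\nu(Z_{1})$ (resp. $\nu(Z_{2})$), and observe that the minimum is genuinely needed — in the delicate configuration of a long interval of small mass, $F$ at its endpoint is only of order (mass)$/$(length), but then the complementary side is short and heavy, so its $\Phi$-value is comparably small and the bound still closes.

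The step I expect to be the main obstacle is reconciling the nonlinearity of $\Phi$ with the fact that localization transports only linear functionals. Freezing the constant $\frac{c\varepsilon}{D}\log\frac1{p(S_{i})}$ is faithful only if the one-dimensional mass $\nu(Z_{i})$ of the needle equals the ambient mass $p(S_{i})$; a needle that largely misses $S_{1}$ makes $\log\frac1{\nu(Z_{1})}$ much larger than the frozen $\log\frac1{p(S_{1})}$, and then the one-dimensional estimate need not contradict the localized inequality. The fix, which is the real content beyond Theorem~\ref{ISO2}, is to run the bisection process underlying localization so that each bisecting hyperplane $H$ is chosen to split the pair of measures $(p,\,p|_{S})$ simultaneously, i.e.\ $p(S\cap H)=p(S)\,p(H)$ — possible for any rotation family through a codimension-two subspace because the function $p(S\cap H)-p(S)\,p(H)$ changes sign between a halfspace and its complement. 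This yields a needle decomposition of $\R^{n}$ adapted to $S$ in which every needle $\gamma$ inherits the exact ratio $\nu_{\gamma}(S)=p(S)$, so the frozen logarithmic factor is exact on each needle; applying the one-dimensional log-Cheeger estimate per needle (each of length $\le D$) and integrating, using $p(\partial S)\ge\int_{\gamma}p_{\gamma}(\partial S)$, gives $p(\partial S)\gtrsim\frac1D\Phi(\min\{p(S),p(\R^{n}\setminus S)\})$ directly. This is the route carried out in \cite{KannanLM06}; everything else — the reduction to intervals, the one-dimensional case analysis, and the passage $\varepsilon\to0$ — is routine once the ratio-preserving bisection is set up.
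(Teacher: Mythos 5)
Your diagnosis of the failure of the naive frozen-constant localization is correct, and it is worth making it quantitative to see why: set $\alpha_i=\nu(Z_i)$ and $p_i=p(S_i)$, and suppose $\alpha_3,p_3\to 0$ so $\alpha_1+\alpha_2\approx 1\approx p_1+p_2$. After applying the localization lemma to the frozen $g,h$, the one-dimensional log-Cheeger estimate contradicts the localized inequality only if
\[
\min\{\alpha_1\log\tfrac1{\alpha_1},\alpha_2\log\tfrac1{\alpha_2}\}\;\gtrsim\;\min\{\alpha_1\log\tfrac1{p_1},\alpha_2\log\tfrac1{p_2}\}.
\]
Take $p_1$ tiny (so $\log\frac1{p_1}$ huge, $\log\frac1{p_2}\approx p_1$), and $\alpha_1\ll p_1/\log\frac1{p_1}$. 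Then the left side is $\approx\Phi(\alpha_2)\approx\alpha_1$ while the right side is $\approx\alpha_1\log\frac1{p_1}$, so the ratio is $\approx 1/\log\frac1{p_1}\to 0$: no universal constant works, and the contradiction genuinely fails on such a needle. So the issue you isolate is real and is indeed the obstruction.

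Your fix — an $S$-adapted needle decomposition in which every needle $\gamma$ satisfies $\nu_\gamma(S)=p(S)$, so that the one-dimensional log-Cheeger inequality on each needle integrates directly to the theorem — is sound in principle, and the IVT argument for the ratio-preserving bisecting halfspace through any codimension-two subspace is correct. However, a few points deserve more care before calling this a complete proof. First, what you need is not Lemma~\ref{lem:LOCALIZATION} (which, as stated and proved in the paper, produces a \emph{single} needle with two positive integrals), but the stronger \emph{needle-decomposition} statement: a full disintegration $p=\int w(\gamma)\,\nu_\gamma\,d\gamma$ into logconcave one-dimensional needles each satisfying $\int_\gamma f\,d\nu_\gamma=0$ for the prescribed $f=\chi_S-p(S)$. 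This is a separate (and nontrivial) theorem in the Payne--Weinberger / Gromov--Milman / Klartag line; the paper gestures at it in the needle-decomposition subsection but does not prove it. Asserting it as ``run the bisection process underlying localization'' undersells the gap, since the bisection tree must be shown to converge to a genuine partition into needles and the prescribed ratio must survive the limiting procedure. Second, the step $p(\partial S)\ge\int_\gamma w(\gamma)\,\nu_\gamma(\partial(S\cap\gamma))\,d\gamma$ is plausible but is not free: you are disintegrating a lower Minkowski content along a needle fibration, which requires an argument (e.g.\ a Fatou/liminf argument on the $\varepsilon$-enlargement of $S$, or working with the Lipschitz-function formulation of the Cheeger constant). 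Third, the one-dimensional inequality $\nu(\partial A)\gtrsim\frac1D\min\{\Phi(\nu(A)),\Phi(\nu(A^c))\}$ for a logconcave $\nu$ on a segment of length $\le D$ does hold (and is tight for the truncated exponential), but as you note it requires its own case analysis; it is the genuinely new one-dimensional ingredient relative to Theorem~\ref{ISO2} and should be stated and proved rather than waved at. Since the survey only cites \cite{KannanLM06} and gives no proof, there is no ``paper proof'' to compare against; but your reconstruction, once the adapted-decomposition existence theorem and the boundary disintegration are supplied, is a valid route to the statement. I would hedge the final sentence: you have not verified that this is exactly the argument in \cite{KannanLM06}, only that it is a plausible one.
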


Next we mention an application to the anti-concentration of polynomials.
This is a corollary of a more general result by Carbery and Wright.
\begin{thm}[\cite{carbery2001distributional}]
Let $q$ be a degree $d$ polynomial in $\R^{n}$. Then for a convex
body $K\subset\R^{n}$ of volume $1$, any $\epsilon>0$, and $x$
drawn uniform from $K$, 
\[
\Pr_{x\sim K}\left(|q(x)|\le\epsilon\max_{K}|q(x)|\right)\lesssim\epsilon^{\frac{1}{d}}n
\]
\end{thm}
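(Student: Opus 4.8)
The plan is to reduce the $n$-dimensional anti-concentration estimate to a one-dimensional Remez-type inequality, using a decomposition of $K$ into segments that all issue from a point at which $|q|$ is maximized. Write $M=\max_{x\in K}|q(x)|$, attained at some $x_{0}\in K$ by compactness; I may assume $M>0$, since otherwise $q$ vanishes identically on the full-dimensional body $K$ and there is nothing to prove. Set $t=\epsilon M$. First I would pass to polar coordinates centered at $x_{0}$: for a unit vector $\theta\in S^{n-1}$, let $R(\theta)\ge0$ be defined by $\{\rho\ge0:x_{0}+\rho\theta\in K\}=[0,R(\theta)]$, which is an interval because $K$ is convex and $x_{0}\in K$, so that
\[
\vol\bigl(\{x\in K:|q(x)|\le t\}\bigr)=\int_{S^{n-1}}\int_{0}^{R(\theta)}\mathbf{1}\bigl[|q(x_{0}+\rho\theta)|\le t\bigr]\,\rho^{n-1}\,d\rho\,d\theta.
\]

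The crucial point is that along each ray the restriction $p_{\theta}(\rho)\defeq q(x_{0}+\rho\theta)$ is a univariate polynomial of degree at most $d$ whose maximum modulus on $[0,R(\theta)]$ is exactly $M$: it is at most $M$ because $x_{0}+\rho\theta\in K$, and it equals $M$ at $\rho=0$. Hence the Remez inequality, applied to $p_{\theta}$ on $[0,R(\theta)]$, bounds the Lebesgue measure of the sublevel set $E_{\theta}=\{\rho\in[0,R(\theta)]:|p_{\theta}(\rho)|\le\epsilon M\}$ by $c\,R(\theta)\,\epsilon^{1/d}$ for an absolute constant $c$ (the Chebyshev polynomial realizes the worst case, and the monomial $\rho\mapsto\rho^{d}$ already shows the exponent $1/d$ is unavoidable). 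Bounding the polar weight crudely by its value at the far endpoint,
\[
\int_{0}^{R(\theta)}\mathbf{1}_{E_{\theta}}(\rho)\,\rho^{n-1}\,d\rho\le R(\theta)^{n-1}\,|E_{\theta}|\le c\,R(\theta)^{n}\epsilon^{1/d}=c\,n\,\epsilon^{1/d}\int_{0}^{R(\theta)}\rho^{n-1}\,d\rho,
\]
and then integrating over $\theta\in S^{n-1}$ and using $\int_{S^{n-1}}\int_{0}^{R(\theta)}\rho^{n-1}\,d\rho\,d\theta=\vol(K)=1$ yields $\vol(\{x\in K:|q(x)|\le\epsilon M\})\le c\,n\,\epsilon^{1/d}$, which is the claim.

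The only step that is not routine bookkeeping is the normalization that makes Remez applicable: choosing $x_{0}$ to be a \emph{global} maximizer of $|q|$ forces every needle issued from $x_{0}$ to have its own maximum modulus equal to the global quantity $M$, which is exactly what the one-dimensional inequality requires. The factor $n$ in the conclusion is then unavoidable in this scheme, since the polar weight $\rho^{n-1}$ concentrates near the far endpoint of each needle, where it exceeds its average by a factor of order $n$; this is precisely the price of normalizing by $\max_{K}|q|$ rather than by $\E_{x\sim K}|q(x)|$, which is how the sharper, dimension-free Carbery--Wright estimate is stated. Two caveats I would check carefully: that the Remez inequality indeed delivers the exponent $1/d$ with a constant that does not blow up with $d$ for $\epsilon$ small, and that the degenerate cases ($M=0$, or $p_{\theta}$ constant on a needle) are handled (both are trivial). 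An alternative is to run the same reduction through the localization lemma (Lemma~\ref{lem:LOCALIZATION}) in place of polar coordinates, at the cost of needing a one-dimensional anti-concentration bound for polynomials against the more general needle measure $\ell(\cdot)^{n-1}$.
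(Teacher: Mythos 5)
Your proof is correct, and it is worth saying that the paper itself gives \emph{no} proof here: it simply states the result and cites Carbery--Wright, placing it in the localization section as a thematic companion. Your argument is a clean, self-contained derivation of precisely the stated bound, and the two key choices are exactly right. Choosing $x_{0}$ to be a global maximizer of $|q|$ forces $\sup_{[0,R(\theta)]}|p_{\theta}|=M$ on every ray, which is the normalization the one-dimensional Remez inequality needs; the Remez step itself, $|E_{\theta}|\le 4R(\theta)\epsilon^{1/d}$, follows from the standard bound $T_{d}(x)\le(2x)^{d}$ for $x\ge1$ with a constant that is absolute (not growing with $d$), answering your own caveat. The crude bound $\rho^{n-1}\le R(\theta)^{n-1}$ and the identity $\int_{0}^{R}\rho^{n-1}d\rho=R^{n}/n$ then produce the factor $n$, and integrating the polar formula over $S^{n-1}$ with $\vol(K)=1$ closes the argument. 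Degenerate cases ($M=0$, $R(\theta)=0$, $p_{\theta}$ constant, $\epsilon\ge1$) are all vacuous or trivial, as you say.

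Compared to the route the paper's framing suggests (the localization lemma, Lemma~\ref{lem:LOCALIZATION}), your decomposition of $K$ into rays from $x_{0}$ is a different, and in this instance simpler, needle decomposition: the localization lemma would bisect toward needles with an affine weight $\ell(t)^{n-1}$ and would still require a Remez-type estimate against that weight, whereas your polar needles all share the maximizer at one endpoint and carry the explicit weight $\rho^{n-1}$, which is what produces the clean factor $n$. It is also worth flagging that the Carbery--Wright paper actually proves a sharper, dimension-free statement (roughly $\Pr(|q|\le\alpha)\lesssim d\,\alpha^{1/d}$ after normalizing $q$ by an $L^{1/d}$ moment rather than by $\max_{K}|q|$); the version stated here, with the lossy factor $n$ and the $\max$ normalization, is exactly what your elementary polar/Remez scheme delivers, and you correctly identify the polar Jacobian as the source of that $n$.
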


We conclude this section with a nice interpretation of the localization
lemma by Fradelizi and Guedon. They also give a version that extends
localization to multiple inequalities.
\begin{thm}[Reformulated Localization Lemma \cite{fradelizi2004extreme}]
Let $K$ be a compact convex set in $\Rn$ and $f$ be an upper semi-continuous
function. Let $P_{f}$ be the set of logconcave distributions $\mu$
supported by $K$ satisfying $\int fd\mu\geq0$. The set of extreme
points of $\text{conv}P_{f}$ is exactly:
\begin{enumerate}
\item the Dirac measure at points $x$ such that $f(x)\geq0$, or
\item the distributions $v$ satisfies
\begin{enumerate}
\item density function is of the form $e^{\ell}$ with linear $\ell$,
\item support equals to a segment $[a,b]\subset K$,
\item $\int fdv=0$,
\item $\int_{a}^{x}fdv>0$ for $x\in(a,b)$ or $\int_{x}^{b}fdv>0$ for
$x\in(a,b)$.
\end{enumerate}
\end{enumerate}
\end{thm}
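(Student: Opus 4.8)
The plan is to work directly with the definition of an extreme point of $\operatorname{conv}P_f$. If $\mu$ is such an extreme point and $\mu=\sum_i\lambda_i\mu_i$ with $\mu_i\in P_f$, $\lambda_i>0$, then extremality forces every $\mu_i=\mu$, so $\mu\in P_f$: the extreme points of $\operatorname{conv}P_f$ lie in $P_f$, and the task is to decide which $\mu\in P_f$ fail to be a proper convex combination of other elements of $P_f$. Fix such a $\mu$, let $C=\operatorname{supp}\mu$ (convex, since $\mu$ is logconcave) and $d=\dim C$. The case $d=0$ is immediate: $\mu=\delta_x$ is an extreme point of the whole simplex of probability measures on $K$, and $\delta_x\in P_f$ exactly when $f(x)\ge0$, giving item (1); here upper semicontinuity makes $\{f\ge0\}$ closed. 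For $d\ge2$ I would show $\mu$ is never extreme by a simultaneous-bisection argument: with $c=\int f\,d\mu\ge0$, for each unit $u$ let $H_u^+=\{x\cdot u\le t(u)\}$ be the (a.e.\ unique, since $\mu$ is absolutely continuous with connected support) mass-bisecting halfspace; then $t(-u)=-t(u)$, so $\Phi(u):=\int_{H_u^+}f\,d\mu$ satisfies $\Phi(u)+\Phi(-u)=c$, and $\Phi$ is continuous on the connected sphere $S^{d-1}$ (here $f$ u.s.c.\ is used), hence $\Phi(u_\ast)=c/2$ for some $u_\ast$. Splitting $\mu=\tfrac12\big(2\mu|_{H_{u_\ast}^+}\big)+\tfrac12\big(2\mu|_{H_{u_\ast}^-}\big)$ then gives two distinct logconcave probability measures (restrictions of a logconcave density to convex pieces) each with $f$-integral $c\ge0$, so both lie in $P_f$.

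The case $d=1$ is the heart of the inclusion $\operatorname{ext}\operatorname{conv}P_f\subseteq(1)\cup(2)$, which is essentially a sharpening of the Localization Lemma~\ref{lem:LOCALIZATION}. Here $\mu$ has logconcave density $e^{-V}$ on a segment $[a,b]$; set $G(x)=\int_a^x f\,d\mu$, so $G(a)=0$ and $G(b)=c\ge0$. If $G$ attains some value in $[0,c]$ at an interior point $m$, then splitting $\mu$ at $m$ yields two distinct members of $P_f$ (left piece $f$-integral $G(m)\ge0$, right piece $c-G(m)\ge0$). If no such $m$ exists, continuity of $G$ forces $c=0$ and $G$ of one strict sign on $(a,b)$ — precisely conditions (c) and (d). Within this subcase I still must rule out extremality when $V$ is not affine: since $V$ is not affine there is an interior region where it has room to spare (a band of strict convexity absorbing a smooth bump, or a corner absorbing a piecewise-affine function), and I would build a compactly supported perturbation $u$ of $(a,b)$, not identically $0$ and small, with $e^{-V}(1+u)$ and $e^{-V}(1-u)$ both logconcave, while imposing the two linear conditions $\int e^{-V}u=0$ and $\int f\,e^{-V}u=0$; the remaining degrees of freedom keep $u\neq0$ possible. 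Then $\mu=\tfrac12 e^{-V}(1+u)\,dx+\tfrac12 e^{-V}(1-u)\,dx$ is a proper combination in $P_f$, so $\mu$ of type $d=1$ is extreme only if $V$ is affine, i.e.\ only if $\mu$ is of type (2).

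For the reverse inclusion, a Dirac at a point with $f\ge0$ is trivially extreme, so the remaining point is that a measure $\nu=e^{\ell}\mathbf{1}_{[a,b]}\,dx$ of type (2), say with $\int_a^x f\,d\nu>0$ on $(a,b)$, is extreme in $\operatorname{conv}P_f$. Given $\nu=\lambda\mu_1+(1-\lambda)\mu_2$ in $P_f$, each $\mu_i\le\tfrac1{\min(\lambda,1-\lambda)}\nu$ is absolutely continuous and supported in $[a,b]$, and averaging the nonnegative numbers $\int f\,d\mu_i$ to $\int f\,d\nu=0$ forces $\int f\,d\mu_i=0$, so each $\mu_i$ again satisfies (c). Writing the density of $\mu_1$ as $e^{\ell}(1+q)$, one must deduce $q\equiv0$ from one-dimensional logconcavity of $e^\ell(1\pm q)$ together with (d). The transparent instance is the affine (tilting) perturbation $q=\beta(x-\bar x)$ with $\bar x=\E_\nu X$: the constraint $\int f\,d\mu_1=0$ then reads, after integration by parts,
\[
\beta\int_a^b x f(x)e^{\ell(x)}\,dx \;=\; -\,\beta\int_a^b G(x)\,dx \;=\;0,
\]
and since (d) makes $\int_a^b G>0$ this forces $\beta=0$. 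I expect the main obstacle to be extending this rigidity to an arbitrary admissible $q$, that is, ruling out perturbations that add curvature to the log-density or shrink the support; the route I would take is that a logconcave density on $[a,b]$ dominated by $2e^{\ell}$ is already severely constrained, and combining this with the requirement that both pieces satisfy (c) pins $q\equiv0$, so $\mu_1=\mu_2=\nu$ and $\nu$ is extreme.
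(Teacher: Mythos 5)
The paper does not prove this theorem; it is stated and cited to Fradelizi--Gu\'edon, so I am evaluating your proposal on its own merits. Your overall architecture is the right one: extreme points of $\operatorname{conv}P_f$ must lie in $P_f$ because $\operatorname{conv}$ means finite combinations; classify by $d=\dim\operatorname{supp}\mu$; kill $d\ge2$ by a continuous family of mass-bisecting halfspaces and the intermediate value theorem on $S^{d-1}$ (using the oddness $\Phi(u)+\Phi(-u)=c$); for $d=1$, split at a point $m$ with $G(m)\in[0,c]$ if one exists, and otherwise deduce $c=0$ and one-sided sign of $G$, which are exactly (c) and (d). All of that is sound. One small misattribution: continuity of $u\mapsto\int_{H_u^+}f\,d\mu$ does not come from upper semi-continuity of $f$ --- it comes from $\mu$-integrability of $f$ together with $\mu\big(H_{u'}^+\triangle H_u^+\big)\to0$ (dominated convergence), which needs the continuity and uniqueness of the bisecting threshold $t(u)$, not the semi-continuity of $f$. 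Upper semi-continuity is genuinely used only to make $\{f\ge0\}$ closed, as you note in the $d=0$ case.

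The two places where the proposal is incomplete are precisely the two places you yourself flag, and they are the crux of the theorem, so I want to be explicit about what is missing. (A) Forward, $d=1$, $V$ not affine: you need to \emph{prove}, not just assert, that a nonzero perturbation $u$ exists with $e^{-V}(1\pm u)$ logconcave, $\int e^{-V}u=0$ and $\int fe^{-V}u=0$. The delicacy is that ``room to spare'' is not uniform: $V$ convex and non-affine only guarantees strict convexity in the Alexandrov sense on some set of positive measure, or a corner, and you must show there are at least \emph{three} independent directions of admissible perturbation to beat the two linear constraints (two directions would generically be killed). This is doable (e.g. three disjoint bumps inside a region where $\log(1+u)$ can be absorbed) but it is exactly where the degree-of-freedom counting must be carried out, and your sketch does not yet do it. (B) Reverse inclusion, type (2): the affine tilt $q=\beta(x-\bar x)$ is a warm-up, not the theorem. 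A useful observation you omit, and which would streamline the argument, is that if $\nu=\lambda\mu_1+(1-\lambda)\mu_2$ then $\operatorname{supp}\mu_i$ must be \emph{all} of $[a,b]$: if $\operatorname{supp}\mu_1=[a',b']\subsetneq[a,b]$ with, say, $b'<b$, then $\mu_2$'s density $\frac{1}{1-\lambda}\big(e^\ell-\lambda h_1\big)$ jumps up at the interior point $b'$, contradicting continuity of a logconcave density on the interior of its support. With that, $h_i=e^\ell(1+q_i)$ on all of $[a,b]$, $\log(1+q_1)$ and $\log(1-\tfrac{\lambda}{1-\lambda}q_1)$ both concave, $\int e^\ell q_1=\int fe^\ell q_1=0$, and you must show $q_1\equiv0$. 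Your integration-by-parts identity $\int fe^\ell q=-\int G\,dq$ is the right lever since $G>0$ on $(a,b)$, but it finishes only when $q$ is monotone; for a general admissible $q$ you still need an argument (from the two concavity constraints) that forces $q$ to have at most one sign change or similar, and this is the actual content of the rigidity. Until (A) and (B) are filled in, the proposal is a correct outline with the hardest two steps left open.
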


Since we know the maximizer of any convex function is at extreme points,
this shows that one can optimize $\max_{\mu\in P_{f}}\Phi(\mu)$ for
any convex $\Phi$ by checking Dirac measures and log-affine functions!

\subsection{Stochastic localization}

We now describe a variant of localization that is the key idea behind
recent progress on the KLS conjecture. Consider a subset $E$ with
measure $0.5$ according to a logconcave density (it suffices to consider
such subsets to bound the isoperimetric constant \cite{Milman2009}).
In standard localization, we repeatedly bisect space using a hyperplane
that preserves the relative measure of $E$. The limit of this process
is a partition into 1-dimensional logconcave measures (``needles''),
for which inequalities are easier to prove. This approach runs into
difficulties for proving the KLS conjecture. While the original measure
might be isotropic, the one-dimensional measures could, in principle,
have variances roughly equal to the trace of the original covariance
(i.e., long thin needles), for which the Cheeger constant is much
smaller. However, if we pick the bisecting hyperplanes randomly, it
seems unlikely that we get such long thin needles. In a different
line of work, Klartag introduced a \textquotedbl{}tilt\textquotedbl{}
operator, i.e., multiplying a density by an exponential of the form
$f(x)\propto e^{-c^{\top}x}$ along a fixed vector $c$, and used
it in his paper improving the slicing constant \cite{Klartag2006,klartag2012centroid}.
A stochastic version of localization combining both these aspects,
i.e., apprpaching needles via tilt operators, was discovered by Eldan
\cite{Eldan2013}. 

Stochastic localization can be viewed as the continuous-time process,
where at each step, we pick a random direction and multiply the current
density with a linear function along the chosen direction. Thus, the
discrete bisection step is replaced by infinitesimal steps, each of
which is a renormalization with a linear function in a random direction.
One might view this informally as an averaging over infinitesimal
needles. The discrete time equivalent would be $p_{t+1}(x)=p_{t}(x)(1+\sqrt{h}(x-\mu_{t})^{\top}w)$
for a sufficiently small $h$ and random Gaussian vector $w$. Using
the approximation $1+y\sim e^{y-\frac{1}{2}y^{2}}$, we see that over
time this process introduces a negative quadratic factor in the exponent,
which will be the Gaussian factor. As time tends to $\infty$, the
distribution tends to a more and more concentrated Gaussian and eventually
a delta function, at which point any subset has measure either $0$
or 1. The idea of the proof is to stop at a time that is large enough
to have a strong Gaussian factor in the density, but small enough
to ensure that the measure of a set is not changed by more than a
constant. Over time, the density can be viewed as a spherical Gaussian
times a logconcave function, with the Gaussian gradually reducing
in variance. When the Gaussian becomes sufficiently small in variance,
then the overall distribution has good isoperimetric coefficient,
determined by the inverse of the Gaussian standard deviation (Theorem
\ref{thm:Gaussian-iso}). An important property of the infinitesimal
change at each step is \emph{balance \textendash{} }the density at
time $t$ is a martingale and therefore the expected measure of any
subset is the same as the original measure. Over time, the measure
of a set $E$ is a random quantity that deviates from its original
value of $\frac{1}{2}$ over time. The main question then is: what
direction to use at each step so that (a) the measure of $E$ remains
bounded and (b) the Gaussian factor in the density eventually has
small variance.

\begin{figure}
\centering{}\includegraphics[width=2in]{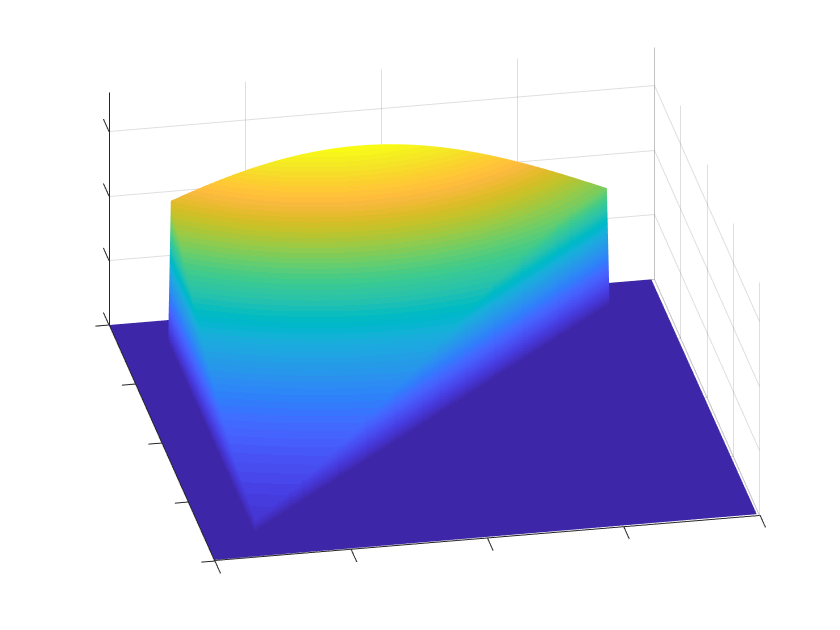}\includegraphics[width=2in]{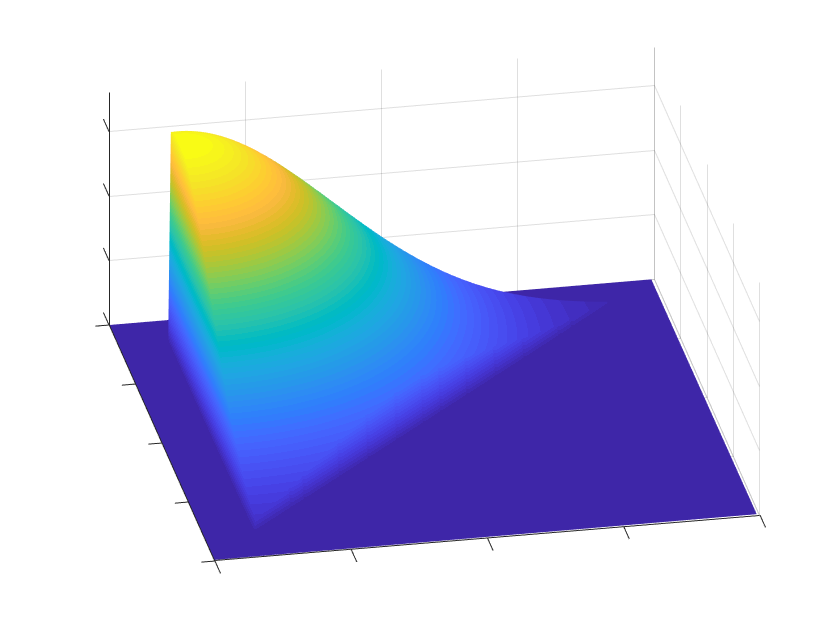}\includegraphics[width=2in]{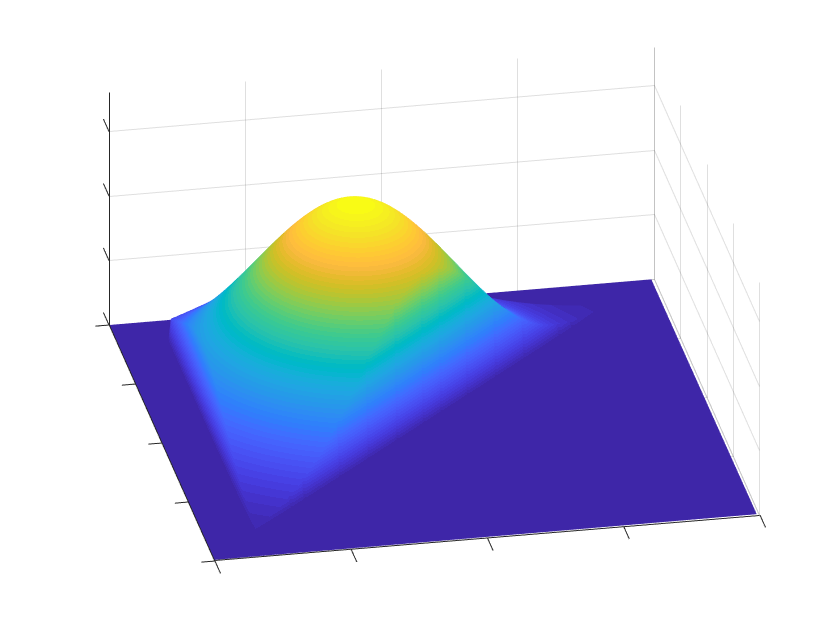}\caption{Stochastic localization}
\end{figure}

\subsubsection{A stochastic process and its properties}

Given a distribution with logconcave density $p(x)$, we start at
time $t=0$ with this distribution and at each time $t>0$, we apply
an infinitesimal change to the density. To make some proofs easier,
one may assume that the support of $p$ is contained in a ball of
radius $n$ because there is only exponentially small probability
outside this ball, at most $e^{-\Omega(n)}$. Let $dW_{t}$ be the
infinitesimal Wiener process.
\begin{defn}
\label{def:A}Given a logconcave distribution $p$, we define the
following stochastic differential equation:
\begin{equation}
c_{0}=0,\quad dc_{t}=dW_{t}+\mu_{t}dt,\label{eq:dBt}
\end{equation}
where the probability distribution $p_{t}$ and its mean $\mu_{t}$
are defined by
\[
p_{t}(x)=\frac{e^{c_{t}^{\top}x-\frac{t}{2}\norm x_{2}^{2}}p(x)}{\int_{\Rn}e^{c_{t}^{\top}y-\frac{t}{2}\norm y_{2}^{2}}p(y)dy},\quad\mu_{t}=\E_{x\sim p_{t}}x.
\]
We will presently explain why $p_{t}$ takes this form with a Gaussian
component. Before we do that, we note that the process can be generalized
using a ``control'' matrix $C_{t}$ at time $t$. This is a positive
definite matrix that could, for example, be used to adapt the process
to the covariance of the current distribution. At time $t$, the covariance
matrix is
\[
A_{t}\defeq\E_{x\sim p_{t}}(x-\mu_{t})(x-\mu_{t})^{\top}
\]
The control matrix is incorporated in the following more general version
of (\ref{eq:dBt}):
\[
c_{0}=0,\quad dc_{t}=C_{t}^{1/2}dW_{t}+C_{t}\mu_{t}dt,
\]
\[
B_{0}=0,\quad dB_{t}=C_{t}dt,
\]
where the probability distribution $p_{t}$ is now defined by 
\[
p_{t}(x)=\frac{e^{c_{t}^{\top}x-\frac{1}{2}\norm x_{B_{t}}^{2}}p(x)}{\int_{\Rn}e^{c_{t}^{\top}y-\frac{1}{2}\norm y_{B_{t}}^{2}}p(y)dy}.
\]
When $C_{t}$ is a Lipschitz function with respect to $c_{t},\mu_{t},A_{t}$
and $t$, standard theorems (e.g., \cite[Sec 5.2]{oksendal2013stochastic})
show the existence and uniqueness of the solution in time $[0,T]$
for any $T>0$. 
\end{defn}

We will now focus on the case $C_{t}=I$ and hence $B_{t}=tI$. The
lemma below says that the stochastic process is the same as continuously
multiplying $p_{t}(x)$ by a random infinitesimally small linear function. 
\begin{lem}
\label{lem:def-pt}We have that $dp_{t}(x)=(x-\mu_{t})^{\top}dW_{t}\cdot p_{t}(x)$
for any $x\in\Rn$. 
\end{lem}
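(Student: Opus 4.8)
The plan is a direct It\^o-calculus computation. Write $p_t(x)=N_t(x)/Z_t$ where $N_t(x)=e^{c_t^\top x-\frac t2\norm{x}_2^2}p(x)$ is the unnormalized density and $Z_t=\int_{\Rn}N_t(y)\,dy$ is the partition function; since $p$ is fixed with support in a ball of radius $n$, all integrals below converge and we may freely interchange $d$ and $\int$.

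First I would compute $dN_t(x)$ for fixed $x$ by It\^o's formula applied to $N_t(x)=p(x)e^{E_t(x)}$ with exponent $E_t(x)=c_t^\top x-\frac t2\norm{x}_2^2$. Substituting $dc_t=dW_t+\mu_t\,dt$ gives $dE_t(x)=x^\top dW_t+\bigl(\mu_t^\top x-\tfrac12\norm{x}_2^2\bigr)dt$, and the It\^o correction adds $\tfrac12\,d\langle E(x)\rangle_t=\tfrac12\norm{x}_2^2\,dt$ (the coefficient of $dW_t$ in $E_t(x)$ is the row vector $x^\top$, so its quadratic variation rate is $\norm{x}_2^2$). These two occurrences of $\tfrac12\norm{x}_2^2\,dt$ cancel, and $dN_t(x)=N_t(x)\bigl(x^\top dW_t+\mu_t^\top x\,dt\bigr)$. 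Integrating in $x$ and using $\int N_t(y)y\,dy=Z_t\mu_t$ yields $dZ_t=Z_t\bigl(\mu_t^\top dW_t+\norm{\mu_t}_2^2\,dt\bigr)$.

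Next I would apply It\^o's formula to $1/Z_t$: its drift is $-\tfrac1{Z_t}\norm{\mu_t}_2^2\,dt$ from $dZ_t$ plus $+\tfrac1{Z_t}\norm{\mu_t}_2^2\,dt$ from the correction $\tfrac12\cdot\tfrac2{Z_t^3}\,d\langle Z\rangle_t=\tfrac1{Z_t^3}\,Z_t^2\norm{\mu_t}_2^2\,dt$, so these cancel and $d(1/Z_t)=-\tfrac1{Z_t}\mu_t^\top dW_t$. Finally $dp_t(x)=d\bigl(N_t(x)\cdot\tfrac1{Z_t}\bigr)$ expands into three pieces: $\tfrac1{Z_t}\,dN_t(x)=p_t(x)\bigl(x^\top dW_t+\mu_t^\top x\,dt\bigr)$; $N_t(x)\,d(1/Z_t)=-p_t(x)\,\mu_t^\top dW_t$; and the cross-variation term $d\langle N(x),1/Z\rangle_t=N_t(x)x^\top\cdot\bigl(-\tfrac1{Z_t}\mu_t\bigr)dt=-p_t(x)\,\mu_t^\top x\,dt$. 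Adding these, the drifts $\pm\,p_t(x)\,\mu_t^\top x\,dt$ cancel and the $dW_t$ terms combine to $p_t(x)(x-\mu_t)^\top dW_t$, which is exactly the assertion.

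The only delicate point is the bookkeeping of the three It\^o corrections, which produce the cancellations of the $\tfrac12\norm{x}_2^2$, the $\norm{\mu_t}_2^2$, and the $\mu_t^\top x$ drift terms; the interchange of $d$ and $\int$ needed for $dZ_t$ is routine given the bound on the support. As consistency checks one can note that integrating $dp_t(x)=p_t(x)(x-\mu_t)^\top dW_t$ over $x$ gives $d\!\int p_t=0$, so the total mass stays $1$, and that the logarithmic form $d\log p_t(x)=(x-\mu_t)^\top dW_t-\tfrac12\norm{x-\mu_t}_2^2\,dt$ shows $p_t(x)$ is a conditional geometric Brownian motion, which on integrating recovers the Gaussian factor $e^{-\frac t2\norm{x}_2^2}$ built into Definition \ref{def:A}.
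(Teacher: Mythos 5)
Your computation is correct. Writing $p_t=N_t/Z_t$, you correctly get $dN_t(x)=N_t(x)\bigl(x^\top dW_t+\mu_t^\top x\,dt\bigr)$ after the It\^o correction $\tfrac12\|x\|_2^2\,dt$ cancels the $-\tfrac12\|x\|_2^2\,dt$ coming from the explicit Gaussian factor; integrating gives $dZ_t=Z_t(\mu_t^\top dW_t+\|\mu_t\|_2^2\,dt)$; the It\^o correction in $d(1/Z_t)$ then kills the $\|\mu_t\|_2^2$ drift leaving $d(1/Z_t)=-\tfrac1{Z_t}\mu_t^\top dW_t$; and in the product rule for $N_t(x)\cdot(1/Z_t)$ the quadratic covariation term $-p_t(x)\mu_t^\top x\,dt$ exactly cancels the $+p_t(x)\mu_t^\top x\,dt$ drift, so only $p_t(x)(x-\mu_t)^\top dW_t$ survives. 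The two sanity checks (zero drift of total mass, and the geometric-Brownian form of $d\log p_t(x)$) are also right.

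One comparative remark: the paper does not actually supply a forward proof of this lemma. Instead, in the ``Alternative definition of the process'' subsection it takes $dp_t(x)=(x-\mu_t)^\top dW_t\,p_t(x)$ as the starting point and runs It\^o's formula on $\log p_t(x)$ to \emph{recover} the explicit density $p_t(x)\propto e^{c_t^\top x-\frac t2\|x\|_2^2}p(x)$, i.e.\ it verifies the converse implication as motivation for where the Gaussian factor comes from. Your argument goes the other way: differentiate the explicit formula and check that the $dt$ terms cancel. The two computations are It\^o-calculus mirror images of each other and are of comparable length; yours has the advantage of directly establishing the lemma as stated (which is what is needed for the subsequent martingale argument, e.g.\ $p(\partial E)=\E\,p_T(\partial E)$), while the paper's version is better at explaining \emph{why} Definition \ref{def:A} contains the $e^{-\frac t2\|x\|_2^2}$ factor in the first place.
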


\subsubsection{Alternative definition of the process}

Here, we use $dp_{t}(x)=(x-\mu_{t})^{\top}dW_{t}p_{t}(x)$ as the
definition of the process and show how the Gaussian term $-\frac{t}{2}\norm x_{2}^{2}$
emerges. To compute $d\log p_{t}(x)$, we first explain how to apply
the chain rule (Itô's formula) for a stochastic processes. Given real-valued
stochastic processes $x_{t}$ and $y_{t}$, the quadratic variations
$[x]_{t}$ and $[x,y]_{t}$ are real-valued stochastic processes defined
by 
\begin{align*}
[x]_{t} & =\lim_{|P|\rightarrow0}\sum_{n=1}^{\infty}\left(x_{\tau_{n}}-x_{\tau_{n-1}}\right)^{2},\\{}
[x,y]_{t} & =\lim_{|P|\rightarrow0}\sum_{n=1}^{\infty}\left(x_{\tau_{n}}-x_{\tau_{n-1}}\right)\left(y_{\tau_{n}}-y_{\tau_{n-1}}\right),
\end{align*}
where $P=\{0=\tau_{0}\leq\tau_{1}\leq\tau_{2}\leq\cdots\uparrow t\}$
is a stochastic partition of the non-negative real numbers, $|P|=\max_{n}\left(\tau_{n}-\tau_{n-1}\right)$
is called the \emph{mesh} of $P$ and the limit is defined using convergence
in probability. Note that $[x]_{t}$ is non-decreasing with $t$ and
$[x,y]_{t}$ can be defined as
\[
[x,y]_{t}=\frac{1}{4}\left([x+y]_{t}-[x-y]_{t}\right).
\]
For example, if the processes $x_{t}$ and $y_{t}$ satisfy the SDEs
\[
dx_{t}=\mu(x_{t})dt+\sigma(x_{t})dW_{t}\quad\mbox{and\quad}y_{t}=\nu(y_{t})dt+\eta(y_{t})dW_{t}
\]
where $W_{t}$ is a Wiener process, we have that 
\[
[x]_{t}=\int_{0}^{T}\sigma^{2}(x_{s})ds,\quad[x,y]_{t}=\int_{0}^{T}\sigma(x_{s})\eta(y_{s})ds\quad\mbox{and}\quad d[x,y]_{t}=\sigma(x_{t})\eta(y_{t})dt.
\]
For vector-valued SDEs 
\[
dx_{t}=\mu(x_{t})dt+\Sigma(x_{t})dW_{t}\quad\mbox{and}\quad dy_{t}=\nu(y_{t})dt+M(y_{t})dW_{t}
\]
we have that 
\[
[x^{i},x^{j}]_{t}=\int_{0}^{T}(\Sigma(x_{s})\Sigma^{\top}(x_{s}))_{ij}ds\quad\mbox{and}\quad d[x^{i},y^{j}]_{t}=\int_{0}^{T}(\Sigma(x_{s})M^{\top}(y_{s}))_{ij}ds.
\]
\begin{lem}[Itô's formula]
 Let $x$ be a semimartingale and $f$ be a twice continuously differentiable
function, then
\[
df(x_{t})=\sum_{i}\frac{df(x_{t})}{dx^{i}}dx^{i}+\frac{1}{2}\sum_{i,j}\frac{d^{2}f(x_{t})}{dx^{i}dx^{j}}d[x^{i},x^{j}]_{t}.
\]
\end{lem}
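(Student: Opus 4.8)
The plan is to prove the identity for a continuous semimartingale $x$ --- the only case needed here, since every process appearing in Definition \ref{def:A} is driven by a Wiener process --- via a second-order Taylor expansion along a refining sequence of partitions, followed by a limiting argument. First I would \emph{localize}: fix $R>0$ and stop at $\tau_{R}=\inf\{t:\norm{x_{t}}_{2}\ge R\}$, so that on $[0,\tau_{R}]$ the path of $x$, its drift and martingale parts, and the functions $f,\nabla f,\nabla^{2}f$ evaluated along $x$ are all bounded; every sum below then converges in $L^{2}$ (or at least in probability), and the unrestricted statement follows by sending $R\to\infty$, using that $\tau_{R}\uparrow\infty$ almost surely. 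It also suffices to treat the scalar case $n=1$ first and recover the general identity by polarization at the end, so assume $n=1$ for the moment.

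Fix $t$ and a partition $P=\{0=\tau_{0}<\tau_{1}<\cdots<\tau_{m}=t\}$, and write $\Delta_{k}x=x_{\tau_{k}}-x_{\tau_{k-1}}$. Telescoping and expanding each increment to second order (with the integral remainder, giving an intermediate point $\xi_{k}$ between $x_{\tau_{k-1}}$ and $x_{\tau_{k}}$):
\[
f(x_{t})-f(x_{0})=\sum_{k}f'(x_{\tau_{k-1}})\Delta_{k}x+\tfrac{1}{2}\sum_{k}f''(x_{\tau_{k-1}})(\Delta_{k}x)^{2}+\tfrac{1}{2}\sum_{k}\big(f''(\xi_{k})-f''(x_{\tau_{k-1}})\big)(\Delta_{k}x)^{2}.
\]
As $|P|\to0$ I would establish: (i) the first sum tends to the It\^o integral $\int_{0}^{t}f'(x_{s})\,dx_{s}$ --- this is precisely the definition of the stochastic integral of the continuous adapted integrand $f'(x_{\cdot})$ against $x$, and the use of the \emph{left} endpoint $x_{\tau_{k-1}}$ is what selects the It\^o (rather than Stratonovich) form; (ii) the second sum tends to $\int_{0}^{t}f''(x_{s})\,d[x]_{s}$, which combines the fundamental fact $\sum_{k}(\Delta_{k}x)^{2}\to[x]_{t}$ in probability with a Riemann--Stieltjes "freezing" step (replacing the coefficient $f''(x_{\tau_{k-1}})$ by its value at a chosen point of each block, at a cost controlled by the oscillation of $s\mapsto f''(x_{s})$, which is small by path continuity); (iii) the remainder vanishes, being bounded by $\big(\max_{k}|f''(\xi_{k})-f''(x_{\tau_{k-1}})|\big)\cdot\sum_{k}(\Delta_{k}x)^{2}$, where the maximum tends to $0$ by uniform continuity of $f''$ on a compact set together with the a.s. uniform continuity of $s\mapsto x_{s}$ on $[0,t]$, while $\sum_{k}(\Delta_{k}x)^{2}$ stays bounded (it converges to the finite quantity $[x]_{t}$). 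Combining (i)--(iii) gives the scalar formula, in differential form $df(x_{t})=f'(x_{t})\,dx_{t}+\tfrac{1}{2}f''(x_{t})\,d[x]_{t}$.

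For general $n$ the same telescoping plus Taylor expansion produces, in place of the scalar second-order term, a Hessian quadratic form $\tfrac{1}{2}\sum_{k}\sum_{i,j}\partial_{i}\partial_{j}f(x_{\tau_{k-1}})\,\Delta_{k}x^{i}\,\Delta_{k}x^{j}$; the only new ingredient is that the mixed sums $\sum_{k}\Delta_{k}x^{i}\,\Delta_{k}x^{j}$ converge to $[x^{i},x^{j}]_{t}$, which follows from the scalar quadratic-variation convergence applied to $x^{i}+x^{j}$ and $x^{i}-x^{j}$ via $[x^{i},x^{j}]=\tfrac{1}{4}\big([x^{i}+x^{j}]-[x^{i}-x^{j}]\big)$; assembling everything and undoing the localization yields the stated formula. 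I expect the main obstacle to be step (ii): making $\sum(\Delta x)^{2}\to[x]_{t}$ rigorous while simultaneously controlling the interaction between the oscillation of the (Hessian of the) integrand and the squared path increments --- this is exactly where one needs path continuity and finiteness of the quadratic variation working in tandem, and where the localization does genuine work keeping the relevant quantities in $L^{2}$. By contrast, step (i) is routine once the stochastic integral has been constructed, though it is worth being careful there to use the left-endpoint evaluation consistently.
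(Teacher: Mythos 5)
The paper does not prove this lemma: It\^o's formula is quoted verbatim as a classical result from stochastic calculus (the paper points the reader to a standard reference, \cite{oksendal2013stochastic}, a few lines earlier for the existence/uniqueness theory of the SDE, and treats It\^o's formula the same way). So there is no paper proof to compare against; you are supplying a proof of a black-box tool.

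That said, your sketch is the correct and standard textbook argument for \emph{continuous} semimartingales --- localize by stopping times, telescope over a partition, Taylor-expand each increment to second order with remainder, identify the first-order sum with the It\^o integral (left-endpoint evaluation), identify the second-order sum with $\int f''\,d[x]$ via convergence of $\sum(\Delta_k x)^2$ to $[x]_t$, kill the remainder by uniform continuity of $f''$ along the compact image of the path, and pass to general $n$ by polarization of quadratic (co)variation. One thing worth flagging: the paper's statement says ``let $x$ be a semimartingale,'' but the displayed formula is actually false for a general semimartingale with jumps --- one needs the extra term $\sum_{s\le t}\bigl(f(x_s)-f(x_{s^-})-\nabla f(x_{s^-})\cdot\Delta x_s\bigr)$ (and then $[x^i,x^j]$ is taken in its jump-inclusive form). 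You correctly restrict to the continuous case, and you correctly observe that this suffices for the application since the processes in Definition \ref{def:A} are driven by Brownian motion and are therefore continuous; but as written your proof establishes a slightly narrower statement than the lemma claims. If you want your argument to literally discharge the lemma as stated, you should either add the jump term to the conclusion or add ``continuous'' to the hypotheses --- the latter is what the paper implicitly intends.
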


We can now compute the derivative $d\log p_{t}(x)$. Using $dp_{t}(x)$
in Lemma \ref{lem:def-pt} and Itô's formula, we have that

\begin{align*}
d\log p_{t}(x) & =\frac{dp_{t}(x)}{p_{t}(x)}-\frac{1}{2}\frac{d[p_{t}(x)]_{t}}{p_{t}(x)^{2}}\\
 & =(x-\mu_{t})^{\top}dW_{t}-\frac{1}{2}(x-\mu_{t})^{\top}(x-\mu_{t})dt\\
 & =x^{\top}(dW_{t}+\mu_{t}dt)-\frac{1}{2}\norm x^{2}dt+g(t)\\
 & =x^{\top}dc_{t}-\frac{1}{2}\norm x^{2}dt+g(t)
\end{align*}
where $g(t)$ represents terms independent of $x$ and the first two
terms explain the form of $p_{t}(x)$ and the appearance of the Gaussian. 

Using Itô's formula again, we can compute the change of the covariance
matrix:

\[
dA_{t}=\int_{\Rn}(x-\mu_{t})(x-\mu_{t})^{\top}\cdot(x-\mu_{t})^{\top}dW_{t}\cdot p_{t}(x)dx-A_{t}^{2}dt.
\]

\subsubsection{Applications}

\paragraph{Better KLS bound.}

Eldan introduced stochastic localization \cite{Eldan2013} and used
it to prove that the thin-shell conjecture implies the KLS conjecture
up to a logarithmic factors. We later adapted his idea to get a better
bound on KLS constant itself. Since our proof is slightly simpler
and more direct than Eldan's proof, we choose to only discuss our
proof in full detail. We will discuss the difference between our proof
and his proof in the next section.
\begin{thm}[\cite{LeeV17KLS}]
\label{thm:n14} For any logconcave density $p$ in $\R^{n}$ with
covariance matrix $A$,
\[
\psi_{p}\gtrsim\frac{1}{\left(\tr\left(A^{2}\right)\right)^{1/4}}.
\]
In particular, we have $\psi_{p}\gtrsim n^{-\frac{1}{4}}$ for any
isotropic logconcave $p$.
\end{thm}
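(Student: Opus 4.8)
The plan is to run Eldan's stochastic localization (Definition~\ref{def:A}) with the trivial control matrix $C_t=I$, hence $B_t=tI$, and to stop it at a time $T\asymp(\tr(A^2))^{-1/2}$. Two structural facts carry the ``output'' side of the argument. First, by Lemma~\ref{lem:def-pt} and its derivation, $p_t(x)\propto e^{c_t^\top x-\frac t2\norm x_2^2}p(x)$, which is an (a.s.) logconcave function times the Gaussian weight $e^{-\frac12 x^\top(tI)x}$; so Theorem~\ref{thm:Gaussian-iso} applied with $B=tI$ gives, for every realization of the Brownian motion, $\psi_{p_t}\gtrsim\sqrt t$. Second, since $t\mapsto p_t(x)$ is a martingale for each fixed $x$ (Lemma~\ref{lem:def-pt}), $\E\,p_t(A)=p(A)$ for every measurable $A$; applying this to the $\varepsilon$-neighbourhoods of a fixed set $E$ and invoking Fatou's lemma as $\varepsilon\to0^+$ yields $\E\,p_t(\partial E)\le p(\partial E)$.

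For the averaging step, recall it suffices to lower bound $p(\partial E)$ for sets with $p(E)=\tfrac12$ (see \cite{Milman2009}). Writing $g_t\defeq p_t(E)$ and combining the two facts,
\[
p(\partial E)\ \ge\ \E\,p_T(\partial E)\ \ge\ \E\big[\psi_{p_T}\min\{p_T(E),p_T(\Rn\setminus E)\}\big]\ \gtrsim\ \sqrt T\;\E\big[\min\{g_T,1-g_T\}\big].
\]
From $dp_t(x)=(x-\mu_t)^\top dW_t\,p_t(x)$ we get $dg_t=\big(\int_E(x-\mu_t)p_t\,dx\big)^\top dW_t$, and the vector $\int_E(x-\mu_t)p_t\,dx=\cov_{p_t}(\mathbb 1_E,x)$ has Euclidean norm at most $\tfrac12\norm{A_t}_{\op}^{1/2}$ by Cauchy--Schwarz, so the quadratic variation of $g$ is at most $\tfrac14\norm{A_t}_{\op}\,dt\le\tfrac14\sqrt{\tr(A_t^2)}\,dt$. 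Thus the whole problem reduces to keeping $\tr(A_t^2)$ controlled for as long as possible: if $\tr(A_t^2)\le2\tr(A^2)$ for all $t\le T:=c/\sqrt{\tr(A^2)}$ on an event of constant probability, then the stopped martingale has $\Var(g_{T\wedge\tau})\le\tfrac14\sqrt{2\tr(A^2)}\,T\lesssim c$, so $\E[\min\{g_T,1-g_T\}]\gtrsim1$ by Chebyshev, and the display gives $p(\partial E)\gtrsim\sqrt T\asymp(\tr(A^2))^{-1/4}$, i.e.\ $\psi_p\gtrsim(\tr(A^2))^{-1/4}$; for isotropic $p$, $\tr(A^2)=n$, giving $\psi_p\gtrsim n^{-1/4}$.

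The crux, and the step I expect to be the main obstacle, is controlling the smooth potential $\Phi_t\defeq\tr(A_t^2)$. Applying It\^o's formula to the dynamics
\[
dA_t=\int_{\Rn}(x-\mu_t)(x-\mu_t)^\top(x-\mu_t)^\top dW_t\,p_t(x)\,dx-A_t^2\,dt,
\]
the drift contributes $-2\tr(A_t^3)\,dt\le0$ (discard it), and the It\^o correction contributes $\|T_t\|_F^2\,dt$, where $T_t$ is the third central-moment tensor of $p_t$; a short computation gives $\|T_t\|_F^2=\E_{x,y\sim p_t}\langle x-\mu_t,y-\mu_t\rangle^3$. Conditioning on $x$ with $w=x-\mu_t$, the inner expectation is the third central moment of a one-dimensional logconcave density of variance $w^\top A_t w$, hence at most $c_0(w^\top A_t w)^{3/2}$; and since $A_t^{1/2}(x-\mu_t)$ is a logconcave random vector with covariance $A_t^2$, Borell's lemma (equivalence of moments of the norm of a logconcave vector) gives $\E_x\norm{A_t^{1/2}(x-\mu_t)}^3\le c_1(\tr A_t^2)^{3/2}$. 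So $\tfrac{d}{dt}\E\Phi_t\le c_0c_1\,\E[\Phi_t^{3/2}]$; introducing $\tau=\inf\{t:\Phi_t>2\Phi_0\}$ linearizes this to $\tfrac{d}{dt}\E\Phi_{t\wedge\tau}\lesssim\sqrt{\Phi_0}\,\E\Phi_{t\wedge\tau}$, whence $\E\Phi_{T\wedge\tau}\le\tfrac32\Phi_0$ for $T\asymp1/\sqrt{\Phi_0}=1/\sqrt{\tr(A^2)}$, and Markov's inequality forces $\tau>T$ with probability bounded below by a constant (the crude a priori bound from $\mathrm{supp}(p)\subseteq nB_n$ controls the negligible contribution of $\{\tau\le T\}$ to $\Var(g)$). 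This threshold is essentially sharp — for a density with one dominant eigenvalue $R^2\gg n$ one has $\|T_t\|_F^2\asymp(\tr A_t^2)^{3/2}$ — so one cannot push $T$ past $1/\sqrt{\tr(A^2)}$ by this route, which is exactly why the method yields $n^{-1/4}$ rather than the conjectured $\Omega(1)$. Everything else is the martingale bookkeeping for $g_t$ sketched above, together with the choice of absolute constants.
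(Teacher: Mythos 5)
Your proposal is correct and follows essentially the same route as the paper: stochastic localization with $C_t=I$, the potential $\Phi_t=\tr(A_t^2)$, the same logconcave third-moment bound on the drift (your computation is precisely Lemma~\ref{lem:key}(1)), stopping at $T\asymp(\tr A^2)^{-1/2}$, and invoking Theorem~\ref{thm:Gaussian-iso}. The only routine variation is the bookkeeping for $\Phi_t$ (you stop at $\tau$, take expectations so the diffusion term vanishes, and apply Markov, where the paper instead bounds the diffusion coefficient $\norm{v_t}_2\lesssim\Phi_t^{5/4}$ explicitly via Lemma~\ref{lem:key}(2)), together with your Fatou argument giving $\E\,p_T(\partial E)\le p(\partial E)$, which is a small refinement of the paper's equality claim.
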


We now outline the proof. For this, we use the simplest choice in
stochastic localization, namely a pure random direction chosen from
the uniform distribution (i.e., $C_{t}=I$). The analysis needs a
potential function that grows slowly but still maintains good control
over the spectral norm of the current covariance matrix. The direct
choice of $\norm{A_{t}}_{\op}$, where $A_{t}$ is the covariance
matrix of the distribution at time $t$, is hard to control. We use
the potential $\Phi_{t}=\tr(A_{t}^{2})$. Itô's formula shows that
this function evolves as follows:
\begin{align}
d\Phi_{t}= & -2\tr(A_{t}^{3})dt+\E_{x,y\sim p_{t}}\left((x-\mu_{t})^{\top}(y-\mu_{t})\right)^{3}dt+2\E_{x\sim p_{t}}(x-\mu_{t})^{\top}A_{t}(x-\mu_{t})(x-\mu_{t})^{\top}dW_{t}\nonumber \\
\defeq & \delta_{t}dt+v_{t}^{\top}dW_{t}.\label{eq:dPhi}
\end{align}

The first term can be viewed as a deterministic drift while the second
is stochastic with no bias. To bound both terms, we use the following
lemmas. The first one below is a folklore reverse Hölder inequality
and can be proved using the localization lemma (see e.g., \cite{LV07}).
\begin{lem}[Logconcave moments]
\label{lem:lcmom} Given a logconcave density $p$ in $\R^{n}$,
and any positive integer $k$, 
\[
\E_{x\sim p}\norm x^{k}\le(2k)^{k}\left(\E_{x\sim p}\norm x^{2}\right)^{k/2}.
\]
\end{lem}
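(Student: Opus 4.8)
The plan is to use the Localization Lemma (Lemma~\ref{lem:LOCALIZATION}) to pass from $\R^{n}$ to a one-dimensional \emph{needle}, exactly as in the proof of Theorem~\ref{ISO2}, and then to settle the resulting one-dimensional inequality by comparison with the exponential density. After rescaling we may assume $\E_{x\sim p}\norm x^{2}=1$, so the goal is $\E_{x\sim p}\norm x^{k}\le(2k)^{k}$. Suppose not. Since $\E_{p}\norm x^{k}>(2k)^{k}=(2k)^{k}\cdot1^{k/2}$, by continuity we may fix reals $a'<\E_{p}\norm x^{k}$ and $b'>1$ with $a'>(2k)^{k}(b')^{k/2}$, and put $g(x)=(\norm x^{k}-a')p(x)$ and $h(x)=(b'-\norm x^{2})p(x)$. (We may first replace $p$ by its convolution with an infinitesimal Gaussian, which stays logconcave by Lemma~\ref{lem:marginal} and makes $g,h$ continuous, hence lower semi-continuous; the inequality for $p$ follows by letting the mollification tend to $0$.) Both $\int g$ and $\int h$ are positive, so Lemma~\ref{lem:LOCALIZATION} produces a segment $[u,v]\subset\R^{n}$ and an affine $\ell\ge0$ such that, writing $r(t)=\norm{(1-t)u+tv}$ and letting $w$ be the logconcave probability density proportional to $t\mapsto\ell(t)^{n-1}p((1-t)u+tv)$ on $[0,1]$, we get $\E_{w}r^{k}>a'$ and $\E_{w}r^{2}<b'$; hence $\E_{w}r^{k}>a'>(2k)^{k}(b')^{k/2}>(2k)^{k}(\E_{w}r^{2})^{k/2}$. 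So it suffices to prove the inequality along a needle, with no loss in the constant.

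On the needle the point to exploit is that $q:=r^{2}$ is a \emph{nonnegative quadratic polynomial}, say $q(t)=c_{2}(t-m)^{2}+c_{0}$ with $c_{2},c_{0}\ge0$. Writing $r(t)\le\sqrt{c_{2}}\,|t-m|+\sqrt{c_{0}}$, expanding the $k$-th power, and using $\E_{w}r^{2}=c_{2}\E_{w}(t-m)^{2}+c_{0}\ge\max\{c_{2}\E_{w}(t-m)^{2},\,c_{0}\}$, one gets in a few lines $\E_{w}r^{k}\le2^{k-1}\big(\E_{w}|S|^{k}/(\E_{w}S^{2})^{k/2}+1\big)(\E_{w}r^{2})^{k/2}$, where $S=t-m$ is distributed according to the one-dimensional logconcave density $w$ (the degenerate cases $c_{2}=0$, $c_{0}=0$, or $\E_{w}S^{2}=0$ being trivial). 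So everything reduces to the classical reverse-H\"older estimate for one-dimensional logconcave densities, $\E|S|^{k}\le k!\,(\E S^{2})^{k/2}$, in which the one- or two-sided exponential is extremal; this base case can be proved directly from unimodality and the concavity of $\log w$. Plugging it in gives $\E_{w}r^{k}\le 2^{k}k!\,(\E_{w}r^{2})^{k/2}\le(2k)^{k}(\E_{w}r^{2})^{k/2}$ since $2^{k}k!\le(2k)^{k}$, which closes the argument.

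I do not expect a genuine obstacle here — the lemma is, as stated, folklore — so the two points needing care are small ones: recognizing that $r$ restricted to a needle is a square root of a quadratic (not itself a norm) and therefore must be handled through $|S|$ as above, and supplying the one-dimensional reverse-H\"older base case if one wants a fully self-contained proof. If one is content with the weaker bound $\E_{p}\norm x^{k}\lesssim k^{k}(\E_{p}\norm x^{2})^{k/2}$ (which is all the present survey ever uses this lemma for), localization can be skipped entirely: take $R_{0}$ with $\P_{x\sim p}(\norm x\le R_{0})=\tfrac34$, note $R_{0}\le2(\E_{p}\norm x^{2})^{1/2}$ by Chebyshev, apply Borell's lemma (a consequence of the Pr\'ekopa--Leindler inequality) to the convex symmetric sets $\{\norm x\le t\}$ to obtain $\P_{x\sim p}(\norm x\ge t)\le\tfrac34\cdot3^{-t/(2R_{0})}$ for $t\ge R_{0}$, and integrate $\E_{p}\norm x^{k}=\int_{0}^{\infty}kt^{k-1}\P_{x\sim p}(\norm x\ge t)\,dt$; optimizing the threshold probability and handling small $k$ by Cauchy--Schwarz then recovers the stated constant.
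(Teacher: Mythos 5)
The paper never actually proves this lemma---it only remarks that it is folklore and ``can be proved using the localization lemma,'' citing \cite{LV07}. Your argument carries out exactly that plan, so the route is the same one the survey points to. The contradiction setup with $g=(\norm x^{k}-a')p$ and $h=(b'-\norm x^{2})p$, the mollification remark to secure lower semi-continuity, the passage to a needle with density $w\propto\ell(t)^{n-1}p((1-t)u+tv)$, and, most importantly, the observation that $r(t)^{2}=\norm{(1-t)u+tv}^{2}$ is a nonnegative quadratic $c_{2}(t-m)^{2}+c_{0}$ with $c_{2},c_{0}\ge 0$ (so that $r\le\sqrt{c_{2}}\,|t-m|+\sqrt{c_{0}}$ and $(a+b)^{k}\le 2^{k-1}(a^{k}+b^{k})$ give the reduction to moments of $|S|=|t-m|$) are all correct; this is precisely the subtlety that trips people up in the 1-D reduction and you handled it well.

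The one soft spot is the one-dimensional base case. You assert $\E|S|^{k}\le k!\,(\E S^{2})^{k/2}$ for an arbitrary (not centered, not one-signed) one-dimensional logconcave $S$, say it ``can be proved directly from unimodality and the concavity of $\log w$,'' and claim the one- or two-sided exponential is extremal; none of this is actually argued, and the extremality claim is not literally right (a slightly shifted exponential $Y-a$ with small $a>0$ has a strictly larger ratio $\E|S|^{k}/(\E S^{2})^{k/2}$ than $Y$ itself). To make the step airtight you could invoke the Fradelizi--Gu\'edon extreme-point theorem already quoted in this survey, which reduces the maximization of $\E|S|^{k}$ under $\E S^{2}\le 1$ to Diracs and truncated exponentials, or simply cite a precise reverse-H\"older statement for one-dimensional logconcave laws. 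Fortunately you have a lot of slack: your chain only needs $C_{k}:=\sup_{S}\E|S|^{k}/(\E S^{2})^{k/2}\le 2k^{k}-1$, which is far weaker than $k!$, so essentially any quantitative 1-D reverse-H\"older bound closes the argument. Your alternative proof via Borell's lemma is also sound in outline; the constant one extracts is of order $c^{k}k!$ with $c$ a bit above $3$, which is below $(2k)^{k}$ for $k\ge 3$, and $k=1,2$ are handled by Cauchy--Schwarz exactly as you say.
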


Using this lemma and the Cauchy-Schwarz inequality, we have the following
moment bounds.
\begin{lem}
\label{lem:key}Given a logconcave distribution $p$ with mean $\mu$
and covariance $A$, 
\begin{enumerate}
\item $\E_{x,y\sim p}\left|\left\langle x-\mu,y-\mu\right\rangle \right|^{3}\lesssim\tr\left(A^{2}\right)^{3/2}.$
\item $\norm{\E_{x\sim p}(x-\mu)(x-\mu)^{\top}A(x-\mu)}_{2}\lesssim\norm A_{\op}^{1/2}\tr\left(A^{2}\right).$
\end{enumerate}
\end{lem}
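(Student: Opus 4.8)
The plan is to derive both estimates from the logconcave moment bound of Lemma~\ref{lem:lcmom}, applied to $p$ and to linear images of $p$; recall from Lemma~\ref{lem:marginal} that any linear image of a logconcave density is logconcave. Throughout I would assume without loss of generality that $\mu=0$ (translating $p$ changes neither $A$ nor either quantity), and set $B=A^{1/2}$, so that $Bx$ (with $x\sim p$) is a logconcave random vector with covariance $BAB^{\top}=A^{2}$; in particular $\E\norm{Bx}_{2}^{2}=\tr(A^{2})$.

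For part~(1) I would condition on $y$ and use Fubini. For fixed $y$, the scalar $\langle x,y\rangle$ is the image of $p$ under a linear functional, hence one-dimensional logconcave, with mean $0$ (as $\mu=0$) and variance $y^{\top}Ay$. Lemma~\ref{lem:lcmom} in dimension one with $k=3$ then gives $\E_{x\sim p}|\langle x,y\rangle|^{3}\le 6^{3}(y^{\top}Ay)^{3/2}$. Since $y^{\top}Ay=\norm{By}_{2}^{2}$, integrating over $y$ leaves $6^{3}\,\E_{y\sim p}\norm{By}_{2}^{3}$, and because $By$ is logconcave with covariance $A^{2}$, a second application of Lemma~\ref{lem:lcmom} with $k=3$ bounds this by $6^{6}(\tr(A^{2}))^{3/2}$, which is $\lesssim\tr(A^{2})^{3/2}$.

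For part~(2) write $v=\E_{x\sim p}\,x\,(x^{\top}Ax)$, noting $x^{\top}Ax\ge 0$. Dualizing the Euclidean norm, $\norm v_{2}=\sup_{\norm u_{2}=1}\E_{x\sim p}(u^{\top}x)(x^{\top}Ax)$, and by Cauchy--Schwarz each such expectation is at most $(\E_{x\sim p}(u^{\top}x)^{2})^{1/2}(\E_{x\sim p}(x^{\top}Ax)^{2})^{1/2}$. The first factor equals $(u^{\top}Au)^{1/2}\le\norm A_{\op}^{1/2}$. For the second, $x^{\top}Ax=\norm{Bx}_{2}^{2}$, so $\E_{x\sim p}(x^{\top}Ax)^{2}=\E\norm{Bx}_{2}^{4}$, and Lemma~\ref{lem:lcmom} with $k=4$ applied to $Bx$ (covariance $A^{2}$) bounds this by $8^{4}(\tr(A^{2}))^{2}$, i.e.\ the second factor is at most $8^{2}\tr(A^{2})$. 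Hence $\norm v_{2}\lesssim\norm A_{\op}^{1/2}\tr(A^{2})$.

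The only step needing a little care --- the closest thing to an obstacle here --- is the passage through $B=A^{1/2}$ when $A$ is singular; there one should restrict attention to the affine hull of the support of $p$, on which $A$ is nondegenerate, and observe that $Bx$ still has covariance exactly $A^{2}$, so that Lemma~\ref{lem:marginal} and the identity $\E\norm{Bx}_{2}^{2}=\tr(A^{2})$ remain valid. Everything else --- Fubini, Cauchy--Schwarz, dualizing the $\ell_{2}$ norm --- is routine, and the numerical constants $6^{6}$ and $8^{2}$ are swallowed by $\lesssim$. The genuine content is entirely in Lemma~\ref{lem:lcmom} and the stability of logconcavity under linear maps, both of which are available to us.
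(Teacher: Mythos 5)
Your argument is correct and is essentially identical to the paper's: both parts rest on Lemma~\ref{lem:lcmom} applied first to a one-dimensional marginal (conditionally in part~(1), and to $u^{\top}x$ in part~(2)) and then to the logconcave vector $A^{1/2}x$ with covariance $A^{2}$, with the dualization-plus-Cauchy--Schwarz step in part~(2) matching the paper verbatim. The only difference is cosmetic (you condition on $y$ where the paper fixes $x$, and you add the harmless remark about singular $A$); the constants and structure are the same.
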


\begin{proof}
Without loss of generality, we can assume $\mu=0$. 

For the first statement, we fix $x$ and apply Lemma \ref{lem:lcmom}
to show that
\[
\E_{y\sim p}\left|\left\langle x,y\right\rangle \right|^{3}\lesssim\left(\E_{y\sim p}\langle x,y\rangle^{2}\right)^{3/2}=\left(x^{\top}Ax\right)^{3/2}=\norm{A^{1/2}x}_{2}^{3}.
\]
Then we note that $A^{1/2}x$ follows a logconcave distribution (Lemma
\ref{lem:marginal}) with mean $0$ and covariance $A^{2}$ and hence
Lemma \ref{lem:lcmom} to see that 
\[
\E_{x\sim p}\norm{A^{1/2}x}_{2}^{3}\lesssim\left(\E_{x\sim p}\norm{A^{1/2}x}^{2}\right)^{3/2}=\tr\left(A^{2}\right)^{3/2}.
\]
Therefore, we have that 
\[
\E_{x,y\sim p}\left|\left\langle x,y\right\rangle \right|^{3}\lesssim\E_{x\sim p}\norm{A^{1/2}x}^{3}\lesssim\tr\left(A^{2}\right)^{3/2}.
\]

For the second statement,
\begin{align*}
\norm{\E_{x\sim p}x\cdot x^{\top}Ax}_{2} & =\max_{\norm{\zeta}_{2}\leq1}\E_{x\sim p}x^{\top}\zeta\cdot x^{\top}Ax\\
 & \leq\max_{\norm{\zeta}_{2}\leq1}\sqrt{\E_{x\sim p}(x^{\top}\zeta)^{2}}\sqrt{\E_{x\sim p}\left(x^{\top}Ax\right)^{2}}\\
 & =\norm A_{\op}^{1/2}\cdot\sqrt{\E_{x\sim p}\norm{A^{1/2}x}_{2}^{4}}.
\end{align*}
For the last term, by a similar argument as before, we can use Lemma
\ref{lem:lcmom} shows that 
\[
\E_{x\sim p}\norm{A^{\frac{1}{2}}x}_{2}^{4}\lesssim\left(\tr A^{2}\right)^{2}.
\]
This gives the second statement.
\end{proof}
The drift term in (\ref{eq:dPhi}) can be bounded using the first
inequality in Lemma \ref{lem:key} as 
\begin{equation}
\delta_{t}\le\E_{x,y\sim p_{t}}\left((x-\mu_{t})^{\top}(y-\mu_{t})\right)^{3}\lesssim\tr\left(A_{t}^{2}\right)^{3/2}=\Phi_{t}^{3/2}\label{eq:alpha_bound}
\end{equation}
where we also used that the term $-2\tr(A_{t}^{3})$ is negative since
$A_{t}$ is positive semi-definite. The martingale coefficient $v_{t}$
can be bounded in magnitude using the second inequality:
\[
\norm{v_{t}}_{2}\le\norm{\E_{x\sim p_{t}}(x-\mu_{t})^{\top}A_{t}(x-\mu_{t})(x-\mu_{t})}_{2}\le\norm{A_{t}}_{\op}^{1/2}\tr(A_{t}^{2})\apprle\Phi_{t}^{5/4}.
\]
Together we have the simplified expression 
\[
d\Phi_{t}\apprle\Phi_{t}^{3/2}dt+\Phi_{t}^{5/4}dW_{t}
\]
So the drift term grows roughly as $\Phi^{3/2}t$ while the stochastic
term grows as $\Phi_{t}^{5/4}\sqrt{t}.$ Thus, both bounds indicate
that for $t$ up to $O(\frac{1}{\sqrt{\tr A^{2}}})$, the potential
$\Phi_{t}$ remains $O(\tr A^{2})$, i.e., $\tr(A_{t}^{2})$ grows
only by a constant factor. 

We can use this as follows. Fix any subset $E\subset\Rn$ of measure
$p(E)=\int_{E}p(x)dx=\frac{1}{2}$. We will argue that the set remains
nearly balanced for a while. To see this, let $g_{t}=p_{t}(E)$ and
note that
\[
dg_{t}=\left\langle \int_{E}(x-\mu_{t})p_{t}(x)dx,dW_{t}\right\rangle 
\]
where the integral might not be $0$ because it is over the subset
$E$ and not all of $\R^{n}$. Hence, 
\begin{align*}
d[g_{t}]_{t} & =\norm{\int_{E}(x-\mu_{t})p_{t}(x)dx}_{2}^{2}dt\\
 & =\max_{\norm{\zeta}_{2}\leq1}\left(\int_{E}\zeta^{\top}(x-\mu_{t})p_{t}(x)dx\right)^{2}dt\\
 & \leq\max_{\norm{\zeta}_{2}\leq1}\int_{\Rn}\left(\zeta^{\top}(x-\mu_{t})\right)^{2}p_{t}(x)dx\cdot p_{t}(E)dt\\
 & \leq\max_{\norm{\zeta}_{2}\leq1}\left(\zeta^{\top}A_{t}\zeta\right)dt=\norm{A_{t}}_{\op}dt.
\end{align*}
Thus, $g_{t}$ is bounded by a random process with variance $\norm{A_{t}}_{\op}$
at time $t$. For $0\leq T\apprle\frac{1}{\sqrt{\tr A^{2}}}$, the
total variance accumulated in the time period $[0,T]$ is 
\[
\int_{0}^{T}\norm{A_{s}}_{\op}ds\lesssim\int_{0}^{T}\tr(A_{s}^{2})^{\frac{1}{2}}ds\lesssim1.
\]
Hence, we get that the set $E$ remains bounded in measure between
$\frac{1}{4}$ and $\frac{3}{4}$ till time $T=\frac{c}{\sqrt{\tr A^{2}}}$
for some universal constant $c$. 

But at this time, the density $p_{T}$ has a Gaussian component with
coefficient $T$ and hence the Cheeger constant is $\Omega(\sqrt{T})$
by Theorem \ref{thm:Gaussian-iso}. Hence, we have the following:
\begin{align*}
p(\partial E) & =\E p_{T}(\partial E)\\
 & \gtrsim\sqrt{T}\cdot\P(\frac{1}{4}\leq p_{T}(E)\leq\frac{3}{4})\\
 & \gtrsim\sqrt{T}\\
 & \gtrsim(\tr A^{2})^{-\frac{1}{4}}
\end{align*}
where the first equality follows from the fact that $p_{t}$ is a
martingale, the second inequality follows from $\psi_{p_{T}}\gtrsim\sqrt{T}$
(Theorem \ref{thm:Gaussian-iso}) and the third inequality follows
from the fact the set $E$ remains bounded in measure between $\frac{1}{4}$
and $\frac{3}{4}$ till time $T=\frac{c}{\sqrt{\tr A^{2}}}$ with
at least constant probability. This completes the proof of Theorem
\ref{thm:n14}.

\paragraph{Reduction from thin shell to KLS.}

In Eldan's original proof, he used the stochastic process with control
matrix $C_{t}=A_{t}^{-\frac{1}{2}}$. For this, the change of the
covariance matrix is as follows:

\[
dA_{t}=\int_{\Rn}(x-\mu_{t})(x-\mu_{t})^{\top}\cdot(x-\mu_{t})^{\top}A_{t}^{-\frac{1}{2}}dW_{t}\cdot p_{t}(x)dx-A_{t}dt.
\]
To get the reduction from thin shell to KLS, we use the potential
$\Phi_{t}=\tr A_{t}^{q}$ with a suitably large integer $q$ for better
control of the spectral norm. 
\begin{align*}
d\Phi_{t}= & -q\Phi_{t}dt+q\E_{x\sim p_{t}}(x-\mu_{t})^{\top}A_{t}^{q-1}(x-\mu_{t})(x-\mu_{t})^{\top}A_{t}^{-\frac{1}{2}}dW_{t}\\
 & +\frac{q}{2}\sum_{\alpha+\beta=q-2}\E_{x,y\sim p_{t}}(x-\mu_{t})^{\top}A_{t}^{\alpha}(y-\mu_{t})(x-\mu_{t})^{\top}A_{t}^{\beta}(y-\mu_{t})(x-\mu_{t})^{\top}A_{t}^{-1}(y-\mu_{t})dt.
\end{align*}
The stochastic term can be bounded using the same proof as in the
second inequality of Lemma \ref{lem:key}. However, the last term
is more complicated. Eldan used the thin shell conjecture to bound
the last term and showed that
\[
d\Phi_{t}\apprle q\Phi_{t}dW_{t}+q^{2}\sigma(n)^{2}\log n\cdot\Phi_{t}dt
\]
where $\sigma(n)=\sup_{p}\sigma_{p}$ is the maximum thin-shell constant
over all isotropic logconcave densities $p$ in $\Rn$.

For an isotropic distribution, $\Phi_{0}=n$. Hence, we have that
$\Phi_{t}\apprle n^{O(1)}$ for $0\leq t\leq\frac{1}{q^{2}\sigma(n)^{2}}$.
By choosing $q=O(\log n)$, we have that $\norm{A_{t}}_{\op}\apprle1$
for $0\leq t\leq\frac{1}{\sigma(n)^{2}\log^{2}n}$. By a similar proof
as before, this gives that $\psi_{p}\gtrsim\frac{1}{\sigma(n)\log n}$.
\begin{thm}[\cite{Eldan2013}]
\label{thm:eldan}For any isotropic logconcave density $p$ in $\R^{n}$,
we have $\psi_{p}\gtrsim\frac{1}{\sigma(n)\log n}$ where $\sigma(n)=\sup_{p}\sigma_{p}$
is the maximum thin-shell constant over all isotropic logconcave densities
$p$ in $\Rn$.
\end{thm}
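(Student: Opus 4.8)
The plan is to run Eldan's stochastic localization with the covariance-adapted control matrix $C_t = A_t^{-1/2}$, so that $B_t = \int_0^t A_s^{-1/2}\,ds$ and $p_t(x) \propto e^{c_t^\top x - \frac12\norm{x}_{B_t}^2}p(x)$. First I would record the evolution of the covariance $A_t$ under this process (the formula stated just before the theorem) and introduce the potential $\Phi_t = \tr(A_t^q)$ for an integer $q$ to be fixed at the end. Applying It\^o's formula to $t \mapsto \tr(A_t^q)$ and substituting the expression for $dA_t$ yields a decomposition $d\Phi_t = -q\Phi_t\,dt + (\text{martingale term})\cdot dW_t + (\text{drift term})\,dt$, where the drift is a sum over $\alpha+\beta = q-2$ of sixth-moment quantities $\E_{x,y\sim p_t}(x-\mu_t)^\top A_t^\alpha(y-\mu_t)\cdot(x-\mu_t)^\top A_t^\beta(y-\mu_t)\cdot(x-\mu_t)^\top A_t^{-1}(y-\mu_t)$.

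The martingale term is handled exactly as in the second part of Lemma \ref{lem:key}: after the substitution $z = A_t^{-1/2}(x-\mu_t)$, which is isotropic and logconcave, one applies Lemma \ref{lem:lcmom} together with Cauchy--Schwarz to bound its magnitude by $\lesssim q\,\Phi_t$ up to lower-order terms. The heart of the argument --- and the step I expect to be the main obstacle --- is bounding the drift term. Writing $z = A_t^{-1/2}(x-\mu_t)$ and $w = A_t^{-1/2}(y-\mu_t)$ (i.i.d., isotropic, logconcave), each summand becomes $\E_{z,w}(z^\top A_t^{\alpha+1}w)(z^\top A_t^{\beta+1}w)(z^\top w)$, a triple correlation of quadratic forms. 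A crude moment estimate here is only comparable to the trivial drift $-q\Phi_t$ and gives nothing; the gain has to come from the thin-shell hypothesis, which controls the fluctuation of $\norm{z}$, equivalently of quadratic forms $z^\top M z$, about its mean. Quantitatively, one splits the spectrum of $A_t$ into dyadic blocks --- this is the source of the extra logarithmic factor --- and on each block uses $\sigma(n)$ to show the centered quadratic forms contribute at most $\sigma(n)^2$ times the naive bound, yielding a drift of size $\lesssim q^2\sigma(n)^2\log n\cdot\Phi_t\,dt$. Altogether $d\Phi_t \lesssim q\Phi_t\,dW_t + q^2\sigma(n)^2\log n\cdot\Phi_t\,dt$.

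From this SDE a standard stochastic Gr\"onwall / supermartingale argument --- apply It\^o to $\log\Phi_t$ and note the quadratic-variation term contributes only $O(q^2)$ to its drift --- shows that with probability $1-o(1)$ one has $\Phi_t \le n^{O(1)}$ for all $t \le c/(q^2\sigma(n)^2)$ (using $\Phi_0 = n$ by isotropy). Since $\norm{A_t}_{\op} \le \Phi_t^{1/q} \le n^{O(1/q)}$, choosing $q = \Theta(\log n)$ forces $\norm{A_t}_{\op} \lesssim 1$ throughout $[0,T]$ with $T = \Theta(1/(\sigma(n)^2\log^2 n))$. The remainder of the proof mirrors that of Theorem \ref{thm:n14}: fix $E$ with $p(E) = \tfrac12$; then $g_t = p_t(E)$ is a martingale with $d[g_t]_t \le \norm{A_t^{1/2}}_{\op}\,dt \lesssim dt$, so $g_t$ stays in $[\tfrac14,\tfrac34]$ up to time $T$ with constant probability. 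Because $\norm{A_s}_{\op}\lesssim 1$ on $[0,T]$, the Gaussian part of $p_T$ has precision matrix $B_T = \int_0^T A_s^{-1/2}\,ds \succeq \Omega(T)\,I$, so Theorem \ref{thm:Gaussian-iso} gives $\psi_{p_T}\gtrsim\sqrt{T}$. Finally, using that $p_t$ is a martingale, $p(\partial E) = \E\,p_T(\partial E) \gtrsim \sqrt{T}\cdot\P(\tfrac14\le g_T\le\tfrac34) \gtrsim \sqrt{T} \gtrsim 1/(\sigma(n)\log n)$, which is the claimed bound.
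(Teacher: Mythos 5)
Your proposal reproduces the paper's argument step for step: the covariance-adapted control $C_t=A_t^{-1/2}$, the potential $\Phi_t=\tr(A_t^q)$, the SDE decomposition with drift $-q\Phi_t\,dt$ plus the sixth-moment sum over $\alpha+\beta=q-2$, the bound $\norm{A_t}_\op\le\Phi_t^{1/q}$ with $q=\Theta(\log n)$, and the endgame via the martingale $g_t=p_t(E)$, the estimate $d[g_t]_t\le\norm{A_t^{1/2}}_\op\,dt$, and Theorem~\ref{thm:Gaussian-iso} applied with $B_T\succeq\Omega(T)I$. The one step you describe only heuristically --- using the thin-shell hypothesis to force the drift down to $\lesssim q^2\sigma(n)^2\log n\cdot\Phi_t\,dt$, for which you propose a dyadic spectral decomposition --- is exactly the step the paper itself does not carry out and defers to Eldan's original paper, so your proposal matches the paper at its own level of detail (though whether the dyadic mechanism is literally what produces the extra $\log n$ would need to be checked against \cite{Eldan2013}).
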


\paragraph{Tight log-Sobolev constant and improved concentration.}

One can view the slicing conjecture as being weaker than the thin-shell
conjecture and the thin shell conjecture as weaker than the KLS conjecture.
Naturally one may ask if there is a conjecture stronger than the KLS
conjecture. It is known that KLS conjecture is equivalent to proving
Poincaré constant is $\Theta(1)$ for any isotropic logconcave distribution.
It is also known that log-Sobolev constant (defined below) is stronger
than the Poincaré constant. So, a natural question is whether the
log-Sobolev constant is $\Theta(1)$ for any isotropic logconcave
distribution? We first remind the reader of the definition.
\begin{defn}
For any distribution $p$, we define the the log-Sobolev constant
$\rho_{p}$ be the largest number such that for every smooth $f$
with $\int f^{2}(x)p(x)dx=1$, we have that
\[
\int\left|\nabla f(x)\right|^{2}p(x)dx\gtrsim\rho_{p}\int f^{2}(x)\log f(x)^{2}\cdot p(x)dx.
\]
\end{defn}

The result of \cite{KannanLM06} (Theorem \ref{thm:-The-log-Cheeger})
shows that $\rho_{p}\ge\frac{1}{D^{2}}$ for any isotropic logconcave
measure with support of diameter $D$. Recently, we proved the following
tight bound.
\begin{thm}[\cite{lee2017stochastic}]
For any isotropic logconcave density $p$ in $\R^{n}$ with support
of diameter $D$, the log-Sobolev constant satisfies $\rho_{p}\gtrsim\frac{1}{D}$.
This is the best possible bounds up to a constant. 
\end{thm}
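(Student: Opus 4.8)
The plan is to run Eldan's stochastic localization with the simplest control matrix $C_{t}=I$, exactly as in the proof of Theorem~\ref{thm:n14}, stop it at a time $T\asymp 1/D$, prove a log-Sobolev inequality for the stopped density $p_{T}$, and then transfer it back to $p$ by a martingale identity for entropy. Throughout, $p_{t}$ denotes the density at time $t$ and $A_{t}$ its covariance. Since $p_{t}$ differs from $p$ only by the positive factor $e^{c_{t}^{\top}x-\frac{t}{2}\norm x_{2}^{2}}$, it is again logconcave with support of diameter $D$; in particular $\norm{A_{t}}_{\op}\le D^{2}$ for every $t$. Because $-\log p_{t}(x)=-\log p(x)+\frac{t}{2}\norm x_{2}^{2}+(\text{linear})$ has Hessian $\succeq tI$, the Bakry--\'Emery criterion --- the log-Sobolev analogue of Theorem~\ref{thm:Gaussian-iso}, provable by the same localization argument or directly --- gives $\rho_{p_{t}}\gtrsim t$ (and likewise a Poincar\'e constant $\gtrsim t$).

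Fix a smooth $g$ with $\int g^{2}\,dp=1$ and set $G_{t}\defeq\int g^{2}\,dp_{t}$, a nonnegative martingale since $p_{t}$ is one (Lemma~\ref{lem:def-pt}). Because $\int g^{2}\log g^{2}\,dp_{t}$ has expectation $\int g^{2}\log g^{2}\,dp$ for every $t$, one gets the exact decomposition
\[
\Ent_{p}(g^{2})=\E\!\left[\Ent_{p_{T}}(g^{2})\right]+\Big(\E[G_{T}\log G_{T}]-\E[G_{T}]\log\E[G_{T}]\Big).
\]
For the first term, the log-Sobolev inequality for $p_{T}$ together with $\E\!\int\norm{\nabla g}_{2}^{2}\,dp_{T}=\int\norm{\nabla g}_{2}^{2}\,dp$ (martingale property again) give $\E[\Ent_{p_{T}}(g^{2})]\le\rho_{p_{T}}^{-1}\int\norm{\nabla g}_{2}^{2}\,dp\lesssim T^{-1}\int\norm{\nabla g}_{2}^{2}\,dp$. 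For the second term --- the ``excess entropy'' $\mathcal{E}_{T}$ produced by the randomness of the localization path --- It\^o's formula applied to $G_{t}\log G_{t}$ gives $\mathcal{E}_{T}=\frac{1}{2}\int_{0}^{T}\E\big[\norm{v_{t}}_{2}^{2}/G_{t}\big]\,dt$ with $v_{t}=\int g^{2}(x-\mu_{t})\,dp_{t}=\cov_{p_{t}}(g^{2},x)$, and Cauchy--Schwarz in the maximizing direction gives $\norm{v_{t}}_{2}^{2}\le\norm{A_{t}}_{\op}\,\Var_{p_{t}}(g^{2})$. Using the Poincar\'e inequality of $p_{t}$ to convert $\Var_{p_{t}}(g^{2})$ into a Dirichlet-type quantity and choosing $T\asymp 1/D$, the goal is to show $\mathcal{E}_{T}\lesssim D\int\norm{\nabla g}_{2}^{2}\,dp$; combined with the first term this yields $\Ent_{p}(g^{2})\lesssim D\int\norm{\nabla g}_{2}^{2}\,dp$, i.e.\ $\rho_{p}\gtrsim 1/D$.

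The hard part will be closing the estimate for $\mathcal{E}_{T}$. The bound $\norm{A_{t}}_{\op}\le D^{2}$ alone is far too weak near $t=0$, where the Poincar\'e/log-Sobolev constant of $p_{t}$ is only the a priori $\gtrsim D^{-2}$ supplied by Theorem~\ref{thm:-The-log-Cheeger} and the $t$-integral would diverge; moreover $\Var_{p_{t}}(g^{2})$ carries the usual $g^{2}\norm{\nabla g}_{2}^{2}$ coupling that must be absorbed using these a priori bounds. The resolution should exploit that $A_{t}$ \emph{contracts}: $dA_{t}=(\text{driftless martingale})-A_{t}^{2}\,dt$, so heuristically $\norm{A_{t}}_{\op}\lesssim(t+D^{-2})^{-1}$, and one must control the martingale fluctuations of $\norm{A_{t}}_{\op}$ up to $T\asymp 1/D$ by a potential-function argument in the style of Theorem~\ref{thm:n14} (tracking $\tr(A_{t}^{q})$ for a large integer $q$, with the diameter bound taming the stochastic term). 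Bootstrapping the weak $D^{-2}$ guarantee on $\rho_{p_{t}}$ near $t=0$ into the sharp $1/D$ bound is the delicate point of the whole argument, and a suitably chosen non-identity control matrix $C_{t}$ may help here. Finally, optimality up to constants is already visible in dimension one: take the two-sided exponential density proportional to $e^{-|x|}$ truncated to $[-cD,cD]$ and rescaled to be isotropic --- its support has diameter $\Theta(D)$, and the test function $f$ with $f^{2}\propto e^{|x|}$ (for which $f^{2}p$ is uniform on the support, $\Ent_{p}(f^{2})\asymp D$ and $\int|f'|^{2}\,dp\asymp 1$) gives $\rho_{p}\lesssim 1/D$, matching the lower bound.
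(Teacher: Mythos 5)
Your high-level scaffolding is the right one and matches the paper's outline: run Eldan's stochastic localization with the identity control $C_{t}=I$, stop at time $T\asymp 1/D$, use the Bakry--\'Emery fact that $p_{T}$ has a Gaussian factor $e^{-\frac{T}{2}\norm x^{2}}$ and hence $\rho_{p_{T}}\gtrsim T$, and transfer the inequality back to $p$ by the martingale property of $p_{t}$. Your entropy decomposition $\Ent_{p}(g^{2})=\E[\Ent_{p_{T}}(g^{2})]+\E[G_{T}\log G_{T}]$ with $G_{t}=\int g^{2}\,dp_{t}$ is a correct It\^o computation, the Cauchy--Schwarz bound $\norm{v_{t}}_{2}^{2}\le\norm{A_{t}}_{\op}\Var_{p_{t}}(g^{2})$ is correct, and the optimality example (truncated two-sided exponential with $f^{2}\propto e^{|x|}$) is a clean and correct witness that $\rho_{p}\lesssim 1/D$.

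The genuine gap is exactly where you flag it, but your proposed fix is not the one that works. You suggest controlling $\norm{A_{t}}_{\op}$ by tracking $\tr(A_{t}^{q})$ for a large integer $q$ ``in the style of Theorem~\ref{thm:n14}'' or by switching to a non-identity $C_{t}$. The paper keeps $C_{t}=I$ and, crucially, replaces the power-trace potential by a \emph{Stieltjes-type barrier}: the scalar $u=u(A_{t})$ solving
\[
\tr\bigl((uI-A_{t})^{-2}\bigr)=n,\qquad A_{t}\preceq uI,
\]
i.e.\ $\sum_{i}(u-\lambda_{i})^{-2}=n$ with all $\lambda_{i}\le u$. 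This single scalar dominates $\norm{A_{t}}_{\op}$ (since $A_{t}\preceq uI$) and, unlike $\tr(A_{t}^{q})^{1/q}$, its It\^o drift can be controlled in a self-improving way using the barrier constraint itself; this is what bootstraps the weak a priori $D^{-2}$ bound from Theorem~\ref{thm:-The-log-Cheeger} up to the tight $1/D$ scale without incurring the $q$-dependent losses that an Eldan-style $\tr(A_{t}^{q})$ argument would. The paper explicitly names this barrier (the same potential used for spectral sparsification and covariance estimation) as \emph{the} new ingredient relative to Theorem~\ref{thm:n14}, and it is precisely the piece your proposal lacks. So: framework and endpoints correct, optimality correct, but the load-bearing lemma --- slow growth of the Stieltjes barrier $u(A_{t})$ under the $C_{t}=I$ process up to time $\asymp 1/D$ --- is missing, and the substitutes you propose do not obviously close the estimate.
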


The proof uses the same process as Theorem \ref{thm:n14} with a different
potential function that allows one to get more control on $\norm{A_{t}}_{\op}$.
This potential is a Stieltjes-type barrier function defined as follows.
Let $u(A_{t})$ be the solution to
\begin{equation}
\tr((uI-A_{t})^{-2})=n\text{ and }A_{t}\preceq uI\label{eq:def_u}
\end{equation}
Note that this is the same as $\sum_{i=1}^{n}\frac{1}{(u-\lambda_{i})^{2}}=n$
and $\lambda_{i}\leq u$ for all $i$ where $\lambda_{i}$ are the
eigenvalues of $A_{t}$. Such a potential was used to to build graph
sparsifiers \cite{BSS12,allen2015spectral,lee2015constructing,lee2017sdp},
to understand covariance estimation \cite{srivastava2013covariance}
and to solve bandit problems \cite{audibert2013regret}.

The next theorem is a large-deviation inequality based on the same
proof technique. 
\begin{thm}[\cite{lee2017stochastic}]
\label{thm:conc}For any $L$-Lipschitz function $g$ in $\Rn$ and
any isotropic logconcave density $p$, we have that
\[
\P_{x\sim p}\left(\left|g(x)-\mathrm{med}_{x\sim p}g(x)\right|\geq c\cdot L\cdot t\right)\leq\exp(-\frac{t^{2}}{t+\sqrt{n}}).
\]
Furthermore, the same conclusion holds with $\mathrm{med}_{x\sim p}g(x)$
replaced by $\E_{x\sim p}g(x)$. 
\end{thm}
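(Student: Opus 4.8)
The plan is to run Eldan's stochastic localization with the pure choice $C_t=I$ — the same process used for Theorem~\ref{thm:n14} — and to extract concentration of $g$ from a single scalar martingale attached to it. Normalize $L=1$. Discarding a Euclidean ball of radius $O(\sqrt n)$ costs probability $e^{-\Omega(\sqrt n)}$ (Paouris) and keeps the density logconcave and near‑isotropic with every $p_t$ supported in that ball (so $\norm{A_t}_{\op}=O(n)$ and $g$ is bounded); and since $|\E_p g-\mathrm{med}_p g|\le\sqrt{\Var_p g}\lesssim n^{1/4}$, it suffices to bound $\P(|g(x)-\E_p g|\ge s)$ for $s=cLt$, the complementary range $t=O(n^{1/4})$ being routine (e.g. from the Gromov--Milman concentration theorem together with $\psi_p\gtrsim n^{-1/4}$ from Theorem~\ref{thm:n14}). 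So we may assume $t\gtrsim n^{1/4}$.

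Set $G_t\defeq\E_{x\sim p_t}g(x)$. Since $p_t$ is a martingale collapsing, as $t\to\infty$, to $\delta_{x^{*}}$ with $x^{*}\sim p$, the process $G_t$ is a bounded martingale with $G_0=\E_p g$ and $G_\infty=g(x^{*})$, so the deviations of $g$ under $p$ are exactly those of $G_\infty$. By It\^o's formula and Lemma~\ref{lem:def-pt}, $dG_t=b_t^{\top}dW_t$ with $b_t=\E_{x\sim p_t}[(g(x)-G_t)(x-\mu_t)]$, and maximizing $\zeta^{\top}b_t$ over unit $\zeta$ (Cauchy--Schwarz) followed by Cheeger's inequality for $p_t$ gives
\[
d[G]_t=\norm{b_t}_2^2\,dt\le\Var_{p_t}(g)\,\norm{A_t}_{\op}\,dt\lesssim\frac{\norm{A_t}_{\op}}{\psi_{p_t}^2}\,dt .
\]
Because $p_t$ carries the factor $e^{-\frac{t}{2}\norm{x}_2^2}$ it is $t$‑uniformly logconcave, so $A_t\preceq t^{-1}I$ (Brascamp--Lieb) and $\psi_{p_t}\gtrsim\sqrt t$ (Theorem~\ref{thm:Gaussian-iso} with $B=tI$); hence $d[G]_t\lesssim t^{-2}\,dt$ for $t\ge n^{-1/2}$ and $\int_{n^{-1/2}}^{\infty}d[G]_t\lesssim\sqrt n$ with no exceptional event.

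The remaining — and genuinely hard — step is to keep $\norm{A_t}_{\op}$ bounded on the short window $t\in[0,n^{-1/2}]$. This is done exactly as in the log‑Sobolev bound above, through a potential that controls $\norm{A_t}_{\op}$ in high probability: the Stieltjes‑type barrier $u(A_t)$ of \eqref{eq:def_u} (or already $\tr(A_t^2)$ from the proof of Theorem~\ref{thm:n14}). Estimating the drift and martingale parts of $du(A_t)$ by It\^o's formula and the moment bounds of Lemma~\ref{lem:key}, one confines $\norm{A_t}_{\op}\le u(A_t)=O(1)$ for all $t\le n^{-1/2}$ on an event $\mathcal G$ with $\P(\mathcal G)\ge 1-e^{-\Omega(\sqrt n)}$. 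On $\mathcal G$ one has $\tr(A_t^2)\le n\norm{A_t}_{\op}^2=O(n)$, so $\psi_{p_t}\gtrsim n^{-1/4}$ by Theorem~\ref{thm:n14}, whence $d[G]_t\lesssim\sqrt n\,dt$ on the window and this part contributes $O(1)$; combined with the previous paragraph, $[G]_\infty\lesssim\sqrt n$ on $\mathcal G$. The standard exponential‑supermartingale estimate for $G_t-G_0$, stopped when its quadratic variation first exceeds $C\sqrt n$ (a time equal to $+\infty$ on $\mathcal G$), then gives
\[
\P\bigl(|g(x)-\E_p g|\ge s\bigr)\le 2\,e^{-s^2/(C'\sqrt n)}+e^{-\Omega(\sqrt n)}\lesssim e^{-\Omega(s^2/\sqrt n)}\qquad(s\le\sqrt n),
\]
the sub‑Gaussian regime, which already uses the full strength of $\psi_p\gtrsim n^{-1/4}$.

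For $s\ge\sqrt n$ the localization estimate no longer helps (its $e^{-\Omega(\sqrt n)}$ defect dominates), so there I would use the classical large‑deviation bound $\P_{x\sim p}(\norm x_2\ge r)\le e^{-\Omega(r)}$ for $r\gtrsim\sqrt n$ (Paouris): from $|g(x)-\E_p g|\le\norm x_2+\E_p\norm x_2\le\norm x_2+\sqrt n$ one gets $\P(|g-\E_p g|\ge s)\le e^{-\Omega(s)}$ for $s\gtrsim\sqrt n$. Taking, for each $s$, the better of the two bounds and using $\min\{s^2/\sqrt n,\,s\}\asymp s^2/(s+\sqrt n)$, then absorbing the constants into the universal factor in front of $Lt$ and switching $\E_p g$ back to $\mathrm{med}_p g$ (and, for the ``furthermore'', keeping $\E_p g$), yields the theorem. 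I expect the barrier confinement of $\norm{A_t}_{\op}$ on $[0,n^{-1/2}]$ to be the only real difficulty; the It\^o computation for $G_t$, the Cheeger/Cauchy--Schwarz bound on $d[G]_t$, the split into time regimes, and the Paouris input are routine.
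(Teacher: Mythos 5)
Your proposal follows the route the survey indicates for this theorem: the $C_t = I$ stochastic localization of Theorem~\ref{thm:n14}, now combined with the Stieltjes-type barrier $u(A_t)$ of~\eqref{eq:def_u} to control $\norm{A_t}_{\op}$ on the initial window. Tracking the bounded martingale $G_t = \E_{x\sim p_t}g(x)$ with $G_\infty = g(x^*)$, $x^*\sim p$, bounding $d[G]_t = \norm{b_t}_2^2\,dt \le \Var_{p_t}(g)\,\norm{A_t}_{\op}\,dt$ via Cauchy--Schwarz and Poincar\'e/Cheeger, and splitting time at $t=n^{-1/2}$ so that $t\ge n^{-1/2}$ is handled deterministically by the Gaussian factor ($A_t\preceq t^{-1}I$, $\psi_{p_t}\gtrsim\sqrt t$ from Theorem~\ref{thm:Gaussian-iso}) while the barrier handles $[0,n^{-1/2}]$, is the standard way to extract Lipschitz concentration from this machinery; the exponential-supermartingale step, the Paouris input for $t\gtrsim\sqrt n$, and the mean/median swap are then routine.

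One imprecision worth flagging: the parenthetical suggestion that the $\tr(A_t^2)$ potential from the proof of Theorem~\ref{thm:n14} would ``already'' suffice is not right. That argument gives control of the covariance only with \emph{constant} probability, which is all one needs there (to lower bound $p(\partial E)$ via the martingale property of $p_t$). Here, however, you need the confinement $\norm{A_t}_{\op}=O(1)$ to fail with probability at most $e^{-\Omega(\sqrt n)}$, or else the failure event swamps the sub-Gaussian bound at $t\sim\sqrt n$. That is exactly why the survey says a ``different potential function that allows one to get more control on $\norm{A_t}_{\op}$'' --- the Stieltjes barrier, which tracks the top eigenvalue rather than a trace --- must replace $\tr(A_t^2)$, and why this is the genuinely nontrivial step, as you yourself note.
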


For the Euclidean norm $g(x)=\norm x,$ the range $t\ge\sqrt{n}$
is a well-known inequality proved by Paouris \cite{Paouris2006} and
later refined by Guedon and Milman \cite{GuedonM11} to $\exp(-\min(\frac{t^{3}}{n},t$)).
The bound above improves and generalizes these bounds. 

\subsection{Needle decompositions}

We describe a ``combinatorial'' approach to resolving the KLS conjecture.
The idea of localization was to reduce an isoperimetric inequality
in $\R^{n}$ to a similar inequality in one dimension, by arguing
that if the original inequality were false, there would be a one-dimensional
counterexample. Alternatively, one can view localization as an inductive
process \textemdash{} the final inequality is a weighted sum of inequalities
for each component of a partition into needles, viz. a \emph{needle
decomposition}. For this to be valid, the partition should maintain
the relative measure of the subset $S$ whose isoperimetry is being
considered. To be useful, the Cheeger constant of each needle should
be approximately as large as desired. In fact, it suffices if some
constant fraction of needles (by measure) in a needle decomposition
has good isoperimetry, i.e., small spectral norm, i.e., variance equal
to $O(\norm A_{\op})$. 
\begin{defn}
An $\epsilon$-thin cylinder decomposition of a convex body $K$ is
a partition of $K$ by hyperplane cuts so that each part $P$ is contained
in a cyclinder whose radius is at most $\epsilon.$ The limit of a
sequence of needle decompositions with $\epsilon\rightarrow0$ is
a needle decomposition with weighting $w(P)$ over the limiting set
of needles ${\bf P}$. 
\end{defn}

\begin{figure}
\centering{} \begin{tikzpicture}[y=0.80pt, x=0.80pt, yscale=-0.400000, xscale=0.400000, inner sep=0pt, outer sep=0pt]   \path[draw=cff0000,line join=miter,line cap=butt,miter limit=4.00,even odd     rule,line width=1.000pt] (159.3288,313.4491) .. controls (438.7337,257.8456)     and (438.7337,257.8456) .. (438.7337,257.8456) -- (615.7436,293.1382) --     (676.0063,435.8639) -- (561.9376,566.5282) -- (252.6479,608.3066) --     (90.5970,556.4771) -- (111.0267,402.8090) -- cycle;   \path[draw=black,line join=miter,line cap=butt,miter limit=4.00,even odd     rule,line width=1.000pt] (237.5287,347.7262) .. controls (300.4217,347.5116)     and (300.4217,347.5116) .. (300.4217,347.5116);   \path[draw=c008600,line join=miter,line cap=butt,miter limit=4.00,even odd     rule,line width=1.000pt] (338.7088,412.0355) .. controls (445.2011,411.8210)     and (445.2011,411.8210) .. (445.2011,411.8210);   \path[draw=black,line join=miter,line cap=butt,miter limit=4.00,even odd     rule,line width=1.000pt] (283.4969,333.5692) .. controls (349.6196,333.3547)     and (349.6196,333.3547) .. (349.6196,333.3547);   \path[draw=black,line join=miter,line cap=butt,miter limit=4.00,even odd     rule,line width=1.000pt] (533.7884,478.0702) .. controls (643.3930,426.7771)     and (643.3930,426.7771) .. (643.3930,426.7771);   \path[draw=black,line join=miter,line cap=butt,miter limit=4.00,even odd     rule,line width=1.000pt] (204.4979,439.3463) .. controls (279.6222,418.9633)     and (279.6222,418.9633) .. (279.6222,418.9633);   \path[draw=black,line join=miter,line cap=butt,miter limit=4.00,even odd     rule,line width=1.000pt] (165.4128,525.5287) .. controls (286.0479,517.4029)     and (286.0479,517.4029) .. (286.0479,517.4029);   \path[draw=black,line join=miter,line cap=butt,miter limit=4.00,even odd     rule,line width=1.000pt] (220.9908,393.8070) .. controls (303.2613,393.5925)     and (303.2613,393.5925) .. (303.2613,393.5925);   \path[draw=black,line join=miter,line cap=butt,miter limit=4.00,even odd     rule,line width=1.000pt] (351.1261,498.8160) .. controls (292.6770,423.6296)     and (292.6770,423.6296) .. (292.6770,423.6296);   \path[draw=black,line join=miter,line cap=butt,miter limit=4.00,even odd     rule,line width=1.000pt] (392.1792,342.7931) .. controls (491.6554,310.6852)     and (491.6554,310.6852) .. (491.6554,310.6852);   \path[draw=black,line join=miter,line cap=butt,miter limit=4.00,even odd     rule,line width=1.000pt] (449.7815,348.6541) .. controls (493.2212,387.1712)     and (493.2212,387.1712) .. (493.2212,387.1712);   \path[draw=black,line join=miter,line cap=butt,miter limit=4.00,even odd     rule,line width=1.000pt] (368.7285,319.6284) .. controls (457.2148,269.5831)     and (457.2148,269.5831) .. (457.2148,269.5831);   \path[draw=black,line join=miter,line cap=butt,miter limit=4.00,even odd     rule,line width=1.000pt] (424.0562,347.6251) .. controls (616.9442,312.8997)     and (616.9442,312.8997) .. (616.9442,312.8997);   \path[draw=black,line join=miter,line cap=butt,miter limit=4.00,even odd     rule,line width=1.000pt] (317.4839,385.4792) .. controls (381.3588,355.2153)     and (381.3588,355.2153) .. (381.3588,355.2153);   \path[draw=black,line join=miter,line cap=butt,miter limit=4.00,even odd     rule,line width=1.000pt] (385.9947,470.5145) .. controls (475.6819,422.0183)     and (475.6819,422.0183) .. (475.6819,422.0183);   \path[draw=black,line join=miter,line cap=butt,miter limit=4.00,even odd     rule,line width=1.000pt] (262.0695,372.7280) .. controls (422.4688,316.5376)     and (422.4688,316.5376) .. (422.4688,316.5376);   \path[draw=black,line join=miter,line cap=butt,miter limit=4.00,even odd     rule,line width=1.000pt] (353.0134,383.2598) .. controls (464.8631,373.4093)     and (464.8631,373.4093) .. (464.8631,373.4093);   \path[draw=black,line join=miter,line cap=butt,miter limit=4.00,even odd     rule,line width=1.000pt] (335.3379,487.7370) .. controls (260.9084,552.0656)     and (260.9084,552.0656) .. (260.9084,552.0656);   \path[draw=black,line join=miter,line cap=butt,miter limit=4.00,even odd     rule,line width=1.000pt] (444.5515,573.6872) .. controls (506.1596,552.3675)     and (506.1596,552.3675) .. (506.1596,552.3675);   \path[draw=black,line join=miter,line cap=butt,miter limit=4.00,even odd     rule,line width=1.000pt] (318.5603,444.2673) .. controls (396.3588,435.6986)     and (396.3588,435.6986) .. (396.3588,435.6986);   \path[draw=black,line join=miter,line cap=butt,miter limit=4.00,even odd     rule,line width=1.000pt] (345.2995,458.7922) .. controls (443.5959,493.3954)     and (443.5959,493.3954) .. (443.5959,493.3954);   \path[draw=black,line join=miter,line cap=butt,miter limit=4.00,even odd     rule,line width=1.000pt] (354.7512,573.8244) .. controls (406.1791,556.4193)     and (406.1791,556.4193) .. (406.1791,556.4193);   \path[draw=black,line join=miter,line cap=butt,miter limit=4.00,even odd     rule,line width=1.000pt] (214.7654,569.1847) .. controls (318.4995,548.8455)     and (318.4995,548.8455) .. (318.4995,548.8455);   \path[draw=black,line join=miter,line cap=butt,miter limit=4.00,even odd     rule,line width=1.000pt] (319.1600,521.4980) .. controls (444.1604,546.4665)     and (444.1604,546.4665) .. (444.1604,546.4665);   \path[draw=black,line join=miter,line cap=butt,miter limit=4.00,even odd     rule,line width=1.000pt] (104.0714,542.3710) .. controls (241.2650,544.3681)     and (241.2650,544.3681) .. (241.2650,544.3681);   \path[draw=black,line join=miter,line cap=butt,miter limit=4.00,even odd     rule,line width=1.000pt] (425.9993,461.2306) .. controls (503.7977,440.8914)     and (503.7977,440.8914) .. (503.7977,440.8914);   \path[draw=black,line join=miter,line cap=butt,miter limit=4.00,even odd     rule,line width=1.000pt] (569.6178,395.4914) .. controls (651.0052,406.4993)     and (651.0052,406.4993) .. (651.0052,406.4993);   \path[draw=black,line join=miter,line cap=butt,miter limit=4.00,even odd     rule,line width=1.000pt] (481.6100,402.7484) .. controls (583.7106,428.3127)     and (583.7106,428.3127) .. (583.7106,428.3127);   \path[draw=black,line join=miter,line cap=butt,miter limit=4.00,even odd     rule,line width=1.000pt] (425.4699,578.3279) .. controls (491.9579,506.0897)     and (491.9579,506.0897) .. (491.9579,506.0897);   \path[draw=black,line join=miter,line cap=butt,miter limit=4.00,even odd     rule,line width=1.000pt] (109.4154,520.1002) .. controls (158.9570,502.0380)     and (158.9570,502.0380) .. (158.9570,502.0380);   \path[draw=black,line join=miter,line cap=butt,miter limit=4.00,even odd     rule,line width=1.000pt] (358.1925,513.4731) .. controls (439.8454,504.4257)     and (439.8454,504.4257) .. (439.8454,504.4257);   \path[draw=black,line join=miter,line cap=butt,miter limit=4.00,even odd     rule,line width=1.000pt] (443.8774,295.1846) .. controls (505.2203,309.6724)     and (505.2203,309.6724) .. (505.2203,309.6724);   \path[draw=black,line join=miter,line cap=butt,miter limit=4.00,even odd     rule,line width=1.000pt] (147.5421,344.9744) .. controls (238.3083,324.6352)     and (238.3083,324.6352) .. (238.3083,324.6352);   \path[draw=black,line join=miter,line cap=butt,miter limit=4.00,even odd     rule,line width=1.000pt] (210.1887,307.9176) .. controls (357.0506,307.7030)     and (357.0506,307.7030) .. (357.0506,307.7030);   \path[draw=black,line join=miter,line cap=butt,miter limit=4.00,even odd     rule,line width=1.000pt] (208.4600,461.7218) .. controls (326.7170,484.0562)     and (326.7170,484.0562) .. (326.7170,484.0562);   \path[draw=black,line join=miter,line cap=butt,miter limit=4.00,even odd     rule,line width=1.000pt] (192.2728,452.3280) .. controls (298.7650,452.1134)     and (298.7650,452.1134) .. (298.7650,452.1134);   \path[draw=black,line join=miter,line cap=butt,miter limit=4.00,even odd     rule,line width=1.000pt] (104.4534,474.3479) .. controls (194.6713,470.6180)     and (194.6713,470.6180) .. (194.6713,470.6180);   \path[draw=black,line join=miter,line cap=butt,miter limit=4.00,even odd     rule,line width=1.000pt] (123.2090,398.7903) .. controls (246.2683,412.5877)     and (246.2683,412.5877) .. (246.2683,412.5877);   \path[draw=black,line join=miter,line cap=butt,miter limit=4.00,even odd     rule,line width=1.000pt] (182.7549,583.2892) .. controls (319.8558,578.5484)     and (319.8558,578.5484) .. (319.8558,578.5484);   \path[draw=black,line join=miter,line cap=butt,miter limit=4.00,even odd     rule,line width=1.000pt] (441.0146,468.8532) .. controls (544.8322,488.8861)     and (544.8322,488.8861) .. (544.8322,488.8861);   \path[draw=black,line join=miter,line cap=butt,miter limit=4.00,even odd     rule,line width=1.000pt] (480.7703,491.0400) .. controls (536.1490,512.6770)     and (536.1490,512.6770) .. (536.1490,512.6770);   \path[draw=black,line join=miter,line cap=butt,miter limit=4.00,even odd     rule,line width=1.000pt] (142.7883,390.4192) .. controls (219.6977,369.2780)     and (219.6977,369.2780) .. (219.6977,369.2780);   \path[draw=black,line join=miter,line cap=butt,miter limit=4.00,even odd     rule,line width=1.000pt] (113.2321,461.2434) .. controls (193.8613,421.2965)     and (193.8613,421.2965) .. (193.8613,421.2965);   \path[draw=black,line join=miter,line cap=butt,miter limit=4.00,even odd     rule,line width=1.000pt] (480.3393,353.2142) .. controls (586.4154,361.1615)     and (586.4154,361.1615) .. (586.4154,361.1615);   \path[draw=black,line join=miter,line cap=butt,miter limit=4.00,even odd     rule,line width=1.000pt] (514.2229,386.8430) .. controls (640.1920,371.3402)     and (640.1920,371.3402) .. (640.1920,371.3402);   \path[draw=black,line join=miter,line cap=butt,miter limit=4.00,even odd     rule,line width=1.000pt] (573.2532,516.8102) .. controls (622.3326,461.5206)     and (622.3326,461.5206) .. (622.3326,461.5206);   \begin{scope}[cm={{0.61292,0.0,0.0,0.33698,(238.44856,354.11488)}},draw=c008600,miter limit=4.00,line width=1.000pt]     \path[draw=c008600,line cap=butt,miter limit=4.00,fill opacity=0.761,nonzero       rule,line width=1.000pt] (146.2963,174.5844) ellipse (0.7317cm and 1.6202cm);     \path[draw=c008600,line cap=butt,miter limit=4.00,fill opacity=0.761,nonzero       rule,line width=1.000pt] (335.7073,175.5591) ellipse (0.4965cm and 0.7839cm);     \path[draw=c008600,line join=miter,line cap=butt,miter limit=4.00,even odd       rule,line width=1.000pt] (146.0805,116.8568) -- (335.1852,147.0294);     \path[draw=c008600,line join=miter,line cap=butt,miter limit=4.00,even odd       rule,line width=1.000pt] (145.5166,232.2355) -- (338.4851,203.3369);   \end{scope}   \path[draw=black,line join=miter,line cap=butt,miter limit=4.00,even odd     rule,line width=1.000pt] (477.1348,528.9992) .. controls (571.8727,536.9465)     and (571.8727,536.9465) .. (571.8727,536.9465);   \path[draw=black,line join=miter,line cap=butt,miter limit=4.00,even odd     rule,line width=1.000pt] (187.2066,478.6503) .. controls (258.1924,507.7507)     and (258.1924,507.7507) .. (258.1924,507.7507);
\end{tikzpicture}\caption{Needle decomposition}
\end{figure}
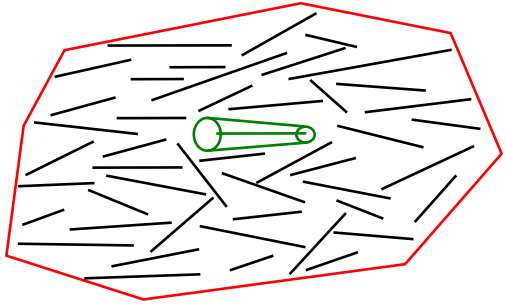
\begin{thm}
Let ${\bf P}$ be a needle decomposition of an isotropic convex body
$K$ by hyperplane cuts s.t. for some $\int_{P}fdx=0$ for each needle
$P$ in the decomposition, where $f:\R^{n}\rightarrow\R$ is a continuous
function. Suppose also that a random needle chosen with density proportional
to $w(P)$ satisfies $\P_{P\in{\bf P}}(\norm{A_{P}}_{\op}\le C)\ge c$,
for constants $c,C>0$. Then $\psi_{K}\gtrsim1$. 
\end{thm}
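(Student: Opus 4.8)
The plan is to run the localization argument of the proof of Theorem~\ref{ISO2}, but using the entire needle decomposition $\mathbf{P}$ in place of the single needle produced by Lemma~\ref{lem:LOCALIZATION}, and discarding the (possibly long and thin) needles using the hypothesis. Write $\pi$ for the uniform probability measure on $K$. By the reduction of~\cite{Milman2009} it suffices to prove $\pi(\partial E)\gtrsim 1$ for every Borel set $E\subseteq K$ with $\pi(E)=\frac{1}{2}$. Given such an $E$, I would apply the hypothesis with $f$ a continuous function satisfying $\int_{K} f\, d\pi = 0$ and approximating $\mathbf{1}_{E}-\frac12$ in $L^{1}(\pi)$ (the discontinuity of $\mathbf{1}_{E}$ is absorbed, exactly as in the proof of Theorem~\ref{ISO2}, by first enlarging $E$ slightly to an open set). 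This supplies a hyperplane-cut needle decomposition $\mathbf{P}$ with weighting $w$, the associated disintegration $\pi=\int_{\mathbf{P}}\pi_{P}\,d\nu(P)$ with $\nu\propto w$ a probability measure and each $\pi_{P}$ a one-dimensional logconcave measure supported on a segment, covariance matrices $A_{P}$, and $\int_{P} f\, d\pi_{P}=0$ for every needle $P$.

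Next I would show that most needles are both short and balanced. From $\int_{P} f\,d\pi_{P}=0$ we get $\bigl|\pi_{P}(E)-\frac12\bigr|\le\int_{P}\bigl|f-(\mathbf{1}_{E}-\frac12)\bigr|\,d\pi_{P}$, and integrating against $\nu$ and using the disintegration identity gives $\int_{\mathbf{P}}\bigl|\pi_{P}(E)-\frac12\bigr|\,d\nu\le\|f-(\mathbf{1}_{E}-\frac12)\|_{L^{1}(\pi)}$, which we may arrange to be at most $c/8$. By Markov's inequality there is then a set $G_{1}$ of needles with $\nu(G_{1})\ge 1-\frac{c}{2}$ on which $\min\{\pi_{P}(E),\pi_{P}(P\setminus E)\}\ge\frac14$. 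Meanwhile the hypothesis provides $G_{2}=\{P:\norm{A_{P}}_{\op}\le C\}$ with $\nu(G_{2})\ge c$, so $\nu(G_{1}\cap G_{2})\ge\frac{c}{2}$.

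For a needle $P\in G_{1}\cap G_{2}$, $\pi_{P}$ is a one-dimensional logconcave measure whose covariance $A_{P}$ has rank one with $\norm{A_{P}}_{\op}$ equal to the variance of $\pi_{P}$ along the needle direction; by the halfspace lower bound for logconcave densities recalled in the introduction (which is tight in one dimension), its Cheeger constant satisfies $\psi_{\pi_{P}}\gtrsim\norm{A_{P}}_{\op}^{-1/2}\ge C^{-1/2}$, hence $\pi_{P}(\partial_{P}E)\ge\psi_{\pi_{P}}\min\{\pi_{P}(E),\pi_{P}(P\setminus E)\}\gtrsim C^{-1/2}$, where $\partial_{P}E$ denotes the boundary of $E$ relative to the segment carrying $\pi_{P}$. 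As in the proof of Theorem~\ref{ISO2}, realizing $\mathbf{P}$ as the $\varepsilon\to 0$ limit of $\varepsilon$-thin cylinder decompositions yields the boundary-measure disintegration inequality $\pi(\partial E)\ge\int_{\mathbf{P}}\pi_{P}(\partial_{P}E)\,d\nu(P)$. Therefore
\[
\pi(\partial E)\ \ge\ \int_{G_{1}\cap G_{2}}\pi_{P}(\partial_{P}E)\,d\nu(P)\ \gtrsim\ \nu(G_{1}\cap G_{2})\,C^{-1/2}\ \ge\ \frac{c}{2}\,C^{-1/2},
\]
and since $\min\{\pi(E),\pi(\R^{n}\setminus E)\}=\frac12$ this gives $\psi_{K}\gtrsim c\,C^{-1/2}\gtrsim 1$; isotropy of $K$ enters only through the fact that the relevant bound $C$ is a dimension-free constant.

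The step I expect to be the main obstacle to making rigorous is the boundary-measure disintegration $\pi(\partial E)\ge\int_{\mathbf{P}}\pi_{P}(\partial_{P}E)\,d\nu(P)$. This is precisely what makes localization reduce isoperimetry to one dimension, and is standard within the localization literature (cf.\ the reduction in the proof of Theorem~\ref{ISO2}), but a careful treatment must control the interaction of two limits — the $\varepsilon\to 0^{+}$ in the definition of $\pi(\partial E)$ and the limit shrinking the cylinders of the $\varepsilon$-thin decompositions — and must handle the fact that a positive $\nu$-mass of needles may degenerate to single points, on which the one-dimensional boundary is empty and the inequality is only helped. The remaining ingredients — extracting the decomposition from a continuous $f$ with prescribed needle averages, the smoothing of $\mathbf{1}_{E}$, and the one-dimensional Cheeger estimate — are routine and already appear in the proof of Theorem~\ref{ISO2} and in the halfspace bound stated in the introduction.
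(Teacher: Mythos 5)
Your proof is correct and follows essentially the same route as the paper's: pair the needle decomposition with the subset $E$ whose isoperimetry is in question (so each needle is approximately balanced), apply the one-dimensional Cheeger bound on each needle, and integrate over needles, using the hypothesis to discard the long thin ones. The one genuine refinement is your $L^{1}$-approximation and Markov step: the paper's sketch "choose[s] $f$ to be the function which maintains the fraction of volume taken up [by] $S$ in each part," i.e.\ in effect $f=\mathbf{1}_{S}-a$, which is not continuous as the theorem's hypotheses require, so your smoothing-plus-Markov patch closes a real (if minor) gap; the Milman reduction to measure $\frac12$ and the boundary-measure disintegration you flag as the main technical obstacle are likewise left implicit in the paper, and your handling of both is sound.
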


The proof of the theorem is simple. We fix a subset $S\subset K$
of measure $a\le1/2$ and choose $f$ to be the function which maintains
the fraction of volume taken up $S$ in each part; thus, the relative
measure of $S$ in each needle is $a$. Next, using one-dimensional
isoperimetry, the measure of the boundary of $S$ in each needle $P$
is at least $\Omega(1)\frac{a}{\norm{A_{P}}_{\op}^{1/2}}w(P)$. This
is $\Omega(a)w(P)$ for at least $c$ fraction of the needles (by
their weight) by the assumption of the theorem. Hence 
\[
\frac{\vol_{n-1}(\partial S)}{\vol(S)}\ge c\cdot\Omega(a)=\Omega(a)
\]
showing that $\psi_{K}=\Omega(1)$.

This puts the focus on whether there exist needle decompositions that
have bounded operator norm for some constant measure of the needles.
This approach was used in \cite{CDV10} to bound the isoperimetry
of star-shaped bodies.

As far as we know, the bounded operator norm property might be true
for \emph{any} needle decomposition! We conclude with this as a question: 

Let ${\bf P}$ be a partition of an isotropic logconcave density $p$
in $\R^{n}$ by hyperplanes. Is it true that there always exists a
subset of parts $Q\subset\P$ such that (1) $p(Q)\ge c$ and (2) $\Var(P)\le C$
for each $P\in Q$? ($c,C$ are absolute constants, and $\Var(P)$
is the variance of a random point from the part $P$ drawn with density
proportional to $p$).

\section{Open Problems}

Here we discuss some intriguing open questions related to the KLS
conjecture (besides resolving it!), asymptotic convex geometry and
efficient algorithms in general.

\paragraph{Deterministic volume.}

Efficient algorithms for volume computation are randomized, and this
is unavoidable if access to the input body is only through an oracle
\cite{E86,Barany1987}. However, for explicit polytopes given as $Ax\ge b$,
the only known hardness is for exact computation \cite{DyerF90} and
it is possible that there is a deterministic polynomial-time approximation
scheme. Such an algorithm would be implied if P=BPP. Thus, finding
a deterministic polynomial-time algorithm for estimating the volume
of a polytope to within a factor of $2$ (say) is an outstanding open
problem.

\paragraph{Lower bound for sampling.}

The complexity of sampling an isotropic logconcave density in $\R^{n}$,
assuming the KLS conjecture is $O^{*}(n^{2})$ from a warm start.
Is this the best possible? Can we show an $\Omega(n^{2})$ lower bound
for randomized algorithms?

\paragraph{Faster sampling and isotropy.}

The current bottleneck for faster sampling (in the general oracle
model) is the complexity of isotropic transformation, which is currently
$O^{*}(n^{4})$ \cite{LV2}. Cousins and Vempala have conjectured
that the following algorithm will terminate in $O(\log n)$ iterations
and produce a nearly-istropic body. Each iteration needs $O^{*}(n)$
samples and takes $O^{*}(n^{2})\times O^{*}(n)=O^{*}(n^{3})$ steps.

Repeat: 
\begin{enumerate}
\item Estimate the covariance of the standard Gaussian density restricted
to the current convex body. 
\item If the covariance has eigenvalues smaller than some constant, apply
a transformation to make this distribution isotropic. 
\end{enumerate}
Another avenue for improvement is in the number of arithmetic operations
per oracle query. This is $\widetilde{O}(n^{2})$ for all the oracle-based
methods since each step must deal with a linear transformation. A
random process that could be faster is Coordinate Hit-and-Run. In
this, a coordinate basis is fixed, and at each step, we pick one of
the coordinate directions at random, compute the chord through the
current point in that direction and go to a random point along the
chord. It is open to show that the mixing time/conductance of this
process is polynomial and perhaps of the same order as Hit-and-Run,
thus potentially a factor of $n$ faster overall.

\paragraph{Manifold KLS.}

In \cite{lee2017convergence}, we proved the following theorem, motivated
by the goal of faster sampling and volume computation of polytopes. 
\begin{lem}
\label{lem:n-d-gaussian-manifold-isoperimetry-1}Let $\phi:K\subset\Rn\rightarrow\R$
be a convex function defined over a convex body $K$ such that $D^{4}\phi(x)[h,h,h,h]\geq0$
for all $x\in K$ and $h\in\Rn$. Given any partition $S_{1},S_{2},S_{3}$
of $K$ with $d=\min_{x\in S_{1},y\in S_{2}}d(x,y)$, i.e., the minimum
distance between $S_{1}$ and $S_{2}$ in the Riemannian metric induced
by the Hessian of $\phi$. For any $\alpha>0$, 

\[
\frac{\int_{S_{3}}e^{-\alpha\phi(x)}dx}{\min\left\{ \int_{S_{1}}e^{-\alpha\phi(x)}dx,\int_{S_{2}}e^{-\alpha\phi(x)}\,dx\right\} }\gtrsim\sqrt{\alpha}\cdot d.
\]
\end{lem}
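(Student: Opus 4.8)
The plan is to generalize the localization proofs of Theorems~\ref{ISO2} and~\ref{thm:Gaussian-iso}, tracking where the factor $\sqrt{\alpha}$ is born. Assume for contradiction that $\int_{S_3}e^{-\alpha\phi}<C\int_{S_i}e^{-\alpha\phi}$ for $i=1,2$, with $C=c\sqrt{\alpha}\,d$ and $c>0$ a small universal constant to be fixed at the end. Exactly as in the proof of Theorem~\ref{ISO2} --- after enlarging $S_1,S_2$ and the domain $K$ to open sets so that the relevant functions are lower semicontinuous --- encode these two inequalities as $\int_{\Rn}g>0$ and $\int_{\Rn}h>0$ for functions $g,h$ built from $f(x)=e^{-\alpha\phi(x)}\chi_{K}(x)$, apply the Localization Lemma~\ref{lem:LOCALIZATION} to obtain a segment $[a,b]$ and an affine $\ell\ge 0$, write $x(t)=(1-t)a+tb$ and $h_0=b-a$, and set $Z_i=\{t\in[0,1]:x(t)\in S_i\}$ and $F(t)=\ell(t)^{n-1}e^{-\alpha\phi(x(t))}$. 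The conclusion of the lemma is $\int_{Z_3}F<C\int_{Z_i}F$ for $i=1,2$, which we must contradict.

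A combinatorial argument identical to the one in Theorem~\ref{ISO2} reduces matters to the case where $Z_1,Z_3,Z_2$ are consecutive intervals, say $Z_1=[a',p]$, $Z_3=[p,q]$, $Z_2=[q,b']$, where $[a',b']$ is the effective support of $F$. Write $\psi(t)=\alpha\phi(x(t))$; since $\phi$ is convex, $\psi''(t)=\alpha\,h_0^{\top}\nabla^2\phi(x(t))h_0\ge 0$, and since $D^4\phi(x)[h_0,h_0,h_0,h_0]\ge 0$ the function $\psi''$ is \emph{convex} on $[a',b']$ --- this is the only use of the fourth-order hypothesis. Now the $\nabla^2\phi$-length of the segment $\{x(t):t\in[p,q]\}$ equals $\int_p^q\sqrt{h_0^{\top}\nabla^2\phi(x(t))h_0}\,dt=\alpha^{-1/2}\int_{Z_3}\sqrt{\psi''(t)}\,dt$; as a straight segment is an admissible curve for the Riemannian distance and its endpoints lie in $\overline{S_1}$ and $\overline{S_2}$, this length is at least $d$, so $\int_{Z_3}\sqrt{\psi''}\ge\sqrt{\alpha}\,d$. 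Hence it suffices to prove the one-dimensional inequality
\[
\int_{Z_3}F\ \gtrsim\ \Bigl(\int_{Z_3}\sqrt{\psi''}\Bigr)\,\min\Bigl\{\int_{Z_1}F,\ \int_{Z_2}F\Bigr\},
\]
which, with $c$ below the implied constant, contradicts $\int_{Z_3}F<C\int_{Z_i}F$.

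Here $F=e^{-u}$ is logconcave with $u=\psi-(n-1)\log\ell$, and $\psi''$ is convex and nonnegative; this convexity plays the role that the constant Hessian of a Gaussian plays in the one-dimensional Brascamp--Lieb inequality underlying Theorem~\ref{thm:Gaussian-iso}. I would prove the inequality directly. By symmetry suppose the minimum is the right tail $\int_{Z_2}F$. Using convexity of $\psi''$ --- so its maximum on $[p,q]$ is attained at an endpoint and on $[q,b']$ it lies above its tangent line at $q$ --- together with logconcavity of $F$, one bounds $\int_{Z_3}\sqrt{\psi''}$ from above and $\int_{Z_2}F$ from above by an exponential governed by $u'(q)$, while $\int_{Z_3}F$ is bounded below by the integral over the portion of $[p,q]$ on which $F$ stays within a constant factor of $\max_{[p,q]}F$. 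Matching these estimates --- treating separately the regime in which the $\nabla^2\phi$-length of $Z_3$ is at most a constant (where the claim reduces to the statement that a logconcave function has comparable mass just before and just after a point) and the regime in which it is large (where the tail beyond $q$ is exponentially smaller than the mass in the preceding unit-length piece) --- should yield the inequality with a universal constant.

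The localization reduction is verbatim from Theorem~\ref{ISO2} and the geometric identity of the second paragraph is elementary; the main obstacle is the one-dimensional inequality. The delicate points there are that the weight $\ell(t)^{n-1}$ may force $F$ to vanish at the ends of $[a',b']$, so the tail estimates should be written in terms of $u''=\psi''+(n-1)(\ell'/\ell)^2$, which is still convex, rather than $\psi''$ alone, and that the constants arising in the various regimes must be reconciled. An alternative that sidesteps this bookkeeping is to bypass Euclidean localization altogether and run a Riemannian needle decomposition, in the sense of \cite{klartag2017needle}, on the weighted manifold $(\Rn,\nabla^2\phi,e^{-\alpha\phi}\,dx)$, after verifying that $D^4\phi\ge 0$ together with $\alpha>0$ gives the curvature-dimension bound that makes each needle, in its own arc-length parameter, a one-dimensional density to which a Payne--Weinberger-type inequality applies; the one-dimensional inequality above then follows with $d$ as the relevant distance and $\sqrt{\alpha}$ appearing through the scaling of the metric $\nabla^2(\alpha\phi)=\alpha\nabla^2\phi$.
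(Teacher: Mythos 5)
Your overall framework---reduce to one dimension by localization, use $D^{4}\phi\ge 0$ to conclude that $\psi''(t)=\alpha\,(b-a)^{\top}\nabla^{2}\phi(x(t))(b-a)$ is a convex function of $t$, and observe that the $\nabla^{2}\phi$-length of the straight segment through $Z_3$ equals $\alpha^{-1/2}\int_{Z_3}\sqrt{\psi''}\ge d$---is the right structure, and the reduction to the one-dimensional inequality
\[
\int_{Z_3}F\ \gtrsim\ \Bigl(\int_{Z_3}\sqrt{\psi''}\Bigr)\min\Bigl\{\int_{Z_1}F,\int_{Z_2}F\Bigr\}
\]
for $F=\ell^{n-1}e^{-\psi}$ logconcave with $\psi''$ convex and nonnegative is exactly where the problem should land. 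But that one-dimensional inequality is where essentially all the work lives, and you only sketch it. The difficulty is not bookkeeping: you must beat $\int_{Z_3}F/\int_{Z_2}F\gtrsim |Z_3|\sqrt{\inf_{Z_3}\psi''}$ (which is what one-dimensional Brascamp--Lieb isoperimetry gives for free) and obtain the stronger bound with $\int_{Z_3}\sqrt{\psi''}$; the convexity of $\psi''$ enters essentially here, and your two-regime split (``$\nabla^{2}\phi$-length of $Z_3$ small or large'') is not fine enough when $\psi''$ varies by orders of magnitude across $Z_3$, which convexity permits. Moreover your estimates have to be carried out for $u''=\psi''+(n-1)(\ell'/\ell)^{2}$---you note this---but $u''\ge\psi''$ shrinks the tail $\int_{Z_2}F$ without obviously enlarging $\int_{Z_3}F$, so it is not automatic that the inequality stated with $\sqrt{\psi''}$ follows from one stated with $\sqrt{u''}$ unless the latter is actually proved. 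A complete argument needs to state and prove this one-dimensional lemma, not gesture at it.

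Two further points. The combinatorial reduction to consecutive intervals is not ``verbatim'' from Theorem~\ref{ISO2}: there the right-hand side is governed by $d(Z_1,Z_2)$, which behaves monotonically under the merging moves, whereas here it is $\int_{Z_3}\sqrt{\psi''}$ and you must check that the middle interval you end up with is a genuine $Z_1$-to-$Z_2$ crossing of the original partition, so that its $\nabla^{2}\phi$-length is still at least $d$. And the alternative route you offer---a Riemannian needle decomposition \`a la \cite{klartag2017needle} on the weighted Hessian manifold $(\Rn,\nabla^{2}\phi,e^{-\alpha\phi}dx)$---requires nonnegative Bakry--\'Emery Ricci curvature, and it is not clear that $D^{4}\phi\ge 0$ delivers this: both the Ricci tensor of the Hessian metric and $\mathrm{Hess}_{g}(\alpha\phi)$ involve third derivatives of $\phi$ through the Levi--Civita connection and are not controlled by fourth-order positivity alone, so as stated this is an unverified claim rather than a shortcut. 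Note finally that the paper does not prove this lemma; it cites it from \cite{lee2017convergence}, so there is no in-paper proof to compare against, but the gap above is substantive either way.
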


The special case when $\phi(x)=\norm x^{2}$ and $d$ is the Euclidean
metric is given by Theorem \ref{thm:Gaussian-iso}. What are interesting
generalizations of this theorem and the KLS conjecture to the manifold
setting?

\bigskip{}
\begin{acknowledgement*}
This work was supported in part by NSF Awards CCF-1563838, CCF-1717349
and CCF-1740551. We are grateful to Ronen Eldan, Bo'az Klartag and
Emanuel Milman for helpful discussions and Yan Kit Chim for making
the illustrations. Part of this work was done while the author was
visiting the Simons Institute for the Theory of Computing. It was
partially supported by the DIMACS/Simons Collaboration on Bridging
Continuous and Discrete Optimization through NSF grant \#CCF-1740425.

\bibliographystyle{plain}
\bibliography{acg}
\end{acknowledgement*}

\end{document}